\newtheorem{thm}{Theorem}[chapter]
\newtheorem{basrepthm}[thm]{Basic Representation Theorem}
\newtheorem{bernthm}[thm]{Bernstein's Theorem}
\newtheorem{boasthm}[thm]{Boas' Theorem}
\newtheorem{carathm}[thm]{Carath\'eodory's Theorem}
\newtheorem{cmsthm}[thm]{Curtis--Mairhuber--Sieklucki Theorem}
\newtheorem{danthm}[thm]{Daniell's Representation Theorem}
\newtheorem{dansigthm}[thm]{Signed Daniell's Representation Theorem}
\newtheorem{havithm}[thm]{Haviland's Theorem}
\newtheorem{hausthm}[thm]{Hausdorff's Theorem}
\newtheorem{habathm}[thm]{Hahn--Banach Theorem}
\newtheorem{hamthm}[thm]{Hamburger's Theorem}
\newtheorem{karthm}[thm]{Karlin's Theorem}
\newtheorem{karposthm}[thm]{Karlin's Positivstellensatz}
\newtheorem{karnegthm}[thm]{Karlin's Nichtnegativstellensatz}
\newtheorem{lumathm}[thm]{Luk\'acs--Markov Theorem}
\newtheorem{rithm}[thm]{Riesz' Representation Theorem}
\newtheorem{risigthm}[thm]{Signed Riesz' Representation Theorem}
\newtheorem{snakethm}[thm]{Snake Theorem}
\newtheorem{swthm}[thm]{Stone--Weierstrass Theorem}
\newtheorem{stielthm}[thm]{Stieltjes' Theorem}
\newtheorem{svethm}[thm]{{\v{S}}venco's Theorem}
\newtheorem{richthm}[thm]{Richter's Theorem}
\newtheorem{lem}[thm]{Lemma}
\newtheorem{rideclem}[thm]{Riesz Decomposition Lemma}
\newtheorem{urylem}[thm]{Urysohn's Lemma}
\newtheorem{cor}[thm]{Corollary}
\theoremstyle{definition}
\newtheorem{dfn}[thm]{Definition}
\newtheorem{exm}[thm]{Example}
\newtheorem{exms}[thm]{Examples}
\theoremstyle{remark}
\newtheorem{rem}[thm]{Remark}
\newcommand{\exmsymbol}{\hfill$\circ$}
\newcommand{\cset}{\mathds{C}}
\newcommand{\nset}{\mathds{N}}
\newcommand{\qset}{\mathds{Q}}
\newcommand{\rset}{\mathds{R}}
\newcommand{\tset}{\mathds{T}}
\newcommand{\zset}{\mathds{Z}}
\newcommand{\cone}{\mathrm{cone}\,}
\newcommand{\conv}{\mathrm{conv}\,}
\newcommand{\diff}{\mathrm{d}}
\newcommand{\lin}{\mathrm{lin}\,}
\newcommand{\pos}{\mathrm{Pos}}
\newcommand{\sign}{\mathrm{sgn}}
\newcommand{\supp}{\mathrm{supp}\,}
\newcommand{\folgt}{\ensuremath{\Rightarrow}}
\newcommand{\gdw}{\ensuremath{\Leftrightarrow}}
\newcommand{\inter}{\mathrm{int}\,}
\newcommand{\cA}{\mathcal{A}}
\newcommand{\cat}{\mathcal{C}}
\newcommand{\cF}{\mathcal{F}}
\newcommand{\cG}{\mathcal{G}}
\newcommand{\cH}{\mathcal{H}}
\newcommand{\cL}{\mathcal{L}}
\newcommand{\cN}{\mathcal{N}}
\newcommand{\cM}{\mathcal{M}}
\newcommand{\cP}{\mathcal{P}}
\newcommand{\cS}{\mathcal{S}}
\newcommand{\cT}{\mathcal{T}}
\newcommand{\cX}{\mathcal{X}}
\newcommand{\cV}{\mathcal{V}}
\newcommand{\cW}{\mathcal{W}}
\newcommand{\cY}{\mathcal{Y}}
\newcommand{\cZ}{\mathcal{Z}}
\newcommand{\fA}{\mathfrak{A}}
\newcommand{\fB}{\mathfrak{B}}
\NewDocumentCommand{\advanced}{mO{#3}m}
 {% #1 is a sectioning command
  % save the current meaning of \the<level>
  \cs_set_eq:Nc \__advanced_save: { the \cs_to_str:N #1 }
  % add \advancemarker
  \cs_set:cpn { the \cs_to_str:N #1 } { \__advanced_save: \advancedmarker }
  \bool_set_true:N \__advanced_killwidth_bool
  #1[#2]{#3}
  \bool_set_false:N \__advanced_killwidth_bool
  % reset \the<level> to the previous meaning
  \cs_set_eq:cN { the \cs_to_str:N #1 } \__advanced_save:
 }
\NewDocumentCommand{\advancedmarker}{}
 {
  \bool_if:NTF \__advanced_killwidth_bool { \makebox[5pt][l]{*} } { * }
 }
\date{\today}
\begin{document}

\author{Philipp J.\ di\,Dio}
\title{An Introduction to T-Systems}
\subtitle{with a special Emphasis on Sparse Moment Problems, Sparse Positivstellensätze, and Sparse Nichtnegativstellensätze}
\maketitle

\frontmatter%%%%%%%%%%%%%%%%%%%%%%%%%%%%%%%%%%%%%%%%%%%%%%%%%%%%%%

\begin{dedication}
Samuel Karlin\quad {\small(June 8, 1924 -- December 18, 2007)}\\ \medskip
He solved almost unnoticed an\\ important algebraic question.
\end{dedication}

%\include{foreword}
%\foreword
%\addcontentsline{toc}{chapter}{Foreword}

%% Please have the foreword written here
%Use the template \textit{foreword.tex} together with the document class SVMono (monograph-type books) or SVMult (edited books) to style your foreword\index{foreword}. 

%The foreword covers introductory remarks preceding the text of a book that are written by a \textit{person other than the author or editor} of the book. If applicable, the foreword precedes the preface which is written by the author or editor of the book.

%\vspace{\baselineskip}
%\begin{flushright}\noindent
%Place, month year\hfill {\it Firstname  Surname}\\
%\end{flushright}

%\include{preface}
\preface
\addcontentsline{toc}{chapter}{Preface}

These are the lecture notes based on \cite{didio23sparse} for the (upcoming) lecture
\emph{T-systems with a special emphasis on sparse moment problems and sparse Positivstellensätze}
in the summer semester 2024 at the University of Konstanz.

The main purpose of this lecture is to prove the sparse Positiv- and Nichtnegativstellensätze of Samuel Karlin\index{Karlin, S.} (1963) and to apply them to the algebraic setting.
That means given finitely many monomials, e.g.
\[1, x^2, x^3, x^6, x^7, x^9,\]
how do all linear combinations of these look like which are strictly positive or non-negative on some interval $[a,b]$ or $[0,\infty)$, e.g.\ describe and even write down all
\[f(x) = a_0 + a_1 x^2 + a_2 x^3 + a_3 x^6 + a_4 x^7 + a_5 x^9\]
with $f(x)>0$ or $f(x)\geq 0$ on $[a,b]$ or $[0,\infty)$, respectively.

To do this we introduce the theoretical framework in which this question can be answered:  T-systems.
We study these T-systems to arrive at Karlin's Positiv- and Nichtnegativstellensatz but we also do not hide the limitations of the T-systems approach.

The main limitation is the Curtis--Mairhuber--Sieklucki Theorem which essentially states that every T-system is only one-dimensional and hence we can only apply these results to the \emph{univariate} polynomial case.
This can also be understood as a lesson or even a warning that this approach has been investigated and found to fail, i.e., learning about these results and limitations shall save students and researchers from following old footpaths which lead to a dead end.

We took great care finding the correct historical references where the results appeared first but are perfectly aware that like people before we not always succeed.

\vspace{\baselineskip}
\begin{flushright}\noindent
Konstanz,\hfill \, %{\it Firstname  Surname}
\\
%\today
February 2024
\hfill \textit{Philipp J.\ di\,Dio}
\end{flushright}

\extrachap{Acknowledgements}
\addcontentsline{toc}{chapter}{Acknowledgements}

The author was supported by the Deutsche Forschungs\-gemein\-schaft DFG with the grant DI-2780/2-1 and his research fellowship at the Zukunfs\-kolleg of the University of Konstanz, funded as part of the Excellence Strategy of the German Federal and State Government.

The author thanks Konrad Schmüdgen for reading large parts of the manuscript and providing valuable remarks.

%\tableofcontents
\chapter*{Contents}
\contentsline {chapter}{Preface}{vii}{chapter*.1}%
\contentsline {chapter}{Acknowledgements}{ix}{chapter*.2}%
\contentsline {chapter}{Contents}{xi}{chapter*.3}%
\contentsline {chapter}{\numberline {0}Preliminaries}{1}{chapter.0}%
\contentsline {section}{\numberline {0.1}Sets, Relations, and Orders}{1}{section.0.1}%
\contentsline {section}{\numberline {0.2}Topology}{2}{section.0.2}%
\contentsline {section}{\numberline {0.3}Stone--Weierstrass Theorem}{3}{section.0.3}%
\contentsline {section}{\numberline {0.4}Convex Geometry}{3}{section.0.4}%
\contentsline {section}{\numberline {0.5}Linear Algebra}{3}{section.0.5}%
\contentsline {section}{\numberline {0.6}Measures}{4}{section.0.6}%
\contentsline {section}{\numberline {0.7\advancedmarker }Daniell's Representation Theorem}{6}{section.0.7}%
\contentsline {section}{\numberline {0.8}Riesz' Representation Theorem}{11}{section.0.8}%
\contentsline {section}{\numberline {0.9\advancedmarker }Riesz Decomposition}{12}{section.0.9}%
\contentsline {part}{Part\ I\hspace {\betweenumberspace }Introduction to Moments}{15}{part.1}%
\contentsline {chapter}{\numberline {1}Moments and Moment Functionals}{17}{chapter.1}%
\contentsline {section}{\numberline {1.1}Moments and Moment Functionals}{17}{section.1.1}%
\contentsline {section}{\numberline {1.2}Determinacy and Indeterminacy}{19}{section.1.2}%
\contentsline {section}{Problems}{20}{section*.6}%
\contentsline {chapter}{\numberline {2}Choquet's Theory and Adapted Spaces}{21}{chapter.2}%
\contentsline {section}{\numberline {2.1}Extensions of Linear Functionals preserving Positivity}{21}{section.2.1}%
\contentsline {section}{\numberline {2.2}Adapted Spaces of Continuous Functions}{23}{section.2.2}%
\contentsline {section}{\numberline {2.3}Existence of Integral Representations}{24}{section.2.3}%
\contentsline {section}{\numberline {2.4\advancedmarker }Adapted Cones}{25}{section.2.4}%
\contentsline {section}{\numberline {2.5\advancedmarker }Continuity of Positive Linear Functionals}{27}{section.2.5}%
\contentsline {section}{Problems}{28}{section*.7}%
\contentsline {chapter}{\numberline {3}The Classical Moment Problems}{29}{chapter.3}%
\contentsline {section}{\numberline {3.1}Classical Results}{29}{section.3.1}%
\contentsline {section}{\numberline {3.2}Early Results with Gaps}{35}{section.3.2}%
\contentsline {section}{\numberline {3.3}Finitely Atomic Representing Measures: Richter's Theorem}{35}{section.3.3}%
\contentsline {section}{\numberline {3.4}Signed Representing Measures: Boas' Theorem}{38}{section.3.4}%
\contentsline {section}{\numberline {3.5}Solving all Truncated Moment Problems solves the Moment Problem}{39}{section.3.5}%
\contentsline {section}{Problems}{40}{section*.8}%
\contentsline {part}{Part\ II\hspace {\betweenumberspace }Tchebycheff Systems}{41}{part.2}%
\contentsline {chapter}{\numberline {4}T-Systems}{43}{chapter.4}%
\contentsline {section}{\numberline {4.1}The Early History of T-Systems}{43}{section.4.1}%
\contentsline {section}{\numberline {4.2}Definition and Basic Properties}{44}{section.4.2}%
\contentsline {section}{\numberline {4.3}The Curtis--Mairhuber--Sieklucki Theorem}{47}{section.4.3}%
\contentsline {section}{\numberline {4.4}Examples of T-Systems}{48}{section.4.4}%
\contentsline {section}{\numberline {4.5}Representation as a Determinant, Zeros, and Non-Negativity}{49}{section.4.5}%
\contentsline {section}{Problems}{55}{section*.9}%
\contentsline {chapter}{\numberline {5}ET- and ECT-Systems}{57}{chapter.5}%
\contentsline {section}{\numberline {5.1}Definitions and Basic Properties}{57}{section.5.1}%
\contentsline {section}{\numberline {5.2}Wronskian Determinant}{60}{section.5.2}%
\contentsline {section}{\numberline {5.3}Characterizations of ECT-Systems}{63}{section.5.3}%
\contentsline {section}{\numberline {5.4}Examples of ET- and ECT-Systems}{67}{section.5.4}%
\contentsline {section}{\numberline {5.5}Representation as a Determinant, Zeros, and Non-Negativity}{68}{section.5.5}%
\contentsline {section}{Problems}{70}{section*.10}%
\contentsline {chapter}{\numberline {6}Generating ET-Systems from T-Systems by Using Kernels}{71}{chapter.6}%
\contentsline {section}{\numberline {6.1}Kernels}{71}{section.6.1}%
\contentsline {section}{\numberline {6.2}The Basic Composition Formulas}{73}{section.6.2}%
\contentsline {section}{\numberline {6.3}Smoothing T-Systems into ET-Systems}{74}{section.6.3}%
\contentsline {section}{Problems}{74}{section*.11}%
\contentsline {part}{Part\ III\hspace {\betweenumberspace }Karlin's Positivstellensätze and Nichtnegativstellensätze}{75}{part.3}%
\contentsline {chapter}{\numberline {7}Karlin's Positivstellensatz and Nichtnegativstellensatz on $[a,b]$}{77}{chapter.7}%
\contentsline {section}{\numberline {7.1}Karlin's Positivstellensatz for T-Systems on $[a,b]$}{77}{section.7.1}%
\contentsline {section}{\numberline {7.2}The Snake Theorem: An Interlacing Theorem}{82}{section.7.2}%
\contentsline {section}{\numberline {7.3}Karlin's Nichtnegativstellensatz for ET-Systems on $[a,b]$}{84}{section.7.3}%
\contentsline {section}{Problems}{88}{section*.14}%
\contentsline {chapter}{\numberline {8}Karlin's Positivstellensätze and Nichtnegativstellensätze on $[0,\infty )$ and $\mathds {R}$}{89}{chapter.8}%
\contentsline {section}{\numberline {8.1}Karlin's Positivstellensatz for T-Systems on $[0,\infty )$}{89}{section.8.1}%
\contentsline {section}{\numberline {8.2}Karlin's Nichtnegativstellensatz for ET-Systems on $[0,\infty )$}{91}{section.8.2}%
\contentsline {section}{\numberline {8.3}Karlin's Positivstellensatz for T-Systems on $\mathds {R}$}{92}{section.8.3}%
\contentsline {section}{\numberline {8.4}Karlin's Nichtnegativstellensatz for ET-Systems on $\mathds {R}$}{92}{section.8.4}%
\contentsline {section}{Problems}{93}{section*.15}%
\contentsline {part}{Part\ IV\hspace {\betweenumberspace }Non-Negative Algebraic Polynomials on $[a,b]$, $[0,\infty )$, and $\mathds {R}$}{95}{part.4}%
\contentsline {chapter}{\numberline {9}Non-Negative Algebraic Polynomials on $[a,b]$}{97}{chapter.9}%
\contentsline {section}{\numberline {9.1}Sparse Algebraic Positivstellensatz on $[a,b]$}{97}{section.9.1}%
\contentsline {section}{\numberline {9.2}Sparse Hausdorff Moment Problem}{101}{section.9.2}%
\contentsline {section}{\numberline {9.3}Sparse Algebraic Nichtnegativstellensatz on $[a,b]$}{103}{section.9.3}%
\contentsline {section}{Problems}{104}{section*.24}%
\contentsline {chapter}{\numberline {10}Non-Negative Algebraic Polynomials on $[0,\infty )$ and on $\mathds {R}$}{105}{chapter.10}%
\contentsline {section}{\numberline {10.1}Sparse Algebraic Positivstellensatz on $[0,\infty )$}{105}{section.10.1}%
\contentsline {section}{\numberline {10.2}Sparse Stieltjes Moment Problem}{108}{section.10.2}%
\contentsline {section}{\numberline {10.3}Sparse Algebraic Nichtnegativstellensatz on $[0,\infty )$}{109}{section.10.3}%
\contentsline {section}{\numberline {10.4}Algebraic Positiv- and Nichtnegativstellensatz on $\mathds {R}$}{110}{section.10.4}%
\contentsline {section}{Problems}{110}{section*.25}%
\contentsline {part}{Part\ V\hspace {\betweenumberspace }Applications of T-Systems}{111}{part.5}%
\contentsline {chapter}{\numberline {11}Moment Problems for continuous T-Systems on $[a,b]$}{113}{chapter.11}%
\contentsline {section}{\numberline {11.1}General Moment Problems for continuous T-Systems on $[a,b]$}{113}{section.11.1}%
\contentsline {section}{\numberline {11.2}A Non-Polynomial Example}{114}{section.11.2}%
\contentsline {chapter}{\numberline {12}Polynomials of Best Approximation and Optimization over Linear Functionals}{117}{chapter.12}%
\contentsline {section}{\numberline {12.1}Polynomials of Best Approximation}{117}{section.12.1}%
\contentsline {section}{\numberline {12.2}Optimization over Linear Functionals}{121}{section.12.2}%
\contentsline {part}{Appendices}{125}{part*.34}%
\contentsline {chapter}{Solutions}{125}{appendix*.35}%
\contentsline {section}{Problems of \Cref {ch:measures}}{125}{section*.36}%
\contentsline {section}{Problems of \Cref {ch:choquet}}{126}{section*.37}%
\contentsline {section}{Problems of \Cref {ch:classical}}{127}{section*.38}%
\contentsline {section}{Problems of \Cref {ch:tsystems}}{129}{section*.39}%
\contentsline {section}{Problems of \Cref {ch:etsystems}}{130}{section*.40}%
\contentsline {section}{Problems of \Cref {ch:ETfromT}}{135}{section*.42}%
\contentsline {section}{Problems of \Cref {ch:karlinPosab}}{135}{section*.43}%
\contentsline {section}{Problems of \Cref {ch:karlinPos0inftyR}}{135}{section*.44}%
\contentsline {section}{Problems of \Cref {ch:nonNegAlgPolab}}{136}{section*.45}%
\contentsline {section}{Problems of \Cref {ch:nonNegAlgPol0infty}}{137}{section*.46}%
\contentsline {chapter}{References}{139}{appendix*.47}%
\contentsline {chapter}{List of Symbols}{145}{appendix*.48}%
\contentsline {chapter}{Index}{147}{appendix*.52}%

\mainmatter%%%%%%%%%%%%%%%%%%%%%%%%%%%%%%%%%%%%%%%%%%%%%%%%%%%%%%%
%\begin{partbacktext}

%\part{Part Title}
%\noindent Use the template \emph{part.tex} together with the document class SVMono (monograph-type books) or SVMult (edited books) to style your part title page and, if desired, a short introductory text (maximum one page) on its verso page.

%\end{partbacktext}

\motto{Pure mathematics is, in its way, the poetry of logical ideas.\\ \medskip
\ \hspace{1cm} \normalfont{Albert Einstein \cite{einstein35noether}}\index{Einstein, A.}}

\setcounter{chapter}{-1}%%%
%\chapter{Prerequisites}%%%%
\chapter{Preliminaries}%%%%
\label{ch:prel}%%%%%%%%%%%%
%%%%%%%%%%%%%%%%%%%%%%%%%%%

The purpose of this preliminary chapter is not to establish and prove results but to clarify notation and to give the reader a survey of what will be assumed as known.

For the representation theorems of linear functionals of Daniell (\Cref{thm:daniellSigned}) and Riesz (\Cref{thm:rieszSigned}) more care is invested since these are the essential representation theorems in the theory of moments in the following chapters, i.e., we include the proofs.

\section{Sets, Relations, and Orders}
%%%%%%%%%%%%%%%%%%%%%%%%%%%%%%%%%%%%%
\label{sec:orderCone}

We let $\nset := \{1,2,3,\dots\}$ be the natural numbers, $\nset_0 := \{0,1,2,\dots,\}$ be the natural numbers including zero, and as usual $\zset$, $\qset$, $\rset$, and $\cset$.
The unit circle is denoted by $\tset := \{(x,y)\in\rset^2 \,|\, x^2+y^2 = 1\}$.\label{numbers}

For inclusions we use $\subseteq$ and $\subsetneq$.
To avoid any confusion we avoid the use of $\subset$ since $\subset$ is used in the literature by different authors either as $\subseteq$ or $\subsetneq$.

For a set $\cX$ we denote by $\cP(\cX)$\label{eq:PX} the set of all subsets of $\cX$.

A \emph{partial order}\index{order!partial}\index{partial order} on a set $\cX$ is a relation $R\subseteq\cX\times\cX$, usually denoted by $\leq$,\label{eq:leq} such that
\begin{enumerate}[(i)]
\item $x = y \quad\Leftrightarrow\quad x\leq y$ and $y\leq x$,
\item $x\leq y$ and $y\leq z \quad\Rightarrow\quad x\leq z$.
\end{enumerate}
A relation $\leq$ is a \emph{total order}\index{order!total}\index{total order} if for all $x,y\in\cX$ we have either $x\leq y$ or $y\leq x$.
A vector space $E$ with a partial order $\leq$ such that
\begin{enumerate}[(i)]
\item $x\leq y$ and $z\in\cX \quad\Rightarrow\quad x+z\leq y+z$,
\item $x\leq y$ and $a\in [0,\infty)\quad\Rightarrow\quad ax\leq ay$
\end{enumerate}
is called an \emph{ordered vector space}.\index{order!vector space}\index{vector space!ordered}
If $E$ is an ordered vector space then $E_+ := \{x\in E \,|\, 0\leq x\}$\label{eq:E+} denotes the \emph{positive cone}\index{cone!positive}\index{positive cone} and $E_- := \{x\in E \,|\, x\leq 0\}$ denoted the \emph{negative cone}.\index{cone!negative}\index{negative cone}
Let $C\subseteq E$ be a cone in a vector space $E$. Then $E$ with $x\leq y$ if and only if $y-x\in C$ is a (partially) ordered vector space.

For a vector space $E$ a (linear) function $f:E\to\rset$ is called (\emph{linear}) \emph{functional}.
For a vector space $E$ the (\emph{algebraic}) \emph{dual}\index{dual} $E^*$\label{eq:Estar} is the set of all linear functionals $f:E\to\rset$.
A functional $f:E\to\rset$ is called \emph{sublinear}\index{sublinear!functional}\index{functional!sublinear} if $f(\rho x) \leq \rho f(x)$ and $f(x+y) \leq f(x) + f(y)$ hold for all $\rho\geq 0$ and $x,y\in E$.
It is called \emph{superlinear}\index{superlinear!functional}\index{functional!superlinear} if $-f$ is sublinear.

\begin{habathm}\label{thm:hahnBanach}\index{Theorem!Hahn--Banach}\index{Hahn--Banach Theorem}
Let $\cX$ be a real vector space, let $p:\cX\to\rset$ be a sublinear function, $\cV\subseteq\cX$ be a subspace, and $f:\cV\to\rset$ be a linear functional such that $f(x)\leq p(x)$ for all $x\in\cV$.
Then there exists a linear functional $F:\cX\to\rset$ such that
\begin{enumerate}[(i)]
\item $f(x) = F(x)$ for all $x\in\cV$, and

\item $F(x)\leq p(x)$ for all $x\in\cX$.
\end{enumerate}
\end{habathm}

The \Cref{thm:hahnBanach} was proved by H.\ Hahn\index{Hahn, H.} \cite{hahn27} and S.\ Banach\index{Banach, S.} \cite{banach29a,banach29b}. 
A previous version is due to \mbox{E.\ Helly} \cite{helly12}.\index{Helly, E.}
For more see e.g.\ \cite{pietsch07} or standard functional analysis textbooks like \cite{yosida68,wernerFA}.

\section{Topology}
%%%%%%%%%%%%%%%%%%

A \emph{topology}\index{topology} $\cT$ on a set $\cX$ is a set $\cT\subseteq\cP(\cX)$ of subsets of $\cX$ which is closed under finite intersections and arbitrary unions, i.e., especially $\emptyset,\cX\in\cT$.
$(\cX,\cT)$ is called a \emph{topological space}\index{space!topological} and sets $A\in\cT$ are called \emph{open}.
A set $A\subseteq\cX$ is called \emph{closed} if $\cX\setminus A$ is open.
The \emph{interior}\index{interior!of a set} $\inter A$\label{eq:intA} of a set $A\subseteq\cX$ is the union of all open sets $O\subseteq A$.
A subset $U$ of a topological space $(\cX,\cT)$ is called a \emph{neighborhood}\index{neighborhood} of $x$ if $x\in\inter U$.

A function $f:\cX\to\cY$ between two topological spaces $\cX$ and $\cY$ is called \emph{continuous}\index{continuous} at $x\in\cX$ if for each neighborhood $V$ of $y = f(x)$ the set $f^{-1}(y)$ is a neighborhood of $x$.
The function $f$ is called \emph{continuous} if it is continuous at every $x\in\cX$.
The set of continuous functions $f:\cX\to\cY$ is denoted by $\cat(\cX,\cY)$.\label{eq:catXY}
A set $K\subseteq\cX$ is called \emph{compact}\index{compact!set} if every open cover $K\subseteq \bigcup_{i\in I} U_i$, $U_i\in\cT$, has a finite subcover $K\subseteq\bigcup_{k=1}^n U_{i_k}$.
For a function $f:\cX\to\rset$ we have the support $\supp f := \{x\in\cX \,|\, f(x)\neq 0\}$.
The set of all continuous functions with compact support are denoted by $\cat_c(\cX,\rset)$.\label{eq:catcXR}

A topological space $\cX$ is called \emph{Hausdorff space}\index{Hausdorff!space}\index{space!Hausdorff} if each pair of distinct points $x,y\in\cX$ have disjoint neighborhoods.
A Hausdorff space $\cX$ is called \emph{locally compact}\index{compact!locally!Hausdorff space}\index{space!Hausdorff!locally compact}\index{locally compact} if every point $x\in\cX$ has a compact neighborhood.
On Hausdorff spaces we have the following important topological result.

\begin{urylem}[see \cite{urysohn25}]\label{lem:ury}
Let $\cX$ be a Hausdorff space.
The following are equivalent:
\begin{enumerate}[(i)]
\item For every pair of disjoint closed sets $A,B\subseteq\cX$ there exist a neighborhood $U$ of $A$ and a neighborhood $V$ of $B$ such that $U\cap V = \emptyset$.

\item For each pair $A,B\subseteq\cX$ of disjoint closed sets there exists a continuous function $f:\cX\to [0,1]$ such that $f(x) = 1$ for all $x\in A$ and $f(y) = 0$ for all $y\in B$.
\end{enumerate}
\end{urylem}

\section{Stone--Weierstrass Theorem}%%%
%%%%%%%%%%%%%%%%%%%%%%%%%%%%%%%%%%%%%%%

\begin{swthm}[\cite{weierstrass85} and {\cite[pp.\ 467--468]{stone37}} or e.g.\ {\cite[p.\ 9]{yosida68}}]\label{thm:stoneWeierstrass}\index{Stone--Weierstrass Theorem}\index{Theorem!Stone--Weierstrass}
Let $\cX$ be a compact set and let $B\subseteq\cat(X,\rset)$ be such that
\begin{enumerate}[(i)]
\item $fg, \alpha f + \beta g\in B$ for all $f,g\in B$ and $\alpha,\beta\in\rset$,
\item there exists a $f\in B$ with $f>0$ on $\cX$, and
\item for all $x,y\in\cX$ with $x\neq y$ there is a $f\in B$ such that $f(x)\neq f(y)$
\end{enumerate}
then for any $f\in\cat(\cX,\rset)$ there exists $\{f_n\}_{n\in\nset_0}\subseteq B$ such that
\[\| f - f_n\|_\infty \xrightarrow{n\to\infty} 0.\]
\end{swthm}

Especially $\rset[x_1,\dots,x_n]$ on any compact $K\subseteq\rset^n$, $n\in\nset$, is dense in $\cat(K,\rset)$ in the $\sup$-norm.

For more on the history of the \Cref{thm:stoneWeierstrass} see e.g.\ \cite[§4.5.6--§4.5.8]{pietsch07}.

\section{Convex Geometry}
%%%%%%%%%%%%%%%%%%%%%%%%%

A set $\cX$ is \emph{convex}\index{convex} if $\lambda x + (1-\lambda)y\in\cX$ for all $x,y\in\cX$ and $\lambda\in [0,1]$.
A set $\cX$ is a \emph{cone}\index{cone} if $\lambda x\in\cX$ for all $x\in\cX$ and $\lambda\in [0,\infty)$.
For a set $A\subseteq\rset^n$ we denote by $\conv A$\label{eq:convA} the \emph{convex hull}\index{hull!convex} of $A$.

\begin{carathm}[see \cite{carath11}]\label{thm:cara}\index{Theorem!Carath\'eodory}\index{Carath\'eodory!Theorem}
Let $n\in\nset$ and let $S\subseteq\rset^n$ be a set.
If $x\in\conv A$ then there is a $k\leq n+1$, points $x_1,\dots,x_k\in A$, and $\lambda_1,\dots,\lambda_k > 0$ with
\[x = \lambda_1 x_1 + \dots + \lambda_k x_k \qquad\text{and}\qquad \lambda_1 + \dots + \lambda_k = 1.\]
\end{carathm}

For more on convex geometry we recommend \cite{rock72} and \cite{schne14}.

\section{Linear Algebra}
%%%%%%%%%%%%%%%%%%%%%%%%

A matrix $M = (a_{i,j})_{i,j=1}^n$ with $a_{i,j} = a_{k,l}$ if $i+j = k+l$ is called \emph{Hankel matrix}.\index{Hankel matrix}
For a sequence $s = (s_\alpha)_{\alpha\in\nset_0:|\alpha|\leq 2n}$ with $n\in\nset_0$ we denote by
\begin{equation}\label{eq:hankel}
\cH(s) := (s_{\alpha+\beta})_{\alpha,\beta\in\nset_0:|\alpha|,|\beta|\leq n}
\end{equation}
the \emph{Hankel matrix of $s$}.\index{Hankel matrix!of a sequence}

\section{Measures}
%%%%%%%%%%%%%%%%%%

For a set $\cX$ an \emph{algebra}\index{algebra} $\fA$ is a set $\fA\subseteq\cP(\cX)$ such that $\emptyset,\cX\in\fA$ and for all $A,B\in\fA$ we have $A\cap B, A\cup B, A\setminus B\in\fA$.
If additionally $\bigcup_{n=1}^\infty A_n\in\fA$ for all $A_n\in\fA$ then $\fA$ called a \emph{$\sigma$-algebra}\index{sigma-algebra@$\sigma$-algebra} and $(\cX,\fA)$ is called a \emph{measurable space}.\index{space!measurable}\index{measurable!space}
By $\fB(\rset^n)$\label{eq:borelSigmaAlg} we denote the \emph{Borel $\sigma$-algebra}.\index{Borel!sigma-algebra@$\sigma$-algebra}\index{sigma-algebra@$\sigma$-algebra!Borel}
For $A\subseteq\cP(\cX)$ we denote by $\sigma(A)$\label{sigmaA} the smallest $\sigma$-algebra containing $A$.
A function $f:(\cX,\fA)\to(\cY,\fB)$ between two measurable spaces is called \emph{measurable}\index{function!measurable}\index{measurable!function} if $f^{-1}(B)\in\fA$ for all $B\in\fB$.

A \emph{measure}\footnote{For us all measures are non-negative unless stated otherwise. In \cite{bogachevMeasureTheory} the theory is developed in greater generality.}\index{measure} $\mu$ is a function $\mu:\fA\to[0,\infty]$ on an algebra $\fA$ such that $\mu$ is \emph{countably additive}, i.e.,
\[\mu\left( \bigcup_{n=1}^\infty A_n \right) = \sum_{n=1}^\infty \mu(A_n)\]
for all pairwise disjoint sets $A_n\in\fA$.
A measure $\mu$ on $\fB(\rset^n)$ is called \emph{Borel measure}.\index{measure!Borel}\index{Borel!measure}
A Borel measure $\mu$ is called a \emph{Radon measure}\index{Radon!measure}\index{measure!Radon} if for every $A\in\fB(\rset^n)$ and $\varepsilon>0$ there exists a compact set $K_\varepsilon\subseteq A$ such that $\mu(A\setminus K_\varepsilon) < \varepsilon$.
We denote by $\cM(\cX)_+$\label{eq:MX+} the set of all Borel measures on $(\cX,\fA)$.
By $(\cX,\fA,\mu)$ we denote a \emph{measure space}.\index{space!measure}\index{measure!space}
A measurable function $f:(\cX,\fA)\to\rset$ is called \emph{$\mu$-integrable}\index{function!mu-integrable@$\mu$-integrable}\index{mu-integrable@$\mu$-integrable} if
\[\int_\cX |f(x)|~\diff\mu(x) < \infty.\]
For any $p\geq 1$ we denote by $\cL^p(\cX,\mu)$\label{eq:Lp} all $\mu$-integrable functions on $\cX$.
For $p=\infty$, i.e., $\cL^\infty(\cX,\mu)$, the essential supremum is bounded.

Since we are proving the (signed) Daniell's Theorem and the (signed) Riesz' Representation Theorem we will give a more detailed background on measures.
For more on measure theory we recommend \cite{bogachevMeasureTheory} and \cite{federerGeomMeasTheo}.

\begin{dfn}\label{dfn:outerMeasure}
Let $\cX$ be a set. A function $\mu: \cP(\cX)\to [0,\infty]$ with
\begin{enumerate}[(i)]
\item $\mu(\emptyset) = 0$,
\item $\mu(A) \leq \mu(B)$ for all $A\subseteq B\subseteq\cX$, and
\item $\mu\left(\bigcup_{i=1}^\infty A_i\right) \leq \sum_{i=1}^\infty \mu(A_i)$ for all $A_i\in\cX$
\end{enumerate}
is called a (\emph{Carath\'eodory)} \emph{outer measure}.\index{measure!outer}\index{measure!Carath\'eodory outer}
\end{dfn}

\begin{dfn}\label{dfn:CaraMeasurable}
For an outer measure $\mu$ on $\cX$ a set $A\subseteq\cX$ is called (\emph{Carath\'eodory}) \emph{$\mu$-measurable}\index{mu-measurable@$\mu$-measurable} if for every $E\subseteq\cX$ we have
$\mu(E) = \mu(E\cap A) + \mu(E\setminus A)$.
\end{dfn}

\begin{rem}\label{rem:CaraMeasurable}
Since by \Cref{dfn:outerMeasure} (iii) we always have
\[\mu(E) = \mu((E\cap A)\cup (E\setminus A)) \leq \mu(E\cap A) + \mu(E\setminus A)\]
it is sufficient for $\mu$-measurability to test
\begin{equation}\label{eq:measurableTest}
\mu(E) \geq \mu(E\cap A) + \mu(E\setminus A).
\end{equation}
\end{rem}

An outer measure is in fact a measure on all its measurable sets.

\begin{thm}\label{thm:outerMeasure}
Let $\mu$ be an outer measure on a set $\cX$ and $\cA_\mu\subseteq\cP(\cX)$ be the set of all $\mu$-measurable sets. Then $\cA_\mu$ is a $\sigma$-algebra of $\cX$ and $\mu$ is a measure on $(\cX,\cA_\mu)$.
\end{thm}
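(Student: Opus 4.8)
The plan is to verify successively that $\cA_\mu$ contains $\emptyset$ and $\cX$, is closed under complementation, is closed under finite unions (hence is an algebra), and finally is closed under countable unions, establishing the countable additivity of $\mu$ along the way. Throughout I would exploit \Cref{rem:CaraMeasurable}: since subadditivity (\Cref{dfn:outerMeasure}(iii)) always gives $\mu(E) \leq \mu(E\cap A) + \mu(E\setminus A)$, it suffices to prove the reverse inequality \eqref{eq:measurableTest} for every test set $E\subseteq\cX$.

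The first three closure properties are routine. That $\emptyset,\cX\in\cA_\mu$ follows from $\mu(\emptyset)=0$ and monotonicity; closure under complementation is immediate because the defining identity in \Cref{dfn:CaraMeasurable} is symmetric under interchanging $A$ and $\cX\setminus A$ (note $E\setminus A = E\cap(\cX\setminus A)$). For closure under binary unions, given $A,B\in\cA_\mu$ and a test set $E$, I would apply the measurability of $A$ to $E$, then the measurability of $B$ to $E\setminus A$, and combine the two identities; writing $E\cap(A\cup B) = (E\cap A)\cup((E\setminus A)\cap B)$ and $E\setminus(A\cup B) = (E\setminus A)\setminus B$, subadditivity then yields \eqref{eq:measurableTest} for $A\cup B$. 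Together with complementation and De Morgan this makes $\cA_\mu$ an algebra.

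The heart of the argument is the passage to countable unions, which I expect to be the main obstacle. First I would record a finite-additivity lemma: if $A,B\in\cA_\mu$ are disjoint, then testing the measurability of $A$ against the set $E\cap(A\cup B)$ gives $\mu(E\cap(A\cup B)) = \mu(E\cap A)+\mu(E\cap B)$, and by induction $\mu(E\cap\bigcup_{k=1}^n A_k) = \sum_{k=1}^n \mu(E\cap A_k)$ for pairwise disjoint $A_k\in\cA_\mu$. Now let $\{A_n\}\subseteq\cA_\mu$ be arbitrary; since $\cA_\mu$ is an algebra I may disjointify by replacing $A_n$ with $A_n\setminus\bigcup_{k<n}A_k$, so I assume the $A_n$ are pairwise disjoint with union $A$ and finite partial unions $S_n\in\cA_\mu$. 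For any $E$, the measurability of $S_n$ together with monotonicity (using $E\setminus S_n\supseteq E\setminus A$) and the finite-additivity lemma yields $\mu(E)\geq\sum_{k=1}^n\mu(E\cap A_k)+\mu(E\setminus A)$ for every $n$.

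Letting $n\to\infty$ and then invoking countable subadditivity in the form $\sum_{k=1}^\infty\mu(E\cap A_k)\geq\mu(E\cap A)$ produces \eqref{eq:measurableTest} for $A$, so $A\in\cA_\mu$ and $\cA_\mu$ is a $\sigma$-algebra. Finally, specializing the very same inequality to the test set $E=A$ gives $\mu(A)\geq\sum_{k=1}^\infty\mu(A_k)$, while subadditivity supplies the reverse bound; hence $\mu$ is countably additive on $\cA_\mu$, i.e.\ a measure. The only delicate point is the interchange of the limit with the infinite sum, but this is harmless since all terms are non-negative and the partial-sum lower bound holds uniformly in $n$.
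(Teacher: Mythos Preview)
Your argument is the standard Carath\'eodory proof and is correct in every step; the only thing to note is that the paper does not actually prove this statement but simply refers the reader to \cite[Thm.\ 1.11.4 (iii)]{bogachevMeasureTheory}. What you have written is precisely the proof one finds in such references, so in substance you have supplied what the paper omits rather than taken a different route.
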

\begin{proof}
See e.g.\ \cite[Thm.\ 1.11.4 (iii)]{bogachevMeasureTheory}.
\end{proof}

Outer measures give another characterization of measurable functions.

\begin{lem}\label{lem:equivMeasurable}
Let $\mu$ be an outer measure on $\cX$ and $f:\cX\to [-\infty,\infty]$ be a function. Then $f$ is $\mu$-measurable if and only if
\[\mu(A) \geq \mu(\{x\in A \,|\, f(x)\leq a\}) + \mu(\{x\in A \,|\, f(x)\geq b\})\]
for all $A\subseteq\cX$ and $-\infty< a < b < \infty$.
\end{lem}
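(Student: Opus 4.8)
The plan is to read ``$f$ is $\mu$-measurable'' as the requirement that every sublevel set $\{f\le c\}:=\{x\in\cX\,|\,f(x)\le c\}$ with $c\in\rset$ be $\mu$-measurable in the sense of \Cref{dfn:CaraMeasurable}. Since these sets generate the Borel structure of $[-\infty,\infty]$ and the $\mu$-measurable sets form a $\sigma$-algebra by \Cref{thm:outerMeasure}, this is equivalent to measurability of $f$, so it suffices to handle the sets $\{f\le c\}$, and by \Cref{rem:CaraMeasurable} I will only ever verify the nontrivial inequality \eqref{eq:measurableTest}.

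For the implication ``$\Rightarrow$'' I would fix $A\subseteq\cX$ and $a<b$ and use that $\{f\le a\}$ is $\mu$-measurable: \Cref{dfn:CaraMeasurable} with $E=A$ gives $\mu(A)=\mu(A\cap\{f\le a\})+\mu(A\setminus\{f\le a\})$. Because $b>a$ we have $A\cap\{f\ge b\}\subseteq A\setminus\{f\le a\}$, so monotonicity (\Cref{dfn:outerMeasure}(ii)) replaces the second term by the smaller $\mu(A\cap\{f\ge b\})$, yielding the asserted inequality. This half uses only the measurability of the lower sets.

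The content lies in ``$\Leftarrow$''. Fixing $c\in\rset$ and $E\subseteq\cX$, I set $A_n:=E\cap\{f\ge c+\tfrac1n\}$, so that the hypothesis with $a=c$, $b=c+\tfrac1n$ reads $\mu(E)\ge\mu(E\cap\{f\le c\})+\mu(A_n)$ for every $n$. Since $A_n\nearrow A:=E\cap\{f>c\}$, taking the supremum over $n$ yields the measurability inequality for $\{f\le c\}$ provided $\lim_n\mu(A_n)\ge\mu(A)$. This is the hard part: outer measures need not be continuous from below (for instance $\mu=1$ on nonempty finite and $\mu=\infty$ on infinite subsets of $\nset$ fails it), so I cannot invoke such continuity for free.

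I expect to recover it by imitating Carath\'eodory's increasing--sets argument for metric outer measures, with the gaps $a<b$ in the hypothesis playing the role of positive distance. Writing the shells $D_n:=A_{n+1}\setminus A_n=E\cap\{c+\tfrac1{n+1}\le f<c+\tfrac1n\}$, countable subadditivity (\Cref{dfn:outerMeasure}(iii)) gives $\mu(A)\le\mu(A_N)+\sum_{n\ge N}\mu(D_n)$ for every $N$; assuming $L:=\sup_n\mu(A_n)<\infty$ (otherwise $\mu(A)=\infty=\lim_n\mu(A_n)$ and the claim is trivial), it thus suffices to prove $\sum_n\mu(D_n)<\infty$. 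For this I would first establish a super-additivity lemma by induction on $m$: if $G_1,\dots,G_m$ and reals $a_k<b_k$ satisfy $G_k\subseteq\{f\ge b_k\}$ and $G_{k+1}\cup\dots\cup G_m\subseteq\{f\le a_k\}$, then $\mu(G_1\cup\dots\cup G_m)\ge\sum_k\mu(G_k)$, the inductive step being the hypothesis applied to $A=\bigcup_iG_i$ with $(a_1,b_1)$ together with monotonicity. The even shells $D_2,D_4,\dots,D_{2m}$ fit this pattern, since the $f$-values on $D_{2k}$ lie at or above $c+\tfrac1{2k+1}$ while those on $\bigcup_{l>k}D_{2l}$ stay below $c+\tfrac1{2k+2}$, and their union sits inside $A_{2m+1}$; hence the lemma yields $\sum_{k=1}^m\mu(D_{2k})\le\mu(A_{2m+1})\le L$, and the odd shells are handled identically. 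Letting $m\to\infty$ gives $\sum_n\mu(D_n)\le 2L<\infty$, so the tail in the subadditivity estimate vanishes, $\mu(A)\le\lim_n\mu(A_n)$, and the measurability of $\{f\le c\}$ follows. The main obstacle throughout is exactly this failure of continuity from below, which I circumvent by the gap-separation packing of the shells.
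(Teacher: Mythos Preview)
Your argument is correct and complete. The paper itself does not supply a proof of this lemma; it simply refers to Federer's \emph{Geometric Measure Theory}, \S2.3.2(7). The proof you have written is precisely the one found there: the forward direction is immediate from measurability of $\{f\le a\}$ and monotonicity, and the backward direction is the Carath\'eodory-style ``separated shells'' argument, using the hypothesis with the gap $a<b$ in place of positive metric distance to obtain super-additivity over the pieces $D_n$, which in turn forces $\sum_n\mu(D_n)<\infty$ and hence continuity from below along $A_n\nearrow E\cap\{f>c\}$. The one phrasing to tighten is the case $L=\infty$: you assert $\mu(A)=\infty$ there, which is fine because $A_n\subseteq A$ gives $\mu(A)\ge\mu(A_n)$ for every $n$, so $\mu(A)=\infty$ follows by monotonicity; the desired inequality $\lim_n\mu(A_n)\ge\mu(A)$ is then an equality of infinities.
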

\begin{proof}
See e.g.\ \cite[§2.3.2(7), pp.\ 74--75]{federerGeomMeasTheo}.
\end{proof}

\begin{dfn}\label{dfn:regularmeasure}
An outer measure $\mu$ is called \emph{regular}\index{regular!measure}\index{measure!regular} if for each set $A\subseteq\cX$ there exists a $\mu$-measurable set $B\subseteq\cX$ with $A\subseteq B$ and $\mu(A) = \mu(B)$.
\end{dfn}

\begin{dfn}
Let $f,g:(\cX,\cA)\to\rset$ be two functions. Then we define $\inf(f,g)$ by
\[\inf(f,g)(x) := \inf(f(x),g(x))\]
for all $x\in\cX$ and similarly $\sup(f,g)$. Additionally, $f\leq g$ iff $f(x)\leq g(x)$ for all $x\in\cX$.
We have $f_+ := \sup(f,0)$, $f_- := f-f_+$, and $|f| = f_+ - f_-$.\label{eq:f+-||}
\end{dfn}

\begin{dfn}\label{dfn:lattice}
Let $\cX$ be a set. We call a set $\cF$ of functions $f:\cX\to\rset$ a \emph{lattice} (\emph{of functions})\index{lattice!of functions} if the following holds:
\begin{enumerate}[(i)]
\item $c\cdot f\in\cF$ for all $c\geq 0$ and $f\in\cF$,
\item $f+g\in\cF$ for all $f,g\in\cF$,
\item $\inf(f,g)\in\cF$ for all $f,g\in\cF$,
\item $\inf(f,c)\in\cF$ for all $c\geq 0$ and $f\in\cF$, and
\item $g-f\in\cF$ for all $f,g\in\cF$ with $f\leq g$.
\end{enumerate}
\end{dfn}

Some authors require that a lattice of functions is a vector space (\emph{lattice space}).\index{lattice!space}\index{space!lattice} But for proving \Cref{thm:daniell} it is only necessary that a lattice is a convex cone as in \Cref{dfn:lattice}.

\begin{exm}
Let $\cX$ be a locally compact Hausdorff space. Then $\cat_c(\cX,\rset)$ is a lattice of functions and even a lattice space.\exmsymbol
\end{exm}

Given a lattice $\cF$ we get another lattice $\cF_+$ by taking only the non-negative functions.

\begin{lem}[see e.g.\ {\cite[§2.5.1, p.\ 91]{federerGeomMeasTheo}}]
Let $\cF$ be a non-empty lattice on a set $\cX$ and define
\[\cF_+ := \{f\in\cF \,|\, f\geq 0\}.\]
Then
\begin{enumerate}[(i)]
\item $f_+, f_-,|f|\in\cF_+$ for all $f\in\cF$ and

\item $\cF_+$ is a non-empty lattice on $\cX$.
\end{enumerate}
\end{lem}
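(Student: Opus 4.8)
The plan is to reduce both parts to one observation: every operation allowed by \Cref{dfn:lattice} preserves non-negativity, so once a function is shown to lie in $\cF$, its membership in $\cF_+$ is equivalent to its being $\geq 0$. Thus the only genuine work is in part~(i), namely exhibiting $f_+$, $f_-$, and $|f|$ as elements of $\cF$; their non-negativity is visible from their definitions. As a preliminary I would record two facts available for free: since $\cF$ is non-empty, axiom~(i) with $c=0$ gives $0\in\cF$, and axiom~(iv) with $c=0$ gives $m:=\inf(f,0)\in\cF$ for every $f\in\cF$. The single auxiliary element $m$ (which satisfies $m\leq f$ and $m\leq 0$) generates everything.

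For part~(i) the subtlety is that \Cref{dfn:lattice} supplies the operation $\inf$ but \emph{not} $\sup$, and scalar multiplication only by non-negative scalars; the sole way to form a difference is axiom~(v), which produces $g-f$ only when $f\leq g$. Hence every supremum and every sign change must be routed through $m$. Concretely: $f_+=\sup(f,0)=f-m$ lies in $\cF$ by axiom~(v) (because $m\leq f$) and is $\geq 0$; the non-negative part $f_-=f_+-f$ lies in $\cF$ by axiom~(v) (because $f\leq f_+=\sup(f,0)$) and is $\geq 0$; and $|f|=f_++f_-$ lies in $\cF$ by axiom~(ii) and is $\geq 0$ (equivalently one may write $|f|=\sup(f,0)-\inf(f,0)=f_+-m$ and invoke axiom~(v) via $m\leq 0\leq f_+$). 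In each case membership in $\cF$ together with non-negativity yields membership in $\cF_+$, which is exactly statement~(i).

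For part~(ii), non-emptiness is immediate from $0\in\cF_+$ (equivalently from $f_+\in\cF_+$). The five lattice axioms for $\cF_+$ I would then verify one by one: if $f,g\in\cF_+$ and $c\geq 0$, then $c\cdot f$, $f+g$, $\inf(f,g)$, $\inf(f,c)$, and $g-f$ (when $f\leq g$) each belong to $\cF$ by the corresponding axiom for $\cF$, and each is manifestly $\geq 0$ since the inputs are $\geq 0$ and all five operations preserve non-negativity. Therefore each output lies in $\cF\cap\{h:h\geq 0\}=\cF_+$, so $\cF_+$ satisfies \Cref{dfn:lattice}.

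The main obstacle is entirely in the bookkeeping of part~(i): because the lattice has no primitive $\sup$ and no unrestricted additive inverse, one cannot simply write $f_+=\sup(f,0)$ or $f_-=-\inf(f,0)$ and declare the result to be in $\cF$; instead each of $f_+$, $f_-$, $|f|$ must first be rewritten as a legitimate difference $g-h$ with $g,h\in\cF$ and $h\leq g$ so that axiom~(v) applies. Once the auxiliary element $m=\inf(f,0)$ is isolated and the inequalities $m\leq f$, $m\leq 0\leq f_+$, and $f\leq f_+$ are recorded, all the required representations fall out, and part~(ii) is then routine.
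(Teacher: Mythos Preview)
Your proof is correct and follows essentially the same route as the paper: set $m=\inf(f,0)\in\cF$ via axiom~(iv), obtain $f_+=f-m$ via axiom~(v), then $f_-=f_+-f$ via axiom~(v), and $|f|=f_++f_-$ via axiom~(ii); part~(ii) is then a direct verification of the lattice axioms for $\cF_+$. The only cosmetic difference is that the paper establishes non-emptiness of $\cF_+$ via $|f|\in\cF_+$ rather than via $0\in\cF_+$.
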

\begin{proof}
(i): Since $\inf(f,0)\in\cF$ and $\inf(f,0)\leq f$ we have $f_+ = \sup(f,0) = f - \inf(f,0)\in\cF_+$ for all $f\in\cF$.
Since $f \leq f_+ = \sup(f,0)\in\cF$ we have $f_- = f_+ - f\in\cF_+$ for all $f\in\cF$.
It follows that $|f| = f_+ + f_- \in\cF_+$ for all $f\in\cF$.

(ii): Since $\cF$ is non-empty there is a $f\in\cF$ and by (ii) we have $|f|\in\cF$ and hence $|f|\in\cF_+$.
$\cF_+$ is a lattice by directly checking the \Cref{dfn:lattice}.
\end{proof}

\advanced
\section{Daniell's Representation Theorem}
%%%%%%%%%%%%%%%%%%%%%%%%%%%%%%%%%%%%%%%%%%

The question when a linear functional acting on (not necessarily measurable) functions is represented by a measure was already fully answered by P.\ J.\ Daniell\index{Daniell, P.\ J.} in 1918 \cite{daniell18}, see also \cite{daniell20}.

Nowadays only the \Cref{thm:rieszRepr} is given in standard texts for the moment problem.
We therefore take the time to present also Daniell's approach which is more general and has some interesting features the standard \Cref{thm:rieszRepr} does not have.

Note, that $h_n\nearrow g$\label{nearrow} denotes a sequence $(h_n)_{n\in\nset}$ with $h_1 \leq h_2 \leq ... \leq g$, i.e., point-wise non-decreasing, with $\lim_{n\to\infty} h_n(x) = g(x)$ for all $x\in\cX$. Equivalently, $h_n\searrow 0$ denotes a point-wise non-increasing sequence with $\lim_{n\to\infty} h_n(x) = 0$ for all $x\in\cX$.

\begin{danthm}[\cite{daniell18}, see also \cite{daniell20} or {\cite[Thm.\ 2.5.2]{federerGeomMeasTheo}}]\index{Daniell's Representation Theorem}\index{Theorem!Daniell's Representation}\index{representation!Theorem!Daniell}
\label{thm:daniell}
Let $\cF$ be a lattice of functions on a set $\cX$ and let $L:\cF\to\rset$ be such that
\begin{enumerate}[(i)]
\item $L(f+g) = L(f) + L(g)$ for all $f,g\in\cF$,
\item $L(c\cdot f) = c\cdot L(f)$ for all $c\geq 0$ and $f\in\cF$,
\item $L(f) \leq L(g)$ for all $f,g\in\cF$ with $f\leq g$,
\item $L(f_n)\nearrow L(g)$ as $n\to\infty$ for all $g\in\cF$ and $f_n\in\cF$ with $f_n\nearrow g$.
\end{enumerate}
Then there exists a measure $\mu$ on $(\cX,\cA)$ with
\begin{equation}\label{eq:algebraDaniell}
\cA := \sigma(\{f^{-1}((-\infty,a]) \,|\, a\in\rset,\ f\in\cF\})
\end{equation}
such that
\[L(f) = \int_\cX f(x)~\diff\mu(x)\]
for all $f\in\cF$.
\end{danthm}

We follow the proof in \cite[Thm.\ 2.5.2, pp.\ 92--93]{federerGeomMeasTheo}.

\begin{proof}
By assumption (iii) we have $L(f) \geq L(0\cdot f) = 0$ for all $f\in\cF_+$.

For any $A\subseteq\cX$ we say a sequence $(f_n)_{n\in\nset}$ \emph{suits}\index{suits} $A$ if and only if $f_n\in\cF_+$ and $f_n \leq f_{n+1}$ for all $n\in\nset$ and
\[\lim_{n\to\infty} f_n(x) \geq 1 \qquad\text{for all}\ x\in A.\]
Note, that we can even assume equality by replacing the $f_n$'s by $\tilde{f}_n := \inf(f_n,1)\in\cF_+$. Then we define
\begin{equation}\label{eq:measureDaniellDfn}
\mu(A) := \inf \left\{\lim_{n\to\infty} L(f_n) \;\middle|\; (f_n)_{n\in\nset}\ \text{suits}\ A\right\}
\end{equation}
which is $\infty$ if there is no sequence $(f_n)_{n\in\nset}$ that suits $A$.

We prove that $\mu$ is an outer measure, see \Cref{dfn:outerMeasure}.
By assumption (iii) $L(f_n)$ is a non-negative increasing sequence and therefore $\lim_{n\to\infty} L(f_n)$ exists and is in $[0,\infty]$. Hence, $\mu:\cP(\cX)\to [0,\infty]$.
For $A=\emptyset$ the zero sequence $f_n=0\in\cF_+$ is suited and therefore $\mu(\emptyset) = 0$.
Let $A\subseteq B\subseteq\cX$, then a suited sequence $(f_n)_{n\in\nset}$ of $B$ is also a suited sequence for $A$ and therefore $\mu(A)\leq\mu(B)$.
Let $A_i\subseteq\cX$, $i\in\nset$, and set $A := \bigcup_{i=1}^\infty A_i$.
Any suited sequence for $A$ is a suited sequences for all $A_i$.
Assume there is an $A_i$ which has no suited sequence, then $A$ has no suited sequence and $\mu(A) = \infty \leq \sum_{i=1}^\infty \mu(A_i) = \infty$.
So assume all $A_i$ have suited sequences, say $(f_{i,n})_{n\in\nset}$ suits $A_i$, $i\in\nset$.
Then $f_n := \sum_{i=1}^n f_{i,n}$ suits $A$ and
\[\mu(A) \leq \lim_{n\to\infty} L(f_n) = \lim_{n\to\infty} \sum_{i=1}^n L(f_{i,n}) \leq \sum_{i=1}^\infty \lim_{m\to\infty} L(f_{i,m}).\]
Taking the infimum on the right side for all $A_i$'s retains the inequality and gives
\[\mu\left(\bigcup_{i=1}^\infty A_i\right) = \mu(A) \leq \sum_{i=1}^\infty \mu(A_i).\]
Hence, all conditions in \Cref{dfn:outerMeasure} are fulfilled and $\mu$ is an outer measure.

Since $\mu$ is an outer measure on $\cX$ by \Cref{thm:outerMeasure} the set $\tilde{\cA}$ of all $\mu$-measurable sets of $\cX$ is a $\sigma$-algebra and $\mu$ is a measure on $(\cX,\tilde{\cA})$.

It remains to show that all $f\in\cF$ are $\mu$-measurable, $\mu$ is a measure on $(\cX,\cA)$ with $\cA = \sigma(\{f^{-1}((-\infty,a]) \,|\, a\in\rset,\ f\in\cF\})$, and $L(f) = \int_\cX f(x)~\diff\mu(x)$ for all $f\in\cF$.

Since $f = f_+ - f_-$ with $f_+,f_-\in\cF_+$ it is sufficient to show that every function in $\cF_+$ is $\mu$-measurable.
So let $f\in\cF_+$.
To show that $f$ is $\mu$-measurable it is sufficient to show that $A := f^{-1}((-\infty,a]) = \{x\in\cX \,|\, f(x)\leq a\}\in\cA$ for all $a\in\rset$, i.e., $A$ is $\mu$-measurable by \Cref{dfn:CaraMeasurable} resp.\ \Cref{rem:CaraMeasurable} if (\ref{eq:measurableTest}) holds for all $E\subseteq\cX$.
From $E\setminus A = E\cap (\cX\setminus A) = E\cap\{x\in\cX \,|\, f(x)> a\}$ we have to verify
\[\mu(E) \geq \mu\big(\{x\in E \,|\, f(x)\leq a\}\big)  + \mu\big(\{x\in E \,|\, f(x)> a\}\big)\]
and by \Cref{lem:equivMeasurable} this is equivalent to
\begin{equation}\label{eq:danProof1}
\mu(E) \geq \mu\big(\underbrace{\{x\in E \,|\, f(x)\leq a\}}_{=:E_a}\big) + \mu\big(\underbrace{\{x\in E \,|\, f(x)\geq b\}}_{=: E_b}\big)
\end{equation}
for all $a<b$. For $a<0$ or $\mu(E)=\infty$ (\ref{eq:danProof1}) is trivial, so assume $a\geq 0$ and $\mu(E)<\infty$.

Let $(g_n)_{n\in\nset}$ be a sequence that suits $E$ and set
\[h := (b-a)^{-1}\cdot [\inf(f,b)-\inf(f,a)]\in\cF_+ \qquad\text{and}\qquad k_n := \inf(g_n,h)\in\cF_+.\]
Then we have $0\leq k_{n+1} - k_n \leq g_{n+1} - g_n$,
\begin{align*}
h(x)&=1  \qquad\text{for all}\ x\in\cX\ \text{with}\ f(x)\geq b,
\intertext{and}
h(x)&=0 \qquad\text{for all}\ x\in\cX\ \text{with}\ f(x)\leq a.
\end{align*}
It follows that $(k_n)_{n\in\nset}$ suits $E_b$ and $(g_n-k_n)_{n\in\nset}$ suits $E_a$. Therefore,
\[\lim_{n\to\infty} L(g_n) = \lim_{n\to\infty} [L(g_n-k_n) + L(k_n)] \geq \mu(E_a) + \mu(E_b)\]
and taking the infimum on the left side retains the inequality and proves (\ref{eq:danProof1}). Hence, all $f\in\cF_+$ and therefore all $f\in\cF$ are $\mu$-measurable.

Let us show that $\mu$ remains a measure on $(\cX,\cA)$.
Since all $f\in\cF$ are $\mu$- and $\cA$-measurable we have
\[f^{-1}((-\infty,a])\in\tilde{\cA}\]
for all $a\in\rset$ and $f\in\cF$.
Therefore,
\[\cA = \sigma(\{f^{-1}((-\infty,a]) \,|\, a\in\rset,\ f\in\cF\}) \subseteq\tilde{\cA}\]
is a $\sigma$-algebra and we can restrict $\mu$ resp.\ $\tilde{\cA}$ to $\cA$.
$\mu$ is a measure on $(\cX,\cA)$.

We show that $L(f) = \int_\cX f(x)~\diff\mu(x)$ holds for all $f\in\cF_+$.
Let $f\in\cF_+$ and set
\[f_t := \inf(f,t)\]
for $t\geq 0$.
If $\varepsilon>0$ and $k\in\nset$ then
\begin{align*}
0 \leq f_{k\varepsilon}(x) - f_{(k-1)\varepsilon}(x) &\leq \varepsilon \quad\text{for all}\ x\in\cX,\\
f_{k\varepsilon}(x) - f_{(k-1)\varepsilon}(x) &= \varepsilon \quad\text{for all}\ x\in\cX\ \text{with}\ f(x)\geq k\varepsilon,
\intertext{and}
f_{k\varepsilon}(x) - f_{(k-1)\varepsilon}(x) &= 0 \quad\text{for all}\ x\in\cX\ \text{with}\ f(x)\leq (k-1)\varepsilon.
\end{align*}
The constant sequence $(\varepsilon^{-1}\cdot(f_{k\varepsilon} - f_{(k-1)\varepsilon}))_{n\in\nset}$ suits $\{x\in\cX \,|\, f(x)\geq k\varepsilon\}$ and consequently
\begin{alignat*}{2}
L(f_{k\varepsilon} - f_{(k-1)\varepsilon}) &\geq \varepsilon\cdot\mu(\{x\in\cX \,|\, f(x)\geq k\varepsilon\})\\
&\geq \int_\cX f_{(k+1)\varepsilon}(x) - f_{k\varepsilon}(x)~\diff\mu(x)\\
&\geq \varepsilon\cdot \mu(\{x\in\cX \,|\, f(x)\geq (k+1)\varepsilon\})
&&\geq L(f_{(k+2)\varepsilon} - f_{(k+1)\varepsilon}).
\intertext{Summing with respect to $k$ from $1$ to $n$ we find}
L(f_{n\varepsilon}) &\geq \;\;\int_\cX f_{(n+1)\varepsilon}(x) - f_\varepsilon(x)~\diff\mu(x) &&\geq L(f_{(n+2)\varepsilon} - f_{2\varepsilon})
\intertext{and since $f_{n\varepsilon}\nearrow f$ as $n\to\infty$ we get from assumption (iv) for $n\to\infty$}
L(f) &\geq \qquad\int_\cX f(x) - f_\varepsilon(x)~\diff\mu(x) &&\geq L(f - f_{2\varepsilon})
\intertext{which gives again from assumption (iv) for $\varepsilon\searrow 0$}
L(f) & \geq \qquad\quad\;\int_\cX f(x)~\diff\mu(x) &&\geq L(f).
\end{alignat*}
Hence, $L(f) = \int_\cX f(x)~\diff\mu(x)$ for all $f\in\cF_+$.

Finally, for all $f\in\cF$ we have $f = f_+ - f_-$ with $f_+,f_-\in\cF_+$ which implies
\[\int_\cX f(x)~\diff\mu(x) = \int_\cX f_+(x)~\diff\mu(x) - \int_\cX f_-(x)~\diff\mu(x) = L(f_+) - L(f_-) = L(f)\]
where the last equality follows from $f_+ = f + f_-$ and assumption (i).
\end{proof}

The most impressive part is that the functional $L:\cF\to\rset$ lives only on a lattice $\cF$ of functions $f:\cX\to\rset$ where $\cX$ is a set without any structure. \Cref{thm:daniell} provides a representing measure $\mu$ by (\ref{eq:measureDaniellDfn}) including the $\sigma$-algebra $\cA$ of the measurable space $(\cX,\cA)$ by (\ref{eq:algebraDaniell}).

\begin{rem}\label{rem:DaniellCondition}
In \Cref{thm:daniell} the assumption (iv) is equivalent to
\begin{enumerate}[\itshape (i')]\setcounter{enumi}{3}
\item \textit{$L(h_n)\searrow 0$ as $n\to\infty$ for all $h_n\in\cF$ with $h_n\searrow 0$ as $n\to\infty$}
\end{enumerate}
since $f_n\nearrow g$ implies $f_n\leq g$ and $0\leq h_n=g-f_n\in\cF$:
\[L(g) = L(g-f_n+f_n) = L(g-f_n) + \underbrace{L(f_n)}_{\nearrow L(g)} = \underbrace{L(h_n)}_{\searrow 0}\ +\ L(f_n).\tag*{$\circ$}\]
\end{rem}

The representing measure $\mu$ in \Cref{thm:daniell} is not unique.
But the representing measure $\mu$ constructed in (\ref{eq:measureDaniellDfn}) has further properties, see e.g.\ \cite[§2.5.3]{federerGeomMeasTheo}.

\Cref{thm:daniell} also has a signed version.

\begin{dansigthm}[\cite{daniell18}, see also {\cite[Thm.\ 2.5.5]{federerGeomMeasTheo}}]\index{Daniell's Signed Representation Theorem}\index{Theorem!Daniell's Representation!Signed}\index{representation!Theorem!Daniell, signed}
\label{thm:daniellSigned}
Let $\cF$ be a lattice of functions on some set $\cX$ and let $L:\cF\to\rset$ be such that for all $f,g,h_1, h_2, h_3,{\dots}\in\cF$ we have
\begin{enumerate}[\;(a)]
\item $L(f+g) = L(f) + L(g)$,
\item $L(c\cdot f) = c\cdot L(f)$ for all $c\geq 0$,
\item $\sup L\big(\{ k\in\cF \,|\, 0 \leq k \leq f\}\big) < \infty$,
\item $h_n \nearrow g$ as $n\to\infty$ implies $L(h_n)\to L(g)$ as $n\to\infty$.
\end{enumerate}
Let $L_+$ and $L_-$ be the functionals on $\cF_+$ defined by
\[L_+(f) := \sup L\big(\{k\in\cF \,|\, 0\leq k\leq f\}\big)\]
and
\[L_-(f) := -\inf L\big(\{k\in\cF \,|\, 0\leq k\leq f\}\big)\]
for all $f\in\cF_+$.
Then there exist $\cF_+$ regular measures $\mu_+$ and $\mu_-$ on $\cX$ such that
\begin{enumerate}[(i)]
\item $L_+(f) = \int_{\cX} f(x)~\diff\mu_+(x)$ for all $f\in\cF_+$,
\item $L_-(f) = \int_{\cX} f(x)~\diff\mu_-(x)$ for all $f\in\cF_+$, and
\item $L(f) = L_+(f) - L_-(f)$ for all $f\in\cF$.
\end{enumerate}
\end{dansigthm}

The proof is taken from \cite[pp.\ 96--97]{federerGeomMeasTheo} and uses \Cref{thm:daniell}.

\begin{proof}
Let $f_+\in\cF_+$. Then $f\geq g\in\cF_+$ implies $f\geq f-g\in\cF_+$ and
\[L(g) - L_-(f) \leq L(g) + L(f-g) \leq L(g) + L_+(f).\]
Hence,
\[L_+(f) - L_-(f) \leq L(f) \leq -L_-(f) + L_+(f)\]
so that
\[L(f) = L_+(f) - L_-(f).\]

Now let $f,g\in\cF_+$.
If $f+g\geq h\in\cF_+$ then
\[f\geq k := \inf(f,h)\in\cF_+ \quad\text{and}\quad g\geq h-k\in\cF_+\]
and hence
\[L_+(f) + L_+(g) \geq L(k) + L(h-k) = L(h).\]
Therefore, $L_+(f) + L_+(g)\geq L_+(f+g)$.
Since the opposite inequality is clear, we have that $L_+$ is additive on $\cF_+$.
Additionally, $L_+$ is positively homogeneous and monotone.

We now show that $L_+$ preserves increasing convergence.
Suppose $h_n\nearrow g$ as $n\nearrow\infty$ with $g,h_n\in\cF_+$.
If $g\geq k\in\cF_+$ then $f_n := \inf(h_n,k)\nearrow k$ as $n\nearrow\infty$, i.e.,
\[L(k) = \lim_{n\to\infty} L(f_n) \leq \lim_{n\to\infty} L_+(h_n).\]
Hence, $L_+(h_n)\nearrow L_+(g)$ as $n\nearrow\infty$.
By \Cref{thm:daniell} we have that there is a $\cF_+$ regular measure $\mu_+$ on $\cX$ such that $L_+(f) = \int f(x)~\diff\mu_+(x)$ for all $f\in\cF_+$.

Similarly, we have $L_-(f) = \int f(x)~\diff\mu_-(x)$ for some measure $\mu_-$ on $\cX$.
\end{proof}

\section{Riesz' Representation Theorem}
%%%%%%%%%%%%%%%%%%%%%%%%%%%%%%%%%%%%%%%

The \Cref{thm:rieszRepr} was developed in several stages.
A first version for continuous functions on the unit interval $[0,1]$ is due to F.\ Riesz\index{Riesz, F.} \cite{riesz09}.
It was extended by Markov\index{Markov, A.\ A.} to some non-compact spaces \cite{markov38} and then by Kakutani\index{Kakutani, S.} to locally compact Hausdorff spaces \cite{kakutani41}.
It is therefore sometimes also called the \emph{Riesz--Markov--Kakutani Representation Theorem}.\index{Riesz--Markov--Kakutani Representation Theorem}\index{Theorem!Riesz--Markov--Kakutani Representation}\index{representation!Theorem!Riesz--Markov--Kakutani}

However, we will see now that the general version already follows from the \Cref{thm:daniellSigned} and \Cref{thm:daniell} from 1918 \cite{daniell18} combined with \Cref{lem:ury} from 1925 \cite{urysohn25}, see also \cite[Sect.\ 2.5]{federerGeomMeasTheo}.
\Cref{lem:ury} is used to ensure that $\cat_c(\cX,\rset)$ is large enough.

At first let us give the signed version.

\begin{risigthm}[see e.g.\ {\cite[Thm.\ 2.5.13]{federerGeomMeasTheo}}]\index{Riesz' Representation Theorem!Signed}\index{Theorem!Riesz' Representation!Signed}\index{representation!Theorem!Riesz, signed}
\label{thm:rieszSigned}
Let $\cX$ be a locally compact Hausdorff space.
If $L:\cat_c(\cX,\rset)\to\rset$ is a linear functional such that
\begin{equation}\label{eq:rieszBounded}
\sup L(\{g\in\cat_c(\cX,\rset) \,|\, 0\leq g\leq f\}) < \infty
\end{equation}
for all $f\in\cat_c(\cX,\rset)_+$ then there exist $\cat_c(\cX,\rset)$ regular measures $\mu_+$ and $\mu_-$ such that
\[L(f) = \int_\cX f(x)~\diff\mu_+(x) - \int_\cX f(x)~\diff\mu_-(x)\]
for all $f\in\cat_c(\cX,\rset)$.
\end{risigthm}

The following proof is taken from \cite[Thm.\ 2.5.13, pp.\ 106--107]{federerGeomMeasTheo}.

\begin{proof}
It is sufficient to verify condition (d) in the \Cref{thm:daniellSigned}.

Let $g,h_1,h_2,{\dots}\in\cat_c(\cX,\rset)_+$ be such that $h_n\nearrow g$ as $n\to\infty$.
By \Cref{lem:ury} there exists a $f\in\cat_c(\cX,\rset)_+$ such that $f(x) = 1$ for all $x\in\supp g$.
Then
\[c := \sup \left\{ |L(k)| \,\middle|\, k\in\cat_c(\cX,\rset)\ \text{and}\ 0\leq k\leq f\right\} < \infty.\]

For each $\varepsilon>0$ the intersection of all compact sets
\[S_n := \{x\in\cX \,|\, g(x) \geq h_n(x) + \varepsilon\}\]
is empty.
Since $S_{n+1}\subset S_n$ for all $n\in\nset$ it follows that $S_n = \emptyset$ when $n$ is sufficiently large.
But $S_n = \emptyset$ implies $0 \leq g - h_n \leq \varepsilon f$ and $|L(g-h_n)|\leq \varepsilon c$ which proves condition (d).
\end{proof}

\begin{cor}[see e.g.\ {\cite[§2.5.14]{federerGeomMeasTheo}}]
If in the \Cref{thm:rieszSigned} we additionally have that the topology of $\cX$ has a countable base then $\mu_+$ and $\mu_-$ are Radon measures.
\end{cor}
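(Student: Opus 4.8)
The plan is to verify, for $\mu := \mu_+$ (the argument for $\mu_-$ being identical), the defining features of a Radon measure as introduced after \Cref{dfn:regularmeasure}: that $\mu$ is a Borel measure which is finite on compact sets and inner regular by compact sets, i.e.\ $\mu(A) = \sup\{\mu(K) \,|\, K\subseteq A\ \text{compact}\}$ for every Borel set $A$. The countable base hypothesis will enter through the resulting $\sigma$-compactness of $\cX$, which is exactly the ingredient absent from the general \Cref{thm:rieszSigned}.

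First I would record the structural consequences of a countable base. Since $\cX$ is locally compact Hausdorff, the basic open sets with compact closure form a countable base, so $\cX$ is $\sigma$-compact and admits an exhaustion by open sets $V_1\subseteq\overline{V_1}\subseteq V_2\subseteq\overline{V_2}\subseteq\cdots$ with each $\overline{V_n}$ compact and $\bigcup_n V_n = \cX$. I would also note that, by \Cref{lem:ury}, every open set is a countable union of sets $\{f>0\}$ with $f\in\cat_c(\cX,\rset)_+$, and that $\{f>0\} = \bigcup_n \{f\geq 1/n\}\in\cA$ for the $\sigma$-algebra $\cA$ of \Cref{thm:daniell}; hence $\fB(\cX)\subseteq\cA$ and $\mu$ is genuinely a Borel measure. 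Finiteness on compacts is then immediate: for compact $K$, local compactness together with \Cref{lem:ury} yields $f\in\cat_c(\cX,\rset)_+$ with $\one_K\leq f$, whence $\mu(K)\leq\int_\cX f~\diff\mu = L_+(f)<\infty$ by \Cref{thm:rieszSigned} and the definition of $L_+$ in \Cref{thm:daniellSigned}.

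The heart of the matter is inner regularity. For an open set $V$ I would argue directly: for $f\in\cat_c(\cX,\rset)_+$ with $f\leq 1$ and $\supp f\subseteq V$ the level sets $\{f\geq c\}$, $c>0$, are closed subsets of the compact set $\supp f$, hence compact subsets of $V$, and $\int_\cX f~\diff\mu\leq\mu(\supp f)$. On the other hand the values $L_+(f)=\int_\cX f~\diff\mu$ exhaust $\mu(V)$ from below, by applying the monotone convergence theorem to a sequence $f_n\nearrow\one_V$ with $\supp f_n\subseteq V$, which exists by the $\sigma$-compactness of $V$. Together these give $\mu(V)=\sup\{\mu(K)\,|\,K\subseteq V\ \text{compact}\}$. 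To pass to an arbitrary Borel set $A$ I would run the standard good-sets argument: the family of Borel sets of finite measure that are simultaneously outer regular by open sets and inner regular by compact sets contains the open sets of finite measure and is closed under complementation within a fixed relatively compact $V_n$ and under countable unions; combining the outer regularity of $\mu$ (inherited from the infimum in the Daniell construction (\ref{eq:measureDaniellDfn})) with the exhaustion $V_n\nearrow\cX$ and $\mu(A)=\lim_n\mu(A\cap V_n)$ then extends inner regularity to all Borel sets, including those of infinite measure.

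I expect the main obstacle to be precisely this last extension, i.e.\ the upgrade of outer regularity to inner regularity by compact sets. The construction (\ref{eq:measureDaniellDfn}) produces $\mu$ as an infimum and is therefore outer regular essentially by design, but inner regularity by compacts can genuinely fail for locally finite outer measures on locally compact Hausdorff spaces that are not $\sigma$-compact. It is here that the countable base is indispensable: it both furnishes the compact exhaustion that reduces infinite-measure sets to finite-measure pieces and guarantees enough compactly supported functions to realize the inner approximations. The remaining bookkeeping in the good-sets argument is routine, but it is the step that must be carried out with care.
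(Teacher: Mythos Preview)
The paper does not supply its own proof of this corollary; it simply records the statement with a pointer to Federer \cite[§2.5.14]{federerGeomMeasTheo}. So there is no in-paper argument to compare against, and your outline is in fact more detailed than anything the text provides.

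That said, your sketch follows the standard route and is essentially what one finds in Federer: a countable base on a locally compact Hausdorff space yields $\sigma$-compactness, the Daniell outer measure from (\ref{eq:measureDaniellDfn}) is outer regular by construction and finite on compacts, and $\sigma$-compactness is exactly what upgrades this to inner regularity by compact sets on all Borel sets via the exhaustion-plus-good-sets argument you describe. Your identification of the ``main obstacle'' is accurate: without $\sigma$-compactness, inner regularity by compacts can genuinely fail, which is why the countable-base hypothesis is needed. One minor point: the paper's definition of a Radon measure (given early in the Measures section, not after \Cref{dfn:regularmeasure}) is phrased as $\mu(A\setminus K_\varepsilon)<\varepsilon$ rather than via the supremum, so for Borel sets of infinite measure you should check that your exhaustion argument produces compact subsets of arbitrarily large measure---which it does, since $\mu(A\cap V_n)\nearrow\mu(A)=\infty$ and each $A\cap V_n$ has finite measure and is inner regular.
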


Since positivity of $L$ on $\cat_c(\cX,\rset)_+$ implies (\ref{eq:rieszBounded}) by
\[0\leq g\leq f \;\;\Rightarrow\;\; 0\leq f-g \;\;\Rightarrow\;\; 0\leq L(f-g) \;\;\Rightarrow\;\; 0 \leq L(g) \leq L(f)<\infty\]
we have as an immediate consequence of the \Cref{thm:rieszSigned} the non-negative version.

\begin{rithm}\index{Riesz' Representation Theorem}\index{Theorem!Riesz' Representation}\index{representation!Theorem!Riesz}
\label{thm:rieszRepr}
Let $\cX$ be a locally compact Hausdorff space and $L:\cat_c(\cX,\rset)\to\rset$ be a non-negative linear functional on $\cat_c(\cX,\rset)_+$.
Then there exists a measure $\mu$ on $\cX$ such that
\[L(f) = \int_\cX f(x)~\diff\mu(x)\]
for all $f\in\cat_c(\cX,\rset)$.

If additionally $\cX$ as a topological space has a countable base then $\mu$ can be chosen to be a Radon measure.
\end{rithm}

From a topological point of view measures can also be introduced abstractly as linear functionals over certain spaces, see e.g.\ \cite[p.\ 216]{treves67}.
The Riesz representation theorem is then used to show the equivalence of the measure theoretic approach and the topological approach.

\advanced
\section{Riesz Decomposition}
%%%%%%%%%%%%%%%%%%%%%%%%%%%%%

The results in this section about the Riesz decomposition will be used only in \Cref{thm:coneExtension2} (ii) about adapted cones and extensions of linear functionals on these.
\Cref{thm:coneExtension2} is not used for the T-systems and can be omitted on first reading.

In \Cref{dfn:lattice} we introduced lattices.
Lattice spaces fulfill the following.

\begin{rideclem}[see e.g.\ {\cite[Lem.\ 10.5]{choquet69}}] \label{lem:rieszdecomp}
Let $\cF$ be a lattice space and $x,y_1,y_2\geq 0$ with $x \leq y_1 + y_2$.
Then there exist $x_1,x_2\geq 0$ such that
\[x = x_1 + x_2,\quad x_1\leq y_1,\quad \text{and}\quad x_2\leq y_2\]
hold.
\end{rideclem}

While the previous results holds for lattice spaces, also other spaces have this property.

\begin{dfn}\label{dfn:rieszDecompProp}
Let $F$ be an ordered vector space.
We say $F$ has the \emph{Riesz decomposition property}\index{Riesz!decomposition property}\index{decomposition!Riesz property} if
\begin{equation}\label{eq:rieszDecompProp}
x,y_1,y_2\in F_+: x\leq y_1+y_2 \quad\Rightarrow\quad \exists x_1,x_2\in F_+: x = x_1 + x_2,\ x_1\leq y_1,\ x_2\leq y_2.
\end{equation}
\end{dfn}

We have the following corollary.

\begin{cor}[see e.g.\ {\cite[Cor.\ 10.6]{choquet69}}]\label{cor:rieszdecomp}
Let $F$ be an ordered vector space with the Riesz decomposition property, let $x_1,\dots,x_n\in F_+$, and let $y_1,\dots,y_m\in F_+$ with
\[\sum_{i=1}^n x_i = \sum_{j=1}^m y_j.\]
Then for all $i=1,\dots,n$ and $j=1,\dots,m$ there exist $z_{i,j}\in F_+$ such that
\[x_i = \sum_{j=1}^m z_{i,j}\qquad \text{and}\qquad y_j = \sum_{i=1}^n z_{i,j}.\]
\end{cor}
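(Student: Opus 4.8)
The plan is to prove the corollary by induction on $n$, using the Riesz decomposition property to repeatedly split off pieces of the $y_j$'s that match against each $x_i$ in turn. The statement is the natural ``matrix refinement'' generalization of the two-term splitting in \eqref{eq:rieszDecompProp}: we have two partitions of a common element $s := \sum_i x_i = \sum_j y_j$ into positive summands, and we want a single refinement $z_{i,j}$ whose row sums recover the $x_i$ and whose column sums recover the $y_j$.

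First I would dispose of the base case $n=1$, where the claim is simply $x_1 = \sum_{j=1}^m y_j$; here we must take $z_{1,j} = y_j$, and the required identities hold trivially. For the inductive step, assume the result for $n-1$ summands on the left. Given $x_1 + (x_2 + \dots + x_n) = \sum_{j=1}^m y_j$, I would view this as a splitting of $s$ into the two positive pieces $x_1$ and $x' := x_2 + \dots + x_n$. The key device is to apply the Riesz decomposition property iteratively to peel $x_1$ apart against $y_1, \dots, y_m$: since $x_1 \le s = y_1 + (y_2 + \dots + y_m)$, the property \eqref{eq:rieszDecompProp} yields $z_{1,1} \le y_1$ and a remainder $r_1 := x_1 - z_{1,1} \le y_2 + \dots + y_m$; applying the property again to $r_1 \le y_2 + (y_3 + \dots + y_m)$ produces $z_{1,2} \le y_2$, and so on. After $m$ steps this gives $z_{1,1}, \dots, z_{1,m} \in F_+$ with $x_1 = \sum_{j=1}^m z_{1,j}$ and $z_{1,j} \le y_j$ for each $j$.

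Having extracted the first row, I would set $y_j' := y_j - z_{1,j} \ge 0$. Then $\sum_{i=2}^n x_i = s - x_1 = \sum_{j=1}^m y_j - \sum_{j=1}^m z_{1,j} = \sum_{j=1}^m y_j'$, so the $n-1$ elements $x_2, \dots, x_n$ and the $m$ elements $y_1', \dots, y_m'$ again have equal total, all in $F_+$. The induction hypothesis supplies $z_{i,j} \in F_+$ for $i = 2, \dots, n$ with $x_i = \sum_j z_{i,j}$ and $y_j' = \sum_{i=2}^n z_{i,j}$ for each $j$. Combining, $\sum_{i=1}^n z_{i,j} = z_{1,j} + y_j' = z_{1,j} + (y_j - z_{1,j}) = y_j$, which is exactly the desired column-sum identity, while the row-sum identities $x_i = \sum_j z_{i,j}$ hold for all $i$ by construction.

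The only subtlety to check carefully is the repeated application of \eqref{eq:rieszDecompProp} in the first row: one must confirm at each stage that the running remainder is nonnegative and bounded by the remaining partial sum of the $y_j$'s, so that the decomposition property applies. This is routine once set up as an inner induction on $j$, and I expect it to be the main (though mild) obstacle. Everything else is bookkeeping, and the nonnegativity of all constructed elements follows directly from the defining inequalities $z_{1,j} \le y_j$ and from the induction hypothesis.
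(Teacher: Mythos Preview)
Your argument is correct and is the standard proof of this refinement corollary. Note, however, that the paper does not actually supply a proof of this statement: it records the corollary with a reference to \cite[Cor.\ 10.6]{choquet69} and moves on, so there is nothing in the paper to compare your approach against. Your double induction (outer on $n$, inner on $j$ to peel $x_1$ apart against the $y_j$'s via repeated application of \eqref{eq:rieszDecompProp}) is exactly the expected argument and would serve well as the omitted proof.
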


%%%%%%%%%%%%%%%%%%%%%%%%%%%%%%%%%
%%%%%%%%%%%%%%%%%%%%%%%%%%%%%%%%%
\part{Introduction to Moments}%%%
%%%%%%%%%%%%%%%%%%%%%%%%%%%%%%%%%
%%%%%%%%%%%%%%%%%%%%%%%%%%%%%%%%%

\motto{Extremes in nature equal ends produce;\\
In man they join to some mysterious use.\\ \medskip
\ \hspace{1cm} \normalfont{Alexander Pope: Essay on Man, Epistle II}\index{Pope, A.}} % Line 205

\chapter{Moments and Moment Functionals}%%%
%%%%%%%%%%%%%%%%%%%%%%%%%%%%%%%%%%%%%%%%%%%
\label{ch:measures}

In this chapter we deal with the basics of moments and moment functionals.
More on moments and moment functionals can be found e.g.\ in \cite{schmudMomentBook,lauren09,marshallPosPoly} and the classical literature \cite{shohat43,ahiezer62,kreinMarkovMomentProblem}.

\section{Moments and Moment Functionals}
%%%%%%%%%%%%%%%%%%%%%%%%%%%%%%%%%%%%%%%%

\begin{dfn}
Let $(\cX,\fA,\mu)$ be a measure space and let $f:\cX\to\rset$ be a $\mu$-integrable function.
The real number
\[\int_\cX f(x)~\diff\mu(x)\]
is called the \emph{$f$-moment of $\mu$}.\index{moment!$f$-moment of $\mu$}
\end{dfn}

The name \emph{moment} comes from the most famous example of moments: $\cX = \rset^3$ and $f(x,y,z) = f_\alpha(x,y,z) = x^{\alpha_1}\cdot y^{\alpha_2}\cdot z^{\alpha_3}$.
Then
\[\int_{\rset^3} (x^2 + y^2)\cdot\rho(x,y,z)~\diff x~\diff y~\diff z\]
is the $z$-rotational moment of a body with mass distribution $\rho$ in $\rset^3$.

In the modern theory of moments the investigation is about moment \emph{functionals}.

\begin{dfn}\label{dfn:momentFunctional}
Let $(\cX,\fA)$ be a measurable space and let $\cV$ be a vector space of real-valued measurable functions on $(\cX,\fA)$.
A linear functional $L:\cV\to\rset$ is called a \emph{moment functional}\index{moment!functional}\index{functional!moment} if there exists a measure $\mu$ such that
\begin{equation}\label{eq:momFunctDfn}
L(f) = \int_\cX f(x)~\diff\mu(x)
\end{equation}
for all $f\in\cV$.
Any measure $\mu$ such that (\ref{eq:momFunctDfn}) holds is called a \emph{representing measure of $L$}.\index{measure!representing}\index{representing!measure}
We denote by $\cM(L)$ the set of all representing measures of $L$.
\end{dfn}

\begin{cor}\label{cor:convexML}
Let $(\cX,\fA)$ be a measurable space, $\cV$ be a space of measurable functions $f:\cX\to\rset$, and let $L:\cV\to\rset$ be a moment functional.
Then $\cM(L)$ is convex.
\end{cor}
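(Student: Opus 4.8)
The plan is to show that $\cM(L)$ is closed under convex combinations directly from the defining property in \Cref{dfn:momentFunctional}. Let $\mu_1,\mu_2\in\cM(L)$ and fix $\lambda\in[0,1]$. First I would form the candidate measure $\mu := \lambda\mu_1 + (1-\lambda)\mu_2$ and observe that it is again a (non-negative) Borel measure, since a convex combination with non-negative coefficients of two measures is countably additive, assigns $0$ to $\emptyset$, and takes values in $[0,\infty]$; these are immediate from the countable additivity of $\mu_1$ and $\mu_2$ together with the rearrangement of non-negative series.

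Next I would verify that $\mu$ represents $L$, i.e.\ that $L(f) = \int_\cX f(x)~\diff\mu(x)$ for all $f\in\cV$. Using linearity of the integral with respect to the measure, for every $f\in\cV$ we have
\[
\int_\cX f(x)~\diff\mu(x) = \lambda\int_\cX f(x)~\diff\mu_1(x) + (1-\lambda)\int_\cX f(x)~\diff\mu_2(x) = \lambda L(f) + (1-\lambda) L(f) = L(f),
\]
where the first equality is the behaviour of the Lebesgue integral under a convex combination of measures, the second uses $\mu_1,\mu_2\in\cM(L)$, and the last is the trivial identity $\lambda + (1-\lambda) = 1$. Hence $\mu\in\cM(L)$, which is exactly convexity of $\cM(L)$.

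The only point requiring a little care is the additivity of the integral in the measure argument, namely $\int f~\diff(\lambda\mu_1+(1-\lambda)\mu_2) = \lambda\int f~\diff\mu_1 + (1-\lambda)\int f~\diff\mu_2$. For a fixed $f\in\cV$ this is standard: it holds for simple functions by definition of the combined measure, extends to non-negative measurable $f$ by monotone convergence, and then to general $f$ via the decomposition $f = f_+ - f_-$, with integrability of $f$ against $\mu$ following from integrability against $\mu_1$ and $\mu_2$. I expect this to be the main (though entirely routine) step; everything else is bookkeeping, and no other result beyond elementary integration theory is needed.
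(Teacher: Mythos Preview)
Your proof is correct and follows exactly the same approach as the paper: take $\mu_1,\mu_2\in\cM(L)$, form $\lambda\mu_1+(1-\lambda)\mu_2$, and verify it represents $L$ via linearity of the integral in the measure argument. The paper's solution is in fact terser than yours, omitting the routine justification of $\int f~\diff(\lambda\mu_1+(1-\lambda)\mu_2)=\lambda\int f~\diff\mu_1+(1-\lambda)\int f~\diff\mu_2$ that you spell out.
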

\begin{proof}
See Problem \ref{prob:convexML}.
\end{proof}

While a moment functional comes from a measure, conversely a measure $\mu$ gives a moment functional on $\mu$-integrable functions.

\begin{dfn}\label{dfn:Lmu}
Let $(\cX,\fA)$ be a measurable space and let $\cV$ be a vector space of measurable functions on $(\cX,\fA)$.
Given a measure $\mu$ such that all $f\in\cV$ are $\mu$-integrable then\label{eq:Lmu}
\[L_\mu:\cV\to\rset,\quad f\mapsto L_\mu(f):= \int_\cX f(x)~\diff\mu(x)\]
is the \emph{moment functional generated by $\mu$}.\index{moment!functional!generated by $\mu$}
\end{dfn}

We did not give any restrictions to the possible representing measures $\mu$ of a moment functional $L$.
In practice and hence also in theory restrictions can and even must be made, e.g., $\supp\mu\subseteq K$ for some $K\in\fA$.

\begin{dfn}
Let $(\cX,\fA)$ be a measurable space, $K\in\fA$ be a measurable set, let $\cV$ be a vector space of measurable functions $f:\cX\to\rset$, and let $L:\cV\to\rset$ be a linear functional.
We call $L$ to be a \emph{$K$-moment functional}\index{functional!K-moment@$K$-moment}\index{K-moment functional@$K$-moment functional} if there exists a measure $\mu$ on $\cX$ such that
\[L(f) = \int_\cX f(x)~\diff\mu(x)\]
for all $f\in\cV$ and $\supp\mu\subseteq K$.
\end{dfn}

A linear functional $L:\cV\to\rset$ can also be described by the numbers $s_i := L(f_i)$ for a basis $\{f_i\}_{i\in I}$ of $\cV$.

\begin{dfn}\label{dfn:rieszFunctional}
Let $(\cX,\fA)$ be a measurable space, let $\cV$ be a space of measurable functions $f:\cX\to\rset$ with basis $\{f_i\}_{i\in I}$ for some index set $I$.
Given any real sequence $s = (s_i)_{i\in I}$ the linear functional $L_s:\cV\to\rset$ defined by
\[L_s(f_i) := s_i\]
for all $i\in I$ is called the \emph{Riesz functional of $s$}.\index{functional!Riesz}\index{Riesz functional}
The sequence $s$ is called a \emph{moment sequence}\index{moment!sequence}\index{sequence!moment} if $L_s:\cV\to\rset$ is a moment functional.
\end{dfn}

\begin{exm}
Let $n\in\nset$, $\cX = \rset^n$ with $\fA = \fB(\rset^n)$ the Borel $\sigma$-algebra, and let $\cV = \rset[x_1,\dots,x_n]$ be the ring of polynomials.
Then a real sequence $s = (s_\alpha)_{\alpha\in\nset_0^n}$ gives a linear functional $L_s:\rset[x_1,\dots,x_n]\to\rset$ by $L_s(x^\alpha) := s_\alpha$ for all $\alpha\in\nset_0^n$.
The matrix $\cH(s) = (s_{\alpha+\beta})_{\alpha,\beta\in\nset_0^n}$ is the \emph{Hankel matrix} of the sequence $s$ (resp.\ the linear functional $L_s$).\index{Hankel matrix!of a sequence}
\exmsymbol
\end{exm}

In practice and hence also in theory we have the special case that $\cV$ is finite dimensional.

\begin{dfn}
Let  $(\cX,\fA)$ be a measurable space, let $\cV$ be a vector space of measurable functions $f:\cX\to\rset$, and $L:\cV\to\rset$ be a moment functional. Then $L$ is called a \emph{truncated} moment functional\index{moment!functional!truncated}\index{truncated!moment functional} if $\cV$ is finite dimensional.
\end{dfn}

\section{Determinacy and Indeterminacy}
%%%%%%%%%%%%%%%%%%%%%%%%%%%%%%%%%%%%%%%

We introduced the set of all representing measures $\cM(L)$ of a moment functional in \Cref{dfn:momentFunctional}.
We have the special and important case when $\cM(L)$ is a singleton, i.e., the moment functional $L$ has a unique representing measure.

\begin{dfn}
Let $(\cX,\fA)$ be a measurable space, $\cV$ a real vector space of measurable functions $f:\cX\to\rset$, and let $L:\cV\to\rset$ be a moment functional.
If $\cM(L)$ is a singleton, i.e., $L$ has a unique representing measure, then $L$ is called \emph{determinate}.\index{determinate!moment functional}\index{moment!functional!determinate}
Otherwise it is call \emph{indeterminate}.\index{indeterminate!moment!functional}\index{moment!functional!indeterminate}
\end{dfn}

\begin{cor}\label{cor:indeter}
Let $(\cX,\fA)$ be a measurable space, $\cV$ a real vector space of measurable functions $f:\cX\to\rset$, and let $L:\cV\to\rset$ be an indeterminate moment functional.
Then $L$ has infinitely many representing measures.
\end{cor}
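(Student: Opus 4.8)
The plan is to exploit the convexity of the representing-measure set $\cM(L)$ proved in \Cref{cor:convexML}. Since $L$ is assumed indeterminate, $\cM(L)$ is by definition not a singleton, so we may fix two distinct representing measures $\mu_1,\mu_2\in\cM(L)$ with $\mu_1\neq\mu_2$. The idea is that the entire segment joining $\mu_1$ and $\mu_2$ consists of representing measures, and that a nondegenerate segment contains infinitely many points.

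Concretely, I would set $\mu_\lambda := \lambda\mu_1 + (1-\lambda)\mu_2$ for $\lambda\in[0,1]$. By \Cref{cor:convexML} each $\mu_\lambda$ again lies in $\cM(L)$; equivalently, this is immediate from linearity of the integral, since $\int_\cX f~\diff\mu_\lambda = \lambda L(f) + (1-\lambda)L(f) = L(f)$ for all $f\in\cV$, so that $\mu_\lambda$ represents $L$.

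It then remains to verify that different parameters yield genuinely different measures, i.e.\ that $\lambda\mapsto\mu_\lambda$ is injective. If $\mu_\lambda=\mu_{\lambda'}$ as measures on $(\cX,\fA)$, then $(\lambda-\lambda')\mu_1=(\lambda-\lambda')\mu_2$ as set functions, and since $\mu_1\neq\mu_2$ this forces $\lambda=\lambda'$. Hence the family $\{\mu_\lambda\}_{\lambda\in[0,1]}\subseteq\cM(L)$ is pairwise distinct, and as $[0,1]$ is infinite (indeed uncountable), $L$ possesses infinitely many representing measures.

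There is no real obstacle here: the substantive content is already packaged into the convexity of $\cM(L)$. The only point requiring a word of justification is the injectivity of the segment parametrization, which is elementary once one recalls the defining hypothesis $\mu_1\neq\mu_2$.
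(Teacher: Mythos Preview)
Your proof is correct and follows essentially the same approach as the paper: take two distinct representing measures, form the convex segment $\mu_\lambda=\lambda\mu_1+(1-\lambda)\mu_2$, observe by \Cref{cor:convexML} that each $\mu_\lambda\in\cM(L)$, and check that the parametrization is injective. The only cosmetic difference is that the paper verifies injectivity by fixing a set $A\in\fA$ with $\mu_1(A)\neq\mu_2(A)$ and noting that $\lambda\mapsto\mu_\lambda(A)$ is strictly monotone, whereas you argue algebraically that $(\lambda-\lambda')(\mu_1-\mu_2)=0$ forces $\lambda=\lambda'$.
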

\begin{proof}
See Problem \ref{prob:indeter}.
\end{proof}

The first example of an indeterminate moment functional/sequence was given by T.\ J.\ Stieltjes\index{Stieltjes, T.\ J.} \cite{stielt94}.
In {\cite[p.\ J.105, §55]{stielt94}} he states that all
\[s_k = \int_0^\infty x^k\cdot \left(1+c\cdot\sin(\sqrt[4]{x})\right)\cdot e^{-\sqrt[4]{x}}~\diff x \tag{$k\in\nset_0$}\]
are independent on $c\in [-1,1]$.

The first explicit example then follows in {\cite[pp.\ J.106--J.107, §56]{stielt94}}.

\begin{exm}[see {\cite[pp.\ J.106--J.107, §56]{stielt94}}]\index{indeterminacy!Stieltjes example}\index{Stieltjes!example!indeterminacy}
Let $c\in [-1,1]$ and
\[f(x) = \frac{1}{\sqrt{\pi}}\cdot \exp\left( -\frac{1}{2} (\ln x)^2\right)\]
for all $x\in [0,\infty)$. Then the measure $\mu_c\in\cM(\rset)$ defined by
\[\diff\mu_c(x) := [1 + c\cdot\sin(2\pi \ln x)]\cdot f(x)~\diff x\]
has the moments
\[s_k = \int_0^\infty x^k~\diff\mu_c(x) = e^{\frac{1}{4} (k+1)^2}\]
for all $k\in\nset_0$, i.e., independent on $c\in [-1,1]$.\exmsymbol
\end{exm}

Criteria for determinacy and indeterminacy are well-studied, see e.g.\ \cite{schmudMomentBook} and reference therein.

\section*{Problems}%%%%%%%%%%%%%%%%%%%%%
\addcontentsline{toc}{section}{Problems}

\begin{prob}\label{prob:determinacy}
Let $n\in\nset$ and let $L:\rset[x_1,\dots,x_n]\to\rset$ be a moment functional with a representing measure $\mu$ such that $\supp\mu\subseteq K$ for some compact $K\subset\rset^n$.
Show that $L$ is determinate, i.e., show that $\mu$ is the unique representing measure of $L$.

\emph{Hint}: Use the \Cref{thm:stoneWeierstrass}.
\end{prob}

\begin{prob}\label{prob:convexML}
Prove \Cref{cor:convexML}.
\end{prob}

\begin{prob}\label{prob:indeter}
Prove \Cref{cor:indeter}.
\end{prob}

\motto{Progress imposes not only new possibilities for the future\\
but new restrictions.\\ \medskip
\ \hspace{1cm} \normalfont{Norbert Wiener {\cite[p.\ 46]{wiener88}}}\index{Wiener, N.}}

%%%%%%%%%%%%%%%%%%%%%%%%%%%%%%%%%%%%%%%%%%%%%%%%
\chapter{Choquet's Theory and Adapted Spaces}%%%
\label{ch:choquet}%%%%%%%%%%%%%%%%%%%%%%%%%%%%%%
%%%%%%%%%%%%%%%%%%%%%%%%%%%%%%%%%%%%%%%%%%%%%%%%

This chapter is devoted to the theory of Choquet and the concept of adapted spaces.
The results can also be found in e.g.\ \cite{choquet69,phelpsLectChoquetTheorem,schmudMomentBook}.

\section{Extensions of Linear Functionals preserving Positivity}
%%%%%%%%%%%%%%%%%%%%%%%%%%%%%%%%%%%%%%%%%%%%%%%%%%%%%%%%%%%%%%%%

We remind the reader that a convex cone $C\subseteq F$ in a real vector space $F$ induces an order $\leq$ on $F$, i.e., for any $x,y\in F$ we have $x\leq y$ iff $y-x\in C$, see \Cref{sec:orderCone}.

\begin{lem}[see e.g.\ {\cite[Prop.\ 34.1]{choquet69}}]\label{lem:linearCone}
Let $F$ be a real vector space, $E\subseteq F$ be a linear subspace, and let $C\subseteq F$ be a convex cone which induces the order $\leq$ on $F$.
Then the following are equivalent:
\begin{enumerate}[(i)]
\item $F+C$ is a vector space.

\item $F+C = F-C$.

\item Any $x\in (F+C)\cup (F-C)$ is majorized\index{majorized} by some $z\in F$, i.e., $x\leq z$, and is minorized\index{minorized} by some $y\in F$, i.e., $y\leq x$.
\end{enumerate}
\end{lem}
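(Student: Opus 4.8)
The plan is to prove the equivalence by establishing the cyclic chain of implications $(i)\Rightarrow(ii)\Rightarrow(iii)\Rightarrow(i)$, which lets each arrow reuse the hypotheses of the previous one efficiently. Throughout I would keep in mind the basic structural facts: $E\subseteq F$ is a subspace (so $E=-E$), $C$ is a convex cone (so $C+C\subseteq C$ and $\lambda C\subseteq C$ for $\lambda\geq 0$), and the order is defined by $x\leq y\iff y-x\in C$.

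First I would do $(i)\Rightarrow(ii)$. Here one should note that the statement surely intends $E+C$ rather than the typo'd $F+C$ (otherwise the claim is vacuous, since $F+C=F$ already). Assuming $E+C$ is a vector space, it is in particular closed under taking negatives, so $-(E+C)=E+C$; since $-(E+C)=(-E)+(-C)=E-C$ using $E=-E$, this gives $E+C=E-C$ directly.

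Next, for $(ii)\Rightarrow(iii)$, I would take any $x\in(E+C)\cup(E-C)$ and produce a majorant and a minorant in $E$. If $x\in E+C$, write $x=e+c$ with $e\in E$, $c\in C$; then $e-x=-c\in -C$, i.e.\ $e\leq x$, so $x$ is minorized by $e\in E$. For a majorant, the hypothesis $E+C=E-C$ lets me also write $x=e'-c'$ with $e'\in E$, $c'\in C$, whence $e'-x=c'\in C$, i.e.\ $x\leq e'$. The case $x\in E-C$ is symmetric (swap the roles), again using $(ii)$ to convert between the two representations. This is where the hypothesis $(ii)$ does its real work.

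Finally, for $(iii)\Rightarrow(i)$, I would show $E+C$ is a subspace by checking closure under addition and scalar multiplication. Closure under addition and under nonnegative scalars is automatic from $C$ being a convex cone and $E$ a subspace; the only genuine content is closure under multiplication by $-1$, i.e.\ that $x\in E+C\Rightarrow -x\in E+C$. Given $x=e+c\in E+C$, condition $(iii)$ supplies a majorant $z\in E$ with $x\leq z$, meaning $z-x\in C$. Then $-x=(z-x)-z$ with $z-x\in C$ and $-z\in E$, so $-x\in C+E=E+C$, as desired. \textbf{The main obstacle} I anticipate is purely bookkeeping: being careful that the intended object is $E+C$ (the $F$ in the statement appears to be a misprint), and correctly chasing the sign conventions of the cone-induced order so that ``majorized'' and ``minorized'' are matched to membership in $C$ versus $-C$; the algebra itself is routine once the right representation is selected in each implication.
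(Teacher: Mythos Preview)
Your proposal is correct and follows essentially the same cycle $(i)\Rightarrow(ii)\Rightarrow(iii)\Rightarrow(i)$ as the paper's solution; you are also right that the statement has a typo and should read $E+C$ rather than $F+C$ throughout (the paper's own solution carries the same misprint, but the use of the lemma in the subsequent extension theorem confirms the intended meaning). The only cosmetic difference is in $(iii)\Rightarrow(i)$: the paper argues by showing the two inclusions $E+C\subseteq E-C$ and $E-C\subseteq E+C$, whereas you directly check closure under multiplication by $-1$; these are equivalent verifications.
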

\begin{proof}
See Problem \ref{prob:linearCone}.
\end{proof}

\begin{dfn}
Let $F$ be a real vector space and $C\subseteq F$ be a convex cone. A linear functional $L:F\to\rset$ is called \emph{$C$-positive}\index{cone!positive!linear functional}\index{functional!linear!cone positive} if $L(f)\geq 0$ holds for all $f\in C$. $L$ is called \emph{strictly} $C$-positive if $L(f)>0$ holds for all $f\in C\setminus\{0\}$.
\end{dfn}

\begin{thm}[see e.g.\ {\cite[Thm.\ 34.2]{choquet69}}]\label{thm:coneExtension}
Let $F$ be a real vector space, $E\subseteq F$ be a linear subspace, and $C\subseteq F$ be a convex cone with $F = E+C$.
Then any $(C\cap E)$-positive linear functional $L:E\to\rset$ can be extended to a $C$-positive linear functional $\tilde{L}:F\to\rset$.

The extension $\tilde{L}$ is unique if and only if for all $x\in E$ we have
\begin{equation}\label{eq:uniquenessCriteriaConeExtension}
\sup\{L(y) \,|\, y\leq x,\ y\in F\} = \inf\{ L(y) \,|\, x\leq y,\ y\in F\}.
\end{equation}
\end{thm}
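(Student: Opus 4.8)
The plan is to obtain existence from a Hahn--Banach argument built on a suitable sublinear majorant, and to characterize uniqueness by sandwiching \emph{every} $C$-positive extension between two explicit numbers. Throughout, $\leq$ is the order induced by $C$. Since $F = E+C$ is a vector space, \Cref{lem:linearCone} tells us that every $x\in F$ admits both a minorant and a majorant in $E$, i.e.\ there are $y,z\in E$ with $y\leq x\leq z$. This is the structural fact that makes the construction go through.

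For existence I would set, for $x\in F$,
\[
p(x) := \inf\{L(y) \,|\, y\in E,\ x\leq y\}.
\]
Existence of a majorant in $E$ keeps this set non-empty, and if $w\in E$ is a minorant of $x$ then every majorant $y\in E$ satisfies $y-w\in C\cap E$, so $L(y)\geq L(w)$ and $p(x)\geq L(w)>-\infty$; hence $p:F\to\rset$. Positive homogeneity and subadditivity of $p$ follow directly (for subadditivity, sums of majorants are majorants), so $p$ is sublinear, and for $x\in E$ one gets $p(x)=L(x)$ because $x$ majorizes itself while $L$ is monotone on $E$. By \Cref{thm:hahnBanach} there is a linear $\tilde L:F\to\rset$ with $\tilde L|_E=L$ and $\tilde L\leq p$. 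For $c\in C$ we have $0\geq -c$ with $0\in E$, so $p(-c)\leq L(0)=0$ and thus $\tilde L(c)=-\tilde L(-c)\geq -p(-c)\geq 0$; that is, $\tilde L$ is $C$-positive.

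For uniqueness the key observation is a two-sided bound valid for \emph{any} $C$-positive linear extension $\tilde L$ of $L$: for every $x\in F$, with $y$ ranging over $E$,
\[
\sup\{L(y) \,|\, y\leq x\} \;\leq\; \tilde L(x) \;\leq\; \inf\{L(y) \,|\, x\leq y\},
\]
since $y\leq x$ with $y\in E$ gives $\tilde L(x)-L(y)=\tilde L(x-y)\geq 0$, and symmetrically for majorants; the left side never exceeds the right because $y_1\leq x\leq y_2$ forces $L(y_1)\leq L(y_2)$. Writing $A(x)$ and $B(x)$ for these two numbers, condition (\ref{eq:uniquenessCriteriaConeExtension}) asserts $A(x)=B(x)$ for all $x\in F$; when it holds the sandwich pins $\tilde L(x)=A(x)$, so the extension is unique. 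Conversely, if $A(x_0)<B(x_0)$ for some $x_0$, I would realize every $\alpha\in[A(x_0),B(x_0)]$ by a $C$-positive extension: set $E_0:=E+\rset x_0$, define $\tilde L_0(e+tx_0):=L(e)+t\alpha$, and check $(C\cap E_0)$-positivity by a sign analysis on $t$ (for $t>0$, $e+tx_0\in C$ gives $-e/t\leq x_0$, so $L(e)+t\alpha\geq 0$ from $L(-e/t)\leq A(x_0)\leq\alpha$; for $t<0$ use $x_0\leq e/|t|$ and $\alpha\leq B(x_0)$; $t=0$ is positivity of $L$). As $F=E_0+C$, the existence part applied to $E_0$ extends $\tilde L_0$ to a $C$-positive functional on $F$ with $\tilde L(x_0)=\alpha$, and $\alpha=A(x_0)$ versus $\alpha=B(x_0)$ give two distinct extensions.

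I expect the main obstacle to be the uniqueness converse: one must translate the membership $e+tx_0\in C$ into the correct order relation after dividing by $t$ (being careful with the sign), and then confirm that $\tilde L_0$ really is $(C\cap E_0)$-positive so that the already-proved existence half can be reused verbatim on the larger subspace $E_0$. The existence half itself, by contrast, is routine once \Cref{lem:linearCone} supplies the majorants and minorants needed to make $p$ finite and sublinear.
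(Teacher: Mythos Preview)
Your proof is correct and takes a genuinely different route for the existence half. The paper redoes the Hahn--Banach argument by hand: it applies Zorn's Lemma to the family of positive extensions and then, reducing to the codimension-one case, defines $\alpha=\sup\{L(y):y\in E,\ y\leq x_0\}$ and $\beta=\inf\{L(z):z\in E,\ x_0\leq z\}$ and shows that every $\gamma\in[\alpha,\beta]$ yields a $C$-positive extension to $E+\rset x_0$. Uniqueness then falls out of this one-step construction. You instead invoke \Cref{thm:hahnBanach} as a black box via the sublinear majorant $p(x)=\inf\{L(y):y\in E,\ x\leq y\}$, which is shorter for existence; the paper in fact remarks that this second route ``loses the uniqueness condition'', but you recover it separately by the sandwich bound together with the explicit one-step extension to $E_0=E+\rset x_0$---which is exactly the paper's codimension-one step, now used only for the converse. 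So the two proofs converge in the uniqueness argument; the trade-off is modularity (your version cleanly separates existence from uniqueness and reuses the existence half on $E_0$) versus self-containment (the paper needs no external Hahn--Banach). Note also that the theorem as stated has two evident typos: the condition should range over $x\in F$ (not $E$), and inside \eqref{eq:uniquenessCriteriaConeExtension} one needs $y\in E$ (not $F$), since $L$ is only defined on $E$; you read both correctly.
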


The proof is taken from \cite[vol.\ 2, p.\ 270--271]{choquet69}. It adapts the idea behind the proof of the \Cref{thm:hahnBanach}.

\begin{proof}
Let $\cH := \{(H,h)\}_{H\ \text{subspace}:\ E\subseteq H\subseteq F}$ where $h:H\to\rset$ extends $L$.
The family $\cH$ has a natural order by the extension property, i.e., we have $(H_1,h_1)\leq (H_2,h_2)$ if $h_2:H_2\to\rset$ is an extension of $h_1:H_1\to\rset$.
By Zorn's Lemma $\cH$ has a maximal element $(G,g)$.
We have to show $G = F$.
For that it is sufficient that $E$ is a hyperplane in $F$ and $L$ can be extended to $F$.

Let $x_0\in F\setminus E$.
By \Cref{lem:linearCone} (iii) there exist $y,z\in E$ with $y\leq x_0\leq z$.
We define
\[\alpha := \sup\{L(y) \,|\, y\leq x_0\ \text{and}\ y\in E\}\]
and
\[\beta := \inf\{L(z) \,|\, x_0\leq z\ \text{and}\ z\in E\}.\]
Since $L$ is $C$-positive we have $\alpha\leq\beta$ and any extension $\tilde{L}$ must satisfy $\alpha\leq \tilde{L}(x_0)\leq \beta$.

We show that for each $\gamma\in [\alpha,\beta]$ there exists an extension $\tilde{L}$ with $\tilde{L}(x_0) = \gamma$.
Each point $u\in F$ can be uniquely written as $u = y - \lambda x_0$ with $y\in E$ and $\lambda\in\rset$.
Define $\tilde{L}(u) := L(y) - \lambda\gamma$.
Then $\tilde{L}$ is a linear extension of $L$ and we have to show that $\tilde{L}$ is $C$-positive.
Let $u\in C$, i.e., $y\geq \lambda x_0$.
If $\lambda>0$ then $x_0 \leq y/\lambda$ and $\beta\leq L(y/\lambda)$.
Hence, $L(y) \geq \lambda\beta\geq \lambda\gamma$ and so $\tilde{L}(u)\geq 0$.
If on the other hand $\lambda<0$ then $x_0\geq y/\lambda$ and $\alpha\geq L(y/\lambda)$ which implies $L(y) \geq \lambda\alpha \geq \lambda\gamma$ and $\tilde{L}(u)\geq 0$.
At last, if $\lambda=0$ then $\tilde{L}(u) = \tilde{L}(y) \geq 0$.
In summary, we proved that $\tilde{L}$ is $C$-positive.

For the uniqueness it is sufficient to note that if (\ref{eq:uniquenessCriteriaConeExtension}) holds for all $x\in E$ then $\tilde{L}$ is uniquely determined since every extension $\tilde{L}$ arises from this construction.
If on the other hand $\alpha<\beta$, i.e., (\ref{eq:uniquenessCriteriaConeExtension}) does not hold, then some extension $(H,h)\in \cH$ is not unique for $H$ and consequently $\tilde{L}$ is not a unique extension of $L$.
\end{proof}

From the previous proof we see that by redoing the proof of the Hahn--Banach Theorem the uniqueness criteria (\ref{eq:uniquenessCriteriaConeExtension}) can be incorporated.
A second proof using the Hahn--Banach Theorem is much shorter but loses the uniqueness condition (\ref{eq:uniquenessCriteriaConeExtension}), see e.g.\ \cite[Prop.\ 1.7]{schmudMomentBook}.

A third proof of \Cref{thm:coneExtension} follows from the following lemma.

\begin{lem}[see e.g.\ {\cite[Prop.\ 34.3]{choquet69}}]\label{lem:linearMapInBetween}
Let $E$ be a real vector space, let $g:E\to\rset$ be superlinear and let $h:E\to\rset$ be sublinear.
Then there exists a linear map $f:E\to\rset$ such that $g \leq f \leq h$.
\end{lem}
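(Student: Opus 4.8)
The plan is to reduce the assertion to the Hahn--Banach Theorem (\Cref{thm:hahnBanach}). Before starting I would record the compatibility hypothesis $g\leq h$ (i.e.\ $g(x)\leq h(x)$ for all $x\in E$), which is indispensable: without it the claim already fails on $E=\rset$ with the (linear, hence simultaneously sub- and superlinear) functions $g(x)=x$ and $h(x)=-x$, since at $x=1$ no value can satisfy $1\leq f(1)\leq -1$. So I assume $g\leq h$ throughout. Note also that positive homogeneity forces $g(0)=h(0)=0$, and that superlinearity of $g$ means $g$ is superadditive while sublinearity of $h$ means $h$ is subadditive.

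The central device is the sublinear functional obtained by an infimal-convolution-type construction,
\[
q(x) := \inf_{z\in E}\,\bigl[\,h(x+z)-g(z)\,\bigr].
\]
First I would check that $q$ is real-valued. Since $g\leq h$ and $g$ is superadditive, $h(x+z)\geq g(x+z)\geq g(x)+g(z)$, hence $h(x+z)-g(z)\geq g(x)$ for every $z\in E$, so $q(x)\geq g(x)>-\infty$; taking $z=0$ gives $q(x)\leq h(x)<\infty$. Thus $g\leq q\leq h$. Next I would verify that $q$ is sublinear: positive homogeneity follows from the substitution $z\mapsto\lambda z$ for $\lambda>0$, and subadditivity follows by testing the infimum defining $q(x_1+x_2)$ with $z=z_1+z_2$ and using subadditivity of $h$ together with superadditivity of $g$.

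With $q$ available, I would apply \Cref{thm:hahnBanach} on the trivial subspace $\cV=\{0\}$ with the zero functional (which satisfies $0=f(0)\leq q(0)=0$) and dominating function $p=q$, obtaining a linear functional $f:E\to\rset$ with $f\leq q$ on all of $E$. Both sandwich bounds then drop out of this single estimate. The upper bound is immediate, $f(x)\leq q(x)\leq h(x)$. For the lower bound I would evaluate at $-x$ and use the defining infimum at the test point $z=x$, namely $q(-x)\leq h(0)-g(x)=-g(x)$, so that $f(x)=-f(-x)\geq -q(-x)\geq g(x)$. This yields $g\leq f\leq h$, as required.

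The main obstacle I anticipate is precisely the finiteness of $q$: the lower bound $q\geq g>-\infty$ is exactly the point at which the hypothesis $g\leq h$ is consumed, in combination with the super-/subadditivity of $g$ and $h$. Once $q$ is known to be a genuine (finite) sublinear functional, the remainder is routine, since Hahn--Banach supplies the linear functional and the two-sided bound is recovered by the symmetric trick $f(x)=-f(-x)$. An alternative, more hands-on route would mirror the one-dimensional extension plus Zorn's Lemma argument used in the proof of \Cref{thm:coneExtension}, checking at each step that the admissible interval $[\alpha,\beta]$ for the new value is nonempty; but the $q$-construction above is shorter and isolates the single place where $g\leq h$ is essential.
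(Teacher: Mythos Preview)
Your proof is correct, and the observation that the hypothesis $g\le h$ is implicitly required is on point: it is exactly what makes your $q$ finite below, and without it the statement fails as your $E=\rset$ example shows.

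Your route is genuinely different from the paper's. You build the infimal convolution $q(x)=\inf_{z}\bigl[h(x+z)-g(z)\bigr]$, verify it is a finite sublinear functional sandwiched between $g$ and $h$, and then invoke the analytic Hahn--Banach Theorem (\Cref{thm:hahnBanach}) on the trivial subspace to produce $f\le q$; the two-sided bound falls out via $f(x)=-f(-x)$. The paper instead argues topologically: it equips $E$ with the finest locally convex topology (generated by all seminorms), observes that $p(x):=\sup\{h(x),h(-x)\}$ is a seminorm dominating $h$, whence $h$ (being convex and bounded by a continuous function) is continuous, and then appeals to a hyperplane separation of $g$ and $h$. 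Your approach is more self-contained and explicitly isolates where $g\le h$ is consumed, whereas the paper's argument is shorter but relies on geometric Hahn--Banach and leaves the separation step somewhat implicit. Both are standard; yours has the pedagogical advantage of staying entirely within the analytic Hahn--Banach framework already stated in the chapter.
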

\begin{proof}
Equip $E$ with the topology of all semi-norms.
Then $p(x) := \sup\{h(x),h(-x)\}$ is a semi-norm and $h\leq p$.
Since $p$ is continuous and $h$ is convex we have that $h$ is continuous.
Thus $g$ and $h$ can be separated by a closed hyperplane.
\end{proof}

\Cref{lem:linearMapInBetween} not only gives a third proof of \Cref{thm:coneExtension} but also has a generalization which is known as \emph{Strassen's Theorem} \cite{strassen65}.\index{Theorem!Strassen}\index{Strassen's Theorem}

Strassen's Theorem states that if $(\cY,\mu)$ is a measure space, $\{h_y:E\to\rset\}_{y\in\cY}$ is a family of sublinear maps, and let $l:E\to\rset$ be a linear map with
\[l \leq \int_\cY h_y~\diff\mu(y).\]
Then there exists a family $\{l_y:E\to\rset\}_{y\in\cY}$ of linear maps $l_y$ with $l_y\leq h_y$ such that
\[l = \int_\cY l_y~\diff\mu(y).\]
For more on Strassen's Theorem see e.g.\ \cite{edwards78,skala93,lindvall99} and references therein.

\section{Adapted Spaces of Continuous Functions}
%%%%%%%%%%%%%%%%%%%%%%%%%%%%%%%%%%%%%%%%%%%%%%%%

We now come to the adapted spaces.
To define them we need the following.

\begin{dfn}\label{dfn:dominate}
Let $\cX$ be a locally compact Hausdorff space and $f,g\in\cat(\cX,\rset)_+$.
We say $f$ \emph{dominates}\index{dominate} $g$ if for any $\varepsilon>0$ there is an $h_\varepsilon\in\cat_c(\cX,\rset)$ such that $g \leq \varepsilon f + h_\varepsilon$.
\end{dfn}

Equivalent expressions are the following.

\begin{lem}[see e.g.\ {\cite[Lem.\ 1.4]{schmudMomentBook}}]\label{lem:adapted}
Let $\cX$ be a locally compact Hausdorff space and let $f,g\in\cat(\cX,\rset)_+$.
Then the following are equivalent:
\begin{enumerate}[(i)]
\item $f$ dominates $g$.

\item For every $\varepsilon>0$ there exists a compact set $K_\varepsilon\subseteq\cX$ such that $g(x) \leq \varepsilon\cdot f(x)$ holds for all $x\in\cX\setminus K_\varepsilon$.

\item For every $\varepsilon>0$ there exists an $\eta_\varepsilon\in\cat_c(\cX,\rset)$ with $0 \leq \eta_\varepsilon\leq 1$ such that $g \leq \varepsilon\cdot f + \eta_\varepsilon\cdot g$.
\end{enumerate}
\end{lem}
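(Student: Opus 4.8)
The plan is to prove that the three statements are equivalent by establishing the cyclic chain of implications (i)$\Rightarrow$(ii)$\Rightarrow$(iii)$\Rightarrow$(i). Two of the three links follow almost directly from the definitions, so the real content sits in a single step that combines \Cref{lem:ury} with the local compactness of $\cX$. Throughout I use that $f,g\geq0$.

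For (i)$\Rightarrow$(ii) I would fix $\varepsilon>0$ and invoke that $f$ dominates $g$ (\Cref{dfn:dominate}) to get $h_\varepsilon\in\cat_c(\cX,\rset)$ with $g\leq\varepsilon f+h_\varepsilon$. Taking $K_\varepsilon$ to be a compact set outside of which $h_\varepsilon$ vanishes (e.g.\ the closure of $\supp h_\varepsilon$, which is compact since $h_\varepsilon\in\cat_c(\cX,\rset)$), we have $h_\varepsilon=0$ on $\cX\setminus K_\varepsilon$ and hence $g\leq\varepsilon f$ there, which is exactly (ii). The link (iii)$\Rightarrow$(i) is equally quick: given the functions $\eta_\varepsilon$ from (iii), set $h_\varepsilon:=\eta_\varepsilon g$. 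Since $\eta_\varepsilon\in\cat_c(\cX,\rset)$ and $g$ is continuous, the product is continuous with support contained in $\supp\eta_\varepsilon$, so $h_\varepsilon\in\cat_c(\cX,\rset)$; and the inequality $g\leq\varepsilon f+\eta_\varepsilon g$ is precisely $g\leq\varepsilon f+h_\varepsilon$, establishing domination.

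The heart of the argument is (ii)$\Rightarrow$(iii). Fix $\varepsilon>0$ and take the compact $K_\varepsilon$ provided by (ii). Using local compactness I would first choose a relatively compact open set $V$ with $K_\varepsilon\subseteq V$ and $\overline{V}$ compact; then, applying \Cref{lem:ury} on the compact (hence normal) space $\overline{V}$ to the disjoint closed sets $K_\varepsilon$ and $\overline{V}\setminus V$ and extending by $0$ outside $V$, I obtain $\eta_\varepsilon\in\cat_c(\cX,\rset)$ with $0\leq\eta_\varepsilon\leq1$, $\eta_\varepsilon\equiv1$ on $K_\varepsilon$, and $\supp\eta_\varepsilon\subseteq V$. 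It then remains to check $g\leq\varepsilon f+\eta_\varepsilon g$ pointwise by a case split. On $K_\varepsilon$ we have $\eta_\varepsilon=1$, so the right-hand side equals $\varepsilon f+g\geq g$ because $f\geq0$. On $\cX\setminus K_\varepsilon$ hypothesis (ii) gives $g\leq\varepsilon f$, and since $\eta_\varepsilon g\geq0$ we again obtain $\varepsilon f+\eta_\varepsilon g\geq\varepsilon f\geq g$. This yields (iii).

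The only genuine obstacle is the construction of the cut-off $\eta_\varepsilon$ in (ii)$\Rightarrow$(iii): \Cref{lem:ury} as stated merely separates disjoint closed sets by a function into $[0,1]$, whereas here I additionally need the separating function to have \emph{compact} support while still taking the value $1$ on all of $K_\varepsilon$. This is exactly where local compactness is used, through the relatively compact neighborhood $V$ and the pasting of $\eta_\varepsilon$ by zero across the boundary $\overline{V}\setminus V$. Once $\eta_\varepsilon$ is in hand, the remaining two-case verification is routine.
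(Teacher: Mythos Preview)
Your proof is correct and follows exactly the same cyclic scheme (i)$\Rightarrow$(ii)$\Rightarrow$(iii)$\Rightarrow$(i) as the paper, with the same key constructions: $K_\varepsilon=\supp h_\varepsilon$ for (i)$\Rightarrow$(ii), a Urysohn cut-off $\eta_\varepsilon$ with $\eta_\varepsilon|_{K_\varepsilon}=1$ for (ii)$\Rightarrow$(iii), and $h_\varepsilon=\eta_\varepsilon g$ for (iii)$\Rightarrow$(i). You simply spell out in more detail how local compactness and \Cref{lem:ury} combine to produce the compactly supported $\eta_\varepsilon$, which the paper leaves as a one-line invocation.
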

\begin{proof}
See Problem \ref{prob:adaptedLem}.
\end{proof}

The main definition of this chapter is the following.

\begin{dfn}\label{dfn:adaptedSpace}
Let $\cX$ be a locally compact Hausdorff space and let $E\subseteq\cat(\cX,\rset)$ be a vector space. Then $E$ is called an \emph{adapted space}\index{space!adapted}\index{adapted!space} if the following conditions hold:
\begin{enumerate}[(i)]
\item $E = E_+ - E_+$,

\item for all $x\in\cX$ there is a $f\in E_+$ such that $f(x) > 0$, and

\item every $g\in E_+$ is dominated by some $f\in E_+$.
\end{enumerate}
\end{dfn}

The space $\cat_c(\cX,\rset)_+$ is of special interest because of the \Cref{thm:rieszRepr}.
The following result shows that any $g\in\cat_c(\cX,\rset)_+$ is dominated (and even bounded) by some $f\in E_+$ for any given adapted space $E\subseteq\cat(\cX,\rset)$.

\begin{lem}\label{lem:adaptedCompact}
Let $\cX$ be a locally compact Hausdorff space, $g\in\cat_c(\cX,\rset)_+$, and let $E\subseteq\cat(\cX,\rset)$ be an adapted space. Then there exists a $f\in E_+$ such that $f>g$.
\end{lem}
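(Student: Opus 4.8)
The plan is to split the pointwise demand $f>g$ according to the two regions determined by $K:=\supp g$. Since $g\equiv 0$ on $\cX\setminus K$, there the inequality $f>g$ only asks $f>0$, whereas on the compact set $K$ it asks for a strict majorant of the positive values of $g$. I first handle $K$. Assuming $g\not\equiv 0$ (otherwise $K=\emptyset$ and only the second region remains), \Cref{dfn:adaptedSpace}(ii) gives, for each $x\in K$, some $f_x\in E_+$ with $f_x(x)>0$; by continuity $f_x>0$ on an open neighbourhood of $x$, and compactness of $K$ lets me pass to a finite subcover and form $h:=f_{x_1}+\dots+f_{x_n}\in E_+$, which is strictly positive on an open set containing $K$. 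Then $m:=\min_{K}h>0$ and $M:=\max_{K}g=\max_{\cX}g$ are attained, and for any scalar $c>M/m$ the function $f_1:=c\,h\in E_+$ satisfies $f_1\ge cm>M\ge g$ on $K$ and $f_1\ge 0=g$ on $\cX\setminus K$; in particular $f_1>g$ throughout $K$.

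To obtain the strict inequality everywhere --- and this is the heart of the matter --- it suffices to produce a single $w\in E_+$ with $w(x)>0$ for all $x\in\cX$. Indeed, since $E_+$ is a convex cone it contains $f:=f_1+w$, and then $f\ge f_1>g$ on $K$, while on $\cX\setminus K$ one has $f\ge w>0=g$; hence $f>g$ at every point of $\cX$, which is exactly the assertion of the lemma. (When $g\equiv 0$ the same $f:=w$ already works.) Thus the lemma is equivalent to the existence of a strictly positive element of $E_+$.

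The step I expect to be the genuine obstacle is precisely this last one: manufacturing $w\in E_+$ that is positive on all of $\cX$. The pointwise axiom \Cref{dfn:adaptedSpace}(ii) yields, on every compact subset, a finite sum in $E_+$ positive there, exactly as above; the difficulty is that $\cX$ is in general non-compact and $E$ is only a vector space, not assumed closed under the infinite combinations one would need to glue such local functions together, so the behaviour at infinity must be controlled by hand. My intended route is to exhaust $\cX$ by an increasing sequence of compacta, build $h_j\in E_+$ positive on the $j$-th compactum, and then invoke the domination property \Cref{dfn:adaptedSpace}(iii) in the equivalent form provided by \Cref{lem:adapted} to rescale the $h_j$ so that their tails are dominated and a legitimate element of $E_+$ can be extracted that does not vanish at infinity; checking that the resulting function truly lies in $E_+$ and stays strictly positive everywhere is where the full strength of the adapted-space axioms is needed and is the crux of the proof.
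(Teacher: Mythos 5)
Your reduction is sound as far as it goes: the finite-cover-and-scaling argument on $K=\supp g$ is exactly the routine part, and it already produces an $f_1\in E_+$ with $f_1>g$ on $K$ and $f_1\geq 0=g$ off $K$, i.e.\ the non-strict statement $f_1\geq g$ in full. You have also correctly isolated the crux: the strict lemma is equivalent to the existence of a single $w\in E_+$ with $w>0$ on all of $\cX$ (take $g=0$ for the converse). But you never prove that step --- you only announce a strategy --- so the proposal is incomplete. Worse, the strategy cannot be completed from the axioms of \Cref{dfn:adaptedSpace} alone: $E$ is merely a vector space, with no closure under the infinite rescaled sums your exhaustion-and-gluing plan requires, and in fact $E=\cat_c(\cX,\rset)$ itself satisfies all three axioms (condition (iii) is trivial there: for $g\in\cat_c(\cX,\rset)_+$ one may take $h_\varepsilon=g$ in \Cref{dfn:dominate}, so every element dominates $g$), yet for non-compact $\cX$ the cone $\cat_c(\cX,\rset)_+$ contains no everywhere-positive function. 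Hence no argument proceeding only from \Cref{dfn:adaptedSpace} as written, however clever the rescaling, can manufacture your $w$; the strict inequality genuinely needs more than the pointwise condition (ii).

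For comparison, the paper's proof is one line: it reads the adaptedness hypothesis as furnishing an $f\in E_+$ with $f>0$ on all of $\cX$ (a global version of condition (ii)), sets $m=\min_{x\in K}f(x)>0$, and scales by a suitable $c>0$; there is no gluing at all. That global positivity is precisely the point you flagged as the obstacle, and in the paper it is assumed, not derived. Note finally that everywhere the lemma is actually used --- e.g.\ to obtain $\cat_c(\cX,\rset)\subseteq F$ in the proof of \Cref{thm:basicrepresentation}, where only $|f|\leq g$ for some $g\in E_+$ is needed --- the non-strict inequality suffices, and that weaker version your first paragraph does prove completely.
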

\begin{proof}
See Problem \ref{prob:adaptedCompact}.
\end{proof}

\section{Existence of Integral Representations}
%%%%%%%%%%%%%%%%%%%%%%%%%%%%%%%%%%%%%%%%%%%%%%%

One important reason adapted spaces have been introduced is to get the following representation theorem.
It is a general version of \Cref{thm:haviland} and will be used to solve most moment problems in an efficient way.

\begin{basrepthm}[see e.g.\ {\cite[Thm.\ 34.6]{choquet69}}]\label{thm:basicrepresentation}\index{Theorem!Basic Representation}\index{Basic Representation Theorem}\index{representation!Theorem!Basic}
Let $\cX$ be a locally compact Hausdorff space, $E\subseteq \cat(\cX,\rset)$ be an adapted subspace, and let $L:E\to\rset$ be a linear functional.
The following are equivalent:
\begin{enumerate}[(i)]
\item The functional $L$ is $E_+$-positive.

\item $L$ is a moment functional, i.e., there exists a (Radon) measure $\mu$ on $\cX$ such that
\begin{enumerate}[(a)]
\item all $f\in E$ are $\mu$-integrable and

\item $L(f) = \int_\cX f(x)~\diff\mu(x)$ holds for all $f\in E$.
\end{enumerate}
\end{enumerate}
\end{basrepthm}

The following proof is adapted from \cite[vol.\ 2, p.\ 276--277]{choquet69}.

\begin{proof}
The direction (ii) $\Rightarrow$ (i) is clear. It is therefore sufficient to prove (i) $\Rightarrow$ (ii).

Define
\begin{equation}\label{eq:extensionF}
F := \{f\in\cat(\cX,\rset) \,|\, |f|\leq g\ \text{for some}\ g\in E_+\}.
\end{equation}
Then $F_+$ is a convex cone.
We have $F = E + F_+$.
To see this let $f\in F$ and write $f = -g + (f+g)$ where $|f|\leq g$ for some $g\in E_+$, i.e., $f\in E + F_+$ and hence $F\subseteq E + F_+$.
The inclusion $E+F_+\subseteq F$ is clear and we therefore have $F = E + F_+$.

By \Cref{thm:coneExtension} we can extend $L$ to a $F_+$-positive linear functional $\tilde{L}:F\to\rset$.
By \Cref{lem:adaptedCompact} we have $\cat_c(\cX,\rset)\subseteq F$ and hence by the \Cref{thm:rieszRepr} there exists a representing Radon measure $\mu$ on $\cX$ of $\tilde{L}|_{\cat_c(\cX,\rset)}$.

We need to show that $\mu$ is also a representing measure of $L$.
Let $f\in E_+$.
Since $\mu$ is Radon we have
\begin{equation}\label{eq:inequalityBasic}
\int_\cX f(x)~\diff\mu(x) = \sup\left\{\int_\cX \varphi(x)~\diff\mu(x) \,\middle|\, \varphi
\in\cat_c(\cX,\rset),\ \varphi\leq f\right\} \leq \tilde{L}(f) = L(f)
\end{equation}
and hence $f$ is $\mu$-integrable.
Since $E = E_+ - E_+$ we have that all $f\in E$ are $\mu$-integrable.

Then
\begin{equation}\label{eq:linfuncK}
K(f) := \tilde{L}(f) - \int_\cX f(x)~\diff\mu(x)
\end{equation}
for all $f\in F$ defines a $F_+$-positive linear functional on $F$ which vanishes on $\cat_c(\cX,\rset)$.
For every $g\in E_+$ there is an $f\in E_+$ dominating $g$.
Let $\varepsilon>0$ and $h_\varepsilon\in\cat_c(\cX,\rset)$ be such that $g \leq \varepsilon f + h_\varepsilon$.
Then $0\leq K(g) \leq \varepsilon\cdot K(f) \xrightarrow{\varepsilon\to 0} 0$, i.e., $K=0$ on $E_+$ and hence on $E$ which proves that $\mu$ is a representing measure of $L$.
\end{proof}

We actually proved that $L$ can be extended to $\tilde{L}$ on $F$ in (\ref{eq:extensionF}) and that $\mu$ is a representing measure for $\tilde{L}$.
This is included in (ii-b).

For the uniqueness of the representing measure $\mu$ of $L$ we have the following.

\begin{cor}[see e.g.\ {\cite[Cor.\ 34.7]{choquet69}}]
Let $\cX$ be a locally compact Hausdorff space, $E\subseteq\cat(\cX,\rset)$ be an adapted space, and let $L:E\to\rset$ be a $E_+$-positive linear functional.
Then the following are equivalent:
\begin{enumerate}[(i)]
\item The representing measure $\mu$ of $L$ from the \Cref{thm:basicrepresentation} is unique.
\item For any $f\in\cat_c(\cX,\rset)$ and $\varepsilon>0$ there are $f_1,f_2\in E$ with $f_1 \leq f\leq f_2$ and $0\leq T(f_2 - f_1) \leq \varepsilon$.
\end{enumerate}
\end{cor}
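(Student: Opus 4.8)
The plan is to reduce the uniqueness question to a single scalar comparison attached to each test function. For $f\in\cat_c(\cX,\rset)$ I set
\[
\alpha(f):=\sup\{L(g)\mid g\in E,\ g\le f\},\qquad \beta(f):=\inf\{L(g)\mid g\in E,\ g\ge f\}.
\]
First I would check that these are finite and satisfy $\alpha(f)\le\beta(f)$: by \Cref{lem:adaptedCompact} applied to $|f|\in\cat_c(\cX,\rset)_+$ there is an $h\in E_+$ with $-h< f< h$, so both defining sets are non-empty, and $E_+$-positivity of $L$ pins $L(g)$ between $L(-h)=-L(h)$ and $L(h)$; monotonicity ($g_1\le f\le g_2$ forces $L(g_1)\le L(g_2)$) then gives $\alpha(f)\le\beta(f)$. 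The purpose of this bookkeeping is that condition (ii) of the corollary is exactly the statement $\alpha(f)=\beta(f)$ for every $f\in\cat_c(\cX,\rset)$.

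For $(ii)\Rightarrow(i)$ I would take any two representing measures $\mu,\nu$ of $L$. By \Cref{thm:basicrepresentation} every $g\in E$ is both $\mu$- and $\nu$-integrable with $L(g)=\int_\cX g\,\diff\mu=\int_\cX g\,\diff\nu$. Hence for $f\in\cat_c(\cX,\rset)$ and $g_1,g_2\in E$ with $g_1\le f\le g_2$, monotonicity of the integral gives $L(g_1)\le\int_\cX f\,\diff\mu\le L(g_2)$, and the same bound for $\nu$; taking the supremum over $g_1$ and the infimum over $g_2$ yields $\alpha(f)\le\int_\cX f\,\diff\mu\le\beta(f)$ and likewise for $\nu$. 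Under (ii) this interval collapses to a point, so $\int_\cX f\,\diff\mu=\int_\cX f\,\diff\nu$ for all $f\in\cat_c(\cX,\rset)$, and since a Radon measure is determined by its integrals against $\cat_c(\cX,\rset)$ (the uniqueness clause of the \Cref{thm:rieszRepr}) we conclude $\mu=\nu$.

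For $(i)\Rightarrow(ii)$ I would argue by contraposition and \emph{build} two distinct representing measures whenever $\alpha(f_0)<\beta(f_0)$ for some $f_0\in\cat_c(\cX,\rset)$. Recall from the proof of \Cref{thm:basicrepresentation} that $F=E+F_+$ with $F$ as in (\ref{eq:extensionF}) and that $\cat_c(\cX,\rset)\subseteq F$. In the hyperplane step of the \Cref{thm:coneExtension} the two extreme quantities attached to $f_0$ are precisely $\alpha(f_0)$ and $\beta(f_0)$, and that step produces, for each $\gamma\in[\alpha(f_0),\beta(f_0)]$, an $F_+$-positive extension of $L$ to $E+\rset f_0$ sending $f_0\mapsto\gamma$; extending further to all of $F$ by \Cref{thm:coneExtension} I obtain $F_+$-positive functionals $\tilde L_1,\tilde L_2:F\to\rset$ with $\tilde L_1(f_0)\ne\tilde L_2(f_0)$. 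Applying the \Cref{thm:rieszRepr} to the restrictions $\tilde L_i|_{\cat_c(\cX,\rset)}$ gives Radon measures $\mu_1,\mu_2$, and the $K$-functional argument of \Cref{thm:basicrepresentation} — which uses only $F_+$-positivity and domination — shows that each $\mu_i$ represents $L$. Since $\int_\cX f_0\,\diff\mu_i=\tilde L_i(f_0)$ differ, we get $\mu_1\ne\mu_2$, so (i) fails.

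The main obstacle is the construction in $(i)\Rightarrow(ii)$: one must be certain that \emph{every} value in $[\alpha(f_0),\beta(f_0)]$ is realized by some $F_+$-positive extension, and that each such extension, after passing through Riesz, really returns a representing measure of $L$ and not merely of $\tilde L$. Both facts are already embedded in the proofs of \Cref{thm:coneExtension} and \Cref{thm:basicrepresentation}; the genuine work is to invoke them with the correct identifications $\alpha=\alpha(f_0)$, $\beta=\beta(f_0)$ and to observe that the argument showing $K\equiv 0$ on $E$ never used the particular extension chosen, so it applies verbatim to both $\tilde L_1$ and $\tilde L_2$.
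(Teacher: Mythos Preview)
Your proof is correct and follows essentially the same route as the paper: both reduce uniqueness of the representing measure to the equality $\alpha(f)=\beta(f)$ for all $f\in\cat_c(\cX,\rset)$, which is precisely the uniqueness criterion (\ref{eq:uniquenessCriteriaConeExtension}) of \Cref{thm:coneExtension} applied to the extension from $E$ to $E+\cat_c(\cX,\rset)$. The paper compresses the whole argument into a single invocation of that criterion, whereas you unpack both directions by hand---sandwiching the integrals for $(ii)\Rightarrow(i)$ and explicitly rerunning the $K$-functional step of \Cref{thm:basicrepresentation} for each of the two extensions in $(i)\Rightarrow(ii)$---which is exactly what the paper's terse ``reformulating (i)'' hides.
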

\begin{proof}
Reformulating (i) we get that the measure $\mu$ must be uniquely defined by the extension of $L:E\to\rset$ to $\tilde{L}:E + \cat_c(\cX,\rset)\to\rset$.
By \Cref{thm:coneExtension} eq.\ (\ref{eq:uniquenessCriteriaConeExtension}) this is equivalent to
\[\sup\{L(\varphi) \,|\, \varphi\leq f,\ \varphi\in E\} = \inf\{ L(\varphi) \,|\, f\leq\varphi,\ \varphi\in E\}.\]
But this is equivalent to our condition (ii), i.e., we showed (i) $\Leftrightarrow$ (ii).
\end{proof}

\advanced
\section{Adapted Cones}
%%%%%%%%%%%%%%%%%%%%%%%

A generalization of adapted spaces is to go to adapted cones, i.e., dropping the vector space property.
This is presented in \cite{choquet69} but not included in \cite{schmudMomentBook} and we want to show it to the reader for the sake (or at least a glimpse) of completeness.
It is not used in the T-systems and can be omitted on first reading.

\begin{dfn}\label{dfn:dominate2}
Let $F$ be an ordered vector space and let $E\subseteq F$ be a convex cone.
For $x,y\in F$ with $x,y\geq 0$ we say that $y$ \emph{dominates} $x$ (\emph{relative to $E$})\index{dominate!cone}\index{cone!adapted} if for any $\varepsilon>0$ there exists a $z_\varepsilon\in E$ such that $x\leq \varepsilon y + z_\varepsilon$.

For two convex cones $C,E\subseteq F_+$ we say that $(C,E)$ are \emph{adapted} (\emph{cones})\index{adapted!cones}\index{cone!adapted} if every $x\in C$ is dominated by some $x'\in C$ (relative to $E$) and for each $g\in E$ there is an $f\in C$ so that $g\leq f$.
\end{dfn}

The previous definition is a generalization of \Cref{dfn:dominate}.
The convex cone $C$ has the role of $\cat_c(\cX,\rset)_+$, $F$ has the role of $\cat(\cX,\rset)$, and $E$ is the adapted space.

\begin{lem}[see e.g.\ {\cite[Prop.\ 35.3]{choquet69}}]\label{lem:CEextention}
Let $F$ be an ordered vector space, let $(C,E)$ be adapted cones, and let $L:E\to\rset$ be a positive linear functional.
Then
\[L|_E = 0 \quad\Rightarrow\quad L|_C = 0.\]
\end{lem}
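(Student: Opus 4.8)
The plan is to exploit the domination half of the adaptedness condition in \Cref{dfn:dominate2}, namely that every element of $C$ is dominated relative to $E$ by some element of $C$. The hypothesis $L|_E = 0$ will then annihilate the ``tail'' $z_\varepsilon \in E$ appearing in the domination inequality, leaving behind a single term that can be driven to zero. I read $L$ as a positive linear functional on the ambient space $F$ (so that the restrictions $L|_E$, $L|_C$ and the monotonicity step below are all meaningful); the assertion is that the role of $C$, dominated relative to $E$, forces it to inherit the vanishing of $L$ on $E$.

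First I would record the routine fact that a positive linear functional is monotone: if $a \le b$ in $F$ then $b - a \in F_+$, hence $L(b) - L(a) = L(b-a) \ge 0$, i.e.\ $L(a) \le L(b)$. This is the only property of positivity used, besides the bare sign condition $L \ge 0$ on $F_+$.

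Next I would fix an arbitrary $x \in C$. By adaptedness there exists $x' \in C$ dominating $x$ relative to $E$, so that for every $\varepsilon > 0$ there is some $z_\varepsilon \in E$ with $x \le \varepsilon x' + z_\varepsilon$. Applying $L$, using monotonicity together with $L(z_\varepsilon) = 0$, gives
\[
L(x) \le \varepsilon L(x') + L(z_\varepsilon) = \varepsilon L(x').
\]
Since $x'$ is chosen once and for all, independently of $\varepsilon$, the number $L(x')$ is a fixed real, so the right-hand side tends to $0$ as $\varepsilon \searrow 0$, whence $L(x) \le 0$. On the other hand $x \in C \subseteq F_+$ and $L$ is positive, so $L(x) \ge 0$; therefore $L(x) = 0$. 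As $x \in C$ was arbitrary, this yields $L|_C = 0$.

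I do not expect a real obstacle: the argument is a one-line squeeze. The only points demanding care are the quantifier order, that the dominating element $x'$ is fixed before $\varepsilon$ is sent to zero, so that $\varepsilon L(x')$ is genuinely $o(1)$, and the reading of $L$ as a positive functional on $F$ rather than merely on $E$, which is what makes the monotonicity step and both restrictions legitimate. Note that the second clause of adaptedness, that each $g \in E$ is bounded above by some $f \in C$, is not needed for this implication; it is the domination of $C$ relative to $E$ that does all the work.
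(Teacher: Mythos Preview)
Your proof is correct and follows exactly the same approach as the paper's: fix $x\in C$, use the domination $x\le \varepsilon x' + z_\varepsilon$ with $z_\varepsilon\in E$, apply $L$ to get $0\le L(x)\le \varepsilon L(x')$, and let $\varepsilon\to 0$. Your version is in fact slightly more explicit than the paper's about the monotonicity step and the lower bound $L(x)\ge 0$ from $C\subseteq F_+$, and your remark that the second adaptedness clause is unused is correct.
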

\begin{proof}
Let $x\in C$.
Since $(C,E)$ are adapted cones there exists a $x'\in C$ such that for any $\varepsilon>0$ there is a $z_\varepsilon\in E$ with
\[0 \leq x \leq \varepsilon x' + z_\varepsilon.\]
Since $L\geq 0$ on $E$ we have
\[0\leq L(x) \leq \varepsilon L(x') \xrightarrow{\varepsilon\to 0} 0\]
which proves $L|_C = 0$.
\end{proof}

\begin{thm}[see e.g.\ {\cite[Thm.\ 35.4]{choquet69}}]\label{thm:coneExtension2}\index{Theorem!Conic Extension}\index{Conic Extension Theorem}
Let $F$ be an ordered vector space.
\begin{enumerate}[(i)]
\item Let $C\subseteq F_+$ be a convex cone and let $L:C\to [0,\infty)$ be a positive linear functional.
Define
\[\hat{C} := \{g\in F_+ \,|\, g\leq f\ \text{for some}\ x\in C\}.\]
Then $L$ has an extension to a positive linear functional $\hat{L}:\hat{C}\to [0,\infty)$.

\item Let $(C,E)$ be adapted cones such that $E\subseteq\hat{C}$ and $\hat{C}$ has the Riesz decomposition property (\ref{eq:rieszDecompProp}).
Then for each $f\in\hat{C}$ we have
\[\hat{L}(f) = \sup\left\{ \hat{L}(g) \,\middle|\, g\in E\ \text{with}\ g\leq f\right\}.\]
\end{enumerate}
\end{thm}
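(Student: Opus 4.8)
For part (i) the plan is to reduce everything to the Conic Extension Theorem \Cref{thm:coneExtension}, which I already have for subspaces. First I would record that $\hat{C}$ is itself a convex cone contained in $F_+$ with $C\subseteq\hat{C}$, and pass to the linear hulls $E_0 := C - C$ and $G := \hat{C}-\hat{C}$, extending $L$ to a linear functional on $E_0$ via $L(x-y):=L(x)-L(y)$ (well defined by additivity of $L$ on $C$). The positivity of $L$ on $C$ enters in the form that $L\geq 0$ on $E_0\cap F_+$, and in particular on $\hat{C}\cap E_0$. The structural identity to verify is $G = E_0 + \hat{C}$: given $g_1-g_2$ with $g_i\in\hat{C}$, I would choose $f_2\in C$ with $g_2\leq f_2$ and write $g_1-g_2 = (-f_2) + \bigl(g_1 + (f_2-g_2)\bigr)$, where $-f_2\in E_0$ and $f_2-g_2\in\hat{C}$ (it is nonnegative and dominated by $f_2$), so the second summand lies in $\hat{C}$. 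With $G = E_0 + \hat{C}$ and $L$ being $(\hat{C}\cap E_0)$-positive on $E_0$, \Cref{thm:coneExtension} yields a $\hat{C}$-positive linear extension $\hat{L}:G\to\rset$, whose restriction to $\hat{C}$ is the desired positive linear extension, taking values in $[0,\infty)$ because $\hat{C}\subseteq F_+$. Concretely this $\hat{L}$ is captured by $\hat{L}(f)=\inf\{L(h)\,|\,h\in C,\ f\leq h\}$.

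For part (ii), write $S(f):=\sup\{\hat{L}(g)\,|\,g\in E,\ g\leq f\}$. The inequality $S(f)\leq\hat{L}(f)$ is immediate: for $g\in E\subseteq\hat{C}$ with $g\leq f$ the difference $f-g$ lies in $\hat{C}$, so $\hat{L}(f)=\hat{L}(g)+\hat{L}(f-g)\geq\hat{L}(g)$, and in particular $S(g)=\hat{L}(g)$ for every $g\in E$. The substance is the reverse inequality $\hat{L}(f)\leq S(f)$, and here I would exploit adaptedness: since $f\in\hat{C}$ there is $f_0\in C$ with $f\leq f_0$, and by \Cref{dfn:dominate2} $f_0$ is dominated by some fixed $f_1\in C$, so for each $\varepsilon>0$ there is $z_\varepsilon\in E$ with $f\leq f_0\leq\varepsilon f_1 + z_\varepsilon$. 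Monotonicity of $\hat{L}$ then gives $\hat{L}(f)\leq\varepsilon\,L(f_1)+\hat{L}(z_\varepsilon)$, so it remains to convert the $E$-element $z_\varepsilon$, which lies above $f$ up to the error $\varepsilon f_1$, into an $E$-element lying below $f$ with comparable $\hat{L}$-value.

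This conversion is where the Riesz decomposition property of $\hat{C}$ enters, and it is the \emph{main obstacle}. Applying \Cref{eq:rieszDecompProp} to $f\leq(\varepsilon f_1)+z_\varepsilon$ splits $f=a+b$ with $a\leq\varepsilon f_1$ and $b\leq z_\varepsilon$, whence $\hat{L}(b)=\hat{L}(f)-\hat{L}(a)\geq\hat{L}(f)-\varepsilon\,L(f_1)$ and $b\leq f$; a second application to $z_\varepsilon\leq(z_\varepsilon-b)+f$ produces $g\leq f$ with $\hat{L}(g)\geq\hat{L}(b)$. The delicate point, which is the crux of the argument, is that the decomposition property only guarantees these pieces in $\hat{C}$, whereas the supremum in $S(f)$ ranges over $E$; bridging this gap requires arguing that the truncated element $g$, which satisfies $g\leq z_\varepsilon\in E$ and $g\leq f$, can be kept inside $E$, drawing on $E\subseteq\hat{C}$ and the adapted-cone mechanism already exploited in \Cref{lem:CEextention}. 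Once $g\in E$ is secured one obtains $S(f)\geq\hat{L}(f)-\varepsilon\,L(f_1)$, and letting $\varepsilon\to 0$ completes the proof.
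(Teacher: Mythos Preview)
Your part~(i) is exactly the paper's argument: pass from $C$ to the linear span $C-C$, verify $\hat C-\hat C=(C-C)+\hat C$, and invoke \Cref{thm:coneExtension}.

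For part~(ii) your route diverges from the paper's, and the gap you yourself flag is real and not closed. The Riesz decomposition of $f\le\varepsilon f_1+z_\varepsilon$ yields pieces only in $\hat C$; your ``second application'' producing $g\le f$, $g\le z_\varepsilon$ again gives $g\in\hat C$, and nothing in the hypotheses forces $g\in E$. Writing that this ``requires arguing \dots\ drawing on $E\subseteq\hat C$ and the adapted-cone mechanism already exploited in \Cref{lem:CEextention}'' is not an argument: that lemma is a statement about positive linear functionals vanishing on $E$, not a device for manufacturing $E$-elements, and neither $E\subseteq\hat C$ nor adaptedness gives $E$ any downward-hereditary property.

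The paper does not try to produce a specific $g\in E$ below $f$ at all. It uses the Riesz decomposition in a different place, namely to show that your functional $S=L_0$ is \emph{additive} on $\hat C$: given $g\in E$ with $g\le f_1+f_2$, decompose $g=g_1+g_2$ with $g_i\le f_i$, which together with the easy superadditivity yields $L_0(f_1+f_2)=L_0(f_1)+L_0(f_2)$. Once $L_0$ is linear, the difference $\hat L-L_0$ is a \emph{positive linear} functional on $\hat C$ that vanishes on $E$; since $(C,E)$ adapted implies $(\hat C,E)$ adapted, \Cref{lem:CEextention} applied to $\hat L-L_0$ gives $\hat L=L_0$ on $\hat C$ in one stroke. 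So the adapted-cone lemma is not used to build $E$-elements below $f$, but is applied wholesale to the difference functional \emph{after} linearity of $L_0$ has been secured; separating the two ingredients (Riesz for additivity, adaptedness for the vanishing) is the idea your direct $\varepsilon$-estimate is missing.
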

\begin{proof}
(i): First, extend $L$ by linearity to the vector space $C-C$.
Let $F_0 := \hat{C}-\hat{C}$.
Then $F_0 = C-C+\hat{C} = -C+\hat{C}$.
By \Cref{thm:coneExtension} $L$ extends to a $\hat{C}$-positive linear functional on $F_0$.

(ii): Define $L_0: \hat{C} \to\rset$ by
\[L_0(f) := \sup\{L(g) \,|\, g\in E\ \text{with}\ g\leq f\}.\]
Hence, $0\leq L_0(f) \leq \hat{L}(f)$ for all $f\in\hat{C}$.
Clearly, $L_0(\lambda f) = \lambda L_0(f)$ holds for all $\lambda\geq 0$ and $f\in\hat{C}$. Additionally,
\begin{align*}
L_0(f_1+f_2) &= \sup\left\{\hat{L}(g) \,\middle|\, g\in E,\ g\leq f_1 + f_2\right\}
\intertext{which is by the Riesz decomposition property (\ref{eq:rieszDecompProp})}
&= \sup\left\{\hat{L}(g_1 + g_2) \,\middle|\, g_1,g_2\in E,\ g_1\leq f_1,\ g_2\leq f_2\right\}\\
&= L_0(f_1) + L_0(f_2)
\end{align*}
for all $f_1,f_2\in \hat{C}$ and hence by linearity extension $L_0$ is linear on $F_0$.

We now show at last that $L - L_0 = 0$ on $\hat{C}$.
Since $(C,E)$ are adapted cones we have that $(\hat{C},E)$ are adapted cones.
We have $L(f) - L_0(f) = 0$ for all $f\in E$ and hence by \Cref{lem:CEextention} we have $L = L_0$ on $\hat{C}$ which proves (ii).
\end{proof}

\Cref{thm:coneExtension2} (ii) is the analogue of extending a Radon measure on $\cat_c(\cX,\rset)$ to continuous integrable functions.

\begin{exm}[see e.g.\ {\cite[Exm.\ 35.5]{choquet69}}]\label{exm:adaptedCones}
Let $(\cX,\fA,\mu)$ be a measure space.
Let $C = (\cL^1(\cX,\mu))_+$ and $E = \cL^\infty(\cX,\mu)\cap (\cL^1(\cX,\mu))_+$.
Then $(C,E)$ are adapted cones.
Hence, every positive linear functional is uniquely determined by its values on $\cL^\infty\cap \cL^1$.\exmsymbol
\end{exm}

\advanced
\section{Continuity of Positive Linear Functionals}
%%%%%%%%%%%%%%%%%%%%%%%%%%%%%%%%%%%%%%%%%%%%%%%%%%%

At the end of this chapter we want to point out some continuity results.
But we will leave out the proofs since these results will not be used for our T-system treatment.

\begin{thm}[see e.g.\ {\cite[Thm.\ 36.1]{choquet69}}]
Let $E$ be an ordered Hausdorff topological vector space such that $E = E_+ - E_+$ and let either
\begin{enumerate}[(i)]
\item $\inter E_+ \neq \emptyset$
\end{enumerate}
or
\begin{enumerate}[(i)]\setcounter{enumi}{1}
\item $E$ is complete, metrizable, and $E_+$ is closed.
\end{enumerate}
Then any positive linear functional $L:E\to\rset$ is continuous.
\end{thm}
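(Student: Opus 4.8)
The plan is to reduce continuity of $L$ to boundedness of $L$ on a neighborhood of $0$ (a linear functional bounded near $0$ is continuous), treating the two hypotheses separately.

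Case (i) is short. Choose $u\in\inter E_+$ and a balanced neighborhood $U$ of $0$ with $u+U\subseteq E_+$. For $x\in U$ we have $u\pm x\in E_+$, since $U$ is balanced, so positivity gives $L(u)\pm L(x)=L(u\pm x)\geq 0$, that is $|L(x)|\leq L(u)$ for all $x\in U$. Hence $L$ is bounded on $U$ and therefore continuous.

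Case (ii) is the substantial one, since $E_+$ may have empty interior; here I would use completeness twice. Fix a translation-invariant, subadditive metric $d$ (an $F$-norm) inducing the topology of the Fr\'echet space $E$. First I would prove a sequential lemma: if $a_n\in E_+$ and $a_n\to 0$ then $L(a_n)\to 0$. Assume not; after passing to a subsequence we may take $L(a_n)\geq\varepsilon>0$, and after thinning further (using $a_n\to 0$) arrange $d(a_n,0)\leq 2^{-n}$. By subadditivity the partial sums $s_m=\sum_{n=1}^m a_n$ are Cauchy, so $s_m\to s$ by completeness. Each tail $s-s_m=\lim_{j}\sum_{n=m+1}^j a_n$ lies in the \emph{closed} cone $E_+$, whence $s_m\leq s$ and
\[\sum_{n=1}^m L(a_n)=L(s_m)\leq L(s)<\infty .\]
Thus $\sum_n L(a_n)$ converges, forcing $L(a_n)\to 0$, a contradiction. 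So $L|_{E_+}$ is sequentially continuous at $0$, hence continuous there by metrizability, and there is a neighborhood $W$ of $0$ with $0\leq L(a)\leq 1/2$ for all $a\in W\cap E_+$.

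Next I would establish an open (And\^o-type) decomposition of the cone: for every neighborhood $W$ of $0$ there is a neighborhood $V$ with $V\subseteq (W\cap E_+)-(W\cap E_+)$. Since $E=E_+-E_+$, we have the countable cover $E=\bigcup_{n\in\nset}n\big[(W\cap E_+)-(W\cap E_+)\big]$, so the closed, convex, symmetric set $D:=\overline{(W\cap E_+)-(W\cap E_+)}$ satisfies $E=\bigcup_n nD$; by Baire's theorem some $nD$ has nonempty interior, and since $D$ is convex and symmetric it is a neighborhood of $0$. A successive-approximation argument using completeness (exactly as in the proof of the open mapping theorem) then removes the closure, yielding the desired $V$ inside a decomposition $(W'\cap E_+)-(W'\cap E_+)$ for a slightly enlarged $W'$; absorbing the enlargement into the choice of $W$ from the sequential lemma gives the claim. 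Finally, combining the two steps, every $x\in V$ splits as $x=a-b$ with $a,b\in W\cap E_+$, so $|L(x)|=|L(a)-L(b)|\leq L(a)+L(b)\leq 1$, and $L$ is bounded on $V$, hence continuous.

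The main obstacle is the open decomposition step: upgrading the purely algebraic identity $E=E_+-E_+$ to a topological, uniform decomposition into arbitrarily small positive pieces. This is precisely where Baire category and completeness are indispensable (the sequential lemma additionally exploits closedness of $E_+$), which explains why hypothesis (ii) bundles these assumptions together.
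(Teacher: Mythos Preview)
The paper does not actually prove this theorem: the section explicitly states that these continuity results are recorded without proof, since they are not used in the T-system treatment, and refers to \cite[Thm.\ 36.1]{choquet69}. So there is no proof in the paper to compare against.

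Your proposal is the standard And\^o--Klee argument and is essentially correct. Case (i) is the textbook order-unit argument. In case (ii), your sequential lemma is the right idea and correctly uses closedness of $E_+$ to pass to the limit in the tail; the open decomposition step is exactly And\^o's lemma (a Baire/open-mapping style argument), which is where completeness and $E=E_+-E_+$ are used. One point you might tighten in a full write-up: the ``successive approximation removes the closure'' step is gestured at rather than carried out, and this is the only place a reader could object; but the claim is correct and standard (it is the same telescoping argument as in the open mapping theorem, using a summable sequence of neighborhoods).
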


The previous results holds for general convex pointed cones in $E$.

\begin{cor}[see e.g.\ {\cite[Cor.\ 36.1]{choquet69}}]
Let $E$ be a Hausdorff topological vector space and $P\subset E$ be a convex pointed cone.
The following hold:
\begin{enumerate}[(i)]
\item If $\inter P \neq \emptyset$ then any linear $P$-positive functional $T:E\to\rset$ is continuous.

\item If $E$ is complete, metrizable, $P$ is closed, and $E = P - P$, then any linear $P$-positive functional $T:E\to\rset$ is continuous.
\end{enumerate}
\end{cor}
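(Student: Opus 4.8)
The plan is to reduce both assertions to the preceding theorem by recognizing that the convex pointed cone $P$ is itself the positive cone of a suitable order on $E$. First I would invoke \Cref{sec:orderCone}: the convex cone $P$ induces the relation $x\leq y\Leftrightarrow y-x\in P$, turning $E$ into an ordered vector space with positive cone $E_+=P$. Pointedness of $P$, i.e.\ $P\cap(-P)=\{0\}$, is precisely what yields antisymmetry of $\leq$, so this is a genuine partial order; together with the given Hausdorff vector-space topology, $E$ becomes an ordered Hausdorff topological vector space. Under this identification a linear functional $T$ is $P$-positive exactly when it is positive for $\leq$, namely $T(x)\geq 0$ for all $x\in E_+$, so the conclusion of the theorem will directly give continuity of $T$ in both cases.

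For (ii) this is already enough: the hypotheses translate verbatim, since $E=P-P=E_+-E_+$, the set $P=E_+$ is closed, and $E$ is complete and metrizable, so I would simply apply part (ii) of the theorem. The only genuine work lies in (i), where I must still supply the decomposition hypothesis $E=E_+-E_+$, which the theorem requires but the corollary does not state. Here I would establish the auxiliary fact that $\inter P\neq\emptyset$ forces $E=P-P$. The key step is to fix an interior point $u\in\inter P$ and observe that, by continuity of scalar multiplication and addition, for every $x\in E$ there is a small $t>0$ with $u+tx\in P$; since also $u\in P$, this gives $tx=(u+tx)-u\in P-P$. Because $P-P$ is a linear subspace (it is closed under addition as $P$ is a convex cone, and closed under all scalar multiples using $\lambda(p-q)=(-\lambda)q-(-\lambda)p$ for $\lambda<0$), it follows that $x\in P-P$, whence $E=P-P=E_+-E_+$. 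With this in hand I would apply part (i) of the theorem.

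The main obstacle is thus concentrated in the single verification that an interior point produces the identity $E=P-P$; everything else is a matter of translating the cone language of the corollary into the order language of the theorem. I expect no further difficulty, since the topological assumptions (Hausdorff in (i); complete, metrizable, and $P$ closed in (ii)) are carried over unchanged and match precisely the two alternative hypotheses of the preceding theorem.
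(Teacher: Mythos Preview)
Your reduction is exactly what the paper has in mind: it states the corollary without proof, prefacing it only with ``The previous result holds for general convex pointed cones in $E$,'' i.e.\ one passes to the order induced by $P$ and invokes the preceding theorem. Your extra verification that $\inter P\neq\emptyset$ forces $E=P-P$ (needed for part~(i)) is correct and fills in precisely the gap the paper leaves implicit.
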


Further conditions for continuity can be found e.g.\ in \cite[Ch.\ 36]{choquet69} or \cite{schaefer99}.
\cite[Ch.\ 36]{choquet69} also gives results for positive linear functionals on C$^*$-algebras, the Schwartz space $\cS(\rset^n,\rset)$, Lipschitz functions, and on general vector lattices.

Another direction is more operator theoretic and deals with linear functionals over algebras.
An \emph{algebra}\index{algebra} $\cA$ is a (complex) vector space with a multiplication $\cdot\,:\cA\times\cA\to\cA$, $(a,b)\mapsto ab$ such that
\begin{enumerate}[(i)]
\item $a(bc) = (ab)c$,

\item $(a+b)c = ac+bc$, and

\item $\alpha (ab) = (\alpha a)b = a(\alpha b)$
\end{enumerate}
for all $a,b,c\in\cA$ and $\alpha\in\cset$.
An element $1\in\cA$ is called \emph{unit element}\index{unit element} if $1a = a = a1$ for all $a\in\cA$.
A \emph{$*$-algebra}\index{algebra!$*$-} is an algebra with an involution $^*:\cA\to\cA$, $a\mapsto a^*$ that also satisfies $(ab)^* = b^* a^*$ and $(\alpha a)^* = \overline{\alpha} a^*$.
A \emph{linear functional}\index{functional!linear} $L:\cA\to\cset$ is called \emph{non-negative}\index{functional!linear!non-negative} if $L(a^* a) \geq 0$ for all $a\in\cA$.
A \emph{topological $*$-algebra} is a $*$-algebra with a topology $\cT$ such that the multiplication and involution are continuous.
A \emph{Fr\'echet topological $*$-algebra}\index{algebra!$*$-!Fr\'echet topological} is a topological algebra which is a Fr\'echet space\index{Fr\'echet space}, i.e., a complete metrizable locally convex space.
An example is $\cset[x_1,\dots,x_n]$.

We have the following.

\begin{thm}[\cite{xia59} and {\cite{ng72}}; or e.g.\ {\cite[Thm.\ 3.6.1]{schmud90UnboundedOperatorAlgebras}}]
Let $\cA$ be a Fr\'echet topological $*$-algebra with unit element and let $L:\cA\to\cset$ be a linear functional.
If $L$ is non-negative then it is continuous.
\end{thm}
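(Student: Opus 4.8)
The plan is to apply the closed graph theorem, for which both the domain $\cA$ and the target $\cset$ are Fréchet spaces. It then suffices to show that the graph of $L$ is closed, and by linearity this reduces to proving that whenever $a_n\to 0$ in $\cA$ and $L(a_n)\to c$ in $\cset$, one must have $c=0$. The basic tool will be the Cauchy--Schwarz inequality attached to the sesquilinear form $(a,b)\mapsto L(b^*a)$: since $L(a^*a)\geq 0$ for all $a\in\cA$, this form is positive semidefinite and Hermitian (non-negativity forces $L(x^*)=\overline{L(x)}$ by polarization, whence $\overline{L(b^*a)}=L(a^*b)$), so that $|L(b^*a)|^2\leq L(a^*a)\,L(b^*b)$ for all $a,b\in\cA$. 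Here the \emph{unit element} enters decisively: taking $b=1$ yields the pointwise bound $|L(a)|^2\leq L(1)\,L(a^*a)$, which reduces the whole problem to controlling the quadratic expression $L(a^*a)$ near $0$.

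With this reduction in hand, I would argue by contradiction through a summability and domination construction. Suppose $L$ is discontinuous; then, by the bound above, $a\mapsto L(a^*a)$ cannot be bounded on any neighbourhood of $0$, so one can select a sequence $a_k\to 0$ with $L(a_k^*a_k)$ growing arbitrarily fast, say $L(a_k^*a_k)\geq k$. Using that $\cA$ is complete and that the multiplication and involution are continuous, I would thin out and rescale this sequence so that the series $b:=\sum_{k\in\nset} a_k^*a_k$ converges in $\cA$ while still $L(a_k^*a_k)\to\infty$. Each finite partial sum $s_N:=\sum_{k\le N} a_k^*a_k$ is then a genuine element of $\cA$, and by linearity together with non-negativity on single squares we obtain $L(s_N)=\sum_{k\le N}L(a_k^*a_k)$, a quantity that diverges as $N\to\infty$.

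The crux, and the step I expect to be the main obstacle, is to bound these partial sums uniformly by $L(b)$, that is, to establish $L(b)\geq\sum_{k\le N}L(a_k^*a_k)$ for every $N$. This amounts to proving $L(b-s_N)\geq 0$, where $b-s_N=\sum_{k>N}a_k^*a_k$ is an \emph{infinite} convergent sum of squares rather than a finite one. One cannot simply pass $L$ through the limit, since that would already presuppose the continuity we are trying to establish; this is the genuinely delicate point, and it is precisely where completeness and the continuity of the algebra operations must be exploited to arrange the construction so that the tails remain in the cone on which $L$ is non-negative, which is the technical heart of the Xia--Ng argument. Once this domination is secured, $\sum_k L(a_k^*a_k)\leq L(b)<\infty$ contradicts $L(a_k^*a_k)\to\infty$, so $a\mapsto L(a^*a)$ is after all bounded on a neighbourhood of $0$; combined with $|L(a)|^2\leq L(1)\,L(a^*a)$ this shows that $L$ is bounded on a neighbourhood of $0$, hence continuous, completing the proof.
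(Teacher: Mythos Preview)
The paper does not prove this theorem: it is stated with references to \cite{xia59}, \cite{ng72}, and \cite[Thm.\ 3.6.1]{schmud90UnboundedOperatorAlgebras}, and the section opens by announcing that ``we will leave out the proofs since these results will not be used for our T-system treatment.'' There is therefore no argument in the paper to compare against.

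Your outline is the standard Xia/Ng--Warner route, and the Cauchy--Schwarz reduction $|L(a)|^2\le L(1)\,L(a^*a)$ together with the summability setup is correctly laid out. But the step you yourself flag as ``the main obstacle'' is not a detail to be arranged later: it is the entire substantive content of the theorem, and your text does not supply an argument for it. The tail $b-s_N=\sum_{k>N}a_k^*a_k$ lies only in the \emph{closure} of the cone of finite sums of squares, and since that cone is in general not closed in a Fr\'echet $*$-algebra, no amount of rescaling or thinning of the $a_k$ will by itself force the tail back into the set on which non-negativity of $L$ is assumed. Saying that ``completeness and the continuity of the algebra operations must be exploited to arrange the construction'' describes the desideratum, not a mechanism. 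As written, the proposal correctly locates the difficulty and then stops; the genuine work is exactly what the cited sources carry out.
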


A more general statement is \cite[Thm.\ 1]{ng72}.
For more see e.g.\ \cite[Ch.\ 3.6]{schmud90UnboundedOperatorAlgebras} and references therein.

\section*{Problems}%%%%%%%%%%%%%%%%%%%%%
\addcontentsline{toc}{section}{Problems}

\begin{prob}\label{prob:linearCone}
Prove \Cref{lem:linearCone}.
\end{prob}

%\begin{prob}\label{prob:linearCone2}
%Show that in \Cref{lem:linearCone} (iii) the condition $x\in (F+C)\cup (F-C)$ can not be replaced by $x\in E$. Give a counterexample.
%\end{prob}

%\begin{prob}\label{prob:coneExtension}
%Prove \Cref{thm:coneExtension} without the uniqueness criteria (\ref{eq:uniquenessCriteriaConeExtension}) by using the \Cref{thm:hahnBanach}.
%\end{prob}

%\begin{prob}\label{prob:linearMapInBetween}
%Work out the details in the proof of \Cref{lem:linearMapInBetween}:
%
%\begin{enumerate}[\bfseries\exmspace (a)]
%\item Show that $p(x) := \sup\{h(x),h(-x)\}$ is a semi-norm.

%\item Show that continuity of $p$ and convexity of $h$ implies continuity of $h$.

%\item Show that continuity of $p$ and concavity of $g$ implies continuity of $g$.

%\item Show with (b) and (c) that $g$ and $h$ can be separated by a closed hyperplane.
%\end{enumerate}
%\end{prob}

%\begin{prob}\label{prob:coneExtension2}
%Prove \Cref{thm:coneExtension} without the uniqueness criteria (\ref{eq:uniquenessCriteriaConeExtension}) by using \Cref{lem:linearMapInBetween}.
%\end{prob}

\begin{prob}\label{prob:adaptedLem}
Prove \Cref{lem:adapted}.
\end{prob}

\begin{prob}\label{prob:compactAdapted}
Let $\cX$ be a compact topological Hausdorff space and let $E\subseteq\cat(\cX,\rset)$ be a subspace such that there exists an $e\in E$ such that $e(x)>0$ for all $x\in\cX$. Show that $E$ is an adapted space.
\end{prob}

\begin{prob}\label{prob:adaptedPolynomials}
Let $n\in\nset$ and $\cX\subseteq\rset^n$ be closed. Show that $\rset[x_1,\dots,x_n]$ on $\cX$ is an adapted space.
\end{prob}

\begin{prob}\label{prob:adaptedPolynomials2}
Let $n\in\nset$, $\cX\subseteq\rset^n$ be closed, and let $E\subseteq\rset[x_1,\dots,x_n]$ be an adapted space. Show that if $E$ is finite dimensional then $\cX$ is compact.
\end{prob}

\begin{prob}\label{prob:adaptedCompact}
Prove \Cref{lem:adaptedCompact}.
\end{prob}

\motto{Those who cannot remember the past are condemned to repeat it.\\ \medskip
\ \hspace{1cm} \normalfont{George Santayana \cite{santa05}}\index{Santayana, G.}}

\chapter{The Classical Moment Problems}%%%
%%%%%%%%%%%%%%%%%%%%%%%%%%%%%%%%%%%%%%%%%%
\label{ch:classical}

In this chapter we give several classical solutions of moment problems: the Stieltjes, Hamburger, and Hausdorff moment problem.
Additionally, we collect other classical results such as Haviland's Theorem, Richter's Theorem on the existence of finitely atomic representing measures for truncated moment functionals, and Boas' Theorem on the existence of signed representing measures for any linear functional \mbox{$L:\rset[x_1,\dots,x_n]\to\rset$.}

\section{Classical Results}
%%%%%%%%%%%%%%%%%%%%%%%%%%%
\label{sec:classicalResults}

In this section we give a chronological list of the early moment problems which have been solved.
We will explicitly discuss the historical (first) proofs of these results.
Our modern proofs here will be based on the Choquet's theory from \Cref{ch:choquet} and for a modern operator theoretic approach see e.g.\ \cite{schmudMomentBook}.

The first moment problem was solved by T.\ J.\ Stieltjes\index{Stieltjes, T.\ J.} \cite{stielt94}.
He was the first who fully stated the moment problem, solved the first one, and by doing that also introduced the integral theory named after him: the Stieltjes integral.

\begin{stielthm}\label{thm:stieltjesMP}\index{Theorem!Stieltjes}\index{Stieltjes' Theorem}\index{moment!problem!Stieltjes|see{Theorem, Stieltjes}}
Let $s = (s_i)_{i\in\nset_0}$ be a real sequence. The following are equivalent:
\begin{enumerate}[(i)]
\item $s$ is a $[0,\infty)$-moment sequence (Stieltjes moment sequence).

\item $L_s(p)\geq 0$ for all $p\in\pos([0,\infty))$.

\item $L_s(p^2)\geq 0$ and $L_{Xs}(p^2) = L_s(x\cdot p^2)\geq 0$ for all $p\in\rset[x]$.

\item $s$ and $Xs = (s_{i+1})_{i\in\nset_0}$ are positive semidefinite.

\item $\cH(s)\succeq 0$ and $\cH(Xs)\succeq 0$ for all $d\in\nset_0$.
\end{enumerate}
\end{stielthm}
\begin{proof}
See Problem \ref{prob:stieltjes}.
\end{proof}

In the original proof of \Cref{thm:stieltjesMP} Stieltjes \cite{stielt94} does not use non-negative polynomials.
Instead he uses continued fractions and introduces new sequences which we (nowadays) denote by $s$ and $Xs$.

Stieltjes only proves (i) $\Leftrightarrow$ (iv).
The implication (i) $\Leftrightarrow$ (ii) is \Cref{thm:haviland}, (ii) $\Leftrightarrow$ (iii) is the description of $\pos([0,\infty))$, and (iv) $\Leftrightarrow$ (v) is a reformulation of $s$ and $Xs$ being positive semi-definite.

The next moment problem was solved by H.\ L.\ Hamburger\index{Hamburger, H.\ L.} \cite[Satz X and Existenztheorem (§8, p.\ 289)]{hamburger20}.

\begin{hamthm}\label{thm:hamburgerMP}\index{Hamburger's Theorem}\index{Theorem!Hamburger}\index{moment!problem!Hamburger|see{Theorem, Hamburger}}
Let $s = (s_i)_{i\in\nset_0}$ be a real sequence. The following are equivalent:
\begin{enumerate}[(i)]
\item $s$ is a $\rset$-moment sequence (Hamburger moment sequence or short moment sequence).

\item $L_s(p)\geq 0$ for all $p\in\pos(\rset)$.

\item $L_s(p^2)\geq 0$ for all $p\in\rset[x]$.

\item $s$ is positive semidefinite.

\item $\cH(s)\succeq 0$.
\end{enumerate}
\end{hamthm}
\begin{proof}
See Problem \ref{prob:hamburger}.
\end{proof}

Hamburger proved similar to Stieltjes the equivalence (i) $\Leftrightarrow$ (iv) via continued fractions.
In \cite[Satz XIII]{hamburger20} Hamburger solves the full moment problem by approximation with truncated moment problems.
This was later in a slightly more general framework proved in \cite{stochel01}, see also \Cref{sec:stochel}.
Hamburger needed to assume that the sequence of measures $\mu_k$ (which he called ``Belegungen'' and denoted by $\diff\Phi^{(k)}(u)$) to converge to some measure $\mu$ (condition 2 of \cite[Satz XIII]{hamburger20}).
Hamburgers additional condition 2 is nowadays replaced by the vague convergence and the fact that the solution set of representing measures is vaguely compact \cite[Thm.\ 1.19]{schmudMomentBook}, i.e., it assures the existence of a $\mu$ as required by Hamburger in the additional condition 2.

Shortly after Hamburger the moment problem on $[0,1]$ was solved by F.\ Hausdorff\index{Hausdorff, F.} \cite[Satz II and III]{hausdo21}.

\begin{hausthm}\label{thm:hausdorffMP}\index{Hausdorff's Theorem}\index{Theorem!Hausdorff}\index{moment!problem!Hausdorff|see{Theorem, Hausdorff}}
Let $s = (s_i)_{i\in\nset_0}$ be a real sequence. The following are equivalent:
\begin{enumerate}[(i)]
\item $s$ is a $[0,1]$-moment sequence (Hausdorff moment sequence).

\item $L_s(p)\geq 0$ for all $p\in\pos([0,1])$.

\item $L_s(p^2)\geq 0$, $L_{Xs}(p^2)\geq 0$, and $L_{(1-X)s}(p^2)\geq 0$ for all $p\in\rset[x]$.

\item $s$, $Xs$, and $(1-X)s$ are positive semidefinite.

\item $\cH(s)\succeq 0$, $\cH(Xs)\succeq 0$, and $\cH((1-X)s)\succeq 0$.
\end{enumerate}
\end{hausthm}
\begin{proof}
See Problem \ref{prob:hausdorff}.
\end{proof}

Hausdorff proved the equivalence (i) $\Leftrightarrow$ (iii) via so called C-sequences.
In \cite{toeplitz11} Toeplitz treats general linear averaging methods.
In \cite{hausdo21} Hausdorff uses these.
Let the infinite dimensional matrix $\lambda = (\lambda_{i,j})_{i,j\in\nset_0}$ be row-finite, i.e., for every row $i$ only finitely many $\lambda_{i,j}$ are non-zero.
Then the averaging method
\[A_i = \sum_{j\in\nset_0} \lambda_{i,j} a_j\]
shall be consistent: If $a_j \to \alpha$ converges then $A_i\to\alpha$ converges to the same limit.
Toeplitz proved a necessary and sufficient condition on $\lambda$ for this property.
Hausdorff uses only part of this property. He calls a matrix $(\lambda_{i,j})_{i,j\in\nset_0}$ with the property that a convergent sequence $(a_j)_{j\in\nset_0}$ is mapped to a convergent sequence $(A_j)_{j\in\nset_0}$ (the limit does not need to be preserved) a C-matrix (convergence preserving matrix).
Hausdorff gives the characterization of C-matrices \cite[p.\ 75, conditions (A) -- (C)]{hausdo21}.
Additionally, if $\lambda$ is a C-matrix and a diagonal matrix with diagonal entries $\lambda_{i,i} = s_i$ then $s = (s_i)_{i\in\nset_0}$ is called a C-sequence.
The equivalence (i) $\Leftrightarrow$ (iii) is then shown by Hausdorff in the result that a sequence is a $[0,1]$-moment sequence if and only if it is a C-sequence \cite[p.\ 102]{hausdo21}.

A much simpler approach to solve the $K$-moment problem for any closed $K\subseteq\rset^n$, $n\in\nset$, was presented by E.\ K.\ Haviland\index{Haviland, E.\ K.} in \cite[Theorem]{havila36}, see also \cite[Theorem]{havila35} for the earlier case $K=\rset^n$.
He no longer used continued fractions but employed the \Cref{thm:rieszRepr}, i.e., representing a linear functional by integration, and connected the existence of a representing measure to the non-negativity of the linear functional on
\begin{equation}\label{eq:posKdfn}
\pos(K) := \{f\in\rset[x_1,\dots,x_n] \,|\, f\geq 0\ \text{on}\ K\}.
\end{equation}

\begin{havithm}\label{thm:haviland}\index{Haviland's Theorem}\index{Theorem!Haviland}
Let $n\in\nset$, $K\subseteq\rset^n$ be closed, and $s = (s_\alpha)_{\alpha\in\nset_0^n}$ be a real sequence. The following are equivalent:
\begin{enumerate}[(i)]
\item $s$ is a $K$-moment sequence.

\item $L_s(p)\geq 0$ for all $p\in\pos(K)$.
\end{enumerate}
\end{havithm}
\begin{proof}
See Problem \ref{prob:haviland}.
\end{proof}

As noted before, in \cite[Theorem]{havila35} Haviland proves ``only'' the case $K=\rset^n$ with the extension method by M.\ Riesz.\index{Riesz, M.}
In \cite[Theorem]{havila36} this is extended to any closed $K\subseteq\rset^n$.
The idea to do so is attributed by Haviland to A.\ Wintner\index{Wintner, A.} \cite[p.\ 164]{havila36}:
\begin{quote}
A.\ Wintner has subsequently suggested that it should be possible to extend this result [\cite[Theorem]{havila35}] by requiring that the distribution function [measure] solving the problem have a spectrum [support] contained in a preassigned set, a result which would show the well-known criteria for the various standard special momentum problems (Stieltjes, Herglotz [trigonometric], Hamburger, Hausdorff in one or more dimensions) to be put particular cases of the general $n$-dimensional momentum problem mentioned above.
The purpose of this note [\cite{havila36}] is to carry out this extension.
\end{quote}

In \cite{havila36} after the general Theorem \ref{thm:haviland} Haviland then goes through all the classical results (Theorems \ref{thm:stieltjesMP} to \ref{thm:hausdorffMP}, and the Herglotz\index{Herglotz!moment problem}\index{moment!problem!Herglotz} (trigonometric) moment problem\index{trigonometric!moment problem}\index{moment!problem!trigonometric} on the unit circle $\tset$ which we did not included here) and shows how all these results (i.e., conditions on the sequences) are recovered from the at this point known representations of non-negative polynomials.

For the \emph{Hamburger moment problem}\index{Hamburger moment problem|see{Theorem, Hamburger}} (\Cref{thm:hamburgerMP}) Haviland uses
\begin{equation}\label{eq:posR}
\pos(\rset) = \left\{ f^2 + g^2 \,\middle|\, f,g\in\rset[x]\right\}
\end{equation}
which was already known to D.\ Hilbert\index{Hilbert, D.} \cite{hilbert88}.
We prove a stronger version of (\ref{eq:posR}) in \Cref{cor:nonnegRRx}.
For the \emph{Stieltjes moment problem}\index{Stieltjes moment problem|see{Theorem, Stieltjes}} (\Cref{thm:stieltjesMP}) he uses
\begin{equation}\label{eq:pos0infty1}
\pos([0,\infty)) = \left\{ f_1^2 + f_2^2 + x\cdot (g_1^2 + g_2^2) \,\middle|\, f_1,f_2,g_1,g_2\in\rset[x]\right\}
\end{equation}
with the reference to G.\ P\'olya\index{P\'olya, G.} and G.\ Szeg{\"o}\index{Szeg\"o, G.} (previous editions of \cite{polya64,polya70}).
In \cite[p.\ 82, ex.\ 45]{polya64} the representation (\ref{eq:pos0infty1}) is still included while it was already known before, see \cite[p.\ 6, footnote]{shohat43}, that
\begin{equation}\label{eq:pos0infty2}
\pos([0,\infty)) = \left\{ f^2 + x\cdot g^2 \,\middle|\, f,g\in\rset[x]\right\}
\end{equation}
is sufficient.
Also in \cite[Prop.\ 3.2]{schmudMomentBook} the representation (\ref{eq:pos0infty1}) is used, not the simpler representation (\ref{eq:pos0infty2}).
We prove a stronger version of (\ref{eq:pos0infty2}) in \Cref{cor:nonneg0inftyRx}.

For the $[-1,1]$-moment problem Haviland uses
\begin{equation}\label{eq:pos-11}
\pos([-1,1]) = \left\{ f^2 + (1-x^2)\cdot g^2 \,\middle|\, f,g\in\rset[x]\right\}.
\end{equation}
For the \emph{Hausdorff moment problem}\index{Hausdorff moment problem|see{Theorem, Hausdorff}} (\Cref{thm:hausdorffMP}) he uses that any strictly positive polynomial on $[0,1]$ is a linear combination of
\begin{equation}\label{eq:lukacs}
x^m\cdot (1-x)^{p}
\end{equation}
with $m,p\in\nset_0$, $p\geq m$, and with non-negative coefficients.

Haviland gives this with the references to a previous edition of \cite{polya70}.
This result is actually due to S.\ N.\ Bernstein\index{Bernstein, S.\ N.} \cite{bernstein12,bernstein15}.

\begin{bernthm}[\cite{bernstein12} for (i), \cite{bernstein15} for (ii); or see e.g.\ {\cite[p.\ 30]{achieser56}} or {\cite[Prop.\ 3.4]{schmudMomentBook}}]\label{thm:bernstein}\index{Theorem!Bernstein}\index{Bernstein!Theorem}
Let $f\in\cat([0,1],\rset)$ and let
\begin{equation}\label{eq:bernPoly}
B_{f,d}(x) := \sum_{k=0}^d \binom{d}{k}\cdot x^k\cdot (1-x)^{d-k}\cdot f\left(\frac{k}{d}\right)
\end{equation}
be the \emph{Bernstein polynomials}\index{polynomial!Bernstein}\index{Bernstein!polynomial} of $f$ with $d\in\nset$.
Then the following hold:
\begin{enumerate}[(i)]
\item The polynomials $B_{f,d}$ converge uniformly on $[0,1]$ to $f$, i.e.,
\[\|f-B_{f,d}\|_\infty \xrightarrow{d\to\infty}\infty.\]

\item If additionally $f\in\rset[x]$ with $f>0$ on $[0,1]$ then there exist a constant $D=D(f)\in\nset$ and constants $c_{k,l}\geq 0$ for all $k,l=0,\dots,D$ such that
\[f(x) = \sum_{k,l=0}^D c_{k,l}\cdot x^k\cdot (1-x)^l.\]

\item The statements (i) and (ii) also hold on $[0,1]^n$ for any $n\in\nset$. Especially every $f\in\rset[x_1,\dots,x_n]$ with $f>0$ on $[0,1]^n$ is of the form
\[f(x) = \sum_{\alpha_1,\dots,\beta_n=0}^D c_{\alpha_1,\dots,\beta_n}\cdot x_1^{\alpha_1}\cdots x_n^{\alpha_n}\cdot (1-x_1)^{\beta_1}\cdots (1-x_n)^{\beta_n}\]
for some $D\in\nset$ and $c_{\alpha_1,\dots,\beta_n}\geq 0$.
\end{enumerate}
\end{bernthm}

The multidimensional statement (iii) follows from the classical one-dimensional cases (i) and (ii).
For this and more on Bernstein polynomials see e.g.\ \cite{lorentz86}.

\Cref{thm:bernstein} only holds for $f>0$.
Allowing zeros at the interval end points is possible and gives the following ``if and only if''-statement.

\begin{cor}\label{cor:bernstein}
Let $f\in\rset[x]\setminus\{0\}$. The following are equivalent:
\begin{enumerate}[(i)]
\item $f>0$ on $(0,1)$.

\item $\displaystyle f(x) = \sum_{i=0}^D c_{k,l}\cdot x^l\cdot (1-x)^k$ for some $D\in\nset$, $c_{k,l}\geq 0$ for all $k,l=0,\dots,D$, and $c_{k',l'}>0$ at least once.
\end{enumerate}
\end{cor}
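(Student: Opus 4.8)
The plan is to reduce the statement to the strictly positive case already settled by \Cref{thm:bernstein}(ii), by splitting off the possible zeros of $f$ at the two endpoints. The equivalence in the corollary is exactly the extension of Bernstein's representation from $[0,1]$ (where $f>0$) to $(0,1)$ (where $f$ may vanish at $0$ or $1$).

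The direction (ii) $\Rightarrow$ (i) is immediate: for every $x\in(0,1)$ the monomials $x^l(1-x)^k$ are strictly positive, so each summand $c_{k,l}\,x^l(1-x)^k$ is non-negative, and the summand attached to the coefficient $c_{k',l'}>0$ is strictly positive; hence $f(x)>0$ on $(0,1)$.

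For (i) $\Rightarrow$ (ii) I would argue as follows. Since $f>0$ on $(0,1)$, the only possible real zeros of $f$ in $[0,1]$ sit at the endpoints. Let $a,b\in\nset_0$ be the exact multiplicities of the zeros of $f$ at $x=0$ and $x=1$, so that $f(x)=x^a(1-x)^b\,g(x)$ for some $g\in\rset[x]$ with $g(0)\neq 0$ and $g(1)\neq 0$. As $x^a(1-x)^b>0$ on $(0,1)$ and $f>0$ there, we get $g>0$ on $(0,1)$; by continuity $g(0)=\lim_{x\to 0^+}g(x)\geq 0$ and $g(1)=\lim_{x\to 1^-}g(x)\geq 0$, and since both values are non-zero they are in fact strictly positive. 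Thus $g>0$ on the whole closed interval $[0,1]$, which is precisely the hypothesis of \Cref{thm:bernstein}(ii). That step, establishing strict positivity of $g$ on $[0,1]$ so that Bernstein applies, is the only genuine point of the proof; it is exactly the factor $x^a(1-x)^b$ that absorbs the endpoint zeros which \Cref{thm:bernstein} cannot tolerate.

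It then remains to reassemble. By \Cref{thm:bernstein}(ii) there is a $D_0\in\nset$ and coefficients $d_{k,l}\geq 0$ with $g(x)=\sum_{k,l=0}^{D_0} d_{k,l}\,x^k(1-x)^l$. Multiplying by $x^a(1-x)^b$ merely shifts exponents and preserves non-negativity:
\[
f(x)=\sum_{k,l=0}^{D_0} d_{k,l}\,x^{k+a}(1-x)^{l+b},
\]
which, after renaming $x^{k+a}(1-x)^{l+b}$ as a monomial $x^{l'}(1-x)^{k'}$ and padding the missing coefficients by zero, is a non-negative combination of the monomials $x^{l'}(1-x)^{k'}$ with $0\leq k',l'\leq D:=D_0+\max\{a,b\}$. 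Finally, since $f\not\equiv 0$ we have $g\not\equiv 0$, so at least one $d_{k,l}>0$, yielding a strictly positive coefficient as required in (ii). Everything beyond the positivity argument for $g$ is routine exponent bookkeeping.
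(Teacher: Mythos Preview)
Your proof is correct and follows essentially the same route as the paper: factor out the endpoint zeros as $f(x)=x^p(1-x)^q\,\tilde f(x)$, observe that $\tilde f>0$ on all of $[0,1]$, apply \Cref{thm:bernstein}(ii) to $\tilde f$, and multiply back. Your argument is in fact slightly more explicit than the paper's in justifying why the quotient is strictly positive at the endpoints (via continuity and the choice of exact multiplicities), whereas the paper simply asserts $\tilde f>0$ on $[0,1]$.
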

\begin{proof}
See Problem \ref{prob:bernstein}.
\end{proof}

On $[-1,1]$ a strengthened version of \Cref{thm:bernstein} (ii) is attributed to F.\ Luk\'acs\index{Luk\'acs, F.} \cite{lukacs18} (\emph{Luk\'acs Theorem}).\index{Theorem!Luk\'acs|see{Luk\'acs--Markov}}\index{Luk\'acs Theorem|see{Luk\'acs--Markov Theorem}}
Note that Luk\'acs in \cite{lukacs18} reproves several results/formulas which already appeared in a work by M.\ R.\ Radau\index{Radau, M.\ R.} \cite{radau80}, as pointed out by L.\ Brickman\index{Brickman, L.} \cite[p.\ 196]{brickman59}.
Additionally, in \cite[p.\ 61, footnote 4]{kreinMarkovMomentProblem} M.\ G.\ Krein\index{Krein, M.\ G.} and A.\ A.\ Nudel'man\index{Nudel'man, A.\ A.} state that A.\ A.\ Markov\index{Markov, A.\ A.} proved a more precise version of Luk\'acs Theorem already in 1906 \cite{markov06},\footnote{We do not have access to \cite{markov06} and can therefore neither confirm nor decline this statement.} see also \cite{markov95}.
Krein and Nudel'man call it \emph{Markov's Theorem}.\index{Markov's Theorem|see{Luk\'acs--Markov Theorem}}\index{Theorem!Markov|see{Luk\'acs--Markov}}
It is the following.

\begin{lumathm}[\cite{markov06} or e.g.\ \cite{lukacs18}, {\cite[p.\ 61, Thm.\ 2.2]{kreinMarkovMomentProblem}}]\index{Luk\'acs--Markov Theorem}\index{Theorem!Luk\'acs--Markov}\label{thm:luma}
Let $-\infty < a < b < \infty$ and let $p\in\rset[x]$ be with $\deg p = n$ and $p\geq 0$ on $[a,b]$.
The following hold:
\begin{enumerate}[(i)]
\item If $\deg p = 2m$ for some $m\in\nset_0$ then $p$ is of the form
\[p(x) = f(x)^2 + (x-a)(b-x)\cdot g(x)^2\]
for some $f,g\in\rset[x]$ with $\deg f = m$ and $\deg g = m-1$.

\item If $\deg p = 2m+1$ for some $m\in\nset_0$ then $p$ is of the form
\[p(x) = (x-a)\cdot f(x)^2 + (b-x)\cdot g(x)^2\]
for some $f,g\in\rset[x]$ with $\deg f = \deg g = m$.
\end{enumerate}
\end{lumathm}

For case (i) note that the relation
\begin{equation}\label{eq:abProduct}
(x-a)(b-x) = \frac{1}{b-a}\left[ (x-a)^2(b-x) + (x-a)(b-x)^2 \right]
\end{equation}
implies
\begin{equation}\label{eq:posab}
\pos([a,b]) = \left\{f(x)^2 + (x-a)\cdot g(x)^2 + (b-x)\cdot h(x)^2 \,\middle|\, f,g,h\in\rset[x]\right\}.
\end{equation}
The special part about the \Cref{thm:luma} are the degree bounds on the polynomials $f$ and $g$.
Equation (\ref{eq:abProduct}) destroyes these degree bounds since we have to go one degree higher.

In the \Cref{thm:luma2} we will see how from \Cref{thm:karlinPosab} an even stronger version follows which describes the polynomials $a$ and $b$ more precisely and up to a certain point uniquely.
In \cite[p.\ 61 Thm.\ 2.2 and p.\ 373 Thm.\ 6.4]{kreinMarkovMomentProblem} the \Cref{thm:luma} is called \emph{Markov--Luk\'acs Theorem}\index{Markov--Luk\'acs Theorem|see{Luk\'acs--Markov Theorem}}\index{Theorem!Markov--Luk\'acs|see{Luk\'acs--Markov}} since Markov gave the more precise version much earlier than Luk\'acs.
In \cite{havila36} Haviland uses this result without any reference or attribution to either Luk\'acs or Markov.

For the two-dimensional Hausdorff moment problem Haviland uses with a reference to \cite{hildeb33} that any polynomial $f\in\rset[x,y]$ which is strictly positive on $[0,1]^2$ is a linear combination of $x^m\cdot y^n\cdot (1-x)^{p}\cdot (1-y)^{q}$, $n,m,q,p\in\nset_0$, with non-negative coefficients.
This is actually \Cref{thm:bernstein} (iii).

T.\ H.\ Hildebrandt\index{Hildebrandt, T.\ H.} and I.\ J.\ Schoenberg\index{Schoenberg, I.\ J.} \cite{hildeb33} already solved the moment problem on $[0,1]^2$ (and even on $[0,1]^n$ for all $n\in\nset$) getting the same result as Haviland.
The idea of using $\pos(K)$-descriptions to solve the moment problem was therefore already used by Hildebrandt and Schoenberg in 1933 \cite{hildeb33} before Haviland uses this in \cite{havila35} and generalized this in \cite{havila36} as suggested to him by Wintner.

With these broader historical remarks we see that of course more people are connected to Theorem \ref{thm:haviland}.
It might also be appropriate to call Theorem \ref{thm:haviland} the \emph{Haviland--Wintner}\index{Theorem!Haviland--Wintner}\index{Haviland--Wintner Theorem} or \emph{Haviland--Hildebrandt--Schoenberg--Wintner Theorem}.\index{Theorem!Haviland--Hildebrandt--Schoenberg--Wintner}\index{Haviland--Hildebrandt--Schoenberg--Wintner Theorem}
But as so often, the list of contributors is long (and maybe even longer) and hence the main contribution (the general proof) is rewarded by calling it just Haviland's Theorem.

The last classical moment problem which we want to mention on the long list was solved by K.\ I.\ {\v{S}}venco \cite{svenco39}.\index{Svenco, K.\ I.@{\v{S}}venco, K.\ I.}

\begin{svethm}\index{Svenco's Theorem@{\v{S}}venco's Theorem}\index{Theorem!Svenco@{\v{S}}venco}\index{moment!problem!Svenco@{\v{S}}venco|see{Theorem, {\v{S}}venco}}\label{thm:svenco}
Let $s=(s_i)_{i\in\nset_0}$ be a real sequence. The following are equivalent:
\begin{enumerate}[(i)]
\item $s$ is a $(-\infty,0]\cup [1,\infty)$-moment sequence.

\item $L_s(p)\geq 0$ for all $p\in\pos((-\infty,0]\cup[1,\infty))$.

\item $L_s(p^2)\geq 0$, $L_{(X^2-X)s}(p^2)\geq 0$ for all $p\in\rset[x]$.

\item $s$ and $(X^2-X)s$ are positive semi-definite.

\item $\cH(s)\succeq 0$ and $\cH((X^2-X)s)\succeq 0$.
\end{enumerate}
\end{svethm}

The general case of \Cref{thm:svenco} on
\begin{equation}\label{eq:zaun}
\rset\setminus \bigcup_{i=1}^n (a_i,b_i)
\end{equation}
for any $n\in\nset$ and $a_1 < b_1 < \dots < a_n < b_n$ was proved by V.\ A.\ Fil'\v{s}tinski\v{\i}\index{Fil'\v{s}tinski\v{\i}, V.\ A.} \cite{filsti64}.
All non-negative polynomials on (\ref{eq:zaun}) can be explicitly written down.
More precisely, all moment problems on closed and semi-algebraic sets $K\subseteq\rset$ follow nowadays easily from \Cref{thm:haviland} resp.\ the \Cref{thm:basicrepresentation} and some well established results from real algebraic geometry, e.g.\ \cite[Prop.\ 2.7.3]{marshallPosPoly}.

\Cref{thm:haviland} was important to give the solutions of the classical moment problem, i.e., mostly one-dimensional cases.
After that is was no longer used and only became important again when descriptions of strictly positive and non-negative polynomials on $K\subseteq\rset^n$ with $n\geq 2$ be came available.
This process was started with and real algebraic geometry was revived by \cite{schmud91}.

\section{Early Results with Gaps}
%%%%%%%%%%%%%%%%%%%%%%%%%%%%%%%%%
\label{sec:earlyGaps}

The early history of moment problems with gaps is very thin. We discuss only \cite{hausdo21a} and \cite{boas39}.

Hausdorff\index{Hausdorff, F.} just solved \Cref{thm:hausdorffMP} in \cite{hausdo21}\footnote{Submitted: February 11, 1920.} and in \cite{hausdo21a}\footnote{Submitted: September 8, 1920.} he treats
\[s_n = \int_0^1 x^{k_n}~\diff\mu(x)\]
with
\[k_0 = 0 < k_1 < k_2 < \dots < k_n < \dots \]
for a sequence of real numbers $k_i$, i.e., not necessarily in $\nset_0$.
See also \cite[p.\ 104]{shohat43}.
Since Hausdorff in \cite{hausdo21a} did not have access to \Cref{thm:haviland} \cite{havila36} or the description of all non-negative linear combinations of $1,x^{k_1}, \dots, x^{k_n},\dots$ the results in \cite{hausdo21a} need complicated formulations and are not very strong.
Only with the description of non-negative linear combinations by Karlin \cite{karlin63} an easy formulation of the result is possible.
We will therefore postpone the exact formulation to \Cref{thm:sparseTruncHausd} and \Cref{thm:generalSparseHausd} where we present easy proofs using also the theory of adapted spaces from \Cref{ch:choquet}, especially the \Cref{thm:basicrepresentation}.

In \cite{boas39} Boas\index{Boas, R.\ P.} investigates the Stieltjes moment problem ($K=[0,\infty)$) with gaps.
Similar to \cite{hausdo21a} the results are difficult to read and they are unfortunately incomplete since Boas (like Hausdorff) did not have access to the description of all non-negative or strictly positive polynomials with gaps (or more general exponents).
We will give the complete solution of the $[0,\infty)$-moment problem with gaps and more general exponents in \Cref{thm:sparseStieltjesMP}.

\section{Finitely Atomic Representing Measures: Richter's Theorem}

When working with a truncated moment sequence resp.\ functionals it is often useful in theory and applications to find a representing measure with finitely many atoms.
That this is always possible for truncated moment functionals was first proved in full generality by H.\ Richter \cite[Satz 4]{richte57}.\index{Richter, H.}

Its proof proceeds by induction via the dimension of the moment cone.
To do that we need to look at the boundary of the moment cone.
We need that when part of the boundary of the moment cone is cut out by a supporting hyperplane then this intersection is again a moment cone of strictly smaller dimension.
That is the content of the following lemma.

\begin{lem}\label{lem:boundaryCone}
Let $n\in\nset$, $(\cX,\fA)$ be a measurable space, $\cF=\{f_i\}_{i=1}^n$ be a family of measurable functions $f_i:\cX\to\rset$, $\cS_\cF$\label{dfn:SF} be the moment cone spanned by $\cF$, and let $H$ be a supporting hyperplane of $\cS_\cF$. Then $\cS_\cF\cap H$ is a moment cone of dimension $m = \dim (\cS_\cF\cap H) < n$ spanned by a family $\cG\subset\lin\cF$ on a measurable space $(\cY,\fA|_\cY)$ with $\cY\subseteq\cX$.
\end{lem}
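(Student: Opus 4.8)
The plan is to identify the supporting hyperplane with a pointwise non-negative function drawn from $\lin\cF$ and then to show that the boundary piece $\cS_\cF\cap H$ consists precisely of the moment vectors of measures concentrated on the zero set of that function. Throughout I regard $\cS_\cF$ as the set of moment vectors
\[
\cS_\cF = \left\{ (L_\mu(f_1),\dots,L_\mu(f_n)) \;\middle|\; \mu\in\cM(\cX)_+,\ f_1,\dots,f_n\in\cL^1(\cX,\mu) \right\}\subseteq\rset^n,
\]
which is a convex cone containing $0$ (take $\mu=0$). First I would note that, because $\cS_\cF$ is a cone, any supporting hyperplane passes through the origin: writing $H=\{s \,|\, \langle a,s\rangle = 0\}$ with $a\neq 0$ and $\langle a,s\rangle\geq 0$ on $\cS_\cF$ (after possibly replacing $a$ by $-a$), scale-invariance of the cone forces the supporting constant to vanish.

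Next I set $f_a := \sum_{i=1}^n a_i f_i\in\lin\cF$. Testing against the Dirac measure $\delta_x\in\cM(\cX)_+$ yields the moment vector $(f_1(x),\dots,f_n(x))\in\cS_\cF$, so $f_a(x) = \langle a,(f_i(x))_i\rangle\geq 0$ for every $x\in\cX$, i.e.\ $f_a\geq 0$ on $\cX$. I then put $\cY := f_a^{-1}(\{0\})\in\fA$ with the trace $\sigma$-algebra $\fA|_\cY$. The decisive observation is that a moment vector $(L_\mu(f_i))_i$ lies on $H$ if and only if $\int_\cX f_a\,\diff\mu = 0$, and since $f_a\geq 0$ and $\mu\geq 0$ this holds exactly when $f_a=0$ $\mu$-almost everywhere, i.e.\ when $\mu$ is concentrated on $\cY$. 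Hence
\[
\cS_\cF\cap H = \left\{ (L_\mu(f_1),\dots,L_\mu(f_n)) \;\middle|\; \mu\in\cM(\cX)_+,\ \mu(\cX\setminus\cY)=0 \right\},
\]
which I recognise as the moment cone of the restricted family $\{f_i|_\cY\}$ on $(\cY,\fA|_\cY)$.

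To produce the spanning family $\cG\subseteq\lin\cF$ and pin down the dimension, I would exploit that $f_a|_\cY=0$ is a nontrivial linear relation among the restrictions $f_1|_\cY,\dots,f_n|_\cY$. Thus $V:=\lin\{f_i|_\cY\}$ has $\dim V = m\leq n-1$; choosing $g_1,\dots,g_m\in\lin\cF$ whose restrictions form a basis of $V$ produces a family $\cG$ of $m<n$ functions that are linearly independent on $\cY$. A short orthogonality argument (a vector $b$ annihilates every curve point $(f_i(x))_i$, $x\in\cY$, iff $\sum_i b_i f_i|_\cY = 0$) shows $\lin(\cS_\cF\cap H) = \lin\{(f_i(x))_i \,|\, x\in\cY\}$ has dimension $m$, and the same argument applied to $\cG$ shows $\dim\cS_\cG = m$, while the linear change of coordinates expressing each $f_i|_\cY$ in the basis $\{g_j|_\cY\}$ carries $\cS_\cG$ isomorphically onto $\cS_\cF\cap H$. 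This delivers all assertions: $\cS_\cF\cap H$ is the moment cone spanned by $\cG\subseteq\lin\cF$ on $(\cY,\fA|_\cY)$, and $m=\dim(\cS_\cF\cap H)\leq n-1<n$.

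The main obstacle I anticipate is the \emph{concentration on $\cY$} step: one must argue carefully that pointwise non-negativity of $f_a$, obtained only from Dirac test measures, together with $\int_\cX f_a\,\diff\mu = 0$ forces $\mu$ to vanish off the zero set, and that the resulting restricted family genuinely realises $\cS_\cF\cap H$ as a moment cone rather than merely its linear hull. The dimension count is then routine linear algebra, the only care being to select $\cG$ inside $\lin\cF$ itself (not merely among restrictions) so that $\cG\subseteq\lin\cF$ as required.
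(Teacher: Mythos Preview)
Your argument is correct and follows essentially the same route as the paper: identify the supporting hyperplane with a function $f_a\in\lin\cF$, use Dirac measures to show $f_a\geq 0$ pointwise, set $\cY=f_a^{-1}(\{0\})$, and conclude that any representing measure for a point of $\cS_\cF\cap H$ is concentrated on $\cY$. Your treatment of the dimension count and the explicit choice of $\cG$ is in fact more detailed than the paper's solution, which handles that part rather tersely.
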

\begin{proof}
See Problem \ref{prob:boundaryCone}.
\end{proof}

With the previous lemma we can now prove Richter's Theorem.

\begin{richthm}[{\cite[Satz 4]{richte57}}; or see e.g.\ {\cite[Thm.\ 1]{kemper68}}, {\cite[p.\ 198, Thm.\ 1]{floudas01}}]\label{thm:richter}\index{Richter's Theorem}\index{Theorem!Richter}
Let $n\in\nset$, let $(\cX,\fA)$ be a measurable space, and let $\{f_i\}_{i=1}^n$ be a family of real linearly independent measurable functions $f_i:\cX\to\rset$. Then for every measure $\mu$ on $\cX$ such that all $f_i$ are $\mu$-integrable, i.e.,
\[s_i := \int_\cX f_i(x)~\diff\mu(x) \quad \in \rset\]
for all $i=1,\dots,n$, there exist a $k\in\nset_0$ with $k\leq n$, points $x_1,\dots,x_k\in\cX$ pairwise different, and $c_1,\dots,c_k\in (0,\infty)$ such that
\[s_i = \sum_{j=1}^k c_j\cdot f_i(x_j) = \int_\cX f_i(x)~\diff\nu(x) \quad\text{with}\quad \nu = \sum_{j=1}^k c_j\cdot\delta_{x_j}\]
holds for all $i=1,\dots,n$.
\end{richthm}
\begin{proof}
We show that every truncated moment sequence $s=(s_1,\dots,s_n)$ has a finitely atomic representing measure with at most $n$ atoms in $\cX$.
We prove this statement by induction on $n$.

$n=1$: We have
\[s_1 = \int_\cX f_1(x)~\diff\mu(x).\]
If $s_1 = 0$ then take $\nu = 0$ which proves the statement. Let us assume $s_1\neq 0$. Since $\mu\geq 0$ on $\cX$ there exists a point $x_1\in\cX$ such that $\sign\, f_1(x_1) = \sign\, s_1$. Hence, we have $\frac{s_1}{f_1(x_1)} =: c_1 > 0$ and
\[s_1 = \frac{s_1}{f_1(x_1)}\cdot f_1(x_1) = \int_\cX f_1(x)~\diff (c_1\cdot\delta_{x_1})(x)\]
which proves the statement.

$n\geq 2$: Let $\cS_\cF\subseteq\rset^n$ be the moment cone generated from $\cF$.
We make the distinction of the two cases
\begin{enumerate}[\qquad (a)]
\item $s = (s_1,\dots,s_n)\in\inter\cS_\cF$ and
\item $s \in \partial\cS_\cF\cap\cS_\cF$.
\end{enumerate}
For (a) let $\cS := \cone \{(f_1(x),\dots,f_n(x))^T \,|\, x\in\cX\}$ be the cone generated by all point evaluations $(f_1(x),\dots,f_n(x))^T$.
By \Cref{thm:cara} every $s\in\cS$ is a moment sequences with a $k$-atomic representing measure with $k\leq n$.
Additionally, we have that $\inter\cS$ is non-empty since $\cS$ is full dimensional.

Assume $\inter\cS\neq\inter\cS_\cF$ then $\inter(\cS_\cF\setminus\cS)\neq\emptyset$. Let $s\in\inter(\cS_\cF\setminus\cS)$ with a representing measure $\mu$.
Then there exists a separating linear functional $l$, i.e., $l(s)<0$ and $l(t)>0$ for all $t\in\cS$.
Since $(f_1(x),\dots,f_n(x))^T\in\cS$ we have that $f(x) := l((f_1(x),\dots,f_n(x)) > 0$ for all $x\in\cX$ but
\[\int_\cX f(x)~\diff\mu(x) = l(s) < 0\]
with is a contradiction to $\mu\geq 0$.
Hence, $\inter\cS = \inter\cS_\cF$ and every $s\in\inter\cS_\cF$ has a $k$-atomic representing measure with $k\leq n$.

For (b) assume $s\in\partial\cS_\cF\cap\cS_\cF$.
Since $\cS_\cF$ is a convex cone there exists a supporting hyperplane $H$ of $\cS_\cF$ at $s$.
But then $\cS_\cF\cap H$ is by \Cref{lem:boundaryCone} a moment cone of dimension at most $n-1$ and here the theorem holds by induction.
\end{proof}

The previous proof is the original proof by Richter and only the mathematical language is updated.
The following historical overview about \Cref{thm:richter} first appeared in \cite{didioCone22}.

Replacing integration by finitely many point evaluations was already used and investigated by C.\ F.\ Gau{\ss} \cite{gauss15}.\index{Gauss@Gau\ss, C.\ F.}
The $k$-atomic representing measures from \Cref{thm:richter} are therefore also called (\emph{Gaussian}) \emph{cubature formulas}.\index{cubature formula!Gaussian}

The history of \Cref{thm:richter} is confusing and the literature is often misleading.
We therefore list in chronological order previous versions or versions which appeared almost at the same time.
The conditions of these versions (including Richter) are the following:
\begin{enumerate}[\bfseries\;\;(A)]
\item A.\ Wald 1939\footnote{Received: February 25, 1939. Published: September 1939.} \cite[Prop.\ 13]{wald39}:\index{Wald, A.}\index{Theorem!Wald} $\cX = \rset$ and $f_i(x) = |x-x_0|^{d_i}$ with $d_i\in\nset_0$, $0\leq d_1 < d_2 < \dots < d_n$, and $x_0\in\cX$.\medskip

\item P.\ C.\ Rosenbloom 1952 \cite[Cor.\ 38e]{rosenb52}:\index{Rosenbloom, P.\ C.}\index{Theorem!Rosenbloom} $(\cX,\fA)$ a measurable space and $f_i$ bounded measurable functions.\medskip

\item H.\ Richter 1957\footnote{Received: December 27, 1956. Published: April, 1957.} \cite[Satz 4]{richte57}: $(\cX,\fA)$ measurable space and $f_i$ measurable functions.\medskip

\item M.\ V.\ Tchakaloff 1957\footnote{Published: July-September, 1957} \cite[Thm.\ II]{tchaka57}: $\cX\subset\rset^n$ compact and $f_i$ monomials of degree at most $d$.\label{item:tchakaloff}\index{Tchakaloff, M.\ V.}\index{Theorem!Tchakaloff}\medskip

\item W.\ W.\ Rogosinski 1958\footnote{Received: August 22, 1957. Published: May 6, 1958.} \cite[Thm.\ 1]{rogosi58}: $(\cX,\fA)$ measurable space and $f_i$ measurable functions.\label{item:rosenbloom}\index{Rogosinski, W.\ W.}\index{Theorem!Rogosinski}
\end{enumerate}
From this list we see that Tchakaloff's result (\ref{item:tchakaloff}) from 1957 is a special case of Rosenbloom's result (\ref{item:rosenbloom}) from 1952 and that the general case was proved by Richter and Rogosinski almost about at the same time, see the exact dates in the footnotes.
If one reads  Richter's paper, one might think at first glance that he treats only the one-dimensional case, but a closer look reveals that his Proposition (Satz) 4 covers actually the  general case of measurable functions.
Rogosinski treats the one-dimensional case, but states at the end of the introduction of \cite{rogosi58}:
\begin{quote}
Lastly, the restrictions in this paper to moment problems of dimension one is hardly essential.
Much of our geometrical arguments carries through, with obvious modifications, to any finite number of dimensions, and even to certain more general measure spaces.
\end{quote}
The above proof of \Cref{thm:richter}, and likewise the one in \cite[Theorem 1.24]{schmudMomentBook}, are nothing but modern formulations of the proofs of Richter and Rogosinski without additional arguments.
Note that Rogosinki's paper \cite{rogosi58} was submitted about a half year after the appearance of Richter's \cite{richte57}.

It might be of interest that the general results of Richter and Rogosinski from 1957/58 can be derived from Rosenbloom's Theorem from 1952, see Problem \ref{prob:richterFromRosen}.
With that wider historical perspective in mind it might be justified to call \Cref{thm:richter} also the \emph{Richter--Rogosinski--Rosenbloom Theorem}.\index{Richter--Rogosinski--Rosenbloom Theorem}\index{Theorem!Richter--Rogosinski--Rosenbloom}

\Cref{thm:richter} was overlooked in the modern literature on truncated polynomial moment problems.
The problem probably arose around 1997/98 when it was stated as an open problem in a published paper.\footnote{We do not give the references for this and subsequent papers who reproved \Cref{thm:richter}.}
The paper \cite{richte57} and numerous works of J.\ H.\ B.\ Kemperman were not included back then.
Especially \cite[Thm.\ 1]{kemper68} where Kemperman fully states the general theorem (\Cref{thm:richter}) and attributed it therein to Richter and Rogosinski is missing.
Later on, this missing piece was not added in several other works.
The error continued in the literature for several years and \Cref{thm:richter} was reproved in several papers in weaker forms.
Even nowadays papers appear not aware of \Cref{thm:richter} or of the content of \cite{richte57}.

\section{Signed Representing Measures: Boas' Theorem}
%%%%%%%%%%%%%%%%%%%%%%%%%%%%%%%%%%%%%%%%%%%%%%%%%%%%%

In the theory of moments almost exclusively the representation by non-negative measures is treated.
The reason is the following result due to R.\ P.\ Boas\index{Boas, R.\ P.} from 1939.

\begin{boasthm}[\cite{boas39a} or e.g.\ {\cite[p.\ 103, Thm.\ 3.11]{shohat43}}]\label{thm:boas}\index{Boas' Theorem}\index{Theorem!Boas}
Let $s = (s_i)_{i\in\nset_0}$ be a real sequence. Then there exist infinitely many signed measures $\mu$ on $\rset$ and infinitely many signed measures $\nu$ on $[0,\infty)$ such that
\[s_i = \int_\rset x^{i}~\diff\mu(x) = \int_0^\infty x^{i}~\diff\nu(x)\]
holds for all $i\in\nset_0$.
\end{boasthm}

The proof follows the arguments in \cite[pp.\ 103--104]{shohat43}.

\begin{proof}
We prove the case on $[0,\infty)$.
The case on $\rset$ is then only a special case.

By induction we write $s = v - w$ such that $v$ and $w$ are positive definite sequences where we can apply the \Cref{thm:basicrepresentation}.

$i=0$: We can chose $v_0,w_0\gg 1$ with $s_0 = v_0 - w_0$, i.e., $L_v(p), L_w(p)\geq 0$ for all $p\in\pos([0,\infty))_{\leq 0} = [0,\infty)$.

$i\to i+1$: Assume we found $(v_j)_{j=0}^i$ and $(w_j)_{j=0}^i$ such that $L_v(p),L_w(p)\geq 0$ for all $p\in\pos([0,\infty))_{\leq i}$.
Since for $i+1$ the term $x^{i+1}$ appears additionally to $1,x,x^2,\dots,x^i$, the convex cone $\pos([0,\infty))_{\leq i+1}$ has compact base, and $L$ is continuous on $\rset[x]_{\leq i+1}$ we find $v_{i+1},w_{i+1}\gg 1$ with $s_{i+1} = v_{i+1} - w_{i+1}$ such that $L_v(p),L_w(p)\geq 0$ for all $p\in\pos([0,\infty))_{\leq i+1}$.

Hence, we found sequences $v,w$ with $s = v-w$ and $L_v(p),L_w(p)\geq 0$ for all $p\in\pos([0,\infty))$.
By the \Cref{thm:basicrepresentation} $L_v$ is represented by some non-negative $\mu_+$ and $L_w$ is represented by some non-negative $\mu_-$ both with support in $[0,\infty)$, i.e., $L_s$ is represented by $\mu = \mu_+ - \mu_-$ supported on $[0,\infty)$.
\end{proof}

T. Sherman\index{Sherman, T.} showed that \Cref{thm:boas} (even when $L$ is a complex linear functional) also holds in the $n$-dimensional case on $\rset^n$ and $[0,\infty)^n$ for any $n\in\nset$ \cite[Thm.\ 1]{sherman64}.
Similar results are proved for linear functionals on the universal enveloping algebra $\mathcal{E}(G)$ of a Lie group $G$ by K.\ Schmüdgen\index{Schmüdgen, K.} \cite{schmud78}.
If the Lie group $G$ is $\rset^n$ then this again gives Sherman's result.
G.\ P\'olya\index{P\'olya, G.} \cite{polya38} (see also \cite[p.\ 104]{shohat43}) showed an extension which kinds of measures can be chosen.
On $\rset^n$ it is even possible to find a Schwartz function $f\in\cS(\rset^n)$ such that
\[s_\alpha = \int_{\rset^n} x^\alpha\cdot f(x)~\diff x\]
for all $\alpha\in\nset_0^n$.
Use e.g.\ \cite{curtoHeat22}.

\Cref{thm:boas} also covers the case with gaps.
If any gaps in the real sequence $s$ are present then fill them with any real number you like.

\section{Solving all Truncated Moment Problems solves the Moment Problem}
%%%%%%%%%%%%%%%%%%%%%%%%%%%%%%%%%%%%%%%%%%%%%%%%%%%%%%%%%%%%%%%%%%%%%%%%%
\label{sec:stochel}

The following result was already indicated by Hamburger\index{Hamburger, H.\ L.} in \cite{hamburger20} and formalized by J.\ Stochel\index{Stochel, J.} in \cite{stochel01}.
We have the following.

\begin{thm}\label{thm:stochel}
Let $n\in\nset$, $K\subseteq\rset^n$ be closed, $\cV\subseteq\rset[x_1,\dots,x_n]$ be an adapted space on $K$, and let $L:\cV\to\rset$ be a linear functional on $\cV$.
The following are equivalent:
\begin{enumerate}[(i)]
\item $L:\cV\to\rset$ is a $K$-moment functional.

\item $L_k := L|_{\cV\cap\rset[x_1,\dots,x_n]_{\leq k}}$ are truncated $K$-moment functionals for all $k\in\nset_0$.
\end{enumerate}
\end{thm}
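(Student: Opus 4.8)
The plan is to route the whole argument through the Basic Representation Theorem (\Cref{thm:basicrepresentation}). Since $K\subseteq\rset^n$ is closed, it is a locally compact Hausdorff space, and $\cV$ is by hypothesis an adapted subspace of $\cat(K,\rset)$; hence \Cref{thm:basicrepresentation} applies with $\cX = K$ and $E = \cV$ and tells us that $L$ is a $K$-moment functional if and only if $L$ is $\cV_+$-positive, i.e.\ $L(f)\geq 0$ for every $f\in\cV$ with $f\geq 0$ on $K$. The key point of the plan is that this reduces the entire equivalence to verifying $\cV_+$-positivity, so that the ``limit of truncated problems'' one might expect to construct by hand is absorbed into the representation theorem.

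The implication (i)$\Rightarrow$(ii) is immediate: if $\mu$ is a representing measure of $L$ with $\supp\mu\subseteq K$, then the very same $\mu$ represents each restriction $L_k$, so every $L_k$ is a truncated $K$-moment functional.

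For (ii)$\Rightarrow$(i) I would verify $\cV_+$-positivity directly. Let $f\in\cV$ with $f\geq 0$ on $K$; the case $f=0$ being trivial, $f$ has some finite degree $k$, so $f\in\cV\cap\rset[x_1,\dots,x_n]_{\leq k}$, which is exactly the domain of $L_k$. By hypothesis (ii) this $L_k$ is a truncated $K$-moment functional, hence there is a non-negative measure $\mu_k$ on $K$ representing it, and therefore
\[L(f) = L_k(f) = \int_K f(x)~\diff\mu_k(x) \geq 0,\]
since $f\geq 0$ on $K$ and $\mu_k\geq 0$. As $f$ ranged over all of $\cV_+$, this shows $L$ is $\cV_+$-positive, and \Cref{thm:basicrepresentation} then furnishes a representing measure supported on $K$, i.e.\ $L$ is a $K$-moment functional.

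The step I expect to matter most is conceptual rather than computational: the observation that each non-negative test function $f\in\cV_+$ is a single polynomial and hence lives in one finite-dimensional slice $\cV\cap\rset[x_1,\dots,x_n]_{\leq k}$. This is precisely what lets a single truncated representing measure $\mu_k$ certify the sign of $L(f)$, so that no diagonal selection, no vague compactness of the solution set, and no passage to a limit of the measures $\mu_k$ is required. In other words, the substance of the theorem lies in recognizing that adaptedness of $\cV$ (through \Cref{thm:basicrepresentation}) converts the global moment problem into a positivity condition that can be tested one degree at a time.
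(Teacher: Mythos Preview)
Your proof is correct and essentially identical to the paper's: both route (ii)$\Rightarrow$(i) through the \Cref{thm:basicrepresentation}, verifying $\cV_+$-positivity by observing that each $f\in\cV_+$ lies in some $\cV\cap\rset[x_1,\dots,x_n]_{\leq k}$ where the truncated $K$-moment functional $L_k$ certifies $L(f)\geq 0$. The paper's version is only slightly terser, writing $L(p)=L_{\deg p}(p)\geq 0$ without explicitly invoking a representing measure $\mu_k$.
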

\begin{proof}
While ``(i) $\Rightarrow$ (ii)'' is clear it is sufficient to prove the reverse direction.

Let $L_k$ be a truncated $K$-moment functionals for all $k\in\nset_0$.
Since $\cV\subseteq\rset[x_1,\dots,x_n]$ for any $p\in\cV$ we have that $L:\cV\to\rset$ is well-defined by $L(p) := L_{\deg p}(p)$.
Let $p\in\cV$ with $p\geq 0$ on $K$ then $L(p) = L_{\deg p}(p) \geq 0$, i.e., by the \Cref{thm:basicrepresentation} we have that $L$ is a $K$-moment functional. 
\end{proof}

Note, $\cV$ can also be finite dimensional when $K$ is compact.
Then the result is trivial.
For unbounded $K$ the adapted space $\cV$ is always infinite dimensional.

A more general version of \Cref{thm:stochel} can e.g.\ be found in \cite[Thm.\ 1.20]{schmudMomentBook}.

\section*{Problems}%%%%%%%%%%%%%%%%%%%%%
\addcontentsline{toc}{section}{Problems}

\begin{prob}\label{prob:stieltjes}
Prove \Cref{thm:stieltjesMP} with the \Cref{thm:basicrepresentation} and the representation (\ref{eq:pos0infty2}).
\end{prob}

\begin{prob}\label{prob:hamburger}
Prove \Cref{thm:hamburgerMP} with the \Cref{thm:basicrepresentation} and the representation (\ref{eq:posR}).
\end{prob}

\begin{prob}\label{prob:hausdorff}
Prove \Cref{thm:hausdorffMP} with the \Cref{thm:basicrepresentation} and the \Cref{thm:luma}, resp.\ $\pos([a,b])$ in (\ref{eq:posab}).
\end{prob}

\begin{prob}\label{prob:haviland}
Prove \Cref{thm:haviland} with the \Cref{thm:basicrepresentation}.
\end{prob}

\begin{prob}\label{prob:bernstein}
Use \Cref{thm:bernstein} (ii) to prove \Cref{cor:bernstein}.
\end{prob}

\begin{prob}\label{prob:boundaryCone}
Prove \Cref{lem:boundaryCone}.
\end{prob}

\begin{prob}\label{prob:richterFromRosen}
Show that \Cref{thm:richter} follows from Rosenbloom's Theorem, i.e., show that the additional assumption that all $f_i$ are bounded on the measurable space $(\cX,\fA)$ can be removed.
\end{prob}

%%%%%%%%%%%%%%%%%%%%%%%%%%%%%
%%%%%%%%%%%%%%%%%%%%%%%%%%%%%
\part{Tchebycheff Systems}%%%
%%%%%%%%%%%%%%%%%%%%%%%%%%%%%
%%%%%%%%%%%%%%%%%%%%%%%%%%%%%
\label{part:tSystems}

\motto{There is nothing more practical %\\
than a good theory.\\ \medskip
\ \hspace{1cm} \normalfont{Kurt Lewin \cite{lewin43}}\index{Lewin, K.}}

\chapter{T-Systems}%%%
%%%%%%%%%%%%%%%%%%%%%%
\label{ch:tsystems}

In this chapter we introduce the Tchebycheff systems or short T-systems.
We give basic examples and properties.

\section{The Early History of T-Systems}
%%%%%%%%%%%%%%%%%%%%%%%%%%%%%%%%%%%%%%%%

In our presentation we mostly limit ourselves to the works \cite{krein51,karlin63,karlinStuddenTSystemsBook,kreinMarkovMomentProblem}.
However, the concept of T-system was introduces much earlier.
It goes back to its name giver: P.\ L.\ Tchebycheff \cite{tcheby74}.\index{Tchebycheff, P.\ L.}
See especially \cite{krein51} for a good overview of the history of the development of T-systems and also \cite{goncha00}.

In \cite{tcheby74} Tchebycheff states the following open problem:
\begin{quote}
Let
\[a < \xi < \eta < b\]
be real numbers and let the numbers
\begin{equation}\label{eq:momentconstr}
s_k = \int_a^b x^k f(x)~\diff x
\end{equation}
for $k=0,1,\dots,n-1$ for some $n\in\nset_0$ be given.
Find the bounds on the integral
\begin{equation}\label{eq:optim}
\int_\xi^\eta f(x)~\diff x
\end{equation}
under the conditions that $f\geq 0$ on $[a,b]$ and (\ref{eq:momentconstr}) holds.
\end{quote}
From this investigation Tchebycheff arrives at the method of continued fractions, which was used in the early results in the moment problems, see \Cref{sec:classicalResults}.
Tchebycheff gives without proof the inequalities (upper and lower bounds) of (\ref{eq:optim}).
The proof was independently found by others, see \cite[pp.\ 3--4]{krein51}.
The key here is to work over a finitely dimensional space spanned by $f_0,\dots,f_n$.

A well-known and guiding example are the functions $1,x,\dots,x^n$.

\begin{exm}\label{exm:vandermonde}
Let $n\in\nset$ and $\cX \subseteq \rset$ with $|\cX|\geq n+1$. Then the family $\cF = \{x^i\}_{i=0}^n$ is a T-system, see \Cref{dfn:tSystem} below. This follows immediately from the Vandermonde determinant\index{determinant!Vandermonde}\index{Vandermonde!determinant}
%
%\begin{equation}\label{eq:vandermonde}
\[\det \big(x_i^j\big)_{i,j=0}^n = \prod_{0\leq i<j\leq n} (x_j-x_i)\]
%\end{equation}
%
for any $x_0,\dots,x_n\in\cX$.\exmsymbol
\end{exm}

Krein states that he developed ``the connection between ideas of Markov and functional-geometric ideas'' which made it possible to remove the Wronskian approach (\Cref{dfn:wronski}) and replacing it with continuity and the condition
\begin{quote}
The curve $\Gamma$ of the $(n+1)$-dimensional space $\rset^{n+1}$:
\[y_0 = f_0(x),\quad y_1 = f_1(x),\quad \dots, y_n = f_n(x)\]
does not intersect itself and no hyperplane through the origin intersects it in more than $n$ points.
\end{quote}
which is equivalent to
\begin{quote}
No linear combination
\[\sum_{i=0}^n a_i f_i \quad\text{with}\quad \sum_{i=0}^n a_i^2 > 0\]
vanishes more than $n$ times in the closed interval $[a,b]$.
\end{quote}
see \cite[pp.\ 19--20]{krein51}.
The later is then generalized to leave out continuity and replacing $[a,b]$ with any set $\cX$, see \Cref{dfn:tSystem}.
For a family $\{f_i\}_{i=0}^n$ with this property S.\ N.\ Bernstein\index{Bernstein, S.\ N.} \cite{bernstein37} introduced the name \emph{Tchebycheff system}\index{system!Tchebycheff|see{T-}} and Krein \cite[p.\ 20]{krein51} and Archieser \cite[p.\ 73, §47]{achieser56} continued using this terminology.

For more on the history see e.g.\ \cite{krein51}.
We especially recommend the very nice survey article \cite{goncha00} with the references therein for more on the works, the contributions, and the impact of Tchebycheff's work.

\section{Definition and Basic Properties}
%%%%%%%%%%%%%%%%%%%%%%%%%%%%%%%%%%%%%%%%%

\begin{dfn}\label{dfn:tSystem}
Let $n\in\nset_0$, $\cX$ be a set with $|\cX|\geq n+1$, and $\cF = \{f_i\}_{i=0}^n$ be a family of real functions $f_i:\cX\to\rset$. We call a linear combination
\begin{equation}\label{eq:linF}
f = \sum_{i=0}^n a_i\cdot f_i \quad\in\lin\cF := \{a_0 f_0 + \dots + a_n f_n \,|\, a_0,\dots,a_n\in\rset\}
\end{equation}
a \emph{polynomial}.\index{polynomial}
The family $\cF$ on $\cX$ is called a \emph{Tchebycheff system}\index{Tchebycheff system|see{T-system}} (or short \emph{T-system})\index{T-system}\index{system!T-} \emph{of order $n$ on $\cX$} if every polynomial $f\in\lin\cF$ with $\sum_{i=0}^n a_i^2 > 0$ has at most $n$ zeros in $\cX$.

If additionally $\cX$ is a topological space and $\cF$ is a family of continuous functions we call $\cF$ a \emph{continuous T-system}.\index{T-system!continuous}
If additionally $\cX$ is the unit circle $\tset$ then we call $\cF$ a \emph{periodic T-system}.\index{T-system!periodic}
\end{dfn}

The following immediate consequence shows that we can restrict the domain $\cX$ of the T-system $\cF$ to some $\cY\subseteq\cX$ and as long as $|\cY|\geq n+1$ the restricted T-system remains a T-system. In applications and examples we therefore only need to prove the T-system property on some larger set $\cX$.

\begin{cor}\label{cor:restriction}
Let $n\in\nset_0$ and let $\cF=\{f_i\}_{i=0}^n$ be a T-system of order $n$ on some set $\cX$ with $|\cX|\geq n+1$. Let $\cY\subseteq\cX$ with $|\cY|\geq n+1$. Then $\cG := \{f_i|_\cY\}_{i=0}^n$ is a T-system of order $n$ on $\cY$.
\end{cor}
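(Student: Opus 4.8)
The plan is to reduce zero-counting on $\cY$ to zero-counting on $\cX$, where the hypothesis already gives us full control. The key observation is that every polynomial in $\lin\cG$ is nothing but the restriction to $\cY$ of a polynomial in $\lin\cF$ carrying the very same coefficient vector, and that shrinking the domain can only delete zeros, never create them.

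Concretely, I would fix a polynomial $g\in\lin\cG$ whose coefficients $(a_0,\dots,a_n)$ satisfy $\sum_{i=0}^n a_i^2>0$, so that $g=\sum_{i=0}^n a_i\cdot(f_i|_\cY)$. Putting $f:=\sum_{i=0}^n a_i f_i\in\lin\cF$, we have $g=f|_\cY$, i.e.\ $g(y)=f(y)$ for all $y\in\cY$. Since $\cY\subseteq\cX$, the zeros of $g$ in $\cY$ are exactly $\{x\in\cX \,|\, f(x)=0\}\cap\cY$, which is a subset of the zeros of $f$ in $\cX$. As $\cF$ is a T-system of order $n$ and $\sum_{i=0}^n a_i^2>0$, the polynomial $f$ has at most $n$ zeros in $\cX$ by \Cref{dfn:tSystem}; hence $g$ has at most $n$ zeros in $\cY$.

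Because $g$ was an arbitrary polynomial in $\lin\cG$ with nonzero coefficient vector, and $|\cY|\geq n+1$ holds by assumption, the family $\cG$ meets the requirements of \Cref{dfn:tSystem} and is therefore a T-system of order $n$ on $\cY$. I expect no genuine obstacle here; the only point worth stressing is that a nonzero coefficient vector cannot be annihilated by the restriction. Indeed, if $f|_\cY$ vanished identically, then $f$ would have at least $|\cY|\geq n+1$ zeros in $\cX$, contradicting the T-system property of $\cF$. Thus the bound of $n$ zeros is inherited for free, and the hypothesis $|\cY|\geq n+1$ is exactly what is needed to keep $\cG$ a legitimate T-system rather than a degenerate family.
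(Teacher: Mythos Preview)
Your proof is correct and follows exactly the same approach as the paper: any $g\in\lin\cG$ with nonzero coefficient vector is the restriction of some $f\in\lin\cF$ with the same coefficients, and restricting to $\cY\subseteq\cX$ can only decrease the number of zeros. Your additional remark that $f|_\cY$ cannot vanish identically (since that would force at least $n+1$ zeros of $f$ in $\cX$) is a nice touch the paper omits, but it is not strictly needed for the argument.
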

\begin{proof}
See Problem \ref{prob:restriction}.
\end{proof}

The set $\cX$ does not require any structure or property except $|\cX|\geq n+1$.

In the theory of T-systems we often deal with one special matrix.
We use the following abbreviation.

\begin{dfn}\label{dfn:kreinMatrix}
Let $n\in\nset_0$, $\cF=\{f_i\}_{i=0}^n$ be a family of real functions on a set $\cX$ with $|\cX|\geq n+1$. We define the matrix
\begin{equation}\label{eq:kreinMatrix}
\begin{pmatrix}
f_0 & f_1 & \dots & f_n\\ x_0 & x_1 & \dots & x_n
\end{pmatrix}
:=
\begin{pmatrix}
f_0(x_0) & f_1(x_0) & \dots & f_n(x_0)\\
f_0(x_1) & f_1(x_1) & \dots & f_n(x_1)\\
\vdots & \vdots & & \vdots\\
f_0(x_n) & f_1(x_n) & \dots & f_n(x_n)
\end{pmatrix} = (f_i(x_j))_{i,j=0}^n
\end{equation}
for any $x_0,\dots,x_n\in\cX$.
\end{dfn}

\begin{lem}[see e.g.\ {\cite[p.\ 31]{kreinMarkovMomentProblem}}]\label{lem:determinant}
Let $n\in\nset_0$, $\cX$ be a set with $|\cX|\geq n+1$, and $\cF=\{f_i\}_{i=0}^n$ be a family of real functions $f_i:\cX\to\rset$. The following are equivalent:
\begin{enumerate}[(i)]
\item $\cF$ is a T-system of order $n$ on $\cX$.

\item The determinant
\[\det\begin{pmatrix}
f_0 & f_1 & \dots & f_n\\ x_0 & x_1 & \dots & x_n
\end{pmatrix}\]
does not vanish for any pairwise distinct points $x_0,\dots,x_n\in\cX$.
\end{enumerate}
\end{lem}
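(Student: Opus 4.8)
The plan is to recognize that both statements are reformulations, via elementary linear algebra, of a single fact about the square matrix
\[
M := \begin{pmatrix} f_0 & f_1 & \dots & f_n\\ x_0 & x_1 & \dots & x_n \end{pmatrix} = (f_i(x_j))_{i,j=0}^n .
\]
The crucial bridge is the observation that for any coefficient vector $a = (a_0,\dots,a_n)^T\in\rset^{n+1}$ and the associated polynomial $f = \sum_{i=0}^n a_i f_i\in\lin\cF$, the $j$-th entry of the matrix--vector product $Ma$ is exactly $\sum_{i=0}^n a_i f_i(x_j) = f(x_j)$. Thus $Ma = (f(x_0),\dots,f(x_n))^T$, and so $a$ lies in $\ker M$ if and only if $f$ vanishes at all of the points $x_0,\dots,x_n$. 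Since $M$ is square, $\det M = 0$ is equivalent to $\ker M\neq\{0\}$, and this is the only structural fact I would invoke.

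For the direction (i) $\Rightarrow$ (ii) I would argue by contraposition. Suppose the determinant vanishes for some pairwise distinct $x_0,\dots,x_n\in\cX$. Then $\ker M\neq\{0\}$, so there is a nonzero $a$ with $Ma = 0$; by the bridge the polynomial $f = \sum_{i=0}^n a_i f_i$ (with $\sum_{i=0}^n a_i^2 > 0$, since $a\neq 0$) vanishes at the $n+1$ distinct points $x_0,\dots,x_n$. This exhibits a polynomial in $\lin\cF$ with $\sum_{i=0}^n a_i^2 > 0$ having at least $n+1$ zeros, contradicting the T-system property. Hence (i) forces the determinant to be nonzero on all pairwise distinct tuples.

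For the converse (ii) $\Rightarrow$ (i), suppose the determinant never vanishes on pairwise distinct points, and let $f = \sum_{i=0}^n a_i f_i$ with $\sum_{i=0}^n a_i^2 > 0$. Assume for contradiction that $f$ has at least $n+1$ zeros, and pick $n+1$ distinct ones $x_0,\dots,x_n\in\cX$. Then $Ma = 0$ for the nonzero vector $a$, so $\ker M\neq\{0\}$ and $\det M = 0$, contradicting (ii). Therefore $f$ has at most $n$ zeros, which is precisely the defining property of a T-system of order $n$ in \Cref{dfn:tSystem}.

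In truth there is no serious obstacle here: the entire content is the translation between ``vanishing at $n+1$ distinct points'' and ``singularity of the evaluation matrix $M$.'' The only points that require a little care are keeping track of the index convention in the matrix (rows indexed by the evaluation points $x_j$, columns by the functions $f_i$) so that $Ma$ really is the vector of values of $f$, and noting that the hypothesis $\sum_{i=0}^n a_i^2 > 0$ is exactly the statement $a\neq 0$, which simultaneously makes $f$ a genuine (nonzero) polynomial and supplies the nontrivial kernel element. The standing assumption $|\cX|\geq n+1$ guarantees throughout that $n+1$ pairwise distinct points can actually be chosen.
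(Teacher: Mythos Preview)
Your proof is correct and takes essentially the same approach as the paper: both directions hinge on the identification of nonzero kernel elements of the evaluation matrix with nontrivial polynomials vanishing at the $n+1$ chosen points, together with the equivalence $\det M = 0 \Leftrightarrow \ker M \neq \{0\}$. The paper phrases (i) $\Rightarrow$ (ii) directly (trivial kernel forces nonzero determinant) while you phrase it by contraposition, but the underlying argument is identical.
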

\begin{proof}
(i) $\Rightarrow$ (ii):
Let $x_0,\dots,x_n\in\cX$ be pairwise distinct.
Since $\cF$ is a T-system we have that any non-trivial polynomial $f$ has at most $n$ zeros, i.e., the matrix
\[\begin{pmatrix}
f_0 & f_1 & \dots & f_n\\ x_0 & x_1 & \dots & x_n
\end{pmatrix}\]
has trivial kernel and hence its determinant is non-zero. Since $x_0,\dots,x_n\in\cX$ are arbitrary pairwise distinct we have (ii).

(ii) $\Rightarrow$ (i): Assume there is a polynomial $f$ with $\sum_{i=0}^n a_i^2 >0$ which has the $n+1$ pairwise distinct zeros $z_0,\dots,z_n\in\cX$. Then the matrix
\[Z=\begin{pmatrix}
f_0 & f_1 & \dots & f_n\\ z_0 & z_1 & \dots & z_n
\end{pmatrix}\]
has non-trivial kernel since $0\neq (a_0,a_1,\dots,a_n)^T\in\ker Z$ and hence $\det Z = 0$ in contradiction to (ii).
\end{proof}

\Cref{lem:determinant} is used in \cite[p.\ 3, Dfn.\ 2.1]{karlinStuddenTSystemsBook} as the definition of a continuous T-system where it is called a weak T-system.
In \cite[p.\ 22, Thm.\ 4.1]{karlinStuddenTSystemsBook} then the equivalence to \Cref{dfn:tSystem} is shown.

\begin{rem}
\Cref{lem:determinant} implies that for any $x\in\cX$ there is a $f\in\lin\cF$ such that $f(x)\neq 0$, i.e., the $f_0,\dots,f_n$ do not have common zeros.\exmsymbol
\end{rem}

\begin{rem}\label{rem:signDet}
After adjusting the sign of $f_n$ in a continuous T-system $\cF=\{f_i\}_{i=0}^n$ on $[a,b]$ we can assume that
\[\det\begin{pmatrix}
f_0 & f_1 & \dots & f_n\\ x_0 & x_1 & \dots & x_n
\end{pmatrix} > 0\]
holds for all $a\leq x_1 < x_2 < \dots < x_n \leq b$.\exmsymbol
\end{rem}

The previous lemma implies the following.

\begin{cor}[see e.g.\ {\cite[p.\ 33]{kreinMarkovMomentProblem}}]\label{cor:transf}
Let $n\in\nset_0$, and $\cF = \{f_i\}_{i=0}^n$ be a T-system of order $n$ on some set $\cX$ with $|\cX|\geq n+1$.
Let $\cW$ be a set with $n+1\leq |\cW|\leq |\cX|$ and let $g:\cW\to\cX$ be injective.
Then $\cG = \{g_i\}_{i=0}^n$ with $g_i:=f_i\circ g$ is a T-system of order $n$ on $\cW$.
\end{cor}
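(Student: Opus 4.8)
The plan is to pass through the determinantal characterization of T-systems provided by \Cref{lem:determinant}, which reduces the T-system property to the non-vanishing of a single determinant on every tuple of pairwise distinct points. Since $\cG=\{g_i\}_{i=0}^n$ is a family of $n+1$ real functions on $\cW$ with $|\cW|\geq n+1$, the corollary will follow once I show that
\[
\det\begin{pmatrix} g_0 & g_1 & \dots & g_n\\ w_0 & w_1 & \dots & w_n\end{pmatrix} \neq 0
\]
for every choice of pairwise distinct points $w_0,\dots,w_n\in\cW$.

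First I would fix pairwise distinct points $w_0,\dots,w_n\in\cW$ and use the injectivity of $g$ to conclude that the images $g(w_0),\dots,g(w_n)\in\cX$ are again pairwise distinct. This is the only place where injectivity enters, and it is precisely what is needed to apply the hypothesis on $\cF$: without it the images could coincide and the determinant below would be forced to vanish trivially.

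Next I would expand the matrix using $g_i=f_i\circ g$, observing that its entries are $g_i(w_j)=f_i(g(w_j))$, so that
\[
\det\begin{pmatrix} g_0 & \dots & g_n\\ w_0 & \dots & w_n\end{pmatrix} = \det\big(f_i(g(w_j))\big)_{i,j=0}^n = \det\begin{pmatrix} f_0 & \dots & f_n\\ g(w_0) & \dots & g(w_n)\end{pmatrix}.
\]
Since $\cF$ is a T-system of order $n$ on $\cX$ and $g(w_0),\dots,g(w_n)$ are pairwise distinct points of $\cX$, the implication (i)~$\Rightarrow$~(ii) of \Cref{lem:determinant} guarantees that the right-hand determinant does not vanish. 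Hence the left-hand determinant is nonzero as well, and as the $w_j$ were arbitrary pairwise distinct points of $\cW$, the converse implication (ii)~$\Rightarrow$~(i) of \Cref{lem:determinant} yields that $\cG$ is a T-system of order $n$ on $\cW$.

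There is essentially no obstacle here: the statement expresses a routine functoriality of the determinant criterion under precomposition by an injection, and the entire argument is the transport of distinctness through $g$ followed by two applications of \Cref{lem:determinant}.
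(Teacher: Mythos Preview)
Your proof is correct and matches the paper's own argument essentially line for line: fix pairwise distinct $w_0,\dots,w_n\in\cW$, use injectivity of $g$ to get pairwise distinct images in $\cX$, observe the determinant identity, and conclude via \Cref{lem:determinant}.
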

\begin{proof}
See Problem \ref{prob:transf}.
\end{proof}

\begin{cor}[see e.g.\ {\cite[p.\ 10]{karlinStuddenTSystemsBook}} or {\cite[p.\ 33]{kreinMarkovMomentProblem}}]\label{cor:scaling}
Let $n\in\nset_0$, and $\cF = \{f_i\}_{i=0}^n$ be a T-system of order $n$ on some set $\cX$ with $|\cX|\geq n+1$.
Let $g:\cX\to\rset$ be such that $g>0$ on $\cX$.
Then $\cG = \{g_i\}_{i=0}^n$ with $g_i := g\cdot f_i$ is a T-system of order $n$ on $\cX$.
\end{cor}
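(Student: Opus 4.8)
The plan is to reduce the claim to the determinant characterization of T-systems provided by \Cref{lem:determinant} and then use the multilinearity of the determinant in its rows. By \Cref{lem:determinant} it suffices to show that for every choice of pairwise distinct points $x_0,\dots,x_n\in\cX$ the determinant
\[
\det\begin{pmatrix}
g_0 & g_1 & \dots & g_n\\ x_0 & x_1 & \dots & x_n
\end{pmatrix}
= \det\big(g_i(x_j)\big)_{i,j=0}^n
\]
does not vanish, and this is exactly the property we must verify for $\cG$.

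First I would fix pairwise distinct $x_0,\dots,x_n\in\cX$, which is possible because $|\cX|\geq n+1$, and read off from \Cref{dfn:kreinMatrix} that the entry of the associated matrix in the row indexed by $x_j$ and the column indexed by $g_i$ is $g_i(x_j)=g(x_j)\cdot f_i(x_j)$. The crucial bookkeeping observation is that the scalar factor $g(x_j)$ is \emph{common to an entire row} (the one indexed by the point $x_j$), not to a column. Hence factoring $g(x_j)$ out of row $j$ for each $j=0,\dots,n$ and using the row-multilinearity of the determinant gives
\[
\det\begin{pmatrix}
g_0 & g_1 & \dots & g_n\\ x_0 & x_1 & \dots & x_n
\end{pmatrix}
= \left(\prod_{j=0}^n g(x_j)\right)\cdot
\det\begin{pmatrix}
f_0 & f_1 & \dots & f_n\\ x_0 & x_1 & \dots & x_n
\end{pmatrix}.
\]

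Finally I would conclude as follows: since $g>0$ on $\cX$, the product $\prod_{j=0}^n g(x_j)$ is strictly positive and in particular nonzero, and since $\cF$ is a T-system of order $n$ the second determinant is nonzero by \Cref{lem:determinant}. Therefore the left-hand determinant is nonzero for every choice of pairwise distinct $x_0,\dots,x_n$, and another application of \Cref{lem:determinant} shows that $\cG=\{g\cdot f_i\}_{i=0}^n$ is a T-system of order $n$ on $\cX$. There is essentially no genuine obstacle here; the only point requiring care is the orientation of the factorization, i.e.\ recognizing that $g(x_j)$ scales rows rather than columns so that it can be pulled out cleanly. I would also note in passing that only $g(x)\neq 0$ for all $x\in\cX$ is needed for $\cG$ to be a T-system, while the strict positivity $g>0$ additionally preserves the sign of the determinant and is thus compatible with the normalization convention of \Cref{rem:signDet}.
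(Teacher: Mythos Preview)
Your proof is correct and follows essentially the same approach as the paper: both reduce to \Cref{lem:determinant}, factor $g(x_j)$ out of each row of the Krein matrix to obtain $\prod_{j=0}^n g(x_j)$ times the $\cF$-determinant, and conclude from $g>0$ and the T-system property of $\cF$. Your write-up is simply more explicit about the row-versus-column bookkeeping and adds the (correct) side remark that $g\neq 0$ would already suffice for the T-system property.
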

\begin{proof}
See Problem \ref{prob:scaling}.
\end{proof}

\begin{cor}[see e.g.\ {\cite[p.\ 33]{kreinMarkovMomentProblem}}]\label{cor:uniqueDeter}
Let $n\in\nset_0$, and $\cF = \{f_i\}_{i=0}^n$ be a T-system of order $n$ on some set $\cX$ with $|\cX|\geq n+1$. The following hold:
\begin{enumerate}[(i)]
\item The functions $f_0,\dots,f_n$ are linearly independent over $\cX$.

\item For any $f = \sum_{i=0}^n a_i\cdot f_i\in\lin\cF$ the coefficients $a_0,\dots,a_n\in\rset$ are unique.
\end{enumerate}
\end{cor}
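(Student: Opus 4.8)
The plan is to derive both statements directly from the defining property of a T-system stated in \Cref{dfn:tSystem}, with \Cref{lem:determinant} available as an equivalent reformulation should a matrix argument be preferred.

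For (i) I would argue by contradiction. Suppose $f_0,\dots,f_n$ were linearly dependent, so that there exist $a_0,\dots,a_n\in\rset$ with $\sum_{i=0}^n a_i^2 > 0$ for which the polynomial $f := \sum_{i=0}^n a_i f_i$ vanishes identically on $\cX$. Then every point of $\cX$ is a zero of $f$, and since $|\cX|\geq n+1$ the polynomial $f$ has at least $n+1$ zeros in $\cX$. This contradicts \Cref{dfn:tSystem}, which forbids a polynomial with $\sum_{i=0}^n a_i^2 > 0$ from having more than $n$ zeros. Hence no nontrivial relation exists and the $f_i$ are linearly independent. A parallel argument via \Cref{lem:determinant} is equally short: choosing $n+1$ pairwise distinct points $x_0,\dots,x_n\in\cX$ (possible because $|\cX|\geq n+1$), the coefficient vector $(a_0,\dots,a_n)^T$ would lie in the kernel of the matrix of \Cref{dfn:kreinMatrix}, but that matrix is invertible by \Cref{lem:determinant}, forcing $a_i = 0$ for all $i$.

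For (ii), uniqueness is then an immediate consequence of (i). If $f = \sum_{i=0}^n a_i f_i = \sum_{i=0}^n b_i f_i$ for two coefficient tuples, then $\sum_{i=0}^n (a_i - b_i) f_i$ vanishes identically on $\cX$, and linear independence from part (i) gives $a_i = b_i$ for all $i = 0,\dots,n$.

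I expect no genuine obstacle here, as the result is essentially a direct unpacking of the definition. The only point requiring a moment of care is the observation that an identically vanishing nontrivial polynomial would have $|\cX|\geq n+1 > n$ zeros, which is precisely the configuration excluded by the T-system property; once this is noted, both parts follow with no computation.
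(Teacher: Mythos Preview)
Your proof is correct and essentially matches the paper's. For (i) the paper uses the identical contradiction argument via \Cref{dfn:tSystem}; for (ii) the paper instead picks $n+1$ distinct points and invokes the invertibility of the matrix from \Cref{lem:determinant} to conclude uniqueness of the coefficient vector, whereas you deduce (ii) directly from (i) --- both are standard and equivalent, and you even mention the matrix route as an alternative.
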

\begin{proof}
See Problem \ref{prob:4.1}.
\end{proof}

The previous corollary extends to the following result.

\begin{thm}[see e.g.\ {\cite[p.\ 33]{kreinMarkovMomentProblem}}]
Let $n\in\nset_0$, $\cF$ be a T-system on some set $\cX$ with $|\cX|\geq n+1$, and let $x_0,\dots,x_n\in\cX$ be $n+1$ pairwise different points.
The following hold:
\begin{enumerate}[(i)]
\item Every $f\in\lin\cF$ is uniquely determined by its values $f(x_0),\dots, f(x_n)$.

\item For any $y_0,\dots,y_n\in\rset$ there exists a unique $f\in\lin\cF$ such that $f(x_i)=y_i$ holds for all $i=0,\dots,n$.
\end{enumerate}
\end{thm}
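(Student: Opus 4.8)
The plan is to recognize both claims as expressing that the \emph{evaluation map}
\[
E\colon \lin\cF\to\rset^{n+1},\qquad f\mapsto (f(x_0),\dots,f(x_n)),
\]
is a linear isomorphism, and to reduce this to the non-vanishing of the defining determinant. First I would note that $E$ is linear, and that by \Cref{cor:uniqueDeter} the functions $f_0,\dots,f_n$ are linearly independent, so $\{f_0,\dots,f_n\}$ is a basis of $\lin\cF$ and $\dim\lin\cF = n+1 = \dim\rset^{n+1}$. Hence $E$ is a linear map between spaces of equal finite dimension, and it suffices to prove that $E$ is bijective; in fact injectivity alone already forces bijectivity.

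Next I would write $E$ in coordinates with respect to this basis. For $f=\sum_{i=0}^n a_i f_i$ we have
\[
f(x_j) = \sum_{i=0}^n a_i f_i(x_j) = (M a)_j, \qquad\text{where}\quad M := (f_i(x_j))_{i,j=0}^n
\]
is exactly the matrix from \Cref{dfn:kreinMatrix} and $a=(a_0,\dots,a_n)^T$. Thus $E$ is represented by $M$. Since the points $x_0,\dots,x_n$ are pairwise distinct and $\cF$ is a T-system of order $n$, \Cref{lem:determinant} gives $\det M\neq 0$, so $M$ is invertible.

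Invertibility of $M$ then yields both statements at once. For (i), if $f,g\in\lin\cF$ satisfy $f(x_j)=g(x_j)$ for all $j$, then their coefficient vectors $a,b$ obey $Ma=Mb$, whence $a=b$ and $f=g$; so $f$ is determined by its values. For (ii), given $(y_0,\dots,y_n)\in\rset^{n+1}$ set $a:=M^{-1}(y_0,\dots,y_n)^T$ and $f:=\sum_{i=0}^n a_i f_i$; then $E(f)=Ma=(y_0,\dots,y_n)^T$, giving existence, and uniqueness is precisely (i).

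There is essentially no hard step here: the entire content is packaged into \Cref{lem:determinant}, and the only point requiring care is bookkeeping---matching the indexing of the matrix $M$ in \Cref{dfn:kreinMatrix} (rows indexed by the evaluation points $x_j$, columns by the functions $f_i$) so that $E$ is multiplication by $M$ rather than by its transpose, and invoking \Cref{cor:uniqueDeter} to justify that $\{f_i\}_{i=0}^n$ is genuinely a basis, so that the dimension count and the passage from coefficient vectors to polynomials are both valid.
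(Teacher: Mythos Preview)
Your proof is correct and follows essentially the same approach as the paper: both reduce the claims to the invertibility of the matrix $M = \begin{pmatrix} f_0 & \dots & f_n\\ x_0 & \dots & x_n\end{pmatrix}$ via \Cref{lem:determinant}, and then read off (i) and (ii) as the uniqueness and existence of solutions to the linear system $Ma = (y_0,\dots,y_n)^T$. Your framing through the evaluation map $E$ and the explicit dimension count via \Cref{cor:uniqueDeter} is a slight elaboration, but the substance is identical.
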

\begin{proof}
(i):
Since $f\in\lin\cF$ we have $f = \sum_{i=0}^n a_i\cdot f_i$.
Let $x_1,\dots,x_n\in\cX$ be pairwise distinct points. Then by \Cref{lem:determinant} (i) $\Rightarrow$ (ii) we have that
\[\begin{pmatrix}
f(x_0)\\ \vdots\\ f(x_n)
\end{pmatrix} = \begin{pmatrix}
f_0 & f_1 & \dots & f_n\\ x_0 & x_1 & \dots & x_n
\end{pmatrix}\cdot \begin{pmatrix}
\alpha_0\\ \vdots\\ \alpha_n
\end{pmatrix} \]
has the unique solution $\alpha_0 = a_0$, \dots, $\alpha_n = a_n$.

(ii):
By the same argument as in (i) the system
\[\begin{pmatrix}
y_0\\ \vdots\\ y_n
\end{pmatrix} = \begin{pmatrix}
f_0 & f_1 & \dots & f_n\\ x_0 & x_1 & \dots & x_n
\end{pmatrix}\cdot \begin{pmatrix}
\alpha_0\\ \vdots\\ \alpha_n
\end{pmatrix} \]
has the unique solution $\alpha_0 = a_0$, \dots, $\alpha_n = a_n$.
\end{proof}

\section{The Curtis--Mairhuber--Sieklucki Theorem}

So far we imposed no structure on the set $\cX$.
We now get a structure of $\cX$.
The following structural result was proved in \cite[Thm.\ 2]{mairhu56} for compact subsets $\cX$ of $\rset^n$ and for arbitrary compact sets $\cX$ in \cite{sieklu58} and \cite[Thm.\ 8 and Cor.]{curtis59}.

\begin{cmsthm}\label{thm:cms}\index{Curtis--Mairhuber--Sieklucki Theorem}\index{Theorem!Curtis--Mairhuber--Sieklucki}
Let $n\in\nset_0$ and $\cF$ be a continuous T-system of order $n$ on a topological space $\cX$. If $\cX$ is a compact metrizable space then $\cX$ can be homeomorphically embedded in the unit circle $\{(x,y)\in\rset^2 \,|\, x^2 + y^2 = 1\}$.
\end{cmsthm}

The proof is not difficult but technical and too lengthy for our purposes.
We therefore refer the reader to \cite[Thm.\ 8]{curtis59}.

An immediate consequence of the \Cref{thm:cms} is that every T-system is up to homomorphisms one-dimensional, i.e., in algebraic applications of the theory of T-systems we can only deal with the univariate case.
Additionally, we have the following result.

\begin{cor}[see e.g.\ {\cite[Cor.\ after Thm.\ 8]{curtis59}}]
The order $n$ of a periodic T-system is even.
\end{cor}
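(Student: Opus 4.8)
The plan is to reduce the statement to a sign computation for the fundamental determinant of the T-system and then to exploit the topology of the circle. Writing $\cF = \{f_i\}_{i=0}^n$ for the periodic T-system on $\tset$ and setting
\[
D(x_0,\dots,x_n) := \det\begin{pmatrix} f_0 & f_1 & \dots & f_n\\ x_0 & x_1 & \dots & x_n\end{pmatrix},
\]
\Cref{lem:determinant} guarantees that $D$ is nonzero for every choice of pairwise distinct points $x_0,\dots,x_n\in\tset$, and since the $f_i$ are continuous, $D$ is a continuous function of $(x_0,\dots,x_n)$. As a determinant, $D$ is an alternating function of its point-arguments: interchanging two of the $x_j$ multiplies $D$ by $-1$. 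In particular the cyclic shift $(x_0,x_1,\dots,x_n)\mapsto(x_1,\dots,x_n,x_0)$, being a product of $n$ transpositions, satisfies $D(x_1,\dots,x_n,x_0)=(-1)^n\,D(x_0,\dots,x_n)$.

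Next I would produce a continuous path of distinct-point configurations on $\tset$ joining an ordered tuple to its cyclic shift. Parametrise the circle by $p(\theta):=(\cos\theta,\sin\theta)\in\tset$ and choose representatives $0\le\theta_0<\theta_1<\dots<\theta_n<2\pi$ of $n+1$ distinct points. Set $\theta_{n+1}:=\theta_0+2\pi$ and consider, for $t\in[0,1]$, the angles $\gamma_j(t):=(1-t)\theta_j+t\,\theta_{j+1}$ for $j=0,\dots,n$. Each difference $\gamma_{j+1}(t)-\gamma_j(t)$ is a convex combination of the positive numbers $\theta_{j+1}-\theta_j$ and $\theta_{j+2}-\theta_{j+1}$, hence positive, so the $\gamma_j(t)$ remain strictly increasing; and $\gamma_n(t)-\gamma_0(t)$ is affine in $t$ with both endpoint values strictly below $2\pi$, so it stays $<2\pi$ throughout. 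Therefore the points $p(\gamma_0(t)),\dots,p(\gamma_n(t))$ are pairwise distinct on $\tset$ for every $t$, giving a continuous path from $(x_0,\dots,x_n)$ at $t=0$ to $(x_1,\dots,x_n,x_0)$ at $t=1$ (using $p(\theta_{n+1})=p(\theta_0)$).

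Finally I would combine the two ingredients. Along the path $t\mapsto D(p(\gamma_0(t)),\dots,p(\gamma_n(t)))$ the determinant is continuous and, by \Cref{lem:determinant}, never vanishes, so its sign is constant; comparing the endpoints yields $\sign D(x_0,\dots,x_n)=\sign D(x_1,\dots,x_n,x_0)$. On the other hand the alternating property gives $D(x_1,\dots,x_n,x_0)=(-1)^n\,D(x_0,\dots,x_n)$. Since $D(x_0,\dots,x_n)\neq 0$, these two identities force $(-1)^n=1$, i.e.\ $n$ is even. I expect the only delicate point to be the construction and verification of the distinct-point path: everything hinges on the fact that on the circle a cyclic relabeling is realisable by an honest homotopy through admissible configurations, which is exactly what fails on an interval, where the analogous tuple would have to leave the ordered chamber and collide.
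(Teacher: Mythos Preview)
Your proof is correct and follows essentially the same approach as the paper: both exploit that a cyclic shift of the point-arguments multiplies the determinant by $(-1)^n$, and then use a continuous path of distinct-point configurations on $\tset$ to argue the sign cannot change. The paper's version is just a particular instance of your construction---it places $n+1$ equally spaced points $\varphi(\alpha+2k\pi/(n+1))$ on the circle and rigidly rotates by increasing $\alpha$ from $0$ to $2\pi/(n+1)$, which realises exactly the cyclic shift you treat in general.
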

\begin{proof}
Let $\varphi:[0,2\pi]\to S=\{(x,y)\in\rset^2 \,|\, x^2 + y^2\}$ with $\varphi(\alpha) = (\sin\alpha, \cos\alpha)$ and $\cF=\{f_i\}_{i=0}^n$ be a periodic T-system. Then the $f_i$ are continuous and hence also
\[\det\begin{pmatrix}
f_0 & f_1 & \dots & f_n\\ t_0 & t_1 & \dots & t_n
\end{pmatrix}\]
is continuous in $t_0,\dots,t_n\in S$. If $\cF$ is a T-system we have that
\[d(\alpha) := \det\begin{pmatrix}
f_0 & f_1 & \dots & f_n\\ \varphi(\alpha) & \varphi(\alpha+2\pi/(n+1)) & \dots & \varphi(\alpha + 2n\pi/(n+1))
\end{pmatrix}\]
in non-zero for all $\alpha\in [0,2\pi]$ and never changes singes. If $n$ is odd then $d(0) = -d(2\pi/(n+1))$ which is a contradiction. Hence, $n$ must be even.
\end{proof}

\section{Examples of T-Systems}
%%%%%%%%%%%%%%%%%%%%%%%%%%%%%%%

\begin{exm}[\Cref{exm:vandermonde} continued]\label{exm:vandermonde2}
Let $n\in\nset_0$ and $\cX = \rset$.
Then the family $\cF = \{x^i\}_{i=0}^n$ of monomials is a T-system.
To see this let $x_0 < x_1 < \dots < x_n$ be $n+1$ points in $\rset$.
We then have by the Vandermonde determinant\index{Vandermonde!determinant}\index{determinant!Vandermonde}
\begin{equation}\label{eq:vandermonde}
\det \begin{pmatrix}
1 & x & \dots & x^n\\ x_0 & x_1 & \dots & x_n
\end{pmatrix} = \prod_{0\leq i<j\leq n} (x_j - x_i)
\end{equation}
which is always non-zero and hence $\cF$ is a T-system of order $n$ on $\rset$ by \Cref{lem:determinant}. Additionally, by \Cref{cor:restriction} we have that $\cF$ is a T-system of order $n$ on any $\cY\subseteq\rset$ with $|\cY|\geq n+1$.\exmsymbol
\end{exm}

Note, that in (\ref{eq:vandermonde}) the functions $f_i$ should be written more precisely as
\[f_i:\rset\to\rset,\ x\mapsto x^i\]
and not just as $x^i$. 
However, we then would have the notation
\[\begin{pmatrix} \cdot^0 & \cdot^1 & \dots & \cdot^n\\ x_0 & x_1 & \dots & x_n\end{pmatrix}
\quad\text{or more general}\quad
\begin{pmatrix}\cdot^{\alpha_0} & \cdot^{\alpha_1} & \dots & \cdot^{\alpha_n}\\
x_0 & x_1 & \dots & x_n\end{pmatrix}\]
for $\alpha_i$ with $-\infty < \alpha_0 < \alpha_1 < \dots < \alpha_n <\infty$ which seems to be hard to read.
We will therefore abuse the notation and use $x^i$, $x^{\alpha_i}$, and (\ref{eq:vandermonde}).

\Cref{exm:vandermonde2} can be generalized to non-negative real exponents.

\begin{exm}[see e.g.\ {\cite[p.\ 9, Exm.\ 1]{karlinStuddenTSystemsBook}} or {\cite[p.\ 38, §2(d)]{kreinMarkovMomentProblem}}]\label{exm:tsysAlpha}
Let $n\in\nset_0$ and let $0 = \alpha_0 < \alpha_1 < \dots < \alpha_n$ be non-negative reals.
Then
\[\cF = \{x^{\alpha_0},x^{\alpha_1},\dots,x^{\alpha_n}\}\]
is a T-system of order $n$ on any $\cX\subseteq [0,\infty)$ with $|\cX|\geq n+1$.\exmsymbol
\end{exm}

If we restrict $\cX$ to $\cX\subseteq (0,\infty)$ then we can allow arbitrary real exponents $\alpha_i$.

\begin{exm}\label{exm:tsysAlphaReal}
Let $n\in\nset$ and $\alpha_0 < \alpha_1 < \dots < \alpha_n$ be reals. Then
\[\cF = \left\{x^{\alpha_0}, x^{\alpha_1},\dots, x^{\alpha_n}\right\}\]
is a T-system on any $\cX\subseteq (0,\infty)$ with $|\cX|\geq n+1$.\exmsymbol
\end{exm}

By using $\exp:\rset\to (0,\infty)$ we find that the previous example is by \Cref{cor:transf} equivalent to the following.

\begin{exm}[see e.g.\ {\cite[p.\ 38]{kreinMarkovMomentProblem}}]\label{exm:tsysExp}
Let $n\in\nset$ and $\alpha_0 < \alpha_1 < \dots < \alpha_n$ be reals. Then
\[\cG = \left\{e^{\alpha_0 x}, e^{\alpha_1 x},\dots, e^{\alpha_n x}\right\}\]
is a T-system on any $\cY\subseteq\rset$ with $|\cY|\geq n+1$.\exmsymbol
\end{exm}

That the equivalent Examples \ref{exm:tsysAlphaReal} and \ref{exm:tsysExp} are T-systems will be postponed to \Cref{exm:etSystems}.
The reason is that with the introduction of ET-systems in \Cref{ch:etsystems} and especially \Cref{thm:generalETsystem} we generate plenty of examples of ET- and T-systems.

\begin{exm}[see e.g.\ {\cite[p.\ 41, no.\ 26]{polya64}} or {\cite[p.\ 37-38]{kreinMarkovMomentProblem}}]\label{exm:fraction}
Let $n\in\nset$ and $\alpha_0 < \alpha_1 < \dots < \alpha_n$ be reals. Then
\[\cF=\left\{\frac{1}{x+\alpha_0},\frac{1}{x+\alpha_1},\dots,\frac{1}{x+\alpha_n}\right\}\]
is a continuous T-system on any $[a,b]$ or $[a,\infty)$ with $-\alpha_0 < a < b$.\exmsymbol
\end{exm}
\begin{proof}
See Problem \ref{prob:fraction}.
\end{proof}

\begin{exm}[see e.g.\ {\cite[p.\ 38]{kreinMarkovMomentProblem}}]\label{exm:xf}
Let $n\in\nset$ and let $f\in \cat^n(\cX,\rset)$ with $\cX=[a,b]$, $a<b$, and $f^{(n)}>0$ on $\cX$. Then
\[\cF = \{1,x,x^2,\dots, x^{n-1},f\}\]
is a continuous T-system of order $n$ on $\cX = [a,b]$. We can also allow $\cX = (a,b)$, $[a,\infty)$, $(-\infty,b)$, \dots .\exmsymbol
\end{exm}

With the techniques developed in \Cref{ch:etsystems} it will be easy to show that \Cref{exm:xf} is not only a T-system but in fact also an ET- and ECT-system.
We will therefore postpone its proof to Problem \ref{prob:xf}.

\section{Representation as a Determinant, Zeros, and Non-Negativity}
%%%%%%%%%%%%%%%%%%%%%%%%%%%%%%%%%%%%%%%%%%%%%%%%%%%%%%%%%%%%%%%%%%%%

The following result shows that when enough zeros of a polynomial $f\in\lin\cF$ are known then $f$ has the following representation as a determinant.\index{representation!as a determinant}\index{determinant!representation as a}

\begin{thm}[see e.g.\ {\cite[p.\ 33]{kreinMarkovMomentProblem}}]\label{thm:detRepr}
Let $n\in\nset$, $\cF = \{f_i\}_{i=0}^n$ be a T-system on some set $\cX$ with $|\cX|\geq n+1$, $x_1,\dots,x_n\in\cX$ be $n$ pairwise distinct points, and let $f\in\lin\cF$. The following are equivalent:
\begin{enumerate}[(i)]
\item $f(x_i)=0$ holds for all $i=1,\dots,n$.

\item There exists a constant $c\in\rset$ such that
\begin{equation}\label{eq:fDeterminant}
f(x) = c\cdot\det\begin{pmatrix}
f_0 & f_1 & \dots & f_n\\ x & x_1 & \dots & x_n
\end{pmatrix}.
\end{equation}
\end{enumerate}
\end{thm}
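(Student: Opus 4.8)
The plan is to introduce, once and for all, the determinant
\[
D(x) := \det\begin{pmatrix} f_0 & f_1 & \dots & f_n\\ x & x_1 & \dots & x_n \end{pmatrix}
\]
viewed as a function of $x$ on $\cX$, and to record three facts about it before touching either implication. First, expanding $D(x)$ along its top row shows $D(x) = \sum_{j=0}^n c_j\, f_j(x)$, where each coefficient $c_j$ is (up to sign) an $n\times n$ minor built solely from the fixed points $x_1,\dots,x_n$ and is therefore a constant independent of $x$; hence $D\in\lin\cF$, and so is $c\cdot D$ for every $c\in\rset$. Second, for each $i\in\{1,\dots,n\}$ the matrix defining $D(x_i)$ has two identical rows, since upon setting $x=x_i$ the top row coincides with the row belonging to $x_i$; consequently $D(x_i)=0$. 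Third, $D$ is not the zero polynomial: because $|\cX|\geq n+1$ I may choose a point $x_0\in\cX\setminus\{x_1,\dots,x_n\}$, and then $D(x_0)$ is the determinant of the matrix from \Cref{dfn:kreinMatrix} at the $n+1$ pairwise distinct points $x_0,x_1,\dots,x_n$, which is nonzero by \Cref{lem:determinant} because $\cF$ is a T-system; thus $D(x_0)\neq 0$ and $D\not\equiv 0$.

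The implication (ii)~$\Rightarrow$~(i) is then immediate from the second fact: if $f=c\cdot D$, then $f(x_i)=c\cdot D(x_i)=0$ for all $i=1,\dots,n$.

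For (i)~$\Rightarrow$~(ii) I would combine the extra point $x_0$ with the defining property of a T-system, namely that a nontrivial polynomial in $\lin\cF$ has at most $n$ zeros. Put $c := f(x_0)/D(x_0)$, which is well defined since $D(x_0)\neq 0$, and set $h := f - c\cdot D\in\lin\cF$. By hypothesis (i) and the second fact, $h(x_i)=f(x_i)-c\cdot D(x_i)=0$ for $i=1,\dots,n$, while the choice of $c$ gives $h(x_0)=f(x_0)-c\cdot D(x_0)=0$. Hence $h$ vanishes at the $n+1$ pairwise distinct points $x_0,x_1,\dots,x_n$, so the T-system property forces $h\equiv 0$, that is $f=c\cdot D$, which is exactly (ii).

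The one step that requires genuine care, and which I expect to be the main obstacle, is the third preliminary fact $D\not\equiv 0$: everything downstream hinges on being able to divide by $D(x_0)$. This is precisely where the hypothesis $|\cX|\geq n+1$ enters (to produce $x_0$) and where \Cref{lem:determinant} converts the T-system assumption into non-vanishing of the relevant $(n+1)\times(n+1)$ determinant. Once $D(x_0)\neq 0$ is secured, both directions reduce to short linear-algebra arguments driven entirely by the ``at most $n$ zeros'' property.
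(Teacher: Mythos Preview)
Your proof is correct and follows essentially the same approach as the paper: exhibit an extra point $x_0$, choose $c$ so that $f$ and $c\cdot D$ agree at $x_0$, and then use the T-system property to force equality. The only organizational difference is that the paper picks $x_0$ with $f(x_0)\neq 0$ (after disposing of the trivial case $f=0$ separately) and then invokes \Cref{cor:uniqueDeter}, whereas you pick any $x_0\notin\{x_1,\dots,x_n\}$ and invoke \Cref{lem:determinant} directly to get $D(x_0)\neq 0$; your version avoids the case split, which is a mild simplification.
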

\begin{proof}
(ii) $\Rightarrow$ (i): Clear.

(i) $\Rightarrow$ (ii):
If $f=0$ then $c=0$ so the assertion holds.
If $f\neq 0$ then there exists a point $x_0\in\cX\setminus\{x_1,\dots,x_n\}$ such that $f(x_0)\neq 0$ since $\cF$ is a T-system.
Then also the determinant in (ii) is non-zero and we can choose $c$ such that both $f$ and the scaled determinant coincide also in $x_0$.
By \Cref{cor:uniqueDeter} a polynomial $f$ is uniquely determined by its values $f(x_i)$ at $x_0,\dots,x_n$.
This shows that (\ref{eq:fDeterminant}) is the only polynomial which fulfills (i).
\end{proof}

So far we treated general T-systems.
For further properties we go to continuous T-systems.
By the \Cref{thm:cms} we can assume $\cX\subseteq\rset$.

\begin{dfn}
Let $n\in\nset_0$, $\cF$ be a continuous T-system on $\cX\subseteq\rset$ an interval, $f\in\lin\cF$, and let $x_0$ be a zero of $f$. Then $x_0\in\inter\cX$ is called a \emph{non-nodal} zero\index{zero!non-nodal} if $f$ does not change sign at $x_0$. Otherwise the zero $x_0$ is called \emph{nodal},\index{zero!nodal} i.e., either $f$ changes signs at $x_0$ or $x_0$ is a boundary point of $\cX$.
\end{dfn}

The following result bounds the number of nodal and non-nodal zeros.

\begin{thm}[see {\cite[Lem.\ 3.1]{krein51}} or e.g.\ {\cite[p.\ 34, Thm.\ 1.1]{kreinMarkovMomentProblem}}]\label{thm:zeros1}
Let $n\in\nset_0$, $\cF$ be a continuous T-system of order $n$ on $\cX = [a,b]$ with $-\infty<a<b<\infty$.
If $f\in\lin\cF$ has $k\in\nset_0$ non-nodal zeros and $l\in\nset_0$ nodal zeros in $\cX$ then $2k + l\leq n$.
\end{thm}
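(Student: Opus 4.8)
The plan is to argue by perturbation: I would produce a single polynomial $g\in\lin\cF$ and show that, for all small $\varepsilon>0$, the perturbed polynomial $f_\varepsilon:=f-\varepsilon g$ is not identically zero yet already carries at least $2k+l$ distinct zeros in $\cX$; the T-system property of \Cref{dfn:tSystem} then immediately forces $2k+l\le n$. Throughout I assume $f\neq 0$, so that $f$ has at most $n$ zeros and in particular all of them are isolated. I write the non-nodal zeros as $\xi_1<\dots<\xi_k$; each lies in $\inter\cX=(a,b)$, and since it is isolated and $f$ does not change sign there, there are a $\delta>0$ and a sign $\sigma_i\in\{-1,+1\}$ with $\sigma_i f>0$ on the punctured interval $(\xi_i-\delta,\xi_i+\delta)\setminus\{\xi_i\}$. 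Among the $l$ nodal zeros I separate the interior ones (where $f$ genuinely changes sign) from those equal to $a$ or $b$.

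The heart of the argument is the choice of $g$. Since the interpolation matrix appearing in \Cref{lem:determinant} is nonsingular at pairwise distinct nodes, I can prescribe the values of an element of $\lin\cF$ at any $n+1$ distinct points of $\cX$. I would therefore pick $g\in\lin\cF$ with $g(\xi_i)=\sigma_i$ for $i=1,\dots,k$ and $g=0$ at every boundary zero of $f$; this fixes at most $k+l\le n<n+1$ of the nodes (the remaining ones being filled arbitrarily), so such a $g$ exists. Moreover $g$ is not a scalar multiple of $f$: at $\xi_i$ one has $f(\xi_i)=0$ but $g(\xi_i)=\sigma_i\neq0$, so proportionality would force $f\equiv0$. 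Consequently $f_\varepsilon=f-\varepsilon g\not\equiv0$ for every $\varepsilon$.

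It remains to count the zeros of $f_\varepsilon$ for small $\varepsilon>0$. After shrinking $\delta$ so that the intervals around the finitely many zeros are pairwise disjoint, and choosing $\varepsilon$ small enough to work simultaneously at the finitely many comparison points, I claim three things. First, near each non-nodal zero $\xi_i$ the sign pattern of $f_\varepsilon$ at $\xi_i-\delta,\ \xi_i,\ \xi_i+\delta$ is $(\sigma_i,-\sigma_i,\sigma_i)$, because $f_\varepsilon(\xi_i)=-\varepsilon\sigma_i$ while $\sigma_i f_\varepsilon(\xi_i\pm\delta)>0$ for small $\varepsilon$; the intermediate value theorem then yields two distinct zeros, so the $k$ non-nodal zeros contribute $2k$ zeros in total. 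Second, at each interior nodal zero the strict sign change of $f$ survives a small perturbation, giving one zero of $f_\varepsilon$ there. Third, at each boundary zero $g$ vanishes, so $f_\varepsilon$ still vanishes there. These zeros live in disjoint neighborhoods, hence are distinct, and $f_\varepsilon$ has at least $2k+l$ zeros in $\cX$. As $f_\varepsilon\neq0$, the T-system property gives $2k+l\le n$.

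I expect the only genuine labor to be the uniform selection of $\delta$ and $\varepsilon$ making the three sign statements hold at once — a routine finiteness argument, since there are only finitely many zeros and comparison points. The conceptual core, which I would state most carefully, is the first claim: a sign-\emph{preserving} (non-nodal) zero splits into two transversal crossings once $f$ is pushed across the axis by $\varepsilon g$, and this is exactly what encodes the factor $2$ in the bound $2k+l\le n$.
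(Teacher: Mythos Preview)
Your argument is correct and follows essentially the same perturbation strategy as the paper's proof: construct an auxiliary polynomial $g\in\lin\cF$ via interpolation (\Cref{lem:determinant}) so that $f-\varepsilon g$ acquires two sign changes near each non-nodal zero while retaining one zero near each nodal zero, then invoke the T-system bound. The only cosmetic differences are that the paper prescribes $g$ to vanish at \emph{all} nodal zeros (so they persist exactly rather than by sign-change stability) and fixes a specific perturbation size $\rho=\frac{m}{2\|g\|_\infty}$ with $m$ half the minimum interval-maximum of $|f|$, whereas you impose $g=0$ only at the boundary zeros and argue for all sufficiently small $\varepsilon$; your variant is marginally cleaner in that it imposes fewer interpolation constraints. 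One tiny omission: your non-proportionality argument for $g$ uses $g(\xi_1)\neq0$, so you should state explicitly that the case $k=0$ is trivial (then $2k+l=l\le n$ is immediate) before assuming $k\ge1$.
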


The proof is adapted from {\cite[pp.\ 34, Thm.\ 1.1]{kreinMarkovMomentProblem}}.

\begin{proof}
We make two case distinctions, one for $k=0$ and one for $k\geq 1$.

$k=0$: If $f\in\lin\cF$ has $l$ zeros then $l \leq n$ by \Cref{dfn:tSystem}.

$k\geq 1$: Let $x_1,\dots,x_p\in\inter\cX$ with $p\leq k+l$ be the zeros of $f$ in $\inter\cX$. Set
\[M_i := \max_{x_{i-1} \leq x\leq x_i} |f(x)|\]
for all $i=1,\dots,p+1$ with $x_0 = a$ and $x_{p+1}=b$. Additionally, set
\[m := \frac{1}{2}\min_{i=1,\dots,p+1} M_i,\]
i.e., $m>0$.

We construct a polynomial $g_1\in\lin\cF$ such that
\begin{enumerate}[\qquad (a)]
\item $g_1$ has the value $g(x_i)=m$ at the non-nodal zeros $x_i$ of $f$ with $f\geq 0$ in a neighborhood of $x_i$,

\item $g_1$ has the values $g(x_i)=-m$ at the non-nodal zeros $x_i$ of $f$ with $f\leq 0$ in a neighborhood of $x_i$, and

\item $g_1$ vanishes at all nodal zeros $x_i$, i.e., $g(x_i)=0$.
\end{enumerate}
After renumbering the zeros $x_i$ we can assume $x_1,\dots,x_{k_1}$ fulfill (a), $x_{k_1+1},\dots,x_{k_1+k_2}$ fulfill (b), and $x_{k_1+k_2+1},\dots,x_{k_1+k_2+l}$ fulfill (c) with $k_1+k_2=k$. By \Cref{dfn:tSystem} we have $k+l\leq n$ and hence by \Cref{lem:determinant} we have that
\begin{equation}\label{eq:bigGleichungssystem}
\begin{pmatrix}
m\\ \vdots\\ m\\ -m\\ \vdots\\ -m\\ 0\\ \vdots\\ 0
\end{pmatrix} = \begin{pmatrix}
f_0(x_1) & \dots & f_n(x_1)\\
\vdots & & \vdots\\
f_0(x_{k_1}) & \dots & f_n(x_{k_1})\\
f_0(x_{k_1+1}) & \dots & f_n(x_{k_1+1})\\
\vdots & & \vdots\\
f_0(x_k) & \dots & f_n(x_k)\\
f_0(x_{k+1}) & \dots & f_n(x_{k+1})\\
\vdots & & \vdots\\
f_0(x_{k+l}) & \dots & f_n(x_{k+l})
\end{pmatrix}\cdot \begin{pmatrix}
\beta_0\\ \vdots\\ \beta_n
\end{pmatrix}
\end{equation}
has at least one solution, say $\beta_0 = b_0$, \dots, $\beta_n = b_n$. Then $g_1 = \sum_{i=0}^n b_i\cdot f_i\in\lin\cF $ fulfills (a) to (c).

Set
\[\rho := \frac{m}{2\cdot \|g_1\|_\infty}\]
and define $g_2 := f - g_1$.

We show that to each non-nodal zero $x_i$ of $f$ there correspond two zeros of $g_2$.
Let $x_i$ be a non-nodal zero of $f$ with $f\geq 0$ in a neighborhood of $x_i$.
We can find a point $y_i\in (x_{i-1},x_i)$ and a point $y_{i+1}\in (x_i,x_{i+1})$ such that
\[f(y_i) = M_i > m \qquad\text{and}\qquad f(y_{i+1}) = M_{i+1} > m.\]
Hence, $g_2(y_i) > 0$ and $g_2(y_{i+1}) > 0$. Since $g_2(x_i) = -\rho\cdot m < 0$ it follows that $g_2$ has a zero both in $(y_i,x_i)$ and in $(x_i,y_{i+1})$.

Additionally, $g_2$ also vanishes at all nodal zeros of $f$ and therefore has at least $2k+l$ distinct zeros. By \Cref{dfn:tSystem} we have $2k+l\leq n$.
\end{proof}

The previous result holds for more general sets $\cX$.

\begin{cor}\label{cor:zeros1}
\Cref{thm:zeros1} holds for sets $\cX\subseteq\rset$ of the form
\begin{enumerate}[(i)]
\item $\cX = (a,b)$, $[a,b)$, $(a,b]$ with $-\infty < a < b < \infty$,

\item $\cX = (a,\infty)$, $[a,\infty)$, $(-\infty,b)$, $(-\infty,b]$ with $-\infty < a, b < \infty$,

\item $\cX = \{x_1,\dots,x_k\}\subseteq\rset$ with $k\geq n+1$ and $x_1 < \dots < x_k$, and

\item countable unions of (i) to (iii).
\end{enumerate}
\end{cor}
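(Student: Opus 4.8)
The plan is to revisit the proof of \Cref{thm:zeros1} and to observe that the hypothesis $\cX=[a,b]$ entered it only in inessential ways. Reading that proof, the closed bounded interval was used to (1) guarantee at most $n$ zeros and the solvability of the defining linear system for the auxiliary polynomial $g_1$, and (2) produce, near each non-nodal zero, two bracketing points on which $|f|$ is bounded below. Point (1) rests solely on $\cF$ being a T-system together with \Cref{lem:determinant}, and point (2) is purely local. I would therefore isolate these two ingredients and check that each survives for every $\cX$ in the list. Case (iii) is immediate and can be disposed of first: a finite set $\cX\subseteq\rset$ has $\inter\cX=\emptyset$, so by definition every zero of $f$ is nodal, i.e.\ $k=0$, and $2k+l=l\le n$ follows directly from \Cref{dfn:tSystem}.

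For the remaining cases let $f\in\lin\cF$ be nontrivial with $k$ non-nodal and $l$ nodal zeros; being nontrivial it has at most $n$ zeros, so $k+l\le n$. Extending the at most $n$ zero points to $n+1$ pairwise distinct points of $\cX$ (possible since $|\cX|\ge n+1$) and invoking the nonvanishing determinant of \Cref{lem:determinant}, the relevant value-prescription matrix has full row rank; hence, exactly as in \Cref{thm:zeros1}, there is a $g_1\in\lin\cF$ equal to $\pm m$ at the non-nodal zeros (with sign matching that of $f$ nearby) and to $0$ at the nodal zeros. Now the key modification: because every non-nodal zero $x_0$ lies in $\inter\cX$, it admits a two-sided neighborhood $[x_0-\delta,x_0+\delta]\subseteq\cX$ on which $f$ is continuous, keeps a constant sign, and vanishes only at $x_0$; I take the bracketing maxima of $|f|$ over the two halves of this local neighborhood rather than over global subintervals. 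Setting $g_2:=f-\rho g_1$ with $\rho$ small then yields, just as before, two distinct zeros of $g_2$ inside each such local neighborhood together with all $l$ nodal zeros, i.e.\ at least $2k+l$ distinct zeros in $\cX$; since $g_2\in\lin\cF$ is nontrivial, \Cref{dfn:tSystem} gives $2k+l\le n$.

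The step I expect to be the genuine obstacle is case (iv), the countable unions, where $\cX$ may be disconnected. Here one cannot simply restrict to a single closed subinterval and quote \Cref{thm:zeros1}, and a component-by-component application is fatal: it only yields $2k+l\le 2n$. The localization above is exactly what repairs this, since the doubling of each non-nodal zero is carried out inside a neighborhood contained in one connected ``piece'' of $\cX$, while the global count of zeros of $g_2$ is still controlled by the single T-system bound $n$. For the connected cases (i) and (ii) one may alternatively give the cleaner reduction: since there are only finitely many zeros, choose a closed bounded interval $[a',b']\subseteq\cX$ containing all of them, with $a'$ (resp.\ $b'$) set equal to an endpoint of $\cX$ exactly when that endpoint lies in $\cX$ and carries a (necessarily nodal) zero; then all non-nodal zeros fall in $(a',b')=\inter[a',b']$ and the nodal/non-nodal classification of every zero is unchanged, so \Cref{cor:restriction} makes $\{f_i|_{[a',b']}\}$ a continuous T-system of order $n$ to which \Cref{thm:zeros1} applies verbatim.
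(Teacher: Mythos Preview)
Your argument is correct and follows the paper's approach. For the connected unbounded or half-open cases you reduce to a closed bounded subinterval via \Cref{cor:restriction} and invoke \Cref{thm:zeros1}, which is precisely what the paper does for $\cX=[0,\infty)$ (it takes $[0,x_{\max}+1]$); for the remaining cases the paper merely writes ``adapt (if necessary) the proof of \Cref{thm:zeros1}'', and your localization of the bracketing argument around each non-nodal zero is exactly such an adaptation. Your explicit handling of case (iv), including the remark that a naive component-by-component application only yields $2k+l\le 2n$, and your separate disposal of the discrete case (iii), go beyond what the paper spells out but are in the same spirit.
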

\begin{proof}
$\cX=[0,\infty)$:
Let $0\leq x_1 < \dots < x_k$ be the zeros of $f$ in $[0,\infty)$.
Since every T-system on $[0,\infty)$ is also a T-system on $[0,b]$ for any $b>0$ by \Cref{cor:restriction} the assertion follows from \Cref{thm:zeros1} with $b=x_k + 1$.

For the other assertions adapt (if necessary) the proof of \Cref{thm:zeros1}.
\end{proof}

That non-nodal points are always inner points and have a weight of (at least) $2$ in counting with multiplicities as well as that boundary points are always non-nodal and are counted (at least) once in counting the multiplicities is generalized in the following.

\begin{dfn}\label{dfn:index}
Let $x\in [a,b]$ with $a\leq b$. We define the \emph{index}\index{index} $\varepsilon(x)$ by
\begin{equation}\label{eq:index}
\varepsilon(x) := \begin{cases}
2 &\text{if}\ x\in (a,b),\\
1 &\text{if}\ x=a\ \text{or}\ b.
\end{cases}
\end{equation}
The same definition holds for sets as in \Cref{cor:zeros1}.

Let $\cX\subseteq\rset$ be a set. We define the \emph{index $\varepsilon(\cX)$ of the set $\cX$}\index{index!of a set} by
\begin{equation}\label{eq:indexSet}
\varepsilon(\cX) := \sum_{x\in\cX} \varepsilon(x).
\end{equation}
\end{dfn}

We now want to show that for each T-system $\cF$ not only non-negative polynomials $f\in\lin\cF$ exists but we can even specify the zeros.
We need the following definition.

\begin{dfn}\label{dfn:linFsubsets}
Let $n\in\nset_0$ and $\cF$ be a T-system of order $n$ on some set $\cX$. We define
\begin{align*}
(\lin\cF)^e &:= \left\{\; \sum_{i=0}^n a_i\cdot f_i \;\middle|\; \sum_{i=0}^n a_i^2 = 1 \;\right\},\\
(\lin\cF)_+ &:= \left\{ f\in\lin\cF \,\middle|\, f\geq 0\ \text{on}\ \cX\right\},
\intertext{and}
(\lin\cF)_+^e &:= (\lin\cF)^e \cap (\lin\cF)_+.
\end{align*}
\end{dfn}

With these definitions we can prove the following existence criteria for non-negative polynomials in a T-systems on $[a,b]$.

\begin{thm}[see {\cite[Lem.\ 3.2]{krein51}} or e.g.\ {\cite[p.\ 35, Thm.\ 1.2]{kreinMarkovMomentProblem}}]\label{thm:zeros2}
Let $n\in\nset_0$, $\cF$ be a continuous T-system on $\cX=[a,b]$, and let $x_1,\dots,x_m\in\cX$ be $m$ distinct points for some $m\in\nset$.
The following are equivalent:
\begin{enumerate}[(i)]
\item The points $x_1,\dots,x_m$ are zeros of a non-negative polynomial $f\in\lin\cF$.

\item $\displaystyle\sum_{i=1}^m \varepsilon(x_i) \leq n$.
\end{enumerate}
\end{thm}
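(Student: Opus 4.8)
The plan is to prove the two implications separately. The forward direction (i) $\Rightarrow$ (ii) is immediate from \Cref{thm:zeros1}, whereas (ii) $\Rightarrow$ (i) needs a construction and is where the work lies.

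For (i) $\Rightarrow$ (ii): let $f\in(\lin\cF)_+$ vanish at $x_1,\dots,x_m$. Because $f\geq 0$ on $[a,b]$, it cannot change sign, so each prescribed interior zero is \emph{non-nodal} and each prescribed boundary zero is (by definition) \emph{nodal}. If $k$ and $l$ denote the numbers of non-nodal and nodal zeros of $f$ in $\cX$, then the interior $x_i$ lie among the former and the boundary $x_i$ among the latter, whence
\[\sum_{i=1}^m \varepsilon(x_i)=2\,\#\{i:x_i\in(a,b)\}+\#\{i:x_i\in\{a,b\}\}\leq 2k+l\leq n,\]
the last step being exactly \Cref{thm:zeros1}.

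For (ii) $\Rightarrow$ (i): the idea is to realise the $x_i$ as zeros of even local contact (so that non-negativity is not destroyed) by an interpolation-and-limit argument. First I distribute $n$ interpolation conditions: a double zero at each interior $x_j$ (contributing $2$), a simple zero at each boundary $x_j$ (contributing $1$), and then fill the remaining $n-\sum_i\varepsilon(x_i)\geq 0$ conditions by additional double zeros at auxiliary interior points and, if a single unit of parity is needed, by a zero at an endpoint; since an endpoint is always available (either a listed endpoint may be promoted to a double zero, or a free endpoint adjoined), exactly $n$ conditions can be met while every interior condition stays a double one. For small $\delta>0$ I split each prescribed double zero at a point $y$ into the two distinct points $y-\delta,y+\delta$ (adjusted to stay in $[a,b]$ at endpoints), producing $n$ pairwise distinct nodes; by \Cref{lem:determinant} and the interpolation property of \Cref{cor:uniqueDeter} there is a unique $h_\delta\in(\lin\cF)^e$ (up to sign) vanishing at these $n$ nodes.

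Having exactly $n$ zeros, each zero of $h_\delta$ is a simple sign change, so the sign of $h_\delta$ alternates along $[a,b]$; because the interior nodes occur in adjacent pairs the sign is restored across each pair, and after fixing the global sign $h_\delta\geq 0$ outside the shrinking intervals $[y-\delta,y+\delta]$ around interior double zeros. Finally I pass to the limit: the sphere $(\lin\cF)^e$ is compact, so along some $\delta_k\downarrow 0$ we get $h_{\delta_k}\to h\in(\lin\cF)^e$ uniformly, hence $h\not\equiv 0$; by continuity $h(x_j)=0$ for each $j$, and for every $t$ outside $\{x_i\}$ we have $h_{\delta_k}(t)\geq 0$ once $t$ has left all the shrinking intervals, so $h(t)\geq 0$ and thus $h\geq 0$ on $[a,b]$. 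This $h$ is the desired non-negative polynomial. I expect the main obstacle to be the sign bookkeeping for $h_\delta$---controlling the parity of the interpolation data using the endpoints and confining the negative excursions to the vanishing intervals---so that non-negativity persists in the limit; compactness of the coefficient sphere is precisely what substitutes for the Wronskian/derivative arguments unavailable for a merely continuous T-system.
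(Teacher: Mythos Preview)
Your proof is correct and follows essentially the same approach as the paper: (i)$\Rightarrow$(ii) via \Cref{thm:zeros1}, and (ii)$\Rightarrow$(i) by imposing $n$ interpolation nodes (double at interior points, simple at endpoints), building a normalised $h_\delta\in(\lin\cF)^e$ through the determinant/interpolation representation, and passing to the limit by compactness of $(\lin\cF)^e$. The only cosmetic differences are that the paper splits one-sidedly (keeping $x_i$ and adding $x_i^{(j)}>x_i$) rather than symmetrically about each $x_i$, and it spells out the endpoint/parity bookkeeping as explicit Cases~I--III rather than sketching it.
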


The proof is adapted from {\cite[pp.\ 35, Thm.\ 1.2]{kreinMarkovMomentProblem}}.

\begin{proof}
``(i) $\Rightarrow$ (ii)'' is \Cref{thm:zeros1} and we therefore only have to prove ``(ii) $\Rightarrow$ (i)''.

\textit{Case I}: At first assume that $a < x_1 < \dots < x_m < b$ and $\sum_{i=0}^m \varepsilon(x_i) = 2m = n$.
If $2m < n$ then add $k$ additional points $x_{m+1},\dots,x_{m+k}$ such that $2m + 2k = n$ and $x_m < x_{m+1} < \dots < x_{m+k} < b$.

Select a sequence of points $(x_1^{(j)},\dots,x_m^{(j)})\in\rset^m$, $j\in\nset$, such that
\[a < x_1 < x_1^{(j)} < \dots < x_m < x_m^{(j)} < b\]
for all $j\in\nset$ and $\lim_{j\to\infty} x_i^{(j)} = x_i$ for all $i=1,\dots,m$. Set
\begin{equation}\label{eq:gjdfns}
g_j(x) := c_j\cdot \det\begin{pmatrix}
f_0 & f_1 & f_2 & \dots & f_{2m-1} & f_{2m}\\
x & x_1 & x_1^{(j)} & \dots & x_m & x_m^{(j)}
\end{pmatrix} \quad\in (\lin\cF)^e
\end{equation}
for some $c_j>0$.
Since $(\lin\cF)^e$ is compact we can assume that $g_j$ converges to some $g_0\in (\lin\cF)^e$.
Then $g_0$ has $x_1,\dots,x_m$ as zeros with $\varepsilon(x_i)=2$ and $g_0$ is non-negative since $g_j>0$ on $[a,x_1)$, $(x_1^{(j)},x_2)$, \dots, $(x_{m-1}^{(j)}, x_m)$, and $(x_m^{(j)},b]$ as well as $g_j<0$ on $(x_1,x_1^{(j)})$, $(x_2,x_2^{(j)})$, \dots, $(x_m,x_m^{(j)})$.

\textit{Case II}:
If $a = x_1 < x_2 < \dots < x_m < b$ with $\sum_{i=1}^m \varepsilon(x_i) = 2m-1 = n$ the only modification required in case I is to replace (\ref{eq:gjdfns}) by
\[g_j(x) := -c_j\cdot \det \begin{pmatrix}
f_0 & f_1 & f_2 & f_3 & \dots & f_{2m-2} & f_{2m-1}\\
x & a & x_2 & x_2^{(j)} & \dots & x_m & x_m^{(j)}
\end{pmatrix} \quad\in (\lin\cF)^e\]
with some normalizing factor $c_j > 0$.

\textit{Case III}:
The procedure is similar if $x_m = b$ and $\sum_{i=1}^m \varepsilon(x_i) = n$.
\end{proof}

\begin{rem}\label{rem:kreinError}
\Cref{thm:zeros2} appears in {\cite[p.\ 35, Thm.\ 1.2]{kreinMarkovMomentProblem}} in a stronger version, see also \cite[Lem.\ 3.4]{krein51}.

In {\cite[p.\ 35, Thm.\ 1.2]{kreinMarkovMomentProblem}} and \cite[Lem.\ 3.4]{krein51} Krein claims that the $x_1,\dots,x_m$ are the \emph{only} zeros of some non-negative $f\in\lin\cF$.
This holds when $n=2m + 2p$ for some $p\geq 0$ and $x_1,\dots,x_m\in\inter\cX$.
To see this add to $x_1,\dots,x_m$ in (\ref{eq:gjdfns}) points $x_{m+1},\dots,x_{m+p}\in\inter\cX\setminus\{x_1,\dots,x_m\}$ and get $g_0$.
Hence, $g_0\geq 0$ has exactly the zeros $x_1,\dots,x_{m+p}$.
Then construct in a similar way $\tilde{g}_0$ with the zeros $x_1,\dots,x_m,\tilde{x}_{m+1},\dots,\tilde{x}_{m+p}$ with $\tilde{x}_{m+1},\dots,\tilde{x}_{m+p}\in\inter\cX\setminus\{x_1,\dots,x_{m+p}\}$.
Hence, $g_0 + \tilde{g}_0\geq 0$ has only the zeros $x_1,\dots,x_m$.

A similar construction works for $n=2m+1$ with or without end points $a$ or $b$.
If $x_1,\dots,x_m$ contains no end point, i.e., all $x_i\in\inter\cX$, then construct a $g_0$ with an zero in $a$ (and therefore $g_0(b)>0$ since the index is odd) and a $\tilde{g}_0$ with zero in $b$ (and therefore $\tilde{g}_0(a)>0$).
Then $g_0 + \tilde{g}_0$ has no end point as a zero.

However, Krein misses that for $n=2m + 2p$ with $p\geq 0$ and when one end point is contained in $x_1,\dots,x_m$ then it might happen that also the other end point must appear.
In \cite[p.\ 28, Thm.\ 5.1]{karlinStuddenTSystemsBook} additional conditions are given which ensure that $x_1,\dots,x_m$ are the only zeros of some $f\geq 0$.

For example if also $\{f_i\}_{i=0}^{n-1}$ is a T-system then it can be ensured that $x_1,\dots,x_m$ are the only zeros of some non-negative polynomial $f\in\lin\cF$, see \cite[p.\ 28, Thm.\ 5.1 (b-i)]{karlinStuddenTSystemsBook}, see Problem \ref{prob:kreinError}.
For our main example(s), the algebraic polynomials with gaps, this holds.

The same problem appears in \cite[p.\ 36, Thm.\ 1.3]{kreinMarkovMomentProblem}.
A weaker but correct version is given in \Cref{thm:zeros3} below.

\Cref{thm:zeros1} with the condition that $\cF$ is an ET-system \cite[p.\ 28, Thm.\ 5.1]{karlinStuddenTSystemsBook} is given below in \Cref{thm:zeros4}.
\exmsymbol
\end{rem}

\begin{rem}\label{rem:doublezeros}
Assume that in \Cref{thm:zeros2} we have additionally that $f_0,\dots,f_n\in \cat^1([a,b],\rset)$. Then in (\ref{eq:gjdfns}) we can set $x_i^{(j)} = x_i + j^{-1}$ for all $i=0,\dots,m$ and $j\gg 1$. For $j\to\infty$ with $c_j := j^m$ we then get
\begin{align}
g_0(x) &= \lim_{j\to\infty} j^m\cdot\det \begin{pmatrix}
f_0 & f_1 & f_2 & \dots & f_{2m-1} & f_{2m}\\
x & x_1 & x_1 + j^{-1} & \dots & x_m & x_m + j^{-1}
\end{pmatrix}\notag\\
&= \lim_{j\to\infty} j^m\cdot\det \begin{pmatrix}
f_0(x) & \dots & f_{2m}(x)\\
f_0(x_1) & \dots & f_{2m}(x_1)\\
f_0(x_1+j^{-1}) & \dots & f_{2m}(x_1+j^{-1})\\
\vdots & & \vdots\\
f_0(x_m) & \dots & f_{2m}(x_m)\\
f_0(x_m+j^{-1}) & \dots & f_{2m}(x_m+j^{-1})\\
\end{pmatrix}\notag\\
&= \lim_{j\to\infty} \det \begin{pmatrix}
f_0(x) & \dots & f_{2m}(x)\\
f_0(x_1) & \dots & f_{2m}(x_1)\\
\frac{f_0(x_1+j^{-1})-f_0(x_1)}{j^{-1}} & \dots & \frac{f_{2m}(x_1+j^{-1})-f_{2m}(x_1)}{j^{-1}}\\
\vdots & & \vdots\\
f_0(x_m) & \dots & f_{2m}(x_m)\\
\frac{f_0(x_m+j^{-1})-f_0(x_m)}{j^{-1}} & \dots & \frac{f_{2m}(x_m+j^{-1})-f_{2m}(x_m)}{j^{-1}}
\end{pmatrix}\label{eq:generalg0construction}\\
&= \det\begin{pmatrix}
f_0(x) & \dots & f_{2m}(x)\\
f_0(x_1) & \dots & f_{2m}(x_1)\\
f_0'(x_1) & \dots & f_{2m}'(x_1)\\
\vdots & & \vdots\\
f_0(x_m) & \dots & f_{2m}(x_m)\\
f_0'(x_m) & \dots & f_{2m}'(x_m)
\end{pmatrix},\notag
\end{align}
i.e., a double zero at $x_j$ is included by including the values $f_i'(x_j)$, $i=0,\dots,n$.
We will define that procedure and need these definitions for ET-systems in \Cref{ch:etsystems}.
\exmsymbol
\end{rem}

\begin{cor}\label{cor:zeros2}
\Cref{thm:zeros2} also holds for intervals $\cX\subseteq\rset$, i.e.,
\begin{equation}\label{eq:intervals}
\cX = (a,b),\ (a,b],\ [a,b),\ [a,b],\ (a,\infty),\ [a,\infty),\ (-\infty,b),\ (-\infty,b],\ \text{and}\ \rset
\end{equation}
with $a<b$.
\end{cor}
\begin{proof}
We have that ``(i) $\Rightarrow$ (ii)'' follows from \Cref{cor:zeros1}.
For ``(ii) $\Rightarrow$ (i)'' we apply \Cref{thm:zeros2} on $[\min_i x_i,\max_i x_i]$.
\end{proof}

We will now give a sharper version of \Cref{thm:zeros1}, see also \Cref{rem:kreinError}.

\begin{thm}[see e.g.\ {\cite[p.\ 30, Thm.\ 5.2]{karlinStuddenTSystemsBook}}]\label{thm:zeros3}
Let $n\in\nset$ and $\cF$ be a continuous T-system on $\cX=[a,b]$.
Additionally, let $x_1,\dots,x_k\in\cX$ and $y_1,\dots,y_l\in\cX$ be pairwise distinct points.
The following are equivalent:
\begin{enumerate}[(i)]
\item There exists a polynomial $f\in\lin\cF$ such that
\begin{enumerate}[(a)]
\item $x_1,\dots,x_k$ are the non-nodal zeros of $f$ and
\item $y_1,\dots,y_l$ are the nodal zeros of $f$.
%\item $f$ has no zeros in $[a,b]\setminus\{x_1,\dots,x_k,y_1,\dots,y_l\}$.
\end{enumerate}

\item $2k+l \leq n$.
\end{enumerate}
\end{thm}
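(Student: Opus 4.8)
The plan is to read necessity directly off the counting bound already available and to obtain sufficiency by an explicit determinantal construction that refines the limiting argument of \Cref{thm:zeros2}. Direction (i) $\Rightarrow$ (ii) requires nothing beyond \Cref{thm:zeros1}: if $f\in\lin\cF$ has the points $x_1,\dots,x_k$ as its non-nodal zeros and $y_1,\dots,y_l$ as its nodal zeros, then $f$ has exactly $k$ non-nodal and $l$ nodal zeros, whence $2k+l\leq n$.

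For (ii) $\Rightarrow$ (i) I would first treat the tight case $2k+l=n$. Here I would build $f$ as a limit $g_0=\lim_{j\to\infty}g_j$ of normalized determinant polynomials $g_j\in(\lin\cF)^e$ of the form (\ref{eq:gjdfns}), but now with the $n$ interpolation nodes chosen as the $l$ single points $y_1,\dots,y_l$ together with, for each non-nodal point $x_i\in\inter\cX$, a coalescing pair $x_i,x_i^{(j)}$ with $x_i^{(j)}\to x_i$. Each $g_j$ has a simple sign-changing zero at each of its $2k+l=n$ distinct nodes, so in the limit every coalescing pair collapses to a touch, i.e.\ a non-nodal zero at $x_i$, while every single node $y_m$ persists as a crossing (a nodal zero, or a boundary zero if $y_m\in\{a,b\}$); this is exactly the sign analysis of \Cref{thm:zeros2} and \Cref{rem:doublezeros}, and compactness of $(\lin\cF)^e$ guarantees $g_0\neq 0$. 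To rule out spurious zeros, note $g_0$ lies in the T-system, so if $k'\geq k$ and $l'\geq l$ denote its numbers of non-nodal and nodal zeros, then \Cref{thm:zeros1} gives $2k'+l'\leq n=2k+l$; since $2(k'-k)+(l'-l)\leq 0$ with both summands nonnegative, $k'=k$ and $l'=l$, so $g_0$ realizes (i).

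For the slack case set $r:=n-2k-l>0$. Here I would manufacture the tight case by padding with auxiliary zeros and then cancel the padding by summing two such polynomials. The guiding observation is that the sign of a polynomial on $[a,b]$ is controlled only by its interior crossings: touches and boundary zeros do not flip it. Accordingly I would produce two tight polynomials $g_0,\tilde g_0$ (each built as above with $n$ nodes) that share the prescribed non-nodal zeros $x_i$ and the prescribed nodal zeros $y_j$, hence share the same interior sign pattern, but whose padding is disjoint: when $r$ is even, pad $g_0$ with $r/2$ auxiliary interior touches and $\tilde g_0$ with $r/2$ different interior touches; when $r$ is odd, use in addition a single boundary node (placed at a free endpoint, or absorbed by raising the multiplicity of an already prescribed endpoint) to account for the odd unit. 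After normalizing so that $g_0$ and $\tilde g_0$ carry the same sign wherever both are nonzero, the sum $h:=g_0+\tilde g_0$ has that same sign, so its zeros are exactly the common zeros $\{x_i\}\cup\{y_j\}$; it still touches at each $x_i$ and crosses at each $y_j$, while $h$ is nonzero at every auxiliary point, where one summand vanishes and the other does not. Thus $h$ realizes (i).

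I expect the genuine difficulty to be the bookkeeping of this last step: verifying that matching the signs of $g_0$ and $\tilde g_0$ is possible, that their sum introduces no new zero and converts no touch into a crossing or vice versa, and that the parity of $r$ is correctly absorbed at the boundary when one or both endpoints are already prescribed. This is precisely the point at which \Cref{rem:kreinError} warns that naive versions of the statement fail, so the real care lies in the sign and parity analysis rather than in any single computation.
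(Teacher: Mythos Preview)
Your treatment of (i)$\Rightarrow$(ii) and of the tight case $2k+l=n$ in (ii)$\Rightarrow$(i) is exactly the paper's: each nodal $y_j$ enters the determinant once, each non-nodal $x_i$ enters together with a coalescing companion $x_i^{(j)}\to x_i$, and the normalized limit in $(\lin\cF)^e$ does the job. Your counting argument $2k'+l'\le n=2k+l$ to rule out spurious zeros in the tight case is a nice addition the paper leaves implicit.

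The divergence is in the slack case $r=n-2k-l>0$. The paper does not attempt to eliminate the auxiliary zeros: its proof says only to ``adapt the proof of \Cref{thm:zeros2}'', which in the slack situation simply pads the determinant with extra nodes. The resulting polynomial carries those extra zeros, and the paper is content with that---recall \Cref{rem:kreinError} explicitly calls \Cref{thm:zeros3} the ``weaker but correct version'' of Krein's claim that the prescribed points are the \emph{only} zeros. So under the reading the paper's proof actually supports, your summing argument is not needed at all.

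You are reading the statement more strongly (the prescribed points are \emph{exactly} the zeros), and your remedy---build two tight polynomials with the same interior sign pattern but disjoint padding and add them---is precisely the mechanism of \Cref{rem:kreinError}. You are also right that the real difficulty is the odd-$r$ boundary bookkeeping. But note that ``raising the multiplicity of an already prescribed endpoint'' is not available in a bare T-system, and when $r$ is odd with an endpoint already prescribed the situation reduces (after removing interior sign changes) to the non-negative case of \Cref{thm:zeros2}, where \Cref{rem:kreinError} warns the other endpoint can be forced in. In other words, the strong reading you are targeting is not known to hold for arbitrary T-systems; it requires an extra hypothesis such as $\{f_i\}_{i=0}^{n-1}$ also being a T-system (cf.\ Problem~\ref{prob:kreinError}). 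So your slack-case programme is correct in spirit but cannot be completed in the generality of the stated theorem.
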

\begin{proof}
(i) $\Rightarrow$ (ii): That is \Cref{thm:zeros1}.

(ii) $\Rightarrow$ (i): Adapt the proof and especially the $g_j$'s in (\ref{eq:gjdfns}) of \Cref{thm:zeros2} accordingly.
Let $z_1 < \dots < z_{k+l}$ be the $x_i$'s and $y_i$'s together ordered by size.
Then in $g_j$ treat every nodal $z_i$ like the endpoint $a$ or $b$, i.e., include it only once in the determinant, and insert for every non-nodal point $z_i$ the point $z_i$ and the sequence $z_i^{(j)}\in (z_i,z_{i+1})$ with $\lim_{j\to\infty} z_i^{(j)} = z_i$.
\end{proof}

\begin{cor}
\Cref{thm:zeros3} also holds for sets $\cX\subseteq\rset$ of the form
\begin{enumerate}[(i)]
\item $\cX = (a,b)$, $[a,b)$, $(a,b]$ with $a<b$,

\item $\cX = (a,\infty)$, $[a,\infty)$, $(-\infty,b)$, $(-\infty,b]$,

\item $\cX = \{x_1,\dots,x_k\}\subseteq\rset$ with $k\geq n+1$ and $x_1 < \dots < x_k$, and

\item finitely many unions of (i) to (iii).
\end{enumerate}
\end{cor}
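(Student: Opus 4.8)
The plan is to follow exactly the route that led from \Cref{thm:zeros1} to \Cref{cor:zeros1} and from \Cref{thm:zeros2} to \Cref{cor:zeros2}: prove the counting direction by citing the already-generalized counting bound, and prove the construction direction by restricting to a compact subinterval and invoking \Cref{thm:zeros3}. The implication (i) $\Rightarrow$ (ii) needs no new idea at all. If some $f\in\lin\cF$ has the points $x_1,\dots,x_k$ as its non-nodal zeros and $y_1,\dots,y_l$ as its nodal zeros on a set $\cX$ of the listed forms, then \Cref{cor:zeros1} (the form of \Cref{thm:zeros1} valid precisely for the sets in (i)--(iv)) applies and gives $2k+l\leq n$, which is (ii).

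For (ii) $\Rightarrow$ (i) I would build $f$ on a suitable compact interval and then read the result back on $\cX$. Concretely, choose a closed interval $[c,d]\subseteq\cX$ such that every prescribed non-nodal point $x_i$ lies in the open interval $(c,d)$; every prescribed nodal point that happens to be an endpoint of $\cX$ (this can occur only for the half-open or closed-ray cases of (i)--(ii)) is taken to be an endpoint of $[c,d]$; and every remaining nodal $y_j$ lies in $(c,d)$. Such $c,d$ exist because $\cX$ is an interval containing all prescribed points: one simply pushes each free endpoint slightly past $\min$ and $\max$ of the points while staying inside $\cX$. By \Cref{cor:restriction} the family $\cF$ restricted to $[c,d]$ is again a T-system of order $n$, and the inequality $2k+l\leq n$ persists, so \Cref{thm:zeros3} yields an $f\in\lin\cF$ whose non-nodal zeros on $[c,d]$ are exactly the $x_i$ and whose nodal zeros are exactly the prescribed points. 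Since each non-nodal point was placed in $(c,d)\subseteq\inter\cX$, and since the only zeros allowed on the boundary of $[c,d]$ were forced to coincide with genuine endpoints of $\cX$, the nodal/non-nodal classification of these zeros agrees whether it is computed in $[c,d]$ or in $\cX$.

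The hard part will be ruling out spurious zeros of $f$ on $\cX\setminus[c,d]$, which is automatic only when the budget is saturated. When $2k+l=n$, \Cref{cor:zeros1} forbids any further zero anywhere in $\cX$, so $f$ is exactly as required. When $2k+l<n$ the single determinant (\ref{eq:gjdfns}) underlying \Cref{thm:zeros3} must be padded with additional interpolation points, and those padding points — being forced to lie in $\cX$ — would in general introduce extra zeros; here I would import the device of \Cref{rem:kreinError}, constructing two admissible polynomials whose padding zeros are placed in disjoint interior positions and combining them so that only the common prescribed zeros survive. The genuine subtlety, and the step I expect to cost the most care, is that the \Cref{rem:kreinError} cancellation was designed for \emph{non-negative} (purely non-nodal) polynomials, where a sum is zero only at common zeros; for prescribed \emph{nodal} zeros one must arrange the two constructions so that their signs agree across each prescribed sign change and so that the combination creates no new zero, which requires tracking the sign pattern on each subinterval rather than merely adding two non-negative functions.

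Finally, the finite-set case (iii) and the finite unions (iv) follow by the same restriction argument applied on the minimal interval spanned by the prescribed points, in exact parallel with the treatment of these cases in the proof of \Cref{cor:zeros1}.
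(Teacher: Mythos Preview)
Your approach is sound and, at its core, the same as the paper's: both reduce to the compact case via restriction, and both obtain (i)~$\Rightarrow$~(ii) from the already-generalized counting bound \Cref{cor:zeros1}. The paper's proof, however, is a single sentence: it observes that in the $g_j$-construction underlying \Cref{thm:zeros3} (adapted from (\ref{eq:gjdfns}) in the proof of \Cref{thm:zeros2}) non-negativity is never required --- sign changes at the nodal points $y_i$ are allowed and indeed built into the design --- so the restriction argument of \Cref{cor:zeros2} carries over without obstruction. That is the entire content of the paper's proof.

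Your explicit treatment of spurious zeros on $\cX\setminus[c,d]$ in the undersaturated case $2k+l<n$, and your proposed cancellation via the \Cref{rem:kreinError} device together with sign-tracking across nodal points, is considerably more careful than what the paper supplies. The paper's one-line proof does not address this issue at all; it simply inherits whatever \Cref{thm:zeros3} produces on the compact subinterval and declares the extension done. So your elaborate undersaturated analysis, while a legitimate concern to raise, goes well beyond the level of detail the paper itself provides here --- the loose end you identify is present in the paper's treatment as well, not something your reduction introduces.
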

\begin{proof}
In the adapted proof and the $g_j$'s in (\ref{eq:gjdfns}) of \Cref{thm:zeros2} we do not need to have non-negativity, i.e., in the $g_j$'s sign changes at the $y_i$'s are allowed (and even required).
\end{proof}

\section*{Problems}%%%%%%%%%%%%%%%%%%%%%
\addcontentsline{toc}{section}{Problems}

\begin{prob}\label{prob:restriction}
Prove \Cref{cor:restriction}.
\end{prob}

\begin{prob}\label{prob:transf}
Prove \Cref{cor:transf}.
\end{prob}

\begin{prob}\label{prob:scaling}
Prove \Cref{cor:scaling}.
\end{prob}

\begin{prob}\label{prob:4.1}
Prove \Cref{cor:uniqueDeter}.
\end{prob}

\begin{prob}\label{prob:fraction}
Prove \Cref{exm:fraction}.
\end{prob}

\begin{prob}\label{prob:4.2}
Why does (\ref{eq:bigGleichungssystem}) have at least one solution?
\end{prob}

\begin{prob}\label{prob:kreinError}
Assume in \Cref{thm:zeros2} we not only have that $\cF = \{f_i\}_{i=0}^n$ is a T-system of order $n$, but additionally that $\{f_i\}_{i=0}^{n-1}$ is T-systems of order $n-1$. 
Then show that the following are equivalent:
\begin{enumerate}[(i)]
\item The distinct points $x_1,\dots,x_p\in [a,b]$ are the \emph{only} zeros of some non-negative polynomial $f\in\lin\cF$.

\item $\sum_{i=1}^p \varepsilon(x_i) \leq n$.
\end{enumerate}
\end{prob}

\motto{Curiouser and curiouser!\\ \medskip
\ \hspace{1cm} \normalfont{Lewis Carroll: Alice's Adventures in Wonderland}\index{Carroll, L.}}

%%%%%%%%%%%%%%%%%%%%%%%%%%%%%%%%
\chapter{ET- and ECT-Systems}%%%
%%%%%%%%%%%%%%%%%%%%%%%%%%%%%%%%
\label{ch:etsystems}

In this chapter we introduce the concept of ET- and ECT-systems, i.e., \emph{extended} and \emph{extended complete} Tchebycheff systems.
The sparse algebraic polynomial systems on $(0,\infty)$ are the main examples.
Being an ET-system is required for certain Positiv- and Nichtnegativstellensätze in later chapters.

\section{Definitions and Basic Properties}
%%%%%%%%%%%%%%%%%%%%%%%%%%%%%%%%%%%%%%%%%%

We remind the reader that a function $f\in\cat^n(\rset,\rset)$ has a \emph{zero} at $x_0\in\rset$ \emph{of multiplicity} (at least) $m$\index{zero!multiplicity} if
\begin{equation}\label{eq:zeroMultipl}
f^{(k)}(x_0) = 0 \qquad\text{for all}\ k=0,1,\dots,m-1.
\end{equation}
For univariate polynomials $f\in\rset[x]$ this translates into a factorization
\begin{equation}\label{eq:zeroFactor}
f(x) = (x-x_0)^m\cdot g(x) \qquad\text{for some}\ g\in\rset[x].
\end{equation}
While the concept of T-systems comes from the univariate polynomials, a relation like (\ref{eq:zeroFactor}) is in general not accessible for T-systems.
Hence, we rely on the more general (analytic) notion (\ref{eq:zeroMultipl}) of multiplicity but still call it \emph{algebraic multiplicity}.
At endpoints of intervals $[a,b]$ we use of course the one-sided derivatives.

\begin{dfn}\label{dfn:etSystem}
Let $n\in\nset$ and let $\cF = \{f_i\}_{i=0}^n\subseteq \cat^n([a,b],\rset)$ be a T-system of order $n$ on $[a,b]$ with $a<b$. $\cF$ is called an \emph{extended Tchebycheff system}\index{T-system!extended} (short \emph{ET-system})\index{ET-system}\index{system!ET-} \emph{on $[a,b]$} if any polynomial $f\in\lin\cF\setminus\{0\}$ has at most $n$ zeros in $[a,b]$ counting algebraic multiplicities.
\end{dfn}

\begin{rem}
It is clear that every ET-system is also a T-system by only allowing multiplicity one for each zero.\exmsymbol
\end{rem}

In \Cref{rem:doublezeros} eq.\ (\ref{eq:generalg0construction}) we showed how double zeros can be included in the determinantal representation.
Whenever we have $\cat^1$-functions in $\cF = \{f_i\}_{i=0}^n$ and
\[ x_0 < \dots < x_i = x_{i+1} < \dots < x_n\]
we define
\begin{equation}\label{eq:doublezeroDfn}
\begin{pmatrix}
f_0 & \dots & f_{i-1} & f_i & f_{i+1} & f_{i+2} & \dots & f_n\\
x_0 & \dots & x_{i-1} & (x_i & x_i) & x_{i+2} & \dots & x_n
\end{pmatrix} := \begin{pmatrix}
f_0(x_0) & \dots & f_n(x_0)\\
\vdots & & \vdots\\
f_0(x_{i-1}) & \dots & f_n(x_{i-1})\\
f_0(x_i) & \dots & f_n(x_i)\\
f_0'(x_i) & \dots & f_n'(x_i)\\
f_0(x_{i+2}) & \dots & f_n(x_{i+2})\\
\vdots & & \vdots\\
f_0(x_n) & \dots & f_n(x_n)
\end{pmatrix}
\end{equation}
and equivalently when $x_j = x_{j+1}$, $x_k = x_{k+1}$, \dots\ for additional entries.

We use the additional brackets ``$($'' and ``$)$'' to indicate that $x_i$ is inserted in the $f_0,\dots,f_n$ and then also into $f_0',\dots,f_n'$ to distinguish (\ref{eq:doublezeroDfn}) from \Cref{dfn:kreinMatrix} to avoid confusion. Hence, in \Cref{dfn:kreinMatrix} we have
\[\det\begin{pmatrix}
f_0 & \dots & f_{i-1} & f_i & f_{i+1} & f_{i+2} & \dots & f_n\\
x_0 & \dots & x_{i-1} & x_i & x_i & x_{i+2} & \dots & x_n
\end{pmatrix} = 0\]
since in two rows $x_i$ is inserted into $f_0,\dots,f_n$, while in (\ref{eq:doublezeroDfn}) we have that
\[\begin{pmatrix}
f_0 & \dots & f_{i-1} & f_i & f_{i+1} & f_{i+2} & \dots & f_n\\
x_0 & \dots & x_{i-1} & (x_i & x_i) & x_{i+2} & \dots & x_n
\end{pmatrix}\]
indicates that $x_i$ is inserted in $f_0,\dots,f_n$ and then also into $f_0',\dots,f_n'$.

Extending this to zeros of multiplicity $m$ for $\cat^{m-1}$-functions is straight forward and we leave it to the reader to write down the formulas.
Similar to (\ref{eq:doublezeroDfn}) we write for any $a\leq x_0 \leq x_1 \leq \dots \leq x_n\leq b$ the matrix as
\begin{equation}\label{eq:matrixStar}
\begin{pmatrix}
f_0 & f_1 & \dots & f_n\\
x_0 & x_1 & \dots & x_n
\end{pmatrix}^*
\end{equation}
when $f_0,\dots, f_n$ are sufficiently differentiable.

We often want to express polynomials $f\in\lin\cF$ as determinants (\ref{eq:gjdfns}) only by knowing their zeros $x_1,\dots,x_k$.
If arbitrary multiplicities appear we only have $x_1 \leq x_2 \leq \dots \leq x_n$ where we include zeros multiple times according to their algebraic multiplicities.
Hence, for
\[x_0 = \dots = x_{i_1} \;<\; x_{i_1+1} = \dots = x_{i_2} \;<\; \dots \;<\; x_{i_k+1} = \dots = x_n\]
we introduce a simpler notation to write down (\ref{eq:doublezeroDfn}):
\begin{equation}\label{eq:doublezeroDfn2}
\left(\begin{array}{c|cccc}
f_0 &\, f_1 & f_2 & \dots & f_n\\
x &\, x_1 & x_2 & \dots & x_n
\end{array}\right) :=
\begin{pmatrix}
f_0 & f_1\; \dots\; f_{i_1} & f_{i_1+1} \;\dots\; f_{i_2} & \dots & f_{i_k+1} \;\dots\; f_{i_k+1}\\
x & (x_1\; \dots\; x_{i_1}) & (x_{i_1+1} \;\dots\; x_{i_2}) & \dots & (x_{i_k+1} \;\dots\; \;\; x_n)\;\,
\end{pmatrix}.
\end{equation}
Clearly $(\ref{eq:doublezeroDfn2}) \in\lin\cF$. For (\ref{eq:doublezeroDfn2}) to be well-defined we need $\cF\subseteq \cat^{m-1}$ where $m$ is the largest multiplicity of any zero.

We see here why we require in \Cref{dfn:etSystem} $\cF = \{f_i\}_{i=0}^n\subseteq\cat^n([a,b],\rset)$.
In the case of \mbox{$x_0 = x_1 = \dots = x_n$} the functions $f_i$ need to be $\cat^n([a,b],\rset)$, not just $\cat^{n-1}([a,b],\rset)$.

Similar to \Cref{lem:determinant} we have the following.

\begin{thm}[\cite{krein51} or e.g.\ {\cite[p.\ 37, P.1.1]{kreinMarkovMomentProblem}}]\label{thm:etDet}
Let $n\in\nset$ and $\cF=\{f_i\}_{i=0}^n\subseteq \cat^n([a,b],\rset)$ with $a<b$. Then the following are equivalent:
\begin{enumerate}[(i)]
\item $\cF$ is an ET-system.

\item We have
\[\det\begin{pmatrix}
f_0 & f_1 & \dots & f_n\\
x_0 & x_1 & \dots & x_n
\end{pmatrix}^* \neq 0\]
for every $a\leq x_0 \leq x_1 \leq \dots \leq x_n\leq b$.
\end{enumerate}
\end{thm}
\begin{proof}
Let $x_0,\dots,x_n\in [a,b]$ with
\[a\;\leq\; x_0 = \dots = x_{i_1} \;<\; x_{i_1+1} = \dots = x_{i_2} \;<\; \dots \;<\; x_{i_k} = \dots = x_n \;\leq\; b\]
be the zeros of some $f = \sum_{i=0}^n a_n f_i\in\lin\cF$.
We get the coefficients $a_0,\dots,a_n$ from the system
\begin{equation}\label{eq:zerosystemET}
0 = \begin{pmatrix}
f(x_0)\\ f'(x_0)\\ \vdots\\ f^{(i_1)}(x_0)\\ f(x_{i_1+1})\\ \vdots\\ f^{(n-i_k)}(x_{i_k})
\end{pmatrix} = \underbrace{\begin{pmatrix}
f_0 & f_1 & \dots & f_n\\
x_0 & x_1 & \dots & x_n
\end{pmatrix}^*}_{=:M}\cdot \begin{pmatrix}
a_0\\ a_1\\ \vdots\\ a_n\end{pmatrix}.
\end{equation}
Hence, since $x_0,\dots,x_n$ are arbitrary we have (i) $\cF$ is an ET-systems $\Leftrightarrow a_0 = \dots = a_n = 0 \Leftrightarrow$ (\ref{eq:zerosystemET}) has only the trivial solution $\Leftrightarrow$ $M$ has full rank $\Leftrightarrow$ (ii).
\end{proof}

\begin{rem}\label{rem:signDetET}
Similar to \Cref{rem:signDet} for T-systems we can assume after a sign change in $f_n$ that for every ET-system $\cF = \{f_i\}_{i=0}^n$ on $[a,b]$ we have that
\[\det\begin{pmatrix}
f_0 & f_1 & \dots & f_n\\
x_0 & x_1 & \dots & x_n
\end{pmatrix}^* > 0\]
holds for all $a\leq x_0 \leq x_1 \leq \dots \leq x_n\leq b$ since $\cF\subseteq\cat^n([a,b],\rset)$.\exmsymbol
\end{rem}

An even more special case of ET-systems and therefore T-systems are the ECT-systems which we define now.

\begin{dfn}
Let $n\in\nset_0$ and let $f_0,\dots,f_n\in\cat^n([a,b],\rset)$ with $a<b$. The family $\cF = \{f_0\}_{i=0}^n$ is called an \emph{extended complete Tchebycheff system}\index{T-system!extended!complete} (short \emph{ECT-system})\index{ECT-system}\index{system!ECT-} \emph{on $[a,b]$} if $\{f_i\}_{i=0}^k$ is an ET-system on $[a,b]$ for all $k=0,\dots,n$.
\end{dfn}

\section{Wronskian Determinant}
%%%%%%%%%%%%%%%%%%%%%%%%%%%%%%%

To handle and work with ECT-systems it is useful to introduce the following determinant.

\begin{dfn}\label{dfn:wronski}
Let $n\in\nset_0$ and let $f_0,\dots,f_n\in\cat^n([a,b],\rset)$ be with $a<b$.
For each $k=0,\dots,n$ we define the \emph{Wronskian determinant}\index{determinant!Wronskian}\index{Wronskian determiant|see{Wronskian}} (short \emph{Wronskian})\index{Wronskian} $\cW(f_0,\dots,f_k)$ of $f_0,\dots,f_k$ to be
\begin{equation}\label{eq:wronski}
\cW(f_0,f_1,\dots,f_k) := \det\begin{pmatrix}
f_0 & f_0' & \dots & f_0^{(k)}\\
f_1 & f_1' & \dots & f_1^{(k)}\\
\vdots & \vdots & & \vdots\\
f_k & f_k' & \dots & f_k^{(k)}
\end{pmatrix}.
\end{equation}
\end{dfn}

The Wronskian is a common tool in the theory of ordinary differential equations.

In the previous definition (\ref{eq:wronski}) we could also shortly write
\[\cW(f_0,\dots,f_k)(x) := \det\begin{pmatrix}
f_0 & f_1 & \dots & f_k\\ x & x & \dots & x
\end{pmatrix}^*\]
for all $x\in [a,b]$.

Let $m_1,\dots,m_k\in\nset$ with $m_1+\dots+m_k=n+1$ and $x_1<\dots<x_k$. Then the first $m_j$ columns of $\cW(f_0,\dots,f_n)$ are the $m_j$ columns in
\[\begin{pmatrix}
f_0 & \dots & f_{m_1-1} & f_{m_1} & \dots & f_{m_1+m_2-1} & f_{m_1+m_2} & \dots & f_{n}\\
x_1 & \dots & x_1 & x_2 & \dots & x_2 & x_3 & \dots & x_k
\end{pmatrix}^*\]
involving $x_j$.

\begin{lem}\label{lem:etMultiplication}
Let $n\in\nset_0$, let $\cF = \{f_i\}_{i=0}^n$ be an ET-system on $[a,b]$ with $a<b$, and let $g\in\cat^n([a,b],\rset)$ with $g>0$.
Then
\[\cG := \{g_i\}_{i=0}^n \quad\text{with}\quad  g_i := g\cdot f_i\]
is an ET-system and we have
\[\cW(g_0,\dots,g_n) = g^{n+1}\cdot\cW(f_0,\dots,f_n).\]
\end{lem}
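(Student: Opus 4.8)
The statement asserts two things, that $\cG$ is again an ET-system and that $\cW(g_0,\dots,g_n)=g^{n+1}\cdot\cW(f_0,\dots,f_n)$, and both follow from a single tool, the Leibniz product rule, which I would record once at the outset. Since $g,f_i\in\cat^n([a,b],\rset)$ we have $g_i=g\cdot f_i\in\cat^n([a,b],\rset)$, and for every $0\le j\le n$,
\[
g_i^{(j)}=\sum_{l=0}^j \binom{j}{l}\, g^{(j-l)}\, f_i^{(l)}.
\]
The plan is to read this formula first as a matrix factorization (for the determinant identity) and then as a statement about vanishing orders (for the ET-property).

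For the Wronskian identity I would introduce the two determinant matrices $F:=(f_i^{(j)})_{i,j=0}^n$, with $\det F=\cW(f_0,\dots,f_n)$, and $G:=(g_i^{(j)})_{i,j=0}^n$, with $\det G=\cW(g_0,\dots,g_n)$. The Leibniz formula says exactly that $G=F\cdot U$, where $U=(U_{l,j})_{l,j=0}^n$ is given by $U_{l,j}:=\binom{j}{l}\,g^{(j-l)}$ for $l\le j$ and $U_{l,j}:=0$ for $l>j$. The matrix $U$ is upper triangular with diagonal entries $U_{j,j}=g$, hence $\det U=g^{\,n+1}$. Taking determinants yields $\cW(g_0,\dots,g_n)=\det G=\det F\cdot\det U=g^{\,n+1}\cdot\cW(f_0,\dots,f_n)$, which is the claimed identity.

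For the ET-property I would show that multiplication by the strictly positive $\cat^n$ function $g$ preserves the algebraic multiplicity of every zero. Take $h=\sum_{i=0}^n a_i g_i\in\lin\cG\setminus\{0\}$ and write $h=g\cdot f$ with $f:=\sum_{i=0}^n a_i f_i\in\lin\cF$; since $g>0$, the function $f=h/g$ is nonzero, so $f\in\lin\cF\setminus\{0\}$. If $f$ has a zero of multiplicity exactly $m$ at $x_0\in[a,b]$, then the Leibniz expansion $h^{(k)}=\sum_{l=0}^k\binom{k}{l} g^{(k-l)} f^{(l)}$ gives $h^{(k)}(x_0)=0$ for all $k<m$ and $h^{(m)}(x_0)=g(x_0)\,f^{(m)}(x_0)\neq 0$, because $g(x_0)>0$. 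Thus $h$ and $f$ have exactly the same zeros with the same algebraic multiplicities, so $h$ has at most $n$ zeros counting multiplicities, as $\cF$ is an ET-system of order $n$; in particular $\cG$ is a T-system of order $n$, and together with $\cG\subseteq\cat^n([a,b],\rset)$ this makes $\cG$ an ET-system. (Alternatively, one could unify both parts through \Cref{thm:etDet}: the same Leibniz computation shows that at any $a\le x_0\le\dots\le x_n\le b$ the starred matrix of $\cG$ is a block-lower-triangular transform of that of $\cF$ with diagonal $g(x_0),\dots,g(x_n)$, so its determinant equals $\big(\prod_{j=0}^n g(x_j)\big)$ times that of $\cF$, and the Wronskian identity is the coincident-point case $x_0=\dots=x_n$.)

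I expect no genuine obstacle. The only step requiring a little care is the multiplicity bookkeeping, namely verifying that the lowest non-vanishing derivative of $h=gf$ at a zero is precisely $g(x_0)f^{(m)}(x_0)$; this is immediate once one notes that in $h^{(m)}(x_0)=\sum_{l=0}^m\binom{m}{l}g^{(m-l)}(x_0)f^{(l)}(x_0)$ every term with $l<m$ vanishes.
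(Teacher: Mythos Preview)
Your proof is correct and uses the same underlying tool as the paper, the Leibniz product rule; your matrix factorization $G=F\cdot U$ is a clean repackaging of the paper's step-by-step row reduction on the Wronskian determinant. Where you genuinely differ is in the ET-system claim: the paper's solution only verifies that the Wronskian $\cW(g_0,\dots,g_n)$ is nonzero (which, taken literally, is the coincident-point case of \Cref{thm:etDet} and not sufficient by itself), whereas you argue directly from the definition by showing that multiplication by a strictly positive $\cat^n$ function preserves the multiplicity of every zero. Your route is therefore a bit more self-contained; the paper's row-operation argument would also extend to the general starred determinants, but you make the ET conclusion explicit without needing that extension.
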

\begin{proof}
See Problem \ref{prob:etMultiplication}.
\end{proof}

\begin{lem}\label{lem:ettrans}
Let $n\in\nset_0$, let $\cF = \{f_i\}_{i=0}^n$ be an ET-system on $[c,d]$, and $g\in\cat^n([a,b],[c,d])$ with $g' >0$ on $[a,b]$.
Then
\[\cG := \{f_i\circ g\}_{i=0}^n \quad\text{with}\quad g_i := f_i\circ g\]
is an ET-system on $[a,b]$ with
\[\cW(g_0,\dots,g_n) = (g')^{\frac{n(n+1)}{2}}\cdot \cW(f_0,\dots,f_n)\circ g.\]
\end{lem}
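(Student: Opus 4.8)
The plan is to establish the two assertions separately: first that $\cG=\{f_i\circ g\}_{i=0}^n$ is an ET-system on $[a,b]$, and then the Wronskian identity. The organizing observation is that $g$ is strictly increasing (since $g'>0$) and of class $\cat^n$, so it is a $\cat^n$-diffeomorphism of $[a,b]$ onto the nondegenerate interval $g([a,b])\subseteq[c,d]$, and every $h\in\lin\cG$ has the form $h=p\circ g$ with $p=\sum_{i=0}^n a_i f_i\in\lin\cF$. The map $p\mapsto p\circ g$ is a linear bijection $\lin\cF\to\lin\cG$: if $p\circ g\equiv 0$ then $p$ vanishes on $g([a,b])$, which contains infinitely many points, forcing $p=0$ by the T-system property of $\cF$. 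Moreover $f_i\in\cat^n([c,d],\rset)$ and $g\in\cat^n([a,b],[c,d])$ give $g_i\in\cat^n([a,b],\rset)$, so the smoothness demanded in \Cref{dfn:etSystem} is in place.

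For the ET-property I would argue directly that composition with $g$ preserves zeros together with their algebraic multiplicities. Fix $x_0\in[a,b]$ and put $y_0=g(x_0)$. By the chain rule in the Fa\`a di Bruno form, for $k\ge 1$ one has
\begin{equation*}
(p\circ g)^{(k)}(x_0)=\sum_{j=1}^{k} p^{(j)}(y_0)\,B_{k,j}\bigl(g'(x_0),\dots,g^{(k-j+1)}(x_0)\bigr),
\end{equation*}
with leading coefficient $B_{k,k}=(g'(x_0))^{k}$, while $(p\circ g)(x_0)=p(y_0)$. Hence if $p$ has a zero of multiplicity exactly $m$ at $y_0$, i.e.\ $p(y_0)=\dots=p^{(m-1)}(y_0)=0$ and $p^{(m)}(y_0)\neq 0$, then every derivative $(p\circ g)^{(k)}(x_0)$ with $k<m$ vanishes, whereas $(p\circ g)^{(m)}(x_0)=p^{(m)}(y_0)\,(g'(x_0))^{m}\neq 0$ because $g'(x_0)>0$; the converse is identical. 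Since $g$ is injective, counting multiplicities the zeros of $p\circ g$ in $[a,b]$ are in bijection with the zeros of $p$ in $g([a,b])\subseteq[c,d]$, and the latter number is at most $n$ as $\cF$ is an ET-system. Taking all multiplicities equal to $1$ this in particular shows $\cG$ is a T-system, so $\cG$ satisfies \Cref{dfn:etSystem} and is an ET-system on $[a,b]$.

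For the Wronskian identity I would recast the same Fa\`a di Bruno expansion as a matrix factorization. Fix $x$ and set $G=(g_i^{(j)}(x))_{i,j=0}^n$. The expansion $g_i^{(j)}(x)=\sum_{k=0}^{j} f_i^{(k)}(g(x))\,P_{j,k}(x)$, in which the coefficients $P_{j,k}$ depend only on $g$ (and not on $i$), reads $G=F\,B^{\mathsf T}$ with $F=(f_i^{(k)}(g(x)))_{i,k=0}^n$ and $B=(P_{j,k}(x))_{j,k=0}^n$. The matrix $B$ is lower triangular, since $P_{j,k}=0$ for $k>j$, with diagonal entries $P_{j,j}=(g')^{j}$, so
\begin{equation*}
\det B=\prod_{j=0}^{n}(g'(x))^{j}=(g'(x))^{\,n(n+1)/2}.
\end{equation*}
By \Cref{dfn:wronski} we have $\det F=\cW(f_0,\dots,f_n)(g(x))$, whence $\cW(g_0,\dots,g_n)(x)=\det G=\det F\cdot\det B=(g'(x))^{\,n(n+1)/2}\,\cW(f_0,\dots,f_n)(g(x))$, which is the claimed identity.

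The main obstacle is the Fa\`a di Bruno bookkeeping: one must verify carefully that the coefficients $P_{j,k}$ really are independent of $i$, that $B$ is lower triangular, and that its diagonal is exactly $(g')^{j}$, since it is precisely the triangular structure of $B$ that produces both the nonvanishing of the transformed Wronskian and the exponent $n(n+1)/2$. A secondary technical point is that at the endpoints $a$ and $b$ every derivative and every instance of the chain rule must be read as the appropriate one-sided derivative, consistent with the conventions under which \Cref{dfn:etSystem} and the starred determinants are defined. Once the triangular structure is pinned down, the ET-property of \Cref{dfn:etSystem} also follows immediately from the multiplicity-preservation established above, so the two halves of the lemma reduce to the same chain-rule analysis.
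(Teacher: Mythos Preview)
Your proof is correct and follows essentially the same approach as the paper: both derive the Wronskian identity from the chain-rule structure, the paper by iterated column operations (subtracting multiples of earlier columns and then factoring out $(g')^j$ from the $j$-th column), you via the equivalent one-shot matrix factorization $G=FB^{\mathsf T}$ with $B$ lower triangular. Your separate multiplicity-preservation argument for the ET-property is more explicit than the paper's solution, which establishes only the Wronskian identity and leaves the ET-part implicit.
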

\begin{proof}
See Problem \ref{prob:wronsTrans}.
\end{proof}

For the Wronskian the following reduction property holds.

\begin{lem}[see e.g.\ {\cite[p.\ 377]{karlinStuddenTSystemsBook}}]\label{lem:wronskiReduction}
Let $n\in\nset_0$ and let $f_0,\dots,f_n\in\cat^n([a,b],\rset)$ be with $a<b$ and $f_0>0$.
Then for the \emph{reduced system}\index{reduced!system}\index{system!reduced} $g_0,\dots,g_{n-1}\in\cat^{n-1}([a,b],\rset)$ defined by
\begin{equation}\label{eq:redSysDfn}
g_i := \left( \frac{f_{i+1}}{f_0} \right)'
\end{equation}
for all $i=0,\dots,n-1$ we have
\begin{equation}\label{eq:wronskiReduction}
\cW(f_0,\dots,f_n) = f_0^{n+1}\cdot\cW(g_0,\dots,g_{n-1}).
\end{equation}
\end{lem}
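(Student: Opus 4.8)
The plan is to reduce everything to two elementary facts about determinants: a scaling property of the Wronskian under multiplication by a common factor, and a cofactor expansion along the row of a constant function. To set things up, I would introduce the normalized functions $h_i := f_i/f_0 \in \cat^n([a,b],\rset)$ for $i = 0,\dots,n$, which are well defined and $n$-times continuously differentiable because $f_0 > 0$ on $[a,b]$. By construction $h_0 \equiv 1$, and $g_i = (f_{i+1}/f_0)' = h_{i+1}'$ for $i = 0,\dots,n-1$, while $f_i = f_0\cdot h_i$; in particular $\cW(f_0,\dots,f_n) = \cW(f_0 h_0,\dots,f_0 h_n)$.

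The first step is the scaling identity $\cW(f_0 h_0,\dots,f_0 h_n) = f_0^{n+1}\cdot\cW(h_0,\dots,h_n)$. This is exactly the Wronskian formula of \Cref{lem:etMultiplication}, and although that lemma is phrased for ET-systems, its determinantal content needs neither the ET-hypothesis nor the restriction to these functions: by the Leibniz rule $(f_0 h_i)^{(k)} = \sum_{j=0}^{k}\binom{k}{j} f_0^{(k-j)} h_i^{(j)}$, so the matrix defining $\cW(f_0 h_0,\dots,f_0 h_n)$ factors as $H\cdot U$, where $H = (h_i^{(j)})_{i,j}$ is the Wronskian matrix of $h_0,\dots,h_n$ and $U = (\binom{k}{j} f_0^{(k-j)})_{j,k}$ is upper triangular with diagonal entries all equal to $f_0$. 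Taking determinants gives $\det U = f_0^{n+1}$ and hence the claimed scaling.

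The second step computes $\cW(h_0,\dots,h_n) = \cW(1,h_1,\dots,h_n)$ directly. Since $h_0 \equiv 1$ has vanishing derivatives, its row in the Wronskian matrix (\ref{eq:wronski}) is $(1,0,\dots,0)$; expanding the determinant along this row leaves exactly the minor
\[
\det\begin{pmatrix} h_1' & h_1'' & \dots & h_1^{(n)} \\ \vdots & \vdots & & \vdots \\ h_n' & h_n'' & \dots & h_n^{(n)} \end{pmatrix},
\]
whose $i$-th row is $h_i', (h_i')', \dots, (h_i')^{(n-1)}$. This minor is precisely $\cW(h_1',\dots,h_n') = \cW(g_0,\dots,g_{n-1})$. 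Combining the two steps yields $\cW(f_0,\dots,f_n) = f_0^{n+1}\cdot\cW(g_0,\dots,g_{n-1})$, which is (\ref{eq:wronskiReduction}).

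The main obstacle is bookkeeping rather than a genuine difficulty: one must make sure the Leibniz factorization is written with the correct upper-triangular matrix so that its diagonal really is $f_0$ (forcing $\det U = f_0^{n+1}$), and one must match derivative orders and row indices carefully after the cofactor expansion, so that the surviving minor is literally the order-$(n-1)$ Wronskian of the reduced system $g_0,\dots,g_{n-1}$ and not a reindexed variant. I would also record that the reduced functions $g_i$ lie in $\cat^{n-1}([a,b],\rset)$, as asserted in (\ref{eq:redSysDfn}), so that $\cW(g_0,\dots,g_{n-1})$ is defined.
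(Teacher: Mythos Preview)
Your proof is correct and follows essentially the same route as the paper: introduce $h_i=f_i/f_0$, invoke \Cref{lem:etMultiplication} to pull out $f_0^{n+1}$, then expand the Wronskian of $1,h_1,\dots,h_n$ along the row (equivalently, column) corresponding to the constant function $1$ to obtain $\cW(h_1',\dots,h_n')=\cW(g_0,\dots,g_{n-1})$. Your Leibniz-rule justification of the scaling identity is exactly the content of the paper's proof of \Cref{lem:etMultiplication}, so there is no substantive difference.
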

\begin{proof}
See Problem \ref{prob:wronskiReduction}.
\end{proof}

\begin{rem}\label{rem:signWronski}
Since $f_0,\dots,f_n\in\cat^n([a,b],\rset)$ we have that $\cW(f_0,\dots,f_k)(x)$ is continuous in $x\in [a,b]$ and hence after adjusting the signs of $f_0,\dots,f_n$ we have that (\ref{eq:wronski}) being non-zero on $[a,b]$ is equivalent to $\cW(f_0,\dots,f_k)>0$ on $[a,b]$ for all $k=0,\dots,n$, see also \Cref{rem:signDet} and \Cref{rem:signDetET}.\exmsymbol
\end{rem}

\begin{lem}[see e.g.\ {\cite[pp.\ 242--245, Lem.\ 5.1 - 5.3]{karlinStuddenTSystemsBook}}]\label{lem:wronskiFirstCase}
Let $n\in\nset_0$ and let $f_1,\dots,f_n\in\cat^n([a,b],\rset)$ be such that
\[\cW(f_0)>0,\quad\dots,\quad\cW(f_0,\dots,f_n)>0\]
on $[a,b]$.
Define functions $g_0,\dots,g_n:[a,b]\to\rset$ by
%
%\[g_0 := f_0,\; g_1 := D_0 f_1,\; g_2 := D_1 D_0 f_2,\; \dots,\; g_n:= D_{n-1} D_{n-2}\dots D_0 f_n.\]
%
\begin{align*}
g_0 &:= f_0\\
g_1 &:= D_0 f_1\\
g_2 &:= D_1 D_0 f_2\\
&\ \;\;\vdots\\
g_n &:= D_{n-1}\dots D_1 D_0 f_n
\end{align*}
with
\begin{equation}\label{eq:DjDfn}
D_j f := \left(\frac{f}{g_j}\right)',\quad i.e.,\quad D_j = \frac{\diff}{\diff x} \frac{1}{g_j}.
\end{equation}
Then
\begin{enumerate}[(i)]
\item $g_i\in\cat^{n-i}([a,b],\rset)$ are well defined with
\[g_1 = \frac{\cW(f_0,f_1)}{f_0^2} \qquad\text{and}\qquad g_i = \frac{\cW(f_0,\dots,f_i)\cdot\cW(f_0,\dots,f_{i-2})}{\cW(f_0,\dots,f_{i-1})^2}\]
for all $i=2,\dots,n$,

\item $g_i>0$ on $[a,b]$ for all $i=0,\dots,n$,

\item\label{item:wronski} for any $g_{n+1}\in\cat([a,b],\rset)$ with $g_{n+1}>0$ on $[a,b]$ we define
\[f_{n+1}(x) := g_0(x)\int_a^x g_1(y_1) \int_a^{y_1} g_2(y_2)\dots \int_a^{y_n} g_{n+1}(y_{n+1})~\diff y_{n+1}\dots\diff y_1\]
and we get
\[g_{n+1} = D_n\dots D_1 D_0 f_{n+1},\]

\item for all $k=0,\dots,n+1$ we have
\[\cW(f_0,\dots,f_k) = g_0^{k+1} g_1^k\cdots g_k\]
with $g_{n+1}$ and $f_{n+1}$ from (\ref{item:wronski}),

\item there exists a $f_{n+1}\in\cat^{n+1}([a,b],\rset)$ such that
\[\cW(f_0,\dots,f_n,f_{n+1}) > 0\]
on $[a,b]$, and

\item for all $k=0,\dots,n+1$ the families $\{f_i\}_{i=0}^k$ are T-systems on $[a,b]$.\label{item:wronskiFirstCase}
\end{enumerate}
\end{lem}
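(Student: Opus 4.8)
The plan is to argue by induction on $n$, with the reduction identity of \Cref{lem:wronskiReduction} as the engine. The base case $n=0$ is immediate: $g_0=f_0=\cW(f_0)>0$ is continuous, and all six assertions hold by direct computation (for (iii), $f_1=g_0\int_a^x g_1$ gives $D_0f_1=(f_1/g_0)'=g_1$).

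For the inductive step I would introduce the \emph{reduced system} $\tilde g_i := (f_{i+1}/f_0)' = D_0 f_{i+1}$ for $i=0,\dots,n-1$. Since $f_0=\cW(f_0)>0$ and $f_{i+1},f_0\in\cat^n([a,b],\rset)$, each $\tilde g_i\in\cat^{n-1}([a,b],\rset)$, and \Cref{lem:wronskiReduction} applied to $f_0,\dots,f_{k+1}$ yields $\cW(f_0,\dots,f_{k+1})=f_0^{k+2}\,\cW(\tilde g_0,\dots,\tilde g_k)$, so the hypothesis $\cW(f_0,\dots,f_{k+1})>0$ forces $\cW(\tilde g_0,\dots,\tilde g_k)>0$ for $k=0,\dots,n-1$. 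Hence $\{\tilde g_i\}_{i=0}^{n-1}$ satisfies the hypotheses of the lemma at order $n-1$.

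The crucial bookkeeping observation is that the construction attached to the reduced system reproduces $g_1,\dots,g_n$ with indices shifted by one: its operators are $\tilde D_j f=(f/h_j)'$ with $h_0=\tilde g_0=g_1$, and since $\tilde D_j=D_{j+1}$ one gets $h_i=g_{i+1}$ throughout. Feeding the induction hypothesis (i)--(iv) for $\{\tilde g_i\}$ through the identity $\cW(f_0,\dots,f_m)=f_0^{m+1}\cW(\tilde g_0,\dots,\tilde g_{m-1})$ then delivers the statements for $\{f_i\}$: the regularity $g_j\in\cat^{n-j}([a,b],\rset)$ and positivity $g_j>0$ transfer verbatim; substituting $\cW(\tilde g_0,\dots,\tilde g_j)=\cW(f_0,\dots,f_{j+1})/f_0^{j+2}$ into the reduced quotient formula for $h_i$ makes every power of $f_0$ cancel, giving exactly the formula of (i); and multiplying the reduced product formula of (iv) by $f_0^{m+1}$ gives $\cW(f_0,\dots,f_m)=g_0^{m+1}g_1^m\cdots g_m$. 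For (iii), one notes that $D_0 f_{n+1}$ equals the iterated integral built from $g_1,\dots,g_{n+1}$, i.e.\ precisely the function the reduced construction produces, so (iii) for $\{\tilde g_i\}$ gives $D_n\cdots D_1(D_0 f_{n+1})=g_{n+1}$.

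Finally, (v) follows by choosing $g_{n+1}\equiv 1$ and invoking (iv) with $k=n+1$, namely $\cW(f_0,\dots,f_{n+1})=g_0^{n+2}g_1^{n+1}\cdots g_n^2>0$. For (vi) I would run a short parallel induction via Rolle's theorem: if a nontrivial $f=\sum_{i=0}^k a_i f_i$ had $k+1$ distinct zeros then, since $f_0>0$, the quotient $f/f_0$ would have $k+1$ zeros, so $(f/f_0)'=\sum_{j=0}^{k-1}a_{j+1}\tilde g_j\in\lin\{\tilde g_j\}_{j=0}^{k-1}$ would have at least $k$ zeros; but by the induction hypothesis the reduced system is a T-system of order $k-1$ (and the alternative $f=cf_0$ has no zeros), a contradiction. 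I expect the main obstacle to be the index bookkeeping that certifies the reduced construction to be literally the tail $g_1,\dots,g_n$, together with the power-of-$f_0$ cancellation in the Wronskian formulas; the $\cat^{n+1}$ regularity asserted in (v) is delicate because $f_0$ is only $\cat^n([a,b],\rset)$, and is to be read in the smoothness the iterated-integral construction affords.
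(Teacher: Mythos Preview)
Your proposal is correct and is built on the same engine as the paper: pass to the reduced system $\tilde g_i=(f_{i+1}/f_0)'$, invoke \Cref{lem:wronskiReduction}, and run an induction on $n$. The paper organizes the six parts slightly differently, though. For (i) it does not carry the quotient formula through the induction but instead computes $g_1=\cW(f_0,f_1)/f_0^2$ by hand and then cites Sylvester's determinant identity to obtain the general ratio $g_i=\cW(f_0,\dots,f_i)\cW(f_0,\dots,f_{i-2})/\cW(f_0,\dots,f_{i-1})^2$ directly; your route via cancellation of the powers of $f_0$ is a legitimate alternative and avoids the external reference. For (vi) the paper works on the determinant side: it expands $\det\bigl(\begin{smallmatrix} f_0&\dots&f_k\\ x_0&\dots&x_k\end{smallmatrix}\bigr)$, factors out $f_0(x_j)$ from each column, subtracts adjacent rows, and applies the mean value theorem columnwise to land on $\det(\tilde g_i(y_j))$ for intermediate points $y_j$; your Rolle argument on the zeros of $f/f_0$ is the function-level version of exactly the same reduction. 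Both approaches buy the same thing and cost the same amount of work.

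Your caveat about (v) is well taken: with $f_0$ only in $\cat^n([a,b],\rset)$ the iterated-integral $f_{n+1}$ is a priori only $\cat^n$, so the asserted membership $f_{n+1}\in\cat^{n+1}([a,b],\rset)$ (and indeed the very definition of $\cW(f_0,\dots,f_{n+1})$) should be read under the tacit extra smoothness that the construction needs; the paper simply writes ``take the $f_{n+1}$ from (iii)'' and does not address this.
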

\begin{proof}
(i) and (ii): Since $\cW(f_0)>0$ we have $f_0>0$ and hence $g_1 = (f_1/f_0)'$ is well-defined and we have
\[\frac{\cW(f_0,f_1)}{f_0^2} = f_0^{-2}\cdot \det\begin{pmatrix}
f_0 & f_0'\\ f_1 & f_1'\end{pmatrix} = \frac{f_0 f_1' - f_1 f_0'}{f_0^2} = \left(\frac{f_1}{f_0}\right)' = g_1,\]
i.e., $g_1>0$ on $[a,b]$.
The relations for $g_i$ for all $i=2,\dots,n$ follow by induction from Sylvester's identity\index{Sylvester's identity}\index{identity!Sylvester} \cite{sylves51,akritas96}.

(iii): From the definition of $f_{n+1}$ we get immediately $g_{n+1} = D_n \dots D_1 D_0 f_{n+1}$.

(iv): Follows immediately from (i).

(v): Take the $f_{n+1}$ from (iii).

(vi): For $k=0$ it is clear that $\{f_i\}_{i=0}^0$ is a T-system since $f_0>0$ on $[a,b]$.
So assume that for any $f_0,\dots,f_{n-1}$ with $\cW(f_0,\dots,f_k)>0$ on $[a,b]$ for all $k=0,\dots,n-1$ we have that all $\{f_i\}_{i=0}^k$ with $k=0,\dots,n-1$ are T-systems.
We show that $\{f_i\}_{i=0}^n$ is also a T-system.
So let $x_0,\dots,x_n\in [a,b]$ with $x_0 < x_1 < \dots < x_n$.
We then have
\begin{align*}
&\det \begin{pmatrix}
f_0 & \dots & f_n\\ x_0 & \dots & x_n
\end{pmatrix} = \det (f_i(x_j))_{i,j=0}^n
\intertext{and factoring out $f_0(x_j)>0$ in each column gives}
&= \prod_{j=0}^n f_0(x_j)\cdot \det \left( \tilde{f}_i(x_j) \right)_{i,j=0}^n
\intertext{with $\tilde{f}_i := f_i/f_0$ for all $i=0,\dots,n$ and substracting from each row its predecessor (the row above) gives}
&= \prod_{j=0}^n f_0(x_j)\cdot \det \left(\delta_{0,j},\tilde{f}_1(x_j) - \tilde{f}_1(x_{j-1}),\dots, \tilde{f}_n(x_j) - \tilde{f}_n(x_{j-1})\right)_{j=0}^n.
\intertext{Expanding along the first column and applying the theorem of the mean gives}
&= \prod_{j=0}^n f_0(x_j)\cdot \prod_{i=0}^{n-1} (x_{i+1} - x_i)\cdot \det \left( \hat{f}_i(y_j) \right)_{i,j=0}^{n-1}
\end{align*}
for some $y_0,\dots,y_{n-1}$ with $x_0 < y_0 < x_1 < y_1 < \dots < y_{n-1} < x_n$ and $\hat{f}_i := (f_{i+1}/f_0)'$ for all $i=0,\dots,n-1$.
The family $\{\hat{f}_i\}_{i=0}^{n-1}$ is the reduced system\index{system!reduced}\index{reduced!system} from \Cref{lem:wronskiReduction} and hence by (\ref{eq:wronskiReduction}) we have
\[\cW(\hat{f}_0,\dots,\hat{f}_{k-1}) = \frac{\cW(f_0,\dots,f_k)}{f_0^{k+1}} > 0\]
on $[a,b]$ for all $k=1,\dots,n$.
By the induction hypothesis we have that $\{\hat{f}_i\}_{i=0}^{n-1}$ is a T-system, i.e.,
\[\det \left( \hat{f}_i(y_j) \right)_{i,j=0}^{n-1} \neq 0 \quad\Rightarrow\quad  \det \begin{pmatrix} f_0 & \dots & f_n\\ x_0 & \dots & x_n \end{pmatrix} \neq 0\]
and $\{f_i\}_{i=0}^n$ is a T-system which ends the proof.
\end{proof}

The previous lemma is used to characterize all ECT-systems.

\section{Characterizations of ECT-Systems}
%%%%%%%%%%%%%%%%%%%%%%%%%%%%%%%%%%%%%%%%%%

We have the following characterization of ECT-systems.

\begin{thm}[see e.g.\ {\cite[p.\ 376, Thm.\ 1.1]{karlinStuddenTSystemsBook}}]\label{thm:ectWronski}
Let $n\in\nset_0$ and let $f_0,\dots,f_n\in\cat^n([a,b],\rset)$ be with $a<b$. The following are equivalent:
\begin{enumerate}[(i)]
\item $\cF = \{f_i\}_{i=0}^n$ is an ECT-system.

\item For all $k=0,\dots,n$ we have that $\cW(f_0,\dots,f_k) \neq 0$ on $[a,b]$.
\end{enumerate}
\end{thm}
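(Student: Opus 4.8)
The plan is to reduce both directions to the generalized determinant criterion of \Cref{thm:etDet}, combined with the Wronskian reduction of \Cref{lem:wronskiReduction}, and to run an induction on the order $k$.

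For (i) $\Rightarrow$ (ii), suppose $\cF$ is an ECT-system. Then by definition each $\{f_i\}_{i=0}^k$ is an ET-system, so \Cref{thm:etDet} gives that the starred determinant $\det\begin{pmatrix} f_0 & \dots & f_k \\ x_0 & \dots & x_k\end{pmatrix}^*$ is nonzero for every admissible tuple $a\leq x_0\leq\dots\leq x_k\leq b$. Specializing to the fully coincident choice $x_0=\dots=x_k=x$ yields exactly $\cW(f_0,\dots,f_k)(x)\neq 0$ for all $x\in[a,b]$, which is (ii).

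For (ii) $\Rightarrow$ (i), which is the substantive direction, I would first use \Cref{rem:signWronski} to adjust signs so that $\cW(f_0,\dots,f_k)>0$ on $[a,b]$ for all $k$. I then prove by induction on $k$ the statement: if $\cW(f_0,\dots,f_j)>0$ on $[a,b]$ for $j=0,\dots,k$, then $\{f_i\}_{i=0}^k$ is an ET-system. The base case $k=0$ is immediate, since $\cW(f_0)=f_0>0$ has no zeros. For the inductive step, set $g_i:=(f_{i+1}/f_0)'$; since $f_0>0$ these lie in $\cat^{k-1}$, and \Cref{lem:wronskiReduction} gives $\cW(g_0,\dots,g_j)=\cW(f_0,\dots,f_{j+1})/f_0^{j+2}>0$ for $j=0,\dots,k-1$, so the induction hypothesis makes $\{g_i\}_{i=0}^{k-1}$ an ET-system of order $k-1$.

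The crux is then a Rolle-type count with algebraic multiplicities. Given a nonzero $f=\sum_{i=0}^k a_i f_i$, put $\varphi:=f/f_0$; since $f_0>0$, $f$ and $\varphi$ share the same zeros with the same multiplicities. If $a_1=\dots=a_k=0$ then $f=a_0 f_0$ has no zeros and we are done; otherwise $\varphi'=\sum_{i=0}^{k-1} a_{i+1}g_i$ is a nonzero element of $\lin\{g_i\}_{i=0}^{k-1}$ and hence has at most $k-1$ zeros counting multiplicity. I would then invoke the standard fact that if $\varphi$ has $N$ zeros in $[a,b]$ counting multiplicity then $\varphi'$ has at least $N-1$ (a zero of $\varphi$ of multiplicity $m$ contributes $m-1$ to $\varphi'$, and Rolle supplies one more between each pair of consecutive distinct zeros), so $N-1\leq k-1$ and $f$ has at most $k$ zeros counting multiplicity. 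Running this for each $k=0,\dots,n$ shows every $\{f_i\}_{i=0}^k$ is an ET-system, i.e.\ $\cF$ is an ECT-system. The main obstacle I expect is the careful multiplicity bookkeeping in the Rolle step at the endpoints $a,b$, where one-sided derivatives and one-sided multiplicities must be used, together with confirming the smoothness $g_i\in\cat^{k-1}$ required to apply the induction hypothesis; the simple-zero version of this count already appears as \Cref{lem:wronskiFirstCase}~(vi), and the present task is essentially to upgrade that T-system argument to one counting algebraic multiplicities.
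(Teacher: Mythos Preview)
Your proof is correct and shares its overall architecture with the paper's proof: both use the same reduction $g_i=(f_{i+1}/f_0)'$ together with \Cref{lem:wronskiReduction} and induct on the order. The direction (i)~$\Rightarrow$~(ii) is identical.

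The difference lies in how the inductive step of (ii)~$\Rightarrow$~(i) is executed. You work at the level of polynomials: for a nonzero $f=\sum a_i f_i$ you set $\varphi=f/f_0$ and apply an extended Rolle count to bound the zeros of $\varphi$ by one more than those of $\varphi'\in\lin\{g_i\}$, then invoke the induction hypothesis on the $g_i$. The paper instead works directly on the starred determinant: it factors out $f_0(x_j)$ from each block of rows, uses the Leibniz rule to transform the columns into those of $\cW(1,f_1/f_0,\dots,f_n/f_0)$, subtracts columns headed by $1$ and applies the mean value theorem, and then expands along the first column to reduce to the corresponding starred determinant for the $g_i$. Your Rolle argument is shorter and conceptually transparent---it is exactly the upgrade of \Cref{lem:wronskiFirstCase}~(vi) from simple zeros to algebraic multiplicities that you anticipate---while the paper's determinantal manipulation has the advantage of tracking the sign of the determinant explicitly at each step. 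Your endpoint and smoothness concerns are handled: since $f_0>0$ and $f_i\in\cat^n$, the $g_i$ lie in $\cat^{n-1}\subseteq\cat^{k-1}$, and the multiplicity drop $m\mapsto m-1$ at a zero of $\varphi$ (including at $a$ or $b$) follows directly from the one-sided derivative definition of multiplicity used throughout the chapter.
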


After adjusting the signs of $f_0,\dots,f_n$ by \Cref{rem:signWronski} we can in \Cref{thm:ectWronski} (ii) also assume that $\cW(f_0,\dots,f_k) > 0$ on $[a,b]$ for all $k=0,\dots,n$.

The following proof is adapted from \cite[pp.\ 376--379]{karlinStuddenTSystemsBook}.

\begin{proof}%[Proof of \Cref{thm:ectWronski}]
(i) $\Rightarrow$ (ii): Since every ECT-system is also an ET-system the statement is \Cref{thm:etDet} (i) $\Rightarrow$ (ii) because
\[\cW(f_0,\dots,f_k)(x) = \begin{pmatrix}
f_0 & f_1 & \dots & f_k\\ x & x & \dots & x
\end{pmatrix}^*\]
for all $x\in [a,b]$.

(ii) $\Rightarrow$ (i): To show that $\cF$ is an ECT-system we have to show that $\{f_i\}_{i=0}^k$ is an ET-system for all $k=0,\dots,n$.
And to show that $\{f_i\}_{i=0}^k$ is an ET-system it is by \Cref{thm:etDet} sufficient to show
\[\det\begin{pmatrix}
f_0 & f_1 & \dots & f_k\\
x_0 & x_1 & \dots & x_k
\end{pmatrix}^* \neq 0\]
for every $a\leq x_0 \leq x_1 \leq \dots \leq x_k\leq b$.
We make two case distinctions:
\begin{enumerate}[\itshape {Case} I:]
\item All $x_0,\dots,x_k$ are pairwise distinct: $x_0 < x_1 < \dots < x_n$.

\item At least once we have $x_j = x_{j+1}$ for some $j=0,\dots,n-1$.
\end{enumerate}
After renaming $x_0,\dots, x_k$ we can assume $a\leq x_1 < x_2 < \dots < x_l \leq b$ and $m_1,\dots,m_l\in\nset$ are the algebraic multiplicities with $m_1 + \dots + m_l = n+1$ for some $l\in\nset_0$.

\textit{Case I:} We have $m_0 = \dots = m_k = 1$ and that is \Cref{lem:wronskiFirstCase} (\ref{item:wronskiFirstCase}).

\textit{Case II:} We assume $m_j\geq 2$ for some $j$. We show that we can reduce the system.

We show this reduction by induction over $n$.

\textit{Induction beginning} ($n=0$): Since $\cW(f_0)(x) \neq 0$ it is an ET- and an ECT-system.
We can assume by changing the sign of $f_0$ that $f_0>0$ on $[a,b]$.

\textit{Induction step} ($n-1\to n$): By the induction beginning ($n=0$) we can assume $f_0>0$ on $[a,b]$.
Then we have to show that
\begin{equation}\label{eq:ectproof1}
\det\begin{pmatrix}
f_0 & f_1 & \dots & f_{m_1-1} & f_{m_1} & \dots & f_n \\
x_1 & x_1 & \dots & x_1 & x_2 & \dots & x_l
\end{pmatrix}^*
\end{equation}
is non-zero.
To show this we factor $f_0(x_j)>0$ out of the $m_j$ rows containing $x_j$ in (\ref{eq:ectproof1}) for each $j=0,\dots,l$ to get
\[\det\begin{pmatrix}
1 & \frac{f_0'}{f_0}(x_1) & \dots & \frac{f_0^{(m_1-1)}}{f_0}(x_1) & 1 & \dots & \frac{f_0^{(m_l-1)}}{f_0}(x_l)\\
\frac{f_1}{f_0}(x_1) & \frac{f_1'}{f_0}(x_1) & \dots & \frac{f_1^{(m_1-1)}}{f_0}(x_1) & \frac{f_1}{f_0}(x_2) & \dots & \frac{f_1^{(m_l-1)}}{f_0}(x_l)\\
\vdots & \vdots & & \vdots & \vdots & & \vdots\\
\frac{f_n}{f_0}(x_1) & \frac{f_n'}{f_0}(x_1) & \dots & \frac{f_n^{(m_1-1)}}{f_0}(x_1) & \frac{f_n}{f_0}(x_2) & \dots & \frac{f_n^{(m_l-1)}}{f_0}(x_l)
\end{pmatrix}.\]
Then subtract from each of the columns containing $x_j$ a linear combination of its predecessors to obtain for these $m_j$ columns the first $m_j$ columns of $\cW(1,f_1/f_0,\dots,f_n/f_0)$ evaluated at $x_j$:
\begin{multline*}
\det\begin{pmatrix}
1 & \left(\frac{f_0}{f_0}\right)'(x_1) & \dots & \left(\frac{f_0}{f_0}\right)^{(m_1-1)}(x_1) & 1 & \dots & \left(\frac{f_0}{f_0}\right)^{(m_l-1)}(x_l)\\
\frac{f_1}{f_0}(x_1) & \left(\frac{f_1}{f_0}\right)'(x_1) & \dots & \left(\frac{f_1}{f_0}\right)^{(m_1-1)}(x_1) & \frac{f_1}{f_0}(x_2) & \dots & \left(\frac{f_1}{f_0}\right)^{(m_l-1)}(x_l)\\
\vdots & \vdots & & \vdots & \vdots & & \vdots\\
\frac{f_n}{f_0}(x_1) & \left(\frac{f_n}{f_0}\right)'(x_1) & \dots & \left(\frac{f_n}{f_0}\right)^{(m_1-1)}(x_1) & \frac{f_n}{f_0}(x_2) & \dots & \left(\frac{f_n}{f_0}\right)^{(m_l-1)}(x_l)
\end{pmatrix}\\
= \det\begin{pmatrix}
1 & 0 & \dots & 0 & 1 & \dots & 0\\
\frac{f_1}{f_0}(x_1) & \left(\frac{f_1}{f_0}\right)'(x_1) & \dots & \left(\frac{f_1}{f_0}\right)^{(m_1-1)}(x_1) & \frac{f_1}{f_0}(x_2) & \dots & \left(\frac{f_1}{f_0}\right)^{(m_l-1)}(x_l)\\
\vdots & \vdots & & \vdots & \vdots & & \vdots\\
\frac{f_n}{f_0}(x_1) & \left(\frac{f_n}{f_0}\right)'(x_1) & \dots & \left(\frac{f_n}{f_0}\right)^{(m_1-1)}(x_1) & \frac{f_n}{f_0}(x_2) & \dots & \left(\frac{f_n}{f_0}\right)^{(m_l-1)}(x_l)
\end{pmatrix}.
\end{multline*}
The Leibniz rule on differentiation, here for us explicitly
\[\left( \frac{f_i}{f_0} \right)^{(k)} = \sum_{j=0}^k \binom{k}{j}\cdot f_i^{(k-j)}\cdot \left( \frac{1}{f_0}\right)^{(j)},\]
ensures that this is always possible.

We then subtract from each column which starts with a $1$ its predecessor which also starts with a $1$ and apply the mean value theorem to get apart from the positive factor $(x_{j+1}-x_j)$
\[\det\begin{pmatrix}
1 & 0 & \dots & 0 & 0 & 0 & \dots & 0\\
\frac{f_1}{f_0}(x_1) & \left(\frac{f_1}{f_0}\right)'(x_1) & \dots & \left(\frac{f_1}{f_0}\right)^{(m_1-1)}(x_1) & \left(\frac{f_1}{f_0}\right)'(y_2) & \left(\frac{f_1}{f_0}\right)'(x_2) & \dots & \left(\frac{f_1}{f_0}\right)^{(m_l-1)}(x_l)\\
\vdots & \vdots & & \vdots & \vdots & & \vdots\\
\frac{f_n}{f_0}(x_1) & \left(\frac{f_n}{f_0}\right)'(x_1) & \dots & \left(\frac{f_n}{f_0}\right)^{(m_1-1)}(x_1) & \left(\frac{f_n}{f_0}\right)'(y_2) & \left(\frac{f_1}{f_0}\right)'(x_2) & \dots & \left(\frac{f_n}{f_0}\right)^{(m_l-1)}(x_l)
\end{pmatrix}\]
with $x_1 < y_2 < x_2 < \dots < x_l$ and expanding by the first row gives
\begin{equation}\label{eq:ectproof2}
\det\begin{pmatrix}
\left(\frac{f_1}{f_0}\right)'(x_1) & \dots & \left(\frac{f_1}{f_0}\right)^{(m_1-1)}(x_1) & \left(\frac{f_1}{f_0}\right)'(y_2) & \left(\frac{f_1}{f_0}\right)'(x_2) & \dots & \left(\frac{f_1}{f_0}\right)^{(m_l-1)}(x_l)\\
\vdots & & \vdots & \vdots & & \vdots\\
\left(\frac{f_n}{f_0}\right)'(x_1) & \dots & \left(\frac{f_n}{f_0}\right)^{(m_1-1)}(x_1) & \left(\frac{f_n}{f_0}\right)'(y_2) & \left(\frac{f_1}{f_0}\right)'(x_2) & \dots & \left(\frac{f_n}{f_0}\right)^{(m_l-1)}(x_l)
\end{pmatrix}.
\end{equation}

In (\ref{eq:ectproof2}) we now have the reduced system $g_i := (f_{i+1}/f_0)'$ with $i=0,\dots,n-1$ from (\ref{eq:redSysDfn}) in \Cref{lem:wronskiReduction}.
By (\ref{eq:wronskiReduction}) in \Cref{lem:wronskiReduction} and since the reduced systems is of dimension $n-1$ where the inductions hypotheses holds we have that (\ref{eq:ectproof2}) is non-zero and hence also (\ref{eq:ectproof1}) is non-zero which we wanted to prove.
\end{proof}

\begin{rem}[see e.g.\ {\cite[p.\ 379, Rem.\ 1.2]{karlinStuddenTSystemsBook}}]\label{rem:zeroECT}
We find the following complete characterization of ECT-systems which requires the additional property (\ref{eq:ectBegRequirement}).
Fortunately, this seemingly additional property can always be generated by a change of basis vectors, i.e., for any vector space spanned by an ECT-system a suitable basis with (\ref{eq:ectBegRequirement}) can be found.\exmsymbol
\end{rem}

\begin{thm}[see e.g.\ {\cite[p.\ 379, Thm.\ 1.2]{karlinStuddenTSystemsBook}}]\label{thm:generalETsystem}
Let $n\in\nset_0$ and let $f_0,\dots,f_n\in\cat^n([a,b],\rset)$ be such that
\begin{equation}\label{eq:ectBegRequirement}
f_j^{(k)}(a) = 0
\end{equation}
holds for all $k=0,\dots,j-1$ and $j=1,\dots,n$. 
After suitable sign changes in $f_0,\dots,f_n$ the following are equivalent:
\begin{enumerate}[(i)]
\item There exist $g_0,\dots,g_n$ with $g_i\in\cat^{n-i}([a,b],\rset)$ and $g_i>0$ on $[a,b]$ for all $i=0,\dots,n$ such that
\begin{align*}
f_0(x) &= g_0(x)\\
f_1(x) &= g_0(x)\cdot \int_a^x g_1(y_1)~\diff y_1\\
f_2(x) &= g_0(x)\cdot \int_a^x g_1(y_1)\cdot\int_a^{y_1} g_2(y_2)~\diff y_2~\diff y_1\\
&\;\,\vdots \\
f_n(x) &= g_0(x)\cdot \int_a^x g_1(y_1)\cdot \int_a^{y_1} g_2(y_2)~ {\dots} \int_a^{y_{n-1}} g_n(y_n)~\diff y_n~ \dots ~\diff y_2~\diff y_1.
\end{align*}

\item $\{f_i\}_{i=0}^n$ is an ECT-system on $[a,b]$.

\item $\cW(f_0,\dots,f_k)>0$ on $[a,b]$ for all $k=0,\dots,n$.
\end{enumerate}
If one and therefore all of the equivalent conditions (i) -- (iii) hold then the $g_i$ in (i) are given by
\[g_0 := f_0 \quad\text{and}\quad g_i := D_{i-1} \dots D_1 D_0 f_i \quad\text{with}\quad D_i := \frac{\diff}{\diff x} \frac{1}{f_0}\]
for all $i=1,\dots,n$ or equivalently by
\[g_0 := f_0,\quad g_1 := \frac{\cW(f_0,f_1)}{f_0^2}, \quad\text{and}\quad
g_i := \frac{\cW(f_0,\dots,f_i)\cdot\cW(f_0,\dots,f_{i-2})}{\cW(f_0,\dots,f_{i-1})^2}\]
for all $i=2,\dots,n$.
\end{thm}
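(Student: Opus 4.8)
The plan is to observe that the equivalence of (ii) and (iii) is already in hand: it is precisely \Cref{thm:ectWronski} combined with the sign normalization of \Cref{rem:signWronski} (and the remark following \Cref{thm:ectWronski}), which upgrades $\cW(f_0,\dots,f_k)\neq 0$ to $\cW(f_0,\dots,f_k)>0$ on $[a,b]$. Hence the only new content is the link between the integral representation (i) and either of these, so I would prove (iii) $\Rightarrow$ (i) and (i) $\Rightarrow$ (iii) separately, and afterwards simply read the closed forms for the $g_i$ off \Cref{lem:wronskiFirstCase}\,(i).

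For (iii) $\Rightarrow$ (i): assuming $\cW(f_0,\dots,f_k)>0$ for all $k$, \Cref{lem:wronskiFirstCase}\,(i)--(ii) supplies $g_0=f_0,\ g_1=D_0f_1,\dots,g_n=D_{n-1}\cdots D_0f_n$ with $g_i\in\cat^{n-i}([a,b],\rset)$, $g_i>0$, and $D_jf=(f/g_j)'$. Fixing $j$, I would set $v^{(j)}_0:=f_j$ and $v^{(j)}_m:=D_{m-1}\cdots D_0 f_j$ for $1\le m\le j$, so that $v^{(j)}_j=g_j$ and $v^{(j)}_{m}=(v^{(j)}_{m-1}/g_{m-1})'$. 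Each $v^{(j)}_m$ is a linear expression in $f_j,f_j',\dots,f_j^{(m)}$ with coefficients assembled from $g_0,\dots,g_{m-1}$ (all finite at $a$), so the boundary hypothesis \eqref{eq:ectBegRequirement}, i.e.\ $f_j^{(k)}(a)=0$ for $k=0,\dots,j-1$, forces $v^{(j)}_m(a)=0$ for every $m\le j-1$. Integrating $v^{(j)}_{m}=(v^{(j)}_{m-1}/g_{m-1})'$ then yields, for $m=1,\dots,j$,
\[v^{(j)}_{m-1}(x)=g_{m-1}(x)\int_a^x v^{(j)}_m(y)~\diff y\]
with no surviving integration constant, and unwinding this from $v^{(j)}_j=g_j$ down to $v^{(j)}_0=f_j$ reproduces exactly
\[f_j(x)=g_0(x)\int_a^x g_1(y_1)\int_a^{y_1}g_2(y_2)\cdots\int_a^{y_{j-1}}g_j(y_j)~\diff y_j\cdots\diff y_1,\]
which is (i).

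For (i) $\Rightarrow$ (iii): if the $f_i$ are given by those iterated integrals with positive $g_i$, then $f_0,\dots,f_n$ are precisely the functions manufactured by the recursive construction in item (iii) of \Cref{lem:wronskiFirstCase}. One checks inductively that the data recovered there via $g_i=D_{i-1}\cdots D_0 f_i$ coincide with the given $g_i$, since repeatedly differentiating the integral formula for $f_i/f_0$ collapses it to $g_i$. Item (iv) of that lemma then applies and gives $\cW(f_0,\dots,f_k)=g_0^{k+1}g_1^{k}\cdots g_k$ for all $k$, which is strictly positive because each $g_i>0$; this is (iii).

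Finally, the explicit expressions $g_0=f_0$, $g_1=\cW(f_0,f_1)/f_0^2$, and $g_i=\cW(f_0,\dots,f_i)\,\cW(f_0,\dots,f_{i-2})/\cW(f_0,\dots,f_{i-1})^2$, together with the operator description $g_i=D_{i-1}\cdots D_0 f_i$, are verbatim \Cref{lem:wronskiFirstCase}\,(i). The hard part will be the bookkeeping in (iii) $\Rightarrow$ (i): one must confirm that $v^{(j)}_m$ depends on $f_j$ only through $f_j^{(0)},\dots,f_j^{(m)}$, so that \eqref{eq:ectBegRequirement} annihilates exactly the integration constants needed at $x=a$ and no spurious boundary term survives — this is the sole place where the hypothesis \eqref{eq:ectBegRequirement} enters, and it is what pins down the lower limit $a$ in every nested integral.
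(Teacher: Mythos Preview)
Your proof is correct and follows exactly the paper's route: the paper likewise reduces (ii)\,$\Leftrightarrow$\,(iii) to \Cref{thm:ectWronski}, cites \Cref{lem:wronskiFirstCase}\,(i)--(iii) for (iii)\,$\Rightarrow$\,(i), and \Cref{lem:wronskiFirstCase}\,(iv) for (i)\,$\Rightarrow$\,(iii). The added value of your write-up is that you make explicit where the boundary hypothesis \eqref{eq:ectBegRequirement} enters --- namely to kill the integration constants when unwinding $v^{(j)}_{m}=(v^{(j)}_{m-1}/g_{m-1})'$ --- which the paper's one-line citation leaves to the reader. One small point: in your (i)\,$\Rightarrow$\,(iii) step, \Cref{lem:wronskiFirstCase}\,(iv) is stated under the Wronskian-positivity hypothesis you are trying to establish, so strictly speaking you should invoke the identity $\cW(f_0,\dots,f_k)=g_0^{k+1}g_1^{k}\cdots g_k$ via the direct induction through \Cref{lem:wronskiReduction} that you already sketch (``one checks inductively\ldots''), rather than citing (iv) as a black box.
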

\begin{proof}
``(ii) $\Leftrightarrow$ (iii)'' is \Cref{thm:ectWronski}, ``(iii) $\Rightarrow$ (i)'' is \Cref{lem:wronskiFirstCase} (i) -- (iii), and \mbox{``(i) $\Rightarrow$ (iii)''} is \Cref{lem:wronskiFirstCase} (iv).
\end{proof}

Condition (ii) in \Cref{thm:generalETsystem} is of course to be understood after  suitable sign changes in $f_0,\dots,f_n$.

The partial statement \Cref{thm:generalETsystem} (i) $\Rightarrow$ (ii) can be found e.g.\ in {\cite[p.\ 19, Exm.\ 12]{karlinStuddenTSystemsBook}} and {\cite[pp.\ 39--40, P.2.4]{kreinMarkovMomentProblem}}.

\section{Examples of ET- and ECT-Systems}
%%%%%%%%%%%%%%%%%%%%%%%%%%%%%%%%%%%%%%%%%

An equivalent result as \Cref{cor:restriction} for T-systems, i.e., restricting the domain $\cX$ of a T-system leads again to a T-system, also holds for ET- and ECT-systems.
We leave that to the reader, see Problem \ref{prob:ecRestriction}.
Hence, it is sufficient to give (examples of) ET- and ECT-systems with the largest possible domain $\cX\subseteq\rset$.

While the condition of being an ET-system or being even an ECT-system seems very restrictive, several examples are known.

\begin{exm}\label{exm:algECTsystem}
Let $n\in\nset_0$ and $\cF = \{x^i\}_{i=0}^n$. Then $\cF$ on $\rset$ is an ECT-system.\exmsymbol
\end{exm}
\begin{proof}
Clearly, $\cF\subset\cat^\infty(\rset,\rset)$ and every non-trivial $f\in\lin\cF = \rset[x]_{\leq n}$ has at most $n$ real zeros counting multiplicities by the fundamental theorem of algebra, i.e., $\cF$ is an ET-systems. Besides that we have that
\[\cW(1,x,x^2,\dots,x^k)(x) 
%= \det\begin{pmatrix}
%1 & x & x^2 & \dots & x^k\\ x & x & x & \dots & x
%\end{pmatrix}^*
= \det\begin{pmatrix}
1 & 0 & 0 & \dots & 0\\
x & 1 & 0 & \dots & 0\\
x^2 & 2x & 2 & \dots & 0\\
\vdots & \vdots & \vdots & & \vdots\\
x^k & kx^k & k(k-1)x^{k-1} & \dots & k!
\end{pmatrix} \geq 1\]
holds for all $x\in\rset$ and $k=0,\dots,n$ which shows that $\cF$ is also an ECT-system.
\end{proof}

\begin{exm}\label{exm:nonETalg}
Let $\cF= \{1,x,x^3\}$ on $[0,b]$ with $b>0$. Then $\cF$ is a T-system (see \Cref{exm:tsysAlpha}) but not an ET-system.
To see this let $x_0 = x_1 = x_2 = 0$, then
\[\begin{pmatrix}
f_0 & f_1 & f_2\\
0 & 0 & 0
\end{pmatrix}^* = \begin{pmatrix}
1 & 0 & 0\\
0 & 1 & 0\\
0 & 0 & 0
\end{pmatrix}.\]
This shows that $\cF$ is not an ET-system.\exmsymbol
\end{exm}

In the previous example the position $x=0$ prevents the T-system to be an ET-system.
If $x=0$ is removed then it is even an ECT-system.

\begin{exm}\label{exm:algETsystem}
Let $\alpha_0,\dots,\alpha_n\in\nset_0$ with $\alpha_0 < \alpha_1 < \dots < \alpha_n$.
Then $\cF = \{x^{\alpha_i}\}_{i=0}^n$ on $(0,\infty)$ is an ECT-system.
For $n=2m$ and $0 < x_1 < x_2 < \dots < x_m$ we often encounter a specific polynomial structure and hence we write it down explicitly once:
\begin{align}
&\det\begin{pmatrix}
x^{\alpha_0} & x^{\alpha_1} & x^{\alpha_2} & \dots & x^{\alpha_{2m-1}} & x^{\alpha_{2m}}\\
x & (x_1 & x_1) & \dots & (x_m & x_m)
\end{pmatrix}\notag\\
&= \lim_{\varepsilon\to 0} \varepsilon^{-m}\cdot \det\begin{pmatrix}
x^{\alpha_0} & x^{\alpha_1} & x^{\alpha_2} & \dots & x^{\alpha_{2m-1}} & x^{\alpha_{2m}}\\
x & x_1 & x_1+\varepsilon & \dots & x_m & x_m+\varepsilon
\end{pmatrix}\notag\\
&=\lim_{\varepsilon\to 0} \left[\prod_{i=1}^m (x_i-x)(x_i+\varepsilon-x)\right]\cdot \left[\prod_{1\leq i<j\leq m} (x_j-x_i)^2(x_j-x_i-\varepsilon)(x_j+\varepsilon-x_i) \right]\label{eq:schurPolyRepr}\\
&\qquad\times s_\alpha(x,x_1,x_1+\varepsilon,\dots,x_m,x_m+\varepsilon)\notag\\
&= \prod_{i=1}^m (x_i-x)^2 \cdot \prod_{1\leq i<j\leq m} (x_j-x_i)^4\cdot s_\alpha(x,x_1,x_1,\dots,x_m,x_m)\notag
\end{align}
where $s_\alpha$ is the Schur polynomial\index{polynomial!Schur}\index{Schur polynomial} of $\alpha = (\alpha_0,\dots,\alpha_n)$ \cite{macdonSymFuncHallPoly}.
Hence,
\[s_\alpha(x,x_1,x_1,\dots,x_m,x_m)\]
is not divisible by any $(x_i-x)$.
\exmsymbol
\end{exm}
\begin{proof}
Combine the induction
\[f^{(m+1)}(x) = \lim_{h\to 0} \frac{f^{(m)}(x+h) - f^{(m)}(x)}{h}\]
and
\[\det \begin{pmatrix}
x^{\alpha_0} & \dots & x^{\alpha_n}\\
x_0 & \dots & x_n
\end{pmatrix} =
\prod_{0\leq i<j\leq n} (x_j - x_i)\cdot s_\alpha(x_0,\dots,x_n)\]
where $s_\alpha$ is the Schur polynomial of $\alpha = (\alpha_0,\dots,\alpha_n)$.
\end{proof}

With \Cref{thm:generalETsystem} the previous example can be generalized.

\begin{exms}[Examples \ref{exm:tsysAlphaReal} and \ref{exm:tsysExp} continued]\label{exm:etSystems}
Let $n\in\nset_0$ and let
\[-\infty < \alpha_0 < \alpha_1 < \dots < \alpha_n < \infty\]
be reals.
Then
\begin{enumerate}[\bfseries\; (a)]
\item 
$\cF = \{x^{\alpha_0},\dots,x^{\alpha_n}\}$
on $\cX = (0,\infty)$ (\Cref{exm:tsysAlphaReal}) and

\item 
$\cG = \{e^{\alpha_0 x},\dots,e^{\alpha_n x}\}$
on $\cY = \rset$ (\Cref{exm:tsysExp})
\end{enumerate}
are ECT-systems.\exmsymbol
\end{exms}
\begin{proof}
See Problem \ref{prob:etSystExm}.
\end{proof}

In Problem \ref{prob:xf} we will see that also \Cref{exm:xf} are ET- and ECT-systems.

\section{Representation as a Determinant, Zeros, and Non-Negativity}
%%%%%%%%%%%%%%%%%%%%%%%%%%%%%%%%%%%%%%%%%%%%%%%%%%%%%%%%%%%%%%%%%%%%

Similar to \Cref{thm:detRepr} we have the following for ET-systems, i.e., knowing $n$ zeros of a polynomial $f$ counting multiplicities determines $f$ uniquely up to a scalar.\index{representation!as a determinant}\index{determinant!representation as a}

\begin{thm}
Let $n\in\nset_0$ and let $\cF = \{f_i\}_{i=0}^n\subseteq\cat^n([a,b],\rset)$ be an ET-system.
Let $x_1,\dots,x_n\in [a,b]$ with
\[x_1 = \dots = x_{i_1} \;<\; x_{i_1+1} = \dots = x_{i_1+i_2} \;<\; \dots \;<\; x_{i_1+\dots +i_{k-1}+1} = \dots = x_{i_1+\dots+i_k=n}\]
for some $k,i_1,\dots,i_k\in\nset$ and let $f\in\lin\cF$. The following are equivalent:
\begin{enumerate}[(i)]
\item $f^{(l)}(x_j) = 0$ for all $j=1,\dots,k$ and $l=0,\dots,i_j-1$.

\item There exists a constant $c\in\rset$ such that
\[f(x) = c\cdot \det\left(\begin{array}{c|cccc}
f_0 & \, f_1 & f_2 & \dots & f_n\\
x & \, x_1 & x_2 & \dots & x_n
\end{array}\right).\]
\end{enumerate}
\end{thm}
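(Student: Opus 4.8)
The plan is to follow the proof of \Cref{thm:detRepr} in spirit, replacing \Cref{lem:determinant} by its ET-analogue \Cref{thm:etDet} and counting zeros with their algebraic multiplicities, as \Cref{dfn:etSystem} permits. Throughout I would abbreviate
\[
g(x) := \det\left(\begin{array}{c|cccc} f_0 & \, f_1 & f_2 & \dots & f_n\\ x & \, x_1 & x_2 & \dots & x_n \end{array}\right)
\]
for the determinant on the right-hand side of (ii), read via the convention (\ref{eq:doublezeroDfn2}). Expanding along the first (the only $x$-dependent) row shows immediately that $g\in\lin\cF$, since the cofactors of that row are values and derivatives of $f_0,\dots,f_n$ at the fixed nodes $x_1,\dots,x_n$ and hence independent of $x$.

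For the direction ``(ii) $\Rightarrow$ (i)'' I would exploit that only the first row of the defining matrix depends on $x$, so by multilinearity of the determinant in that row, $g^{(l)}(x)$ is obtained by replacing the first row with $(f_0^{(l)}(x),\dots,f_n^{(l)}(x))$. Evaluating at $x=x_j$ for $0\le l\le i_j-1$, this first row coincides with the row recording the $l$-th derivatives of $f_0,\dots,f_n$ at $x_j$ already present in the block for $x_j$; two equal rows force $g^{(l)}(x_j)=0$. Hence $g$ satisfies (i), and therefore so does $f=c\,g$ for every constant $c$.

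For the converse ``(i) $\Rightarrow$ (ii)'' I would first dispose of $f=0$ by taking $c=0$, and assume $f\neq 0$. The crucial observation is that $g\not\equiv 0$: for any $x_0\in[a,b]\setminus\{x_1,\dots,x_k\}$, the value $g(x_0)$ is, up to the sign of a row permutation moving the (simple) first row into its sorted position, the starred determinant $\det\begin{pmatrix} f_0 & \cdots & f_n\\ y_0 & \cdots & y_n\end{pmatrix}^*$ of the $n+1$ nodes $x_0,x_1,\dots,x_n$ arranged so that $a\le y_0\le\cdots\le y_n\le b$, which is nonzero by \Cref{thm:etDet}. Fixing such an $x_0$ and setting $c:=f(x_0)/g(x_0)$, the element $f-c\,g\in\lin\cF$ vanishes to order $i_j$ at each $x_j$ (both $f$ and $g$ do, by (i) and the previous paragraph), contributing $i_1+\dots+i_k=n$ zeros counting multiplicities, while in addition $(f-c\,g)(x_0)=0$. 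That is $n+1$ zeros of an element of $\lin\cF$ counting algebraic multiplicities, so \Cref{dfn:etSystem} forces $f-c\,g=0$, i.e.\ $f=c\,g$, which is exactly (ii).

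The only genuinely delicate step is the identification of $g(x_0)$ with a nonvanishing starred determinant: one must check that inserting the fresh simple node $x_0$ and sorting produces precisely a configuration $a\le y_0\le\cdots\le y_n\le b$ admissible for \Cref{thm:etDet}, with the accompanying row permutation contributing only a harmless sign. Everything else is the same bookkeeping with derivative rows and algebraic multiplicities that underlies \Cref{thm:etDet} and \Cref{rem:doublezeros}.
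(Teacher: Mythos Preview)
Your proof is correct and follows essentially the same route as the paper: dispose of $f=0$, pick an extra node $x_0$ outside the prescribed zeros, invoke \Cref{thm:etDet} to see the determinant is nonzero there, match the value at $x_0$, and conclude. The only cosmetic difference is that the paper finishes (i) $\Rightarrow$ (ii) by noting the starred matrix has full rank (so the coefficient vector is uniquely determined), whereas you phrase the same conclusion via zero counting ($f-c\,g$ has $n+1$ zeros with multiplicity, hence vanishes); your treatment of (ii) $\Rightarrow$ (i) is also more explicit than the paper's ``Clear.''
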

\begin{proof}
(ii) $\Rightarrow$ (i): Clear.

(i) $\Rightarrow$ (ii):
If $f=0$ then $c=0$ so the assertion holds.
If $f\neq 0$ then there exists a point $x_0\in\cX\setminus\{x_1,\dots,x_n\}$ such that $f(x_0)\neq 0$ since $\cF$ is an ET-system.
Then also the determinant in (ii) is non-zero and we can choose $c$ such that both $f$ and the scaled determinant coincide also in $x_0$.
Since $\cF$ is an ET-system we have by \Cref{thm:etDet} that 
\[\begin{pmatrix}
f_0 & f_1 & \dots & f_n\\
x_0 & x_1 & \dots & x_n
\end{pmatrix}^*\]
has full rank, i.e., the coefficients of $f$ and
\[c\cdot \det\left(\begin{array}{c|cccc}
f_0 & \, f_1 & f_2 & \dots & f_n\\
x & \, x_1 & x_2 & \dots & x_n
\end{array}\right)\]
coincide.
\end{proof}

The following result is a strengthened version of \Cref{thm:zeros2}.
It is a small extension of e.g.\ {\cite[p.\ 28, Thm.\ 5.1]{karlinStuddenTSystemsBook}} with explicit multiplicities of the zeros of a non-negative polynomial.

\begin{thm}\label{thm:zeros4}
Let $n\in\nset_0$ and let $\cF=\{f_i\}_{i=0}^n$ be an ET-system on $[a,b]$ with $a<b$.
Let $x_1<\dots<x_k$ in $[a,b]$ and let $m_1,\dots,m_k\in\nset$ for some $k\in\nset$.
The following hold:
\begin{enumerate}[(a)]
\item If $m_1 + \dots + m_k \leq n$ and $m_i\in 2\nset$ for all $x_i\in (a,b)$ then there exists a $f\in\lin\cF$ such that
\begin{enumerate}[(i)]
\item $f\geq 0$ on $[a,b]$,
\item $f$ has precisely the zeros $x_1,\dots,x_k$,
\item the zeros $x_i\in (a,b)$ of $f$ have multiplicity $m_i$,
\item if $x_1 = a$ then $x_1 = a$ has multiplicity $m_1$ or $m_1+1$, and
\item if $x_k = b$ then $x_k = b$ has multiplicity $m_k$ or $m_k+1$.
\end{enumerate}

\item If $\cF$ is an ECT-system or $m_1 + \dots + m_k = n$ then there exists a $f\in\lin\cF$ such that
\begin{enumerate}[(i)]
\item $f\geq 0$ on $[a,b]$,
\item $f$ has precisely the zeros $x_1,\dots,x_k$, and
\item the zeros $x_i$ of $f$ have multiplicity exactly $m_i$.
\end{enumerate}

\end{enumerate}
\end{thm}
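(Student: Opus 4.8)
The plan is to build the required non-negative polynomial as a (limit of) determinant of the form (\ref{eq:doublezeroDfn2}), in which a zero of prescribed multiplicity $m_i$ at $x_i$ is produced by repeating the point $x_i$ exactly $m_i$ times, following the clustering-and-limit procedure of \Cref{rem:doublezeros} that refines \Cref{thm:zeros2}. By \Cref{rem:signDetET} I first normalize $\cF$ so that all starred determinants are positive. The workhorse is the following sign observation, which I will invoke repeatedly: if $g\in\lin\cF\setminus\{0\}$ has all its interior zeros of even multiplicity and total zero multiplicity at most $n$, then $g$ cannot change sign on $[a,b]$, since $g$ keeps a constant sign between consecutive distinct zeros, does not change sign at an interior even-order zero, and an endpoint zero is one-sided; hence after multiplying by $\pm 1$ we may assume $g\geq 0$. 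Throughout I treat the interior multiplicities as even, which in part (b) is forced by the non-negativity requirement (i).

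\textbf{Part (b), the case $M:=m_1+\dots+m_k=n$.} Here I define $f$ directly as the determinant (\ref{eq:doublezeroDfn2}) with $x_i$ repeated $m_i$ times, so that $\sum m_i=n$ fills all available rows. By construction $f^{(l)}(x_j)=0$ for $0\le l<m_j$, so $f$ has a zero of multiplicity at least $m_j$ at each $x_j$; evaluating the full determinant at a point $x\notin\{x_1,\dots,x_k\}$ gives a nonzero starred determinant by \Cref{thm:etDet}, so $f\not\equiv 0$, and as an element of an ET-system $f$ has at most $n$ zeros counting multiplicity. Since $\sum m_j=n$, equality is forced: $f$ has precisely the zeros $x_1,\dots,x_k$ with exact multiplicities $m_i$ and no others, giving (ii) and (iii). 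Non-negativity (i) follows from the sign observation because the interior multiplicities are even. There is no room left for an extra endpoint zero, which is exactly why no bumping can occur.

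\textbf{Part (a) and the parity bookkeeping.} When $M<n$, set $d:=n-M$. The interior multiplicities being even, I fill the $d$ missing rows with auxiliary zeros and then cancel them with the two-polynomial trick of \Cref{rem:kreinError}: I construct $p$ and $q$, each a non-negative determinant polynomial of total multiplicity $n$ that shares the zeros $x_1,\dots,x_k$ but has auxiliary zeros disjoint from those of the other, so that $f:=p+q\ge 0$ vanishes only at the common points $x_1,\dots,x_k$; near an interior $x_i$ both summands behave like a positive multiple of $(x-x_i)^{m_i}$, hence so does the sum, giving multiplicity exactly $m_i$. If $d$ is even I place the auxiliary zeros as distinct interior double zeros, no endpoint is disturbed, and I obtain the multiplicities $m_i$ in (iv) and (v). If $d$ is odd the even interior fill cannot absorb the parity: when an endpoint, say $a=x_1$, is among the prescribed points I raise its multiplicity to $m_1+1$ (absorbing the odd slot) and fill the remaining even amount by interior doubles, which is the alternative $m_1+1$ in (iv); when no endpoint occurs among the $x_i$ I instead give $p$ a simple zero at $a$ and $q$ a simple zero at $b$, so that $p+q$ vanishes at neither endpoint and ``precisely the zeros $x_1,\dots,x_k$'' is preserved.

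\textbf{Part (b), the ECT case, and the main obstacle.} It remains to show that for an ECT-system the endpoint bumping of part (a) can always be avoided, i.e.\ that exact multiplicities hold even when $d$ is odd and an endpoint is prescribed. Here I would exploit that for an ECT-system the truncated family $\{f_i\}_{i=0}^{n-1}$ is again an ET-system (indeed ECT), and use the reduction of \Cref{lem:wronskiReduction} together with \Cref{lem:wronskiFirstCase} and the integral representation of \Cref{thm:generalETsystem} to induct on $n$: passing to the reduced system of order $n-1$ flips the parity of the ambient dimension and thereby supplies the additional non-negative polynomial needed to cancel the offending endpoint slot without raising any prescribed multiplicity, exactly as in \cite[p.\ 28, Thm.\ 5.1]{karlinStuddenTSystemsBook}. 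I expect this ECT step to be the principal difficulty: the determinant-and-limit constructions of parts (a) and (b, $M=n$) are routine once the sign observation is in place, but controlling the endpoint parity precisely---proving that the richer ECT structure genuinely removes the $m_i+1$ ambiguity that really does arise for bare ET-systems (cf.\ \Cref{rem:kreinError})---requires the reduction machinery and careful tracking of signs through the inductive step.
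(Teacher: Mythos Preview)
Your treatment of (a) and of the case $M=n$ in (b) is correct and close to the paper's, though the paper fills the $p:=n-M$ extra rows differently in the basic interior case: it takes the two summands to have $p$ copies of $a$, respectively $p$ copies of $b$, in the determinant (so no parity split is needed there), and only resorts to two interior auxiliary points $y,z$ when an endpoint is already among the $x_i$. Your variant with disjoint interior double auxiliary zeros works just as well.

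Where you go astray is in assessing (b) for ECT-systems. You flag this as the ``principal difficulty'' and propose Wronskian reduction plus induction on $n$ via \Cref{lem:wronskiReduction} and \Cref{thm:generalETsystem}. None of that is needed: the paper's proof of (b) is a single sentence. Since $\cF$ is an ECT-system, the truncation $\{f_i\}_{i=0}^{m}$ with $m:=m_1+\dots+m_k$ is itself an ET-system of order $m$. Apply part (a) \emph{to this truncated system}: there $m_1+\dots+m_k=m$, i.e.\ you are in the slackless case $p=0$ you have already handled, so the single determinant
\[
f(x)=c\cdot\det\left(\begin{array}{c|ccc}
f_0 & f_1 & \dots & f_m\\
x   & x_1 & \dots & x_k
\end{array}\right)
\]
(with each $x_i$ repeated $m_i$ times) gives exactly the zeros $x_1,\dots,x_k$ with exact multiplicities $m_i$ and no bumping at the endpoints. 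The ECT hypothesis is used only to guarantee that this truncation is again an ET-system; no reduced system, no parity flip, no induction. You actually wrote down the relevant fact (``the truncated family $\{f_i\}_{i=0}^{n-1}$ is again an ET-system'') but truncated by one instead of truncating all the way down to order $m$.
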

\begin{proof}
(a): Set $m := m_1 + \dots + m_k$.
If all $x_1,\dots,x_k\in (a,b)$ and $n = m + p$ for some $p\in\nset_0$ then the polynomial
\begin{align*}
f(x) &= (-1)^p\cdot \det \left(\begin{array}{c|cccccccc}
f_0 & \, f_1 & \dots & f_p & f_{p+1} & \dots & f_{p+m_1} & \dots & f_n\\
x & \, (a & \dots & a) & (x_1 & \dots & x_1) & \dots & x_k)
\end{array}\right)\\
&\quad + \det \left(\begin{array}{c|cccccccc}
f_0 & \, f_1 & \dots & f_{m_1} & \dots & f_{m} & f_{m+1} & \dots & f_n\\
x & \, (x_1 & \dots & x_1) & \dots & x_k) & (b & \dots & b)
\end{array}\right)
\end{align*}
fulfills the requirements.
If $x_1 = a$ and/or $x_k = b$ then include $x_1 = a$ with multiplicity $m_1$ or $m_1+1$ and $x_k = b$ with multiplicity $m_k$ or $m_k+1$.
Use the choice $m_1$ or $m_1 + 1$ resp.\ $m_k$ or $m_k+1$ to let $p\in 2\nset_0$ and add $y$ and $z$ with $x_{k-1} < y < z < x_k$.
Once construct a polynomial with the zeros $x_1,\dots,x_k,y$ with the corresponding multiplicities and add another polynomial with the zeros $x_1,\dots,x_k,z$ with the corresponding multiplicities to it as above.

(b): Use $\{f_i\}_{i=0}^m$ as the ET-system in (a).
\end{proof}

\section*{Problems}%%%%%%%%%%%%%%%%%%%%%
\addcontentsline{toc}{section}{Problems}

\begin{prob}\label{prob:etMultiplication}
Prove \Cref{lem:etMultiplication}.
%Let $n\in\nset_0$, $\cF = \{f_i\}_{i=0}^n$ on $[a,b]$ with $a<b$ be an ET-system, and let $g\in\cat^n([a,b],\rset)$ be with $g>0$ on $[a,b]$. Show that $\cG := \{g\cdot f_i\}_{i=0}^n$ is an ET-system.
\end{prob}

\begin{prob}\label{prob:wronsTrans}
Prove \Cref{lem:ettrans}.
\end{prob}

\begin{prob}\label{prob:wronskiReduction}
Prove \Cref{lem:wronskiReduction}.
\end{prob}

\begin{prob}\label{prob:ecRestriction}
\begin{enumerate}[\bfseries (a)]
\item Let $n\in\nset_0$ and let $\cF = \{f_i\}_{i=0}^n$ be an ET-system on $[a,b]$ for some $a<b$.
Show that $\cF$ on $[a',b']$ with $a<a'<b'<b$ is also an ET-system.
\end{enumerate}
\begin{enumerate}[\quad\;\;\;\bfseries (a)]\setcounter{enumi}{1}
\item Show (a) for ECT-systems.
\end{enumerate}
\end{prob}

\begin{prob}\label{prob:xf}
Prove that \Cref{exm:xf} is an ECT-system.
\end{prob}

\begin{prob}\label{prob:etSystExm}
Prove that the \Cref{exm:etSystems} are ECT-systems.
\end{prob}

\begin{prob}\label{prob:exampleECTpolynomial}
Let \[\cF := \{1,x^2,x^3,x^5,x^8,x^{11},x^{13},x^{42}\}\] on $[0,\infty)$. Give an algebraic polynomial $f\in\lin\cF$ such that
\begin{enumerate}[\quad(a)]
\item $f$ is non-negative on $[0,\infty)$,
\item $f$ has $x_1 = 1$ as a zero with multiplicity $m_1 = 2$,
\item $f$ has $x_2 = 3$ as a zero with multiplicity $m_2 = 4$, and
\item $f$ has no zeros in $[0,\infty)$ other than $x_1$ and $x_2$.
\end{enumerate}
\end{prob}

\motto{Life is a short affair;\\
we should try to make it smooth, and free from strife.\\ \medskip
\ \hspace{1cm} \normalfont{Euripides: The Suppliant Women {\cite[p.\ 175]{euripi13}}}\index{Euripides}}
% , ll.\ 952--953

\chapter{Generating ET-Systems from T-Systems by Using Kernels}%%%
%%%%%%%%%%%%%%%%%%%%%%%%%%%%%%%%%%%%%%%%%%%%%%%%%%%%%%%%%%%%%%%%%%
\label{ch:ETfromT}

We have seen that ET- and especially ECT-systems have much nicer properties than T-systems.
Therefore, especially for technical reasons, it is desirable to smoothen a T-system into an ET-system.
Usually, a function is smoothed by convolution with e.g.\ the Gaussian kernel.
This procedure is also used for T-systems.

\section{Kernels}
%%%%%%%%%%%%%%%%%

Let $\cX$ and $\cY$ be sets and
\[K:\cX\times \cY\to\rset\]
be a bivariate function, also called \emph{kernel}.\index{kernel}
A family $\{f_i\}_{i=0}^n$ on $\cY$ can then be seen as a special case of $K$ with $\cX =\{0,1,\dots,n\}$, i.e., $f_i = K(i,\,\cdot\,)$ for all $i\in\cX$.
For a kernel $K$ we define the short hand notation
\begin{equation}\label{eq:kernelDetDef}
K\!\begin{pmatrix} x_0 & x_1 & \dots & x_n\\ y_0 & y_1 & \dots & y_n\end{pmatrix} :=
\det (K(x_i,y_j))_{i,j=0}^n.
\end{equation}

\begin{dfn}
Let $k\in\nset_0$, $\cX$ and $\cY$ be ordered sets, and $K:\cX\times\cY\to\rset$ be a kernel.
The kernel $K$ is called \emph{totally positive}\index{totally!positive}\index{positive!totally} (\emph{of order $k$}), short (TP$_k$) property,\index{TPk@TP$_k$} if for all $i=0,1,\dots,k$ we have
\[K\!\begin{pmatrix} x_1 & x_2 & \dots & x_i\\ y_1 & y_2 & \dots & y_i\end{pmatrix} \geq 0\]
for all $x_1 < x_2 < \dots < x_i$, $y_1 < y_2 < \dots < y_i$, and $(x_l,y_m)\in\cX\times\cY$ for all $l,m=1,\dots,i$.
The kernel $K$ is called \emph{strictly totally positive}\index{strictly!totally!positive}\index{positive!totally!strictly} (\emph{of order $k$}), short (STP$_k$),\index{STPk@STP$_k$} if we always have
\[K\!\begin{pmatrix} x_1 & x_2 & \dots & x_i\\ y_1 & y_2 & \dots & y_i\end{pmatrix} > 0.\]
\end{dfn}

For more on sign regular kernels see e.g.\ \cite{karlin68totalPositivity} and \cite{gasca96totalPositivity}.

\begin{cor}[see e.g.\ {\cite[p.\ 10, Exm.\ 3]{karlinStuddenTSystemsBook}}]
Let $n\in\nset_0$, let $K$ be a STP$_{n+1}$ kernel with $\cX = [a,b]$, $\cY = [c,d]$ and $K(x,\,\cdot\,)\in\cat([c,d],\rset)$ for all $x\in\cX$, and let $x_0 < x_1 < \dots < x_n$ in $\cX$.

Then $\{K(x_i,\,\cdot\,)\}_{i=0}^k$ is a continuous T-system on $\cY=[c,d]$ for all $k=0,\dots,n$.
\end{cor}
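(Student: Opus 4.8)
The plan is to reduce the claim to the determinantal characterisation of T-systems in \Cref{lem:determinant} and then to read off the required non-vanishing directly from strict total positivity. Fix $k\in\{0,1,\dots,n\}$ and abbreviate $f_i := K(x_i,\,\cdot\,)\in\cat([c,d],\rset)$ for $i=0,\dots,k$; continuity of each $f_i$ is immediate from the hypothesis $K(x,\,\cdot\,)\in\cat([c,d],\rset)$, and $|\cY|\geq k+1$ since $[c,d]$ is a non-degenerate interval. By \Cref{lem:determinant} the family $\{f_i\}_{i=0}^k$ is a T-system of order $k$ on $[c,d]$ if and only if
\[
\det\big(f_i(y_j)\big)_{i,j=0}^k = \det\big(K(x_i,y_j)\big)_{i,j=0}^k \neq 0
\]
for every choice of pairwise distinct points $y_0,\dots,y_k\in[c,d]$, so it suffices to verify this non-vanishing.

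For the second step I would sort the evaluation points. Given pairwise distinct $y_0,\dots,y_k$, pick a permutation $\sigma$ of $\{0,\dots,k\}$ with $y_{\sigma(0)} < y_{\sigma(1)} < \dots < y_{\sigma(k)}$. Permuting the columns of $(K(x_i,y_j))_{i,j=0}^k$ according to $\sigma$ multiplies the determinant by $\sign(\sigma)\in\{-1,+1\}$, hence does not affect whether it vanishes. The points $x_0,\dots,x_k$ are already strictly increasing, being the first $k+1$ entries of the given sequence $x_0 < x_1 < \dots < x_n$, so after the reordering the determinant is exactly the kernel minor
\[
K\!\begin{pmatrix} x_0 & x_1 & \dots & x_k\\ y_{\sigma(0)} & y_{\sigma(1)} & \dots & y_{\sigma(k)}\end{pmatrix}
\]
from (\ref{eq:kernelDetDef}), evaluated on strictly increasing rows and columns. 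Since $k+1\leq n+1$ and $K$ is STP$_{n+1}$, this minor is strictly positive; therefore $\det(K(x_i,y_j))_{i,j=0}^k$ equals $\sign(\sigma)$ times a positive number and is non-zero. This proves that $\{K(x_i,\,\cdot\,)\}_{i=0}^k$ is a continuous T-system of order $k$ on $[c,d]$ for every $k=0,\dots,n$.

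There is no genuine obstacle in this argument; it is a direct translation of the STP$_{n+1}$ hypothesis through \Cref{lem:determinant}. The only places demanding care are purely bookkeeping: first, aligning the determinant convention of \Cref{lem:determinant} (in which the function index and point index may be transposed relative to (\ref{eq:kernelDetDef})) with the kernel notation, which is harmless because transposition leaves a determinant unchanged; and second, the sign $\sign(\sigma)$ introduced by sorting the $y_j$, which is irrelevant since only non-vanishing, and not the precise sign, is required. The degenerate case $k=0$ is consistent, since STP$_1$ gives $K(x_0,y)>0$ for all $y$, so $f_0$ has no zeros, i.e.\ it is a T-system of order $0$.
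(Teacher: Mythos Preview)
Your argument is correct and follows exactly the same approach as the paper, which simply states that the result ``follows immediately from \Cref{lem:determinant}.'' You have merely spelled out the details (sorting the $y_j$, tracking the sign, aligning the determinant conventions) that the paper's one-line proof leaves implicit.
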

\begin{proof}
Follows immediately from \Cref{lem:determinant}.
\end{proof}

\begin{dfn}\label{dfn:etpk}
Let $k\in\nset$, $\cX = [a,b]$, $\cY = [c,d]$, and $K:\cX\times\cY\to\rset$ be a kernel such that $K(x,\,\cdot\,)\in\cat^k(\cY,\rset)$ for all $x\in\cX$. We define
\begin{equation}\label{eq:kstarDet}
K^*\!\begin{pmatrix} x_1 & x_2 & \dots & x_k\\ y_1 & y_2 & \dots & y_k\end{pmatrix} :=
\det\begin{pmatrix}
K(x_1,\,\cdot\,) & K(x_2,\,\cdot\,) & \dots & K(x_k,\,\cdot\,)\\
y_1 & y_2 & \dots & y_k\end{pmatrix}^*
\end{equation}
for all $x_1 < x_2 < \dots < x_k$ in $\cX$ and $y_1 \leq y_2 \leq \dots \leq y_k$ in $\cY$.

We say $K$ is \emph{extended totally positive}\index{extended!totally!positive}\index{positive!totally!extended} (\emph{of order $k$}), short ETP$_k$,\index{ETPk@ETP$_k$} if for all $i=1,2,\dots,k$ we have
\[K^*\!\begin{pmatrix} x_1 & x_2 & \dots & x_i\\ y_1 & y_2 & \dots & y_i\end{pmatrix}>0\]
for all $x_1 < x_2 < \dots < x_i$ in $\cX$ and $y_1 \leq y_2 \leq \dots \leq y_i$ in $\cY$.
\end{dfn}

\begin{cor}[see e.g.\ {\cite[p.\ 10, Exm.\ 3]{karlinStuddenTSystemsBook}}]
Let $n\in\nset_0$, let $K$ be an ETP$_{n+1}$ kernel with $\cX = [a,b]$, $\cY = [c,d]$ and $K(x,\,\cdot\,)\in\cat^n([c,d],\rset)$ for all $x\in\cX$, and let $x_0 < x_1 < \dots < x_n$ in $\cX$.

Then $\{K(x_i,\,\cdot\,)\}_{i=0}^n$ is an ECT-system on $\cY=[c,d]$.
\end{cor}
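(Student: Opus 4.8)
The plan is to set $f_i := K(x_i,\,\cdot\,)$ for $i=0,\dots,n$ and to verify the definition of an ECT-system directly. Each $f_i$ lies in $\cat^n([c,d],\rset)$ by hypothesis, so the regularity demanded of an ECT-system is automatic, and what remains is to show that every truncation $\{f_i\}_{i=0}^k$, $k=0,\dots,n$, is an ET-system on $\cY=[c,d]$. For this I would invoke the determinantal characterization \Cref{thm:etDet}, which reduces the ET-system property of $\{f_i\}_{i=0}^k$ to the nonvanishing of the starred determinant of $f_0,\dots,f_k$ at every admissible node sequence $c\leq y_0\leq\dots\leq y_k\leq d$.

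The key observation is that this starred determinant is, by definition (\ref{eq:kstarDet}), nothing but $K^*$ evaluated with rows $x_0,\dots,x_k$ and columns $y_0,\dots,y_k$. First I would note that $x_0<x_1<\dots<x_k$ is a strictly increasing subsequence of the fixed points and that $y_0\leq y_1\leq\dots\leq y_k$, so the ETP$_{n+1}$ hypothesis from \Cref{dfn:etpk}, applied in its instance $i=k+1$, yields strict positivity of this determinant. Since $k$ ranges over $0,\dots,n$, the needed orders $i=k+1$ range over $1,\dots,n+1$, which is exactly the range covered by ETP$_{n+1}$; this index match is the one bookkeeping point to get right.

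Finally I would dispatch the degenerate order $k=0$ separately, since the ET-system definition is phrased for positive order: here the $1\times1$ value $K(x_0,y)$ is positive for all $y\in[c,d]$ by the $i=1$ instance of ETP, so $f_0$ has no zeros and $\{f_0\}$ qualifies as an ET-system of order $0$. Combining the cases shows every truncation is an ET-system, whence $\cF$ is an ECT-system. I do not anticipate a genuine obstacle: the argument is essentially an identification of the two starred-determinant notations together with a direct appeal to the ETP hypothesis, exactly paralleling how the preceding STP$_{n+1}$ corollary followed from \Cref{lem:determinant}.
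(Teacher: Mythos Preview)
Your proposal is correct and follows essentially the same approach as the paper, which simply states ``Follows immediately from \Cref{thm:etDet}.'' You have spelled out explicitly what the paper leaves implicit: applying \Cref{thm:etDet} to each truncation $\{f_i\}_{i=0}^k$, identifying the starred determinant with $K^*$ via (\ref{eq:kstarDet}), and invoking the ETP$_{n+1}$ hypothesis at the appropriate order, together with the minor care of treating $k=0$ separately since \Cref{thm:etDet} is phrased for $n\in\nset$.
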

\begin{proof}
Follows immediately from \Cref{thm:etDet}.
\end{proof}

\begin{exm}
Let $\cX = \rset$, $\cY = [a,b]\subset (0,\infty)$, and $K(x,y) = y^x$. Then $K$ is ETP$_k$ for all $k\in\nset_0$.\exmsymbol
\end{exm}
\begin{proof}
Follows immediately from \Cref{exm:etSystems}.
\end{proof}

\begin{exm}[see e.g.\ {\cite[p.\ 11, Exm.\ 5]{karlinStuddenTSystemsBook}}]\label{exm:gaussiankernel}
For any $\sigma>0$ the Gaussian kernel\index{kernel!Gaussian}\index{Gaussian kernel}
\begin{equation}\label{eq:gaussiankernel}
K_\sigma(x,y) := \frac{1}{\sqrt{2\pi\sigma^2}}\exp\left( -\frac{1}{2}\left(\frac{x-y}{\sigma}\right)^2\right) \qquad\text{on\; $\cX \times \cY = \rset^2$}
\end{equation}
is ETP$_k$ for any $k\in\nset$.
\end{exm}

The proof is adapted from \cite[p.\ 11]{karlinStuddenTSystemsBook}.

\begin{proof}
It is sufficient to show that $K(x,y) = e^{-(x-y)^2}$ is ETP$_k$ for all $k\in\nset_0$.

In Example \ref{exm:etSystems} (b) we have seen that $\{e^{\alpha_i x}\}_{i=0}^n$ is an ECT-system on $\rset$ for all $n\in\nset_0$ and all $\alpha_0 < \alpha_1 < \dots < \alpha_n$ in $\rset$.
Hence, by writing
\[f_n(x) := \sum_{i=0}^n a_n\cdot e^{-(x_i-x)^2}
\qquad\text{as}\qquad
f_n(x) = e^{-x^2}\cdot\sum_{i=0}^n a_i\cdot e^{-x_i^2}\cdot e^{2x_i x}\]
we see that $f_n$ has at most $n$ zeros (counting multiplicities) in $\rset$ if $a_0,\dots,a_n\in\rset$ with $a_0^2 + \dots + a_n^2 > 0$.
\end{proof}

\section{The Basic Composition Formulas}
%%%%%%%%%%%%%%%%%%%%%%%%%%%%%%%%%%%%%%%%

The following equations (\ref{eq:basic1}) and (\ref{eq:basic2}) are the \emph{basic composition formulas}.\index{basic composition formulas}

\begin{lem}[see e.g.\ {\cite[pp.\ 13--14, Exm.\ 8]{karlinStuddenTSystemsBook}}]\label{lem:basiccomposition}
Let $K:[a,b]\times [c,d]\to\rset$ and $L:[c,d]\times[e,f]\to\rset$ be kernels. Let $\mu$ be a $\sigma$-finite measure such that $M(x,z)$ defined by
\[M:[a,b]\times [e,f]\to\rset,\quad M(x,z) := \int_c^d K(x,y)\cdot L(y,z)~\diff\mu(y)\]
exists for all $(x,z)\in [a,b]\times [e,f]$.
The following hold:
\begin{enumerate}[(i)]
\item $M$ is a kernel.

\item For all $k\in\nset$, $x_1 < \dots < x_k$ in $[a,b]$, and $z_1 < \dots < z_k$ in $[e,f]$ we have
\begin{multline}\label{eq:basic1}
M\!\begin{pmatrix} x_1 & \dots & x_k\\ z_1 & \dots & z_k\end{pmatrix} =\\
\underset{c\leq y_1 < \dots < y_k\leq d}{\int\;\,\dots\;\,\int} K\!\begin{pmatrix}
x_1 & \dots & x_k\\ y_1 & \dots & y_k\end{pmatrix}\cdot L\!\begin{pmatrix}
y_1 & \dots & y_k\\ z_1 & \dots & z_k\end{pmatrix}~\diff\mu(y_1)\dots\diff\mu(y_k).
\end{multline}

\item If $L(y,\,\cdot\,)\in\cat^{k-1}([e,f],\rset)$ for some $k\in\nset$ and
\begin{equation}\label{eq:partialderivintegr}
\partial_z^i M(x,z) := \int_c^d K(x,y)\cdot \partial_z^i L(y,z)~\diff\mu(y)
\end{equation}
holds for all $i=0,\dots,k-1$ then
\begin{multline}\label{eq:basic2}
M^*\!\begin{pmatrix} x_1 & \dots & x_k\\ z_1 & \dots & z_k\end{pmatrix} =\\
\underset{c\leq y_1 < \dots < y_k\leq d}{\int\;\,\dots\;\,\int} K\!\begin{pmatrix}
x_1 & \dots & x_k\\ y_1 & \dots & y_k\end{pmatrix}\cdot L^*\!\begin{pmatrix}
y_1 & \dots & y_k\\ z_1 & \dots & z_k\end{pmatrix}~\diff\mu(y_1)\dots\diff\mu(y_k)
\end{multline}
for all $x_1 < \dots < x_k$ in $[a,b]$, and $z_1 \leq \dots \leq z_k$ in $[e,f]$.
\end{enumerate}
\end{lem}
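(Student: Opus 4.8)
Statement (i) is immediate: a kernel is by definition nothing but a real-valued bivariate function, and $M$ is such a function on $[a,b]\times[e,f]$, the defining integral existing pointwise by hypothesis. The substance lies in (ii), which is the continuous Cauchy--Binet (Andr\'eief) identity. The plan is to expand the left-hand determinant by the Leibniz formula,
\[
M\!\begin{pmatrix} x_1 & \dots & x_k\\ z_1 & \dots & z_k\end{pmatrix} = \sum_{\sigma}\sign(\sigma)\prod_{i=1}^k M(x_i,z_{\sigma(i)}),
\]
the sum running over all permutations $\sigma$ of $\{1,\dots,k\}$, then to substitute the integral representation of each factor, introducing an independent variable $y_i$ for the $i$-th factor. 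Applying Fubini's theorem rewrites the finite sum of products of integrals as a single integral over the cube $[c,d]^k$ against $\diff\mu(y_1)\dots\diff\mu(y_k)$, and recognizing the Leibniz expansion of $\det(L(y_i,z_j))_{i,j=1}^k$ after summing over $\sigma$ yields
\[
M\!\begin{pmatrix} x_1 & \dots & x_k\\ z_1 & \dots & z_k\end{pmatrix} = \underset{[c,d]^k}{\int\dots\int} \prod_{i=1}^k K(x_i,y_i)\cdot \det\big(L(y_i,z_j)\big)_{i,j=1}^k~\diff\mu(y_1)\dots\diff\mu(y_k).
\]

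To pass from the cube to the ordered region $c\le y_1<\dots<y_k\le d$, I would decompose $[c,d]^k$ into the $k!$ simplices $\{y_{\pi(1)}<\dots<y_{\pi(k)}\}$ indexed by permutations $\pi$. On each simplex one relabels the variables into increasing order; since the $y_i$ index the rows of $\det(L(y_i,z_j))$, this determinant is antisymmetric under permutation of the $y_i$ and contributes the factor $\sign(\pi)$, while the product $\prod_i K(x_i,y_i)$ becomes $\prod_i K(x_{\pi(i)},y_i)$. Summing over $\pi$ then collapses the antisymmetrized $K$-product back into $\det(K(x_i,y_j))_{i,j}$, which is precisely the $K$-determinant in (\ref{eq:basic1}). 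The ``diagonal'' set where some $y_i=y_j$ may be discarded, since there $\det(L(y_i,z_j))$ has two equal rows and vanishes; in particular no hypothesis on atoms of $\mu$ is needed.

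For (iii) the same computation applies almost verbatim, once the $z$-derivatives defining the $*$-determinant (\Cref{dfn:etpk}, eq.\ (\ref{eq:kstarDet}); compare the coalescing limit in \Cref{rem:doublezeros}) are accounted for. In each coincident $z$-column the entry of the matrix underlying $M^*$ is a derivative $\partial_z^{\,\nu} M(x_i,z_j)$, and hypothesis (\ref{eq:partialderivintegr}) lets me pull this derivative inside the integral, so that $\partial_z^{\,\nu} M(x_i,z_j) = \int_c^d K(x_i,y)\,\partial_z^{\,\nu} L(y,z_j)~\diff\mu(y)$. Thus the full column structure of $M^*$ is the $K$-integral of the corresponding column structure of $L^*$, and repeating the Leibniz expansion, the Fubini step, and the simplex/antisymmetry reduction of the previous paragraphs --- now with $\det(L(y_i,z_j))$ replaced throughout by $L^*\!\begin{pmatrix} y_1 & \dots & y_k\\ z_1 & \dots & z_k\end{pmatrix}$, which is still antisymmetric in the $y_i$ and still vanishes when two $y_i$ coincide --- produces (\ref{eq:basic2}).

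The only genuinely delicate points are the justification of Fubini's theorem --- which needs the defining double integral for $M$, respectively for its $z$-derivatives, to converge absolutely so that the finite Leibniz sum of iterated integrals may be recombined over $[c,d]^k$ --- and the careful bookkeeping of signs under relabeling in the simplex decomposition. I expect the sign accounting in the antisymmetrization to be the main place requiring care, whereas the analytic hypotheses ((\ref{eq:partialderivintegr}) together with $\sigma$-finiteness of $\mu$) are exactly what is assumed to make the interchange of summation, integration, and differentiation legitimate.
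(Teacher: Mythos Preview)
Your proposal is correct and is precisely the standard Cauchy--Binet/Andr\'eief computation that the paper has in mind: the paper's own proof of (ii) consists only of the phrase ``follows by straightforward calculations'' together with a citation to P\'olya--Szeg\H{o}, and (iii) is dispatched in one line as ``follows from (ii) with (\ref{eq:partialderivintegr}).'' Your Leibniz expansion, Fubini, and simplex--antisymmetrization argument is exactly the content of that reference, and your treatment of (iii) via column-wise differentiation under the integral sign matches the paper's intent.
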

\begin{proof}
(i) is clear, (ii) follows by straight forward calculations, see e.g.\ \cite[p.\ 48, No.\ 68]{polya70}, and (iii) follows from (ii) with (\ref{eq:partialderivintegr}).
\end{proof}

\section{Smoothing T-Systems into ET-Systems}
%%%%%%%%%%%%%%%%%%%%%%%%%%%%%%%%%%%%%%%%%%%%%

With the Gaussian kernel from \Cref{exm:gaussiankernel} we get from \Cref{lem:basiccomposition} the following smoothing result.

\begin{cor}[see e.g.\ {\cite[p.\ 15]{karlinStuddenTSystemsBook}}]\label{thm:ETfromT}
Let $n\in\nset_0$ and $\cF = \{f_i\}_{i=0}^n$ be a continuous T-system on $[a,b]$.
For any $\sigma>0$ let
\[K_\sigma(x) := \frac{1}{\sqrt{2\pi\sigma^2}}\exp\left( -\frac{1}{2}\left(\frac{x}{\sigma}\right)^2\right) \qquad\text{on\; $\cX = \rset$}\]
be the Gaussian kernel and define $f_{i,\sigma} := f_i*K_\sigma$ for all $i=0,\dots,n$.
Then $\cF_\sigma := \{f_{i,\sigma}\}_{i=0}^n$ is an ET-system.
\end{cor}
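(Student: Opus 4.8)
The plan is to reduce to the determinantal criterion of \Cref{thm:etDet} and then evaluate the relevant starred determinant by the basic composition formula \Cref{lem:basiccomposition} (iii), exploiting that the Gaussian kernel is ETP of every order (\Cref{exm:gaussiankernel}). First I would record the smoothness needed to even speak of an ET-system: since $K_\sigma\in\cat^\infty(\rset,\rset)$ with all derivatives rapidly decaying and each $f_i$ is continuous on the compact interval $[a,b]$, differentiation under the integral sign is justified, so every $f_{i,\sigma}=f_i*K_\sigma$ lies in $\cat^\infty$; in particular $\cF_\sigma\subseteq\cat^n$ as required by \Cref{dfn:etSystem}, and the hypothesis (\ref{eq:partialderivintegr}) of \Cref{lem:basiccomposition} (iii) holds with the evaluation point playing the role of the smooth variable $z$.

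Next I would set up the composition. I view the T-system as the kernel $P(i,y):=f_i(y)$ on $\{0,\dots,n\}\times[a,b]$ and the Gaussian as $L(y,x):=K_\sigma(x-y)$, which by symmetry is exactly the ETP$_k$ kernel of \Cref{exm:gaussiankernel} (for every $k$) read in the variables $(y,x)$. Their composition against Lebesgue measure is precisely $M(i,x)=\int_a^b f_i(y)\,K_\sigma(x-y)\,\diff y=f_{i,\sigma}(x)$. By the basic composition formula (\ref{eq:basic2}), for any $x_0\leq x_1\leq\dots\leq x_n$ in a fixed compact interval,
\[
M^*\!\begin{pmatrix} 0 & 1 & \dots & n\\ x_0 & x_1 & \dots & x_n\end{pmatrix}
=\underset{a\leq y_0<\dots<y_n\leq b}{\int\;\,\dots\;\,\int}
\det\big(f_i(y_j)\big)_{i,j=0}^n\cdot
K_\sigma^*\!\begin{pmatrix} y_0 & \dots & y_n\\ x_0 & \dots & x_n\end{pmatrix}
~\diff y_0\cdots\diff y_n,
\]
and the left-hand side is, up to transposing rows and columns, the starred determinant of \Cref{thm:etDet} attached to $\cF_\sigma$.

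To conclude I would argue that the integrand is strictly of one sign. The factor $K_\sigma^*$ is strictly positive because $K_\sigma$ is ETP$_{n+1}$ and $x_0\leq\dots\leq x_n$ (\Cref{exm:gaussiankernel}, \Cref{dfn:etpk}). The factor $\det(f_i(y_j))$ is the determinant of \Cref{lem:determinant}: since $\cF$ is a continuous T-system it is continuous and never zero on the connected open simplex $\{a\leq y_0<\dots<y_n\leq b\}$, hence of constant sign there, which after the normalization of \Cref{rem:signDet} I take to be positive. Therefore the integrand is strictly positive on a set of full measure, the integral is strictly positive, so $M^*\neq 0$ for all $x_0\leq\dots\leq x_n$, and by \Cref{thm:etDet} the family $\cF_\sigma$ is an ET-system on every compact interval.

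The hard part will be exactly this sign step: the composition formula only forces nonvanishing when the two determinantal factors cannot cancel, so the crux is to combine the \emph{strict} ETP-positivity of the Gaussian with the \emph{constant-sign} property of the continuous T-system determinant (obtained from connectedness of the simplex and \Cref{rem:signDet}). By contrast, verifying the differentiation-under-the-integral hypotheses of \Cref{lem:basiccomposition} (iii) is routine, being immediate from the rapid decay of $K_\sigma$ and the compactness of $[a,b]$.
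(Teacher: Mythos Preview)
Your proposal is correct and follows essentially the same route as the paper: apply the basic composition formula (\Cref{lem:basiccomposition} (iii)) to write the starred determinant of $\cF_\sigma$ as an integral of the product of the T-system determinant and the Gaussian starred determinant, then use \Cref{rem:signDet} and the ETP property of $K_\sigma$ (\Cref{exm:gaussiankernel}) to obtain a strictly positive integrand, and conclude via \Cref{thm:etDet}. You are somewhat more careful than the paper in spelling out the smoothness verification and the constant-sign argument on the connected simplex, but the idea is identical.
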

\begin{proof}
See Problem \ref{prob:smoothing}.
\end{proof}

If $\cF$ is a continuous T-system on $[a,b]$ then
\[\lim_{\sigma\searrow 0} f_{i,\sigma}(x) = f_i(x)\]
for all $x\in (a,b)$ and $i=0,\dots,n$.

\begin{cor}
If $\{f_i\}_{i=0}^k$ in \Cref{thm:ETfromT} is a T-system for all $k=0,\dots,n$ then $\cF_\sigma$ is an ECT-system.
\end{cor}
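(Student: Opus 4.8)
The final statement is a corollary to Corollary~\ref{thm:ETfromT} (the smoothing result), and the plan is to bootstrap from the conclusion already obtained there. Corollary~\ref{thm:ETfromT} tells us that if $\cF = \{f_i\}_{i=0}^n$ is a continuous T-system on $[a,b]$ then its Gaussian convolution $\cF_\sigma = \{f_{i,\sigma}\}_{i=0}^n$ with $f_{i,\sigma} := f_i * K_\sigma$ is an ET-system on $[a,b]$. We now strengthen the hypothesis: we assume that every truncation $\{f_i\}_{i=0}^k$, for $k = 0,1,\dots,n$, is itself a (continuous) T-system. The goal is to conclude that $\cF_\sigma$ is not merely an ET-system but an ECT-system, i.e.\ that each truncation $\{f_{i,\sigma}\}_{i=0}^k$ is an ET-system for every $k=0,\dots,n$.

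\textbf{The main idea.} First I would observe that convolution with the Gaussian kernel acts \emph{independently} on each function $f_i$: we have $f_{i,\sigma} = f_i * K_\sigma$, so the truncated family $\{f_{i,\sigma}\}_{i=0}^k$ is exactly the Gaussian smoothing of the truncated family $\{f_i\}_{i=0}^k$. This is the crucial structural remark that makes the proof almost immediate. By the strengthened hypothesis, $\{f_i\}_{i=0}^k$ is a continuous T-system on $[a,b]$ for every $k$. Therefore I can simply apply Corollary~\ref{thm:ETfromT} \emph{separately} to each truncation $\{f_i\}_{i=0}^k$: the conclusion is that $\{f_{i,\sigma}\}_{i=0}^k = \{(f_i)_\sigma\}_{i=0}^k$ is an ET-system on $[a,b]$ for each $k = 0,\dots,n$. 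Since this holds for all $k$, the family $\cF_\sigma$ satisfies the defining condition of an ECT-system, and the proof is complete.

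\textbf{Carrying it out.} The plan in order is: (1) fix $k \in \{0,1,\dots,n\}$; (2) note that $\{f_i\}_{i=0}^k$ is a continuous T-system on $[a,b]$ by hypothesis; (3) observe that the Gaussian smoothing $\{f_{i,\sigma}\}_{i=0}^k$ of this truncated system coincides with applying the convolution $(\cdot) * K_\sigma$ to each member of $\{f_i\}_{i=0}^k$, because convolution is computed function-by-function and the smoothing of the truncation equals the truncation of the smoothing; (4) invoke Corollary~\ref{thm:ETfromT} on the truncation to conclude $\{f_{i,\sigma}\}_{i=0}^k$ is an ET-system; (5) since $k$ was arbitrary, every truncation of $\cF_\sigma$ is an ET-system, which is precisely the definition of $\cF_\sigma$ being an ECT-system.

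\textbf{Where the difficulty sits.} There is essentially no new analytic or algebraic obstacle here — the entire content was already absorbed into Corollary~\ref{thm:ETfromT}. The only point deserving a moment's care is the observation in step~(3): one must be sure that truncating and then smoothing gives the same family as smoothing the full system and then truncating, which is transparent because $f_{i,\sigma}$ depends only on $f_i$ and not on the other members of the family. Thus the hard work is inherited, and the corollary is genuinely a one-line consequence of applying the earlier smoothing theorem $n+1$ times.
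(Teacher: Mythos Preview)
Your proposal is correct and is exactly the paper's approach: the paper's proof reads in its entirety ``Apply \Cref{thm:ETfromT} for every $k=0,1,\dots,n$.'' Your observation that smoothing commutes with truncation (since $f_{i,\sigma}$ depends only on $f_i$) is the implicit point that makes this one-liner work.
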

\begin{proof}
Apply \Cref{thm:ETfromT} for every $k=0,1,\dots,n$.
\end{proof}

Approximating a T-system by ET-systems with the Gaussian kernel is often used \cite{gantma50,schoen53,karlin68totalPositivity}, see also \cite[p.\ 16]{karlinStuddenTSystemsBook}.
We will need it in the proof of \Cref{thm:karlin}.

\section*{Problems}%%%%%%%%%%%%%%%%%%%%%
\addcontentsline{toc}{section}{Problems}

\begin{prob}\label{prob:smoothing}
Prove \Cref{thm:ETfromT} from \Cref{lem:basiccomposition}.
\end{prob}

%%%%%%%%%%%%%%%%%%%%%%%%%%%%%%%%%%%%%%%%%%%%%%%%%%%%%%%%%%%%%%%%%%%%%
%%%%%%%%%%%%%%%%%%%%%%%%%%%%%%%%%%%%%%%%%%%%%%%%%%%%%%%%%%%%%%%%%%%%%
\part{Karlin's Positivstellensätze and Nichtnegativstellensätze}%%%%%
%%%%%%%%%%%%%%%%%%%%%%%%%%%%%%%%%%%%%%%%%%%%%%%%%%%%%%%%%%%%%%%%%%%%%
%%%%%%%%%%%%%%%%%%%%%%%%%%%%%%%%%%%%%%%%%%%%%%%%%%%%%%%%%%%%%%%%%%%%%
\label{part:karlinPosNonNeg}

\motto{Beauty is the first test: there is no permanent place\\ in this world for ugly mathematics.\\ \medskip
\ \hspace{1cm} \normalfont{Godfrey Harold Hardy {\cite[§10, p.\ 85]{hardy67}}}\index{Hardy, G.\ H.}}

\chapter{Karlin's Positivstellensatz and Nichtnegativstellensatz on $[a,b]$}%%%
%%%%%%%%%%%%%%%%%%%%%%%%%%%%%%%%%%%%%%%%%%%%%%%%%%%%%%%%%%%%%%%%%%%%%%%%%%%%%%%
\label{ch:karlinPosab}

We now come to the main result (\Cref{thm:karlin}) and its variations: \Cref{thm:karlinPosab} for T-systems on $[a,b]$ and \Cref{thm:karlinNonNegab} for ET-systems on $[a,b]$.
Earlier versions were already developed in \cite{karlin53}.
Both results are used in the following chapters to prove \Cref{thm:karlinPos0infty} for T-systems on $[0,\infty)$, \Cref{thm:karlinNonNeg0infty} for ET-systems on $[0,\infty)$, \Cref{thm:karlinPosR} for T-systems on $\rset$, and finally \Cref{thm:karlinNonNegR} for ET-systems on $\rset$.

The main applications and examples will be the various sparse algebraic Positivstellensätze and sparse algebraic Nichtnegativstellensätze in \Cref{part:algPos}.

\section{Karlin's Positivstellensatz for T-Systems on $[a,b]$}
%%%%%%%%%%%%%%%%%%%%%%%%%%%%%%%%%%%%%%%%%%%%%%%%%%%%%%%%%%%%%%

For the following main result we remind the reader what it means that a set has an index, see \Cref{dfn:index}: If $x\in (a,b)$ then its index is $2$ and if $x = a$ or $b$ then its index is $1$.
The following result is due to Karlin and we name it therefore after him.

\begin{karthm}[for {$f>0$ on $[a,b]$}; {\cite[Thm.\ 1]{karlin63}} or e.g.\ {\cite[p.\ 66, Thm.\ 10.1]{karlinStuddenTSystemsBook}}]\label{thm:karlin}\index{Karlin!Theorem!for $f>0$ on $[a,b]$}\index{Theorem!Karlin!for $f>0$ on $[a,b]$}
Let $n\in\nset_0$, $\cF = \{f_i\}_{i=0}^n$ be a continuous T-system of order $n$ on $[a,b]$ with $a<b$, and let $f\in \cat([a,b],\rset)$ with $f>0$ on $[a,b]$ be a strictly positive continuous function. The following hold:
\begin{enumerate}[(i)]
\item There exists a unique polynomial $f_*\in\lin\cF$ such that
\begin{enumerate}[(a)]
\item $f(x) \geq f_*(x) \geq 0$ for all $x\in [a,b]$,

\item $f_*$ vanishes on a set with index $n$,

\item the function $f-f_*$ vanishes at least once between each pair of adjacent zeros of $f_*$,

\item the function $f-f_*$ vanishes at least once between the larges zero of $f_*$ and the end point $b$, and

\item $f_*(b)>0$.
\end{enumerate}

\item There exists a unique polynomial $f^*\in \lin\cF$ which satisfies the conditions \mbox{(a) to (d)} of (i) and
\begin{enumerate}[(a')]\setcounter{enumii}{4}
\item $f^*(b) = 0$.
\end{enumerate}
\end{enumerate}
\end{karthm}

Examples of $f_*$ and $f^*$ are depicted in \Cref{fig:karlinPos} for an odd and an even $n$.
\begin{figure}[t]\centering
%\begin{center}
\begin{subfigure}{0.9\textwidth}
\includegraphics[width=\textwidth]{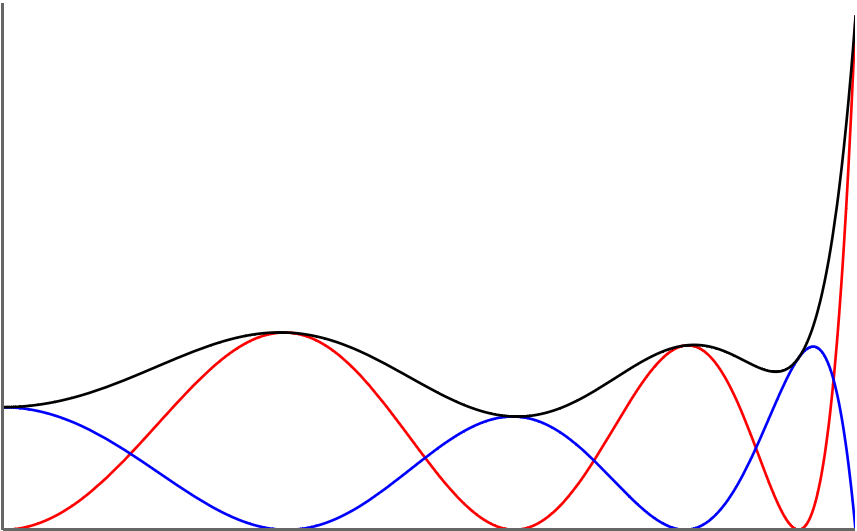}
\caption{$n=5$}
\end{subfigure}

\begin{subfigure}{0.9\textwidth}
\includegraphics[width=\textwidth]{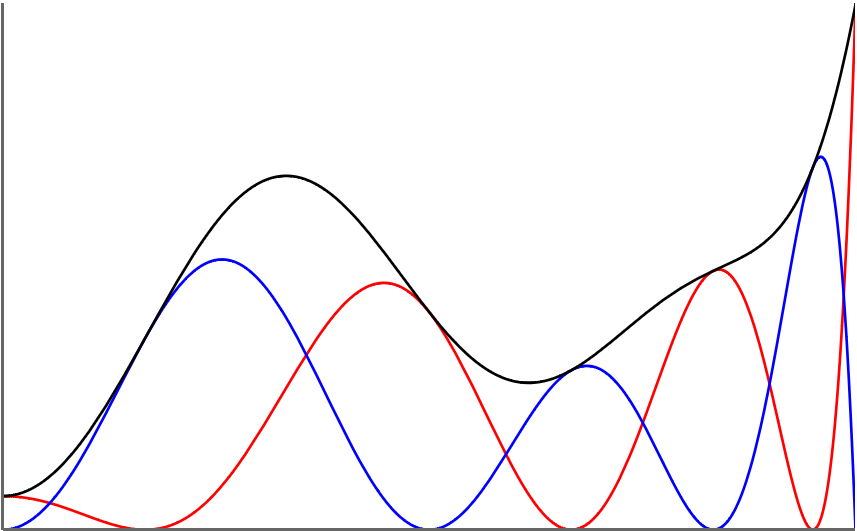}
\caption{$n=6$}
\end{subfigure}
\caption{The functions $f\in\cat([a,b],\rset)$ with $f>0$ (black), $f_*\in\lin\cF$ (red), and $f^*\in\lin\cF$ (blue) from the \Cref{thm:karlinPosab} with $n=5$ and $n=6$.\label{fig:karlinPos}}
\end{figure}

The proof is taken from \cite[pp.\ 68--71]{karlinStuddenTSystemsBook}.
The proof constructs the polynomials $f_*$ and $f^*$ by using the Fixed Point Theorem of Brouwer \index{Theorem!Brouwer Fixed Point}\index{Fixed Point Theorem of Brouwer}\cite[Satz 4]{brouwer11}, see also e.g.\ \cite[Prop.\ 2.6]{zeidlerNonlinFuncAna1}.\footnote{Note that in \cite{zeidlerNonlinFuncAna1} the work \textit{\"Uber Abbildungen von Mannigfaltigkeiten} \cite{brouwer11} is incorrectly dated in the references and Proposition 2.6 on p.\ 52 to the year 1912 while the paper actually appeared in 1911 in the \textit{Mathematische Annalen}. However, we also want to point out that Zeidler gives three proofs of the Fixed Point Theorem of Brouwer, including a constructive one in \cite[pp.\ 254--255, Problem 6.7e]{zeidlerNonlinFuncAna1}.}

\begin{proof}
We distinguish three different cases.

\textit{Case 1:} Let $n=2m$ and let $\cF$ be an ET-system.
We construct $f_*$ in (i) as follows.
For each point $\xi=(\xi_0,\dots,\xi_m)$ in the $m$-dimensional simplex
\begin{equation}\label{eq:simplex}
\Xi^m := \left\{(\xi_0,\dots,\xi_m)\in\rset^{m+1} \,\middle|\, \xi_i\geq 0,\ i=0,1,\dots,m,\ \sum_{i=0}^m \xi_i = b-a\right\}
\end{equation}
set
\[x_i := a + \sum_{k=0}^{i-1} \xi_k\]
for all $i=0,\dots,m$ and define
\begin{equation}\label{eq:fxi}
f_\xi(x) := c_\xi\cdot\det \left(\begin{array}{c|ccccc}
f_0 &\, f_1 & f_2 & \dots & f_{n-1} & f_n\\
x &\, x_1 & x_1 & \dots & x_m & x_m
\end{array}\right)
\end{equation}
with $c_\xi\in\rset$ such that $f_\xi = \sum_{i=0}^n a_i f_i\geq 0$ on $[a,b]$ with $a_0^2 + \dots + a_n^2 = 1$.
If $p$ of the points $x_i$ coincide, this common point is to have multiplicity $2p$.

Define
\begin{equation}\label{eq:deltaiDfn}
\delta_i(\xi):=\min\{\delta\geq 0\,|\,\delta\cdot f\geq u_\xi\ \text{on}\ [x_i,x_{i+1}]\}
\end{equation}
for all $i=0,\dots,m$ with $x_0 = a$ and $x_{m+1} = b$.
The coefficients $a_i(\xi)$ are continuous in $\xi$ and hence the functions $\delta_i(\xi)$ are continuous in $\xi$.

Next, define
\begin{equation}\label{eq:FiDfn}
F_i(\xi) := \delta_i(\xi) - \min_{k} \delta_k(\xi)
\end{equation}
for all $i=0,\dots,m$ and set $F_{m+1}(\xi) := F_0(\xi)$.
If there does not exist a point $\xi$ such that $F_i(\xi) = 0$ for all $i=0,\dots,m$, then $\sum_{i=0}^m F_i(\xi) >0$ for all $\xi\in\Xi^m$.
In this event the continuous mapping
\[\cdot\,':\Xi^m\to\Xi^m,\ \xi\mapsto\xi'\quad \text{with}\quad \xi_i' := \frac{F_{i+1}(\xi)}{\sum_{k=0}^m F_k(\xi)}\cdot (b-a)\]
for all $i=0,\dots,m$ is well-defined.
The Fixed Point Theorem of Brouwer affirms the existence of a point $\xi^*\in\Xi^m$ for which
\begin{equation}\label{eq:fixedPoint}
\xi_i^* := \frac{F_{i+1}(\xi^*)}{\sum_{k=0}^m F_k(\xi^*)}\cdot (b-a)
\end{equation}
for all $i=0,\dots,m$.
By (\ref{eq:FiDfn}) we have that for any $\xi\in\Xi^m$ we have $F_i(\xi)=0$ for some $i$.
Suppose $F_j(\xi^*)=0$ for some fixed $j=0,\dots,m$.
Then (\ref{eq:fixedPoint}) implies $\xi_{j-1}^*=0$.
By (\ref{eq:deltaiDfn}) and (\ref{eq:FiDfn}) imply $F_{j-1}(\xi^*)=0$.
Continuing in this way we get $F_i(\xi^*) = 0$ for all $i=0,\dots,j$ and since $F_{m+1}(\xi)=F_0(\xi)$ we have $F_i(\xi^*) = 0$ for all $i=0,\dots,m$.
But this contradicts our assumption $\sum_{i=0}^m F_i(\xi^*) >0$.
Therefore, there exists at least one point $\xi^*\in\Xi^m$ such that $\delta_i(\xi^*)=\delta$ for all $i=0,\dots,m$.
Since $f_\xi\neq 0$ it follows that $\delta>0$ and hence all $x_i$ are distinct, i.e.,
\[a = x_0 < x_1 < \dots < x_m = b.\]
Hence, $f_* := \delta^{-1}\cdot f_{\xi^*}$ by the nature of its construction fulfills the requirements \mbox{(a) -- (e)} of (i).

For $f^*$ we let $x_0 = a$ and $x_m = b$ and we define similar to (\ref{eq:fxi}) the polynomial
\[g_\xi(x) := d_\xi\cdot\det \left(\begin{array}{c|ccccccc}
f_0 &\, f_1 & f_2 & f_3 & \dots & f_{n-2} & f_{n-1} & f_n\\
x &\, a & x_1 & x_1 & \dots & x_{m-1} & x_{m-1} & b
\end{array}\right).\]
Repeating the arguments from above we get $f^*$ which fulfills (a) -- (d) and (e') in (ii).

\textit{Case 2:} Let $n=2m+1$ and let $\cF$ be an ET-system. Similar to case 1, we define the polynomials
\[f_\xi(x) := d_\xi\cdot\det \left(\begin{array}{c|cccccc}
f_0 &\, f_1 & f_2 & f_3 & \dots & f_{n-1} & f_n\\
x &\, a & x_1 & x_1 & \dots & x_{m} & x_{m}
\end{array}\right).\]
and
\[g_\xi(x) := d_\xi\cdot\det \left(\begin{array}{c|cccccc}
f_0 &\, f_1 & f_2 & \dots & f_{n-2} & f_{n-1} & f_n\\
x &\, x_1 & x_1 & \dots & x_{m} & x_{m} & b
\end{array}\right).\]
Repeating the procedure of case 1 gives the statement.

\textit{Case 3:} Let $n=2m$ and $\cF$ be a T-systems. Then we consider the functions
\[f_i(x;\sigma) := \int_a^b K_\sigma(x,y)\cdot f_i(y)~\diff y\]
where
\[K_\sigma(x,y) := \frac{1}{\sigma\cdot\sqrt{2\pi}}\exp\left[ -\frac{1}{2}\left( \frac{x-y}{\sigma} \right)^2 \right]\]
with $\sigma>0$, see \Cref{ch:ETfromT}.
By \Cref{thm:ETfromT} we have that $\cF_\sigma := \{f_i(\,\cdot\,;\sigma)\}_{i=0}^n$ is an ET-system on $[a,b]$ and hence also on any subinterval $[a',b']$ with $a<a'<b'<b$.
The need to restrict the system $\cF_\sigma$ to the proper interval $[a',b']$ is due to the annoyance that at the end points $x = a$ and $x = b$ we have
\[\lim_{\sigma\searrow 0} f_i(x;\sigma) = \frac{1}{2}f_i(x)\]
for all $i=0,\dots,n$ while for $x\in (a,b)$ we have
\[\lim_{\sigma\searrow 0} f_i(x;\sigma) = f_i(x).\]

From the cases 1 and 2 we find that for any $\sigma>0$ we have a polynomial $f_{*,\sigma}$ satisfying conditions (a) -- (e) of (i) on the interval $[a',b']$.
If
\[f_{*,\sigma} = \sum_{i=0}^n a_i(\sigma)\cdot f_i(\,\cdot\,,\sigma)\]
we can chose a sequence $\sigma_k\searrow 0$ and let $x_1^{(k)},\dots,x_m^{(k)}$ be the zeros of $f_{*,\sigma_k}$.
Additionally, let $y_1^{(k)},\dots,y_{m+1}^{(k)}$ be the points which interlace with $\{x_i^{(k)}\}_{i=0}^m$, i.e., $a' < y_1^{(k)} < x_1^{(k)} < \dots < x_m^{(k)} < y_{m+1}^{(k)}\leq b'$ and satisfying $f(y_i^{(k)}) = f_{*,\sigma_k}(y_i^{(k)})$ for all $i=0,\dots,m+1$.

Since $f(x) \geq f_{*,\sigma}\geq 0$ on $[a',b']$ and solving the system of equations
\[f_{*,\sigma}(x_j) = \sum_{i=0}^n a_i(\sigma)\cdot f_i(x_j;\sigma)\]
for $i=0,\dots,n$ we find that these quantities are uniformly bounded.
We now select a subsequence $\{\sigma_{k'}\}$ from $\{\sigma_k\}$ with the property that as $k'\to\infty$ we obtain
\begin{align*}
a_i(\sigma_{k'}) &\to a_i &\text{for all}\ i=0,\dots,n,\\
y_j^{(k')} &\to y_j &\text{for all}\ j=1,\dots,m+1,\\
x_l^{(k')} &\to x_l &\text{for all}\ l=1,\dots,m
\end{align*}
and
\[a' \leq y_1 \leq x_1 \leq \dots \leq x_m \leq y_{m+1} \leq b'.\]

The function $f_{*,a',b'} := \sum_{i=0}^n a_i\cdot f_i$ vanishes at all $x_l$, $l=1,\dots,m$, and equals $f$ at all $y_j$, $j=1,\dots,m+1$.
Therefore, since $f_{*,a',b'}$ is continuous we see that
\[a' \leq y_1 < x_1 < \dots < x_m < y_{m+1}\leq b'.\]
Hence, $f_{*,a',b'}$ satisfies (a) -- (e) of (i) on the interval $[a',b']$.

Performing a last limiting procedure letting $a'\searrow a$ and $b'\nearrow b$ we obtain a polynomial $f_*$ satisfying (a) -- (e) in (i) on the full interval $[a,b]$.

For $f^*$ the same procedure gives the desired polynomial satisfying the conditions (a) -- (d) and (e').

\textit{Uniqueness of $f_*$ and $f^*$:} Let $n=2m$.
Observe that if another polynomial $\tilde{f}_*$ with properties (a) -- (e) exists then it must have $m$ interior zeros $\tilde{x}_1,\dots,\tilde{x}_m$. Denote by $x_1,\dots,x_m$ the zeros of $f_*$.
Without loss of generality we can assume that either $\tilde{x}_1 < x_1$ or $\tilde{x}_1 = x_1$ and $f_* - \tilde{f}_*$ is non-negative in some interval $(x_1-\varepsilon,x_1)$.
Otherwise we interchange the roles of $f_*$ and $\tilde{f}_*$.
We count the zeros of $g := f_* - \tilde{f}_*$.
We say $g$ has a zero in the closed interval $[c,d]$ if 
\begin{itemize}
\item $g(t_0) = 0$ for $t_0\in (c,d)$,

\item $g(c) = 0$ and $g\geq 0$ on $(c,c+\varepsilon)$, or

\item $g(d) = 0$ and $g\geq 0$ on $(d-\varepsilon,d)$.
\end{itemize}
Counting zeros in this fashion we see that $g$ has at least two zeros in each of the intervals $[x_{i-1},x_i]$ for $i=1,\dots,m$ where $x_0 = a$ and at least one in the interval $[x_m,b]$.
In total $g$ vanishes at least $n+1$ times.
Notice, that certain non-nodal zeros of $g$ have been counted twice and hence by \Cref{thm:zeros1} we have $g = 0$.

In a similar way we get uniqueness of $f^*$ and also in the case $n=2m+1$.
\end{proof}

Note, in the previous result we do not need to have $f\in\lin\cF$. The function $f$ only needs to be continuous and strictly positive on $[a,b]$.

An earlier version of (or at least connected to) \Cref{thm:karlin} combined with \Cref{thm:zeros1} (which was used in the proof of \Cref{thm:karlin}) is a lemma by Markov\index{Markov Lemma}\index{Lemma!Markov} \cite{markov84}, see also \cite[p.\ 80]{shohat43}.

\begin{lem}[\cite{markov84}, see also {\cite[p.\ 80]{shohat43}}]
Let $m\in\nset$ and let $f\in \cat^{n+1}([a,b],\rset)$ be such that $f>0$ and $f^{(k)}\geq 0$ for all $k=1,\dots,m+1$ in $[a,b]$.
Let $p_m\in\rset[x]_{\leq m}$ and $c\in (a,b)$.
Let $m_1\in\nset$ be the number of zeros in $(a,c)$ of the function $f-p_m$ and $m_2$ be the number of zeros of $p_m$ in $(c,b)$, both counted with multiplicity. Then $m_1+m_2\leq m+1$.
\end{lem}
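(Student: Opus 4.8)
The plan is to set $g := f - p_m$ and argue by induction on $m$ using Rolle's theorem. Two features make this the natural route. First, $g^{(m+1)} = f^{(m+1)} \ge 0$ while $p_m^{(m+1)} \equiv 0$, so differentiation interacts cleanly with the degree restriction on $p_m$. Second, the derived pair $(f', p_m')$ again satisfies all hypotheses of the lemma with $m$ replaced by $m-1$: indeed $(f')^{(k)} = f^{(k+1)} \ge 0$ for $k = 1,\dots,m$, and $\deg p_m' \le m-1$. The base case $m=0$ is immediate: a nonzero constant $p_0$ has no zeros, so $m_2 = 0$, while $f - p_0$ is non-decreasing (as $f' \ge 0$) and therefore changes sign at most once, giving $m_1 \le 1 = m+1$.

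For the inductive step I would pass from $(f,p_m)$ to $(f',p_m')$. Counting with multiplicities, Rolle's theorem gives that $g' = f' - p_m'$ has at least $m_1 - 1$ zeros in $(a,c)$ and that $p_m'$ has at least $m_2 - 1$ zeros in $(c,b)$. It is illuminating to phrase this through the quotient $q := p_m/f$, which is licit because $f > 0$: the zeros of $g$ correspond to the level set $\{q = 1\}$ and those of $p_m$ to $\{q = 0\}$, with multiplicities preserved, and by \Cref{cor:scaling} the scaled family $\{f^{-1}, xf^{-1}, \dots, x^m f^{-1}\}$ is a T-system of order $m$. Since the $\{q=1\}$-cluster lies entirely in $(a,c)$ and the $\{q=0\}$-cluster entirely in $(c,b)$, these two packets of critical points of $q$ occupy disjoint intervals, which is precisely why the two Rolle counts \emph{add} to give at least $(m_1-1)+(m_2-1)$ zeros of $q'$.

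The difficulty, and what I expect to be the genuine obstacle, is that this bookkeeping on its own yields only $m_1 + m_2 \le m+2$; recovering the final unit is the whole point of the lemma. This is exactly the step that must invoke the strict positivity $f > 0$ together with the full chain $f^{(1)}, \dots, f^{(m+1)} \ge 0$, none of which the Rolle count has yet used. The mechanism is already transparent at $m=1$: if $g = f - p_1$ (which is convex, since $f'' \ge 0$) has both of its zeros in $(a,c)$, then $p_1$ must have positive slope, and because $p_1$ equals $f > 0$ at the larger of these zeros it remains positive throughout $(c,b)$, forcing $m_2 = 0$. In general I would rule out the extremal packing $m_1 + m_2 = m+2$ directly: in that case $q'$ would realize its maximal allotment of $m$ nodal zeros (consistent with \Cref{thm:zeros1}), all lying outside the gap $(u_{m_1}, v_1)$ straddling $c$ between the largest zero $u_{m_1}$ of $g$ in $(a,c)$ and the smallest zero $v_1$ of $p_m$ in $(c,b)$, so that $q$ would descend monotonically from level $1$ to level $0$ across $c$. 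A sign analysis of $p_m^{(m)} = f^{(m)} - g^{(m)}$, exploiting that $f^{(m)}$ is non-negative and non-decreasing (as $f^{(m+1)} \ge 0$), then shows $p_m$ stays positive beyond $u_{m_1}$, contradicting the existence of a zero of $p_m$ in $(c,b)$. Making this sign count rigorous—most cleanly via a Budan--Fourier-type estimate of the zeros of $p_m$ on $(u_{m_1}, b)$ in terms of the sign changes of $p_m(u_{m_1}), p_m'(u_{m_1}), \dots, p_m^{(m)}(u_{m_1})$—is where the real work lies.
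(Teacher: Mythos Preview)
The paper does not supply a proof of this lemma. It is stated purely as a historical remark—an earlier and weaker relative of \Cref{thm:karlin} combined with the zero-counting bound \Cref{thm:zeros1}—with a citation to Markov and to Shohat--Tamarkin, and the text moves on immediately with ``\Cref{thm:karlin} is of course much more general.'' So there is nothing in the paper to compare your argument against.

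On the substance of your proposal: the induction scheme has a structural gap you pass over. You assert that the derived pair $(f', p_m')$ ``again satisfies all hypotheses of the lemma with $m$ replaced by $m-1$,'' but the hypothesis $f>0$ does \emph{not} descend to $f'>0$; you only have $f'\ge 0$. Without strict positivity of $f'$ you cannot invoke the inductive hypothesis, and the quotient trick via \Cref{cor:scaling} that you use at the top level is unavailable one level down. (Indeed, $f'\ge 0$ and $f''\ge 0$ together allow $f'$ to vanish on an initial segment $[a,x_0]$.) Separately, you openly leave the recovery of the ``final unit'' as a sketch: the Budan--Fourier-type sign count you gesture at for the extremal configuration $m_1+m_2=m+2$ is plausible in spirit but is not carried out, and that step is the entire content of the lemma. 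A route closer to the paper's framing—at least when $f^{(m+1)}>0$ strictly—is to note via \Cref{exm:xf} that $\{1,x,\dots,x^m,f\}$ is a T-system of order $m+1$ and then argue with \Cref{thm:zeros1} directly; handling the splitting at $c$ and the degenerate case $f^{(m+1)}\ge 0$ still requires additional work.
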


\Cref{thm:karlin} is of course much more general.
As a consequence of \Cref{thm:karlin} we get Karlin's Positivstellensatz for T-systems on $[a,b]$.

\begin{karposthm}[for T-Systems on {$[a,b]$}; see {\cite[Cor.\ 1]{karlin63}} or e.g.\ {\cite[p.\ 71, Cor.\ 10.1(a)]{karlinStuddenTSystemsBook}}]\index{Karlin!Positivstellensatz!on $[a,b]$}\index{Positivstellensatz!Karlin!on $[a,b]$}\index{Theorem!Karlin!Positivstellensatz on $[a,b]$}\label{thm:karlinPosab}
Let $n\in\nset_0$, let $\cF$ be a continuous T-system of order $n$ on $[a,b]$ with $a<b$, and let $f\in\lin\cF$ with $f>0$ on $[a,b]$.
Then there exists a unique representation
\[ f = f_* + f^*\]
with $f_*,f^*\in\lin\cF$ such that
\begin{enumerate}[(i)]
\item $f_*,f^*\geq 0$ on $[a,b]$,

\item the zeros of $f_*$ and $f^*$ each are sets of index $n$,

\item the zeros of $f_*$ and $f^*$ strictly interlace,

\item $f_*(b) = f(b) >0$, and

\item $f^*(b) = 0$.
\end{enumerate}
\end{karposthm}
\begin{proof}
Let $f_*$ be the unique $f_*$ from \Cref{thm:karlin}(i). Then $f - f_*\in\lin\cF$ is a polynomial and fulfills (a) -- (d), and (e') of $f^*$ in \Cref{thm:karlin}.
But since also $f^*$ is unique we have $f - f_* = f^*$.
\end{proof}

\section{The Snake Theorem: An Interlacing Theorem}
%%%%%%%%%%%%%%%%%%%%%%%%%%%%%%%%%%%%%%%%%%%%%%%%%%%

In \Cref{thm:karlin} a polynomial $f_*\in\lin\cF$ was found with $0\leq f_*\leq f$ for some given $f\in \cat([a,b],\rset)$ with $f>0$ on $[a,b]$.
This can be extended to find a function $f_*\in\lin\cF$ between some $g_1,g_2\in\cat([a,b],\rset)$ as the following result shows.
In \cite[p.\ 368, Thm.\ 6.1]{karlinStuddenTSystemsBook} M.\ G.\ Krein\index{Krein, M.\ G.} and A.\ A.\ Nudel'man\index{Nudel'man, A.\ A.} called it the \emph{Snake Theorem} which is an accurate description of its graphical representation, see \Cref{fig:g1g2g}.

\begin{snakethm}[{\cite[Thm.\ 2]{karlin63}} or e.g.\ {\cite[p.\ 72, Thm.\ 10.2]{karlinStuddenTSystemsBook}} and {\cite[p.\ 368, Thm.\ 6.1]{kreinMarkovMomentProblem}}]\label{thm:g1g2g}\index{Interlacing Theorem|see{Snake Theorem}}\index{Theorem!Interlacing|see{Snake}}\index{Theorem!Snake}\index{Snake Theorem}
Let $n\in\nset_0$, $\cF = \{f_i\}_{i=0}^n$ be a continuous T-system of order $n$ on $[a,b]$ with $a<b$, and let $g_1,g_2\in\cat([a,b],\rset)$ be two continuous functions on $[a,b]$ such that there exists a function $g\in\lin\cF$ with
\[ g_1 < g < g_2\]
on $[a,b]$. Then the following hold:
\begin{enumerate}[(i)]
\item There exists a unique polynomial $f_*\in\lin\cF$ such that
\begin{enumerate}[(a)]
\item $g_1(x) \leq f_*(x) \leq g_2(x)$ for all $x\in [a,b]$, and

\item there exist $n+1$ points $x_1<\dots<x_{n+1}$ in $[a,b]$ such that
\[ f_*(x_{n+1-i}) = \begin{cases}
g_1(x_{n+1-i}) & \text{for}\ i=1,3,5,\dots,\\
g_2(x_{n+1-i}) & \text{for}\ i=0,2,4,\dots .
\end{cases}\]
\end{enumerate}

\item There exists a unique polynomial $f^*\in\lin\cF$ such that
\begin{enumerate}[(a')]
\item $g_1(x) \leq f^*(x) \leq g_2(x)$ for all $x\in [a,b]$, and

\item there exist $n+1$ points $y_1 < \dots < y_{n+1}$ in $[a,b]$ such that
\[ f^*(y_{n+1-i}) = \begin{cases}
g_2(y_{n+1-i}) & \text{for}\ i=1,3,5,\dots,\\
g_1(y_{n+1-i}) & \text{for}\ i=0,2,4,\dots .
\end{cases}\]
\end{enumerate}
\end{enumerate}
\end{snakethm}

The functions $g_1$, $g_2$, $g$, $f_*$, and $f^*$ of the \Cref{thm:g1g2g} are illustrated in \Cref{fig:g1g2g}.
\begin{figure}[htb]
\begin{center}
\includegraphics[width=0.9\textwidth]{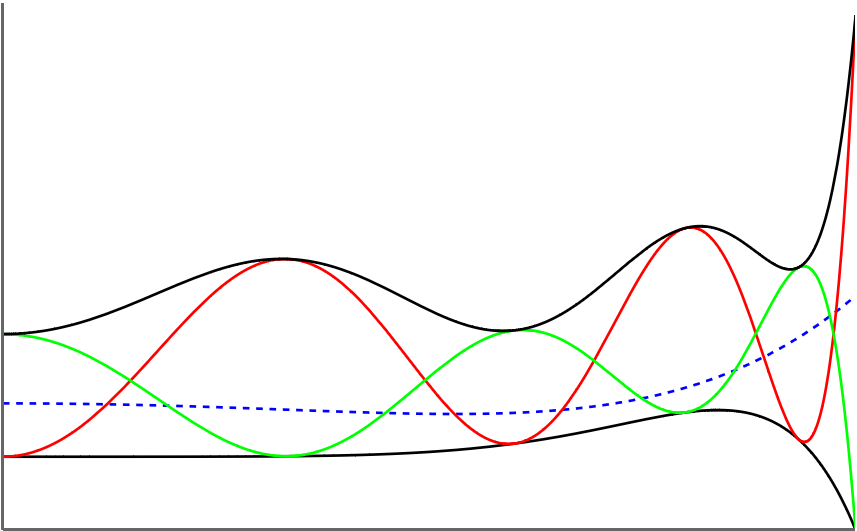}
\end{center}
\caption{The functions $g_1,g_2\in\cat([a,b],\rset)$ (black, $g_1$ bottom, $g_2$ top), $g\in\lin\cF$ (blue, dashed), $f_*\in\lin\cF$ (red), and $f^*\in\lin\cF$ (green) from the \Cref{thm:g1g2g}.\label{fig:g1g2g}}
\end{figure}
The following proof is taken from {\cite[p.\ 73]{karlinStuddenTSystemsBook}}.

\begin{proof}
Let $n=2m$ and $\cF$ be an ET-system.
We proceed as in the proof of \Cref{thm:karlin}.
For each $\xi = (\xi_0,\dots,\xi_n)\in\Xi^n$ and $\sum_{i=0}^n \xi_i = b-a$ we construct the polynomial
\[f_\xi(x) = \sum_{i=0}^n a_i(\xi)\cdot f_i(x) = c_\xi\cdot\det\left(\begin{array}{c|ccc}
f_0 &\, f_1 & \dots & f_n\\
x &\, x_1 & \dots & x_n
\end{array}\right)\]
which vanishes at each of the points
\[x_i := a + \sum_{k=0}^{i-1} \xi_k\]
for all $i=0,\dots,n$ and let $c_\xi\in\rset$ be such that $a_0(\xi)^2 + \dots + a_n(\xi)^2 = 1$ and $f_\xi\geq 0$ on $[x_i,x_{i+1}]$ if $i$ is even.

For $i=0,2,4,\dots,n$ we define
\[\delta_i(\xi) := \min \left\{\delta\geq 0 \,\middle|\, \delta\cdot (g_2-g) \geq f_\xi\ \text{on}\ [x_i,x_{i+1}]\right\},\]
where $x_0=a$ and $x_{n+1} = b$, while for $i=1,3,\dots,n-1$ we define
\[\delta_i(\xi) := \min \left\{ \delta\geq 0 \,\middle|\, f_\xi \geq \delta\cdot (g-g_1)\ \text{on}\ [x_i,x_{i+1}]\right\}.\]
As in \Cref{thm:karlin} we define $F_k(\xi) := \delta_k(\xi) - \min_i \delta_i(\xi)$.
And as before assuming $\sum_{k=0}^n F_k(\xi)>0$ for all $\xi\in\Xi^n$ leads to a contradiction.
Therefore, there exists a $\xi^*\in\Xi^n$ for which $\delta_i(\xi^*) = \delta$ for all $i=0,\dots,n$.
It is clear that $\delta>0$ and that the polynomial $f_* := \delta^{-1}\cdot f_{\xi^*} + g$ satisfies the conditions of the theorem.

The polynomial $f^*$ is constructed employing the same line of arguments.

The extension encompassing the case where $\cF$ is merely a T-system and the proof of the uniqueness proceed as in the proof of \Cref{thm:karlin}.
\end{proof}

\section{Karlin's Nichtnegativstellensatz for ET-Systems on $[a,b]$}
%%%%%%%%%%%%%%%%%%%%%%%%%%%%%%%%%%%%%%%%%%%%%%%%%%%%%%%%%%%%%%%%%%%%

While \Cref{thm:karlin} with $f>0$ can be proved for T-systems, an equivalent version allowing zeros in $f\in\cat([a,b],\rset)$, i.e., $f\geq 0$ but not $f>0$, needs to assume that $\cF$ is an ET-system.

\begin{karthm}[for {$f\geq 0$ on $[a,b]$}; {\cite[Thm.\ 3]{karlin63}} or e.g.\ {\cite[p.\ 74, Thm.\ 10.3]{karlinStuddenTSystemsBook}}]\label{thm:karlinNonNeg}\index{Karlin!Theorem!for $f\geq 0$ on $[a,b]$}\index{Theorem!Karlin!for $f\geq 0$ on $[a,b]$}
Let $n\in\nset_0$, let $\cF = \{f_i\}_{i=0}^n$ be a continuous ET-system of order $n$ on $[a,b]$ with $a<b$, and let $f\in \cat^n([a,b],\rset)$ be such that $f\geq 0$ on $[a,b]$ and $f$ has $r < n$ zeros (counting multiplicities).
The following hold:
\begin{enumerate}[(i)]
\item There exists a unique polynomial $f_*\in\lin\cF$ such that
\begin{enumerate}[(a)]
\item $f(x)\geq f_*(x)\geq 0$ for all $x\in [a,b]$,

\item $f_*$ has $n$ zeros counting multiplicities,

\item if $x_{1} < \dots < x_{{n-r}}$ in $(a,b)$ are the zeros of $f_*$ which remain after removing the $r$ zeros of $f$ then $f - f_*$ vanishes at least twice more (counting multiplicities) in each open interval $(x_i,x_{i+1})$, $i=1,\dots,n-r-1$, and at least once more in each of the intervals $[a,x_1)$ and $(x_{n-r},b]$,

\item the zeros $x_1,\dots,x_{n-r}$ of (c) are a set of index $n-r$, and

\item $x_{n-r} < b$.
\end{enumerate}

\item There exists a unique polynomial $f^*\in\lin\cF$ which satisfies the conditions (a) to (d) and 
\begin{enumerate}[(a')]\setcounter{enumii}{4}
\item $x_{n-r} = b$.
\end{enumerate}
\end{enumerate}
\end{karthm}

The proof is taken from \cite[pp.\ 74--75]{karlinStuddenTSystemsBook}.

\begin{proof}
Let $z_1,\dots,z_p$ be the distinct zeros of $f$ with multiplicities $m_1,\dots,m_p$ where $\sum_{i=1}^p m_i = r \leq n-1$ and set $n':= n-r$.
The proof is now similar to the proof of \Cref{thm:karlin} where $n$ is replaced by $n'$.
Since the odd and the even cases are again somewhat the same and for the sake of some slight variety we treat now the odd case $n' = 2m'+1$.
The construction of $f_*$ in part (i) proceeds as follows.
For each $\xi\in\Xi^{m'}$ we construct the polynomial
\begin{multline}\label{eq:fxiLong}
f_\xi(x) := \sum_{i=0}^n a_i(\xi)\cdot f_i\\
= c_\xi\cdot\det \left(\begin{array}{c|cccccccc}
f_0 &\, f_1 \dots f_{m_1} & \dots & f_{m_1+m_{p-1}+1} & \dots f_{r} & f_{r+1}\; f_{r+2} & \dots & f_{n-2}\; f_{n-1} & f_n\\
x &\, z_1\, \dots\, z_1\;\; & \dots & z_p & \dots\, z_p & x_1\;\;\; x_1 & \dots & x_{n'} \;\;\, x_{n'} & a
\end{array}\right)
\end{multline}
where $c_\xi\in\rset$ is chosen such that $a_1(\xi)^2 + \dots + a_n(\xi)^2 = 1$ and
\[x_i := a + \sum_{k=0}^i \xi_k\]
for all $i=1,\dots,m'$ are the zeros of multiplicity two and $a$ is a zero of multiplicity one.
Now we define
\[\delta_i(\xi) := \min\left\{ \delta\geq 0 \,\middle|\, \delta\geq \frac{f_\xi}{f}\ \text{on}\ [x_i,x_{i+1}]\right\}\]
for $i=1,\dots,m'+1$ with $x_{m'+2}=b$ where the ratio is evaluated by l'Hopital's rule at the zeros $z_1,\dots,z_p$ of $f$.

By examining $\frac{f_\xi}{f}$ first in the neighborhood of each of the points $z_1,\dots,z_p$ and then over the remaining part we find that if $\xi^{(k)}\to\xi$ then
\[ \frac{f_{\xi^{(k)}}}{f} \to \frac{f_\xi}{f}\]
uniformly on $[a,b]$.
Consequently, each of th $\delta_i$ is continuous in $\xi$ and $\delta_i(\xi)=0$ if and only $\xi_i=0$.

The same arguments used in the proof of \Cref{thm:karlin} now show that for some $\xi^*\in\inter\Xi^{m'}$ we have $\delta_i(\xi^*)=\delta>0$ for all $i=1,\dots,m'+1$.
It is simple to see that $f_* := \delta^{-1}\cdot f_{\xi^*}$ possesses the properties (a), (b), (d), and (e) in (i).
To show property (c) observe that if $x_i = z_j$ for some $j$ then $f_{\xi^*}$ has a zero at $z_j$ with multiplicity exceeding that of $f$ so that $\delta$ is strictly greater than $f_{\xi^*}\cdot f^{-1}$ in some neighborhood of $z_j$.
This implies the equality $\delta = f_{\xi^*}(x)\cdot f(x)^{-1}$ for some $x$ in each of the open intervals $(x_1,x_2)$, \dots, $(x_{m'},x_{m'+1})$ and somewhere in $(x_{m'+1},b]$.
Thus, in each $(x_i,x_{i+1})$, either $f(x) - \delta^{-1}\cdot f_{\xi^*}(x)$ vanishes somewhere other than at the zeros of $f$ or the multiplicity of one of the common zeros of $f$ and $\delta^{-1}\cdot f_{\xi^*}$ is increased by two.
In the interval $(x_{m'+1},b]$ the function $f-\delta^{-1}\cdot f_{\xi}$ may vanish at $b$ with multiplicity only one greater than the zero of $f$ at this point.
This concludes that $f_*$ also fulfills (c) in (i).

The polynomial $f^*$ when $n' = 2m'+1$ is constructed in the same manner by replacing $a$ in (\ref{eq:fxiLong}) by $b$.

\textit{Uniqueness:} Assume another polynomial $g$ satisfies the same properties as $f_*$.
Without loss of generality we can assume that the first zero of $f-g$ other than the zeros of $f$ is less than or equal to first zero of $f-f_*$.
Define $h := \frac{f_*-g}{f}$.
A zero of $h$ occurring at one of the values $x_i$, $i=2,\dots,n'+1$ is necessarily at least a double zero.
In this case we assign one zero to each of the intervals $[x_{i-1},x_i]$ and $[x_i,x_{i+1}]$ with $x_{m'+2} = b$.
Under this counting procedure, and taking account of the oscillation properties of $f_*$ and $g$, we deduce that $h$ has at least three zeros in $[a,x_1]$, at least two zeros in each of the intervals $[x_i,x_{i+1}]$, $i=2,\dots,m'$, and at least one zero in $[x_{m'+1},b]$.
Clearly, all of these zeros are other than the $r$ zeros of $f$, so that $f_*-g$ has at least $3+2(m'-1) + 1 + r = n+1$ zeros (counting multiplicities). Hence, $h=0$ and $f_* = g$.
\end{proof}

If $f\in\lin\cF$ in \Cref{thm:karlinNonNeg} we get similar to \Cref{thm:karlinPosab} the following Nichtnegativstellensatz on $[a,b]$ due to Karlin.

\begin{karnegthm}[for ET-Systems on {$[a,b]$}; {\cite[p.\ 603, Cor.\ after Thm.\ 3]{karlin63}} or e.g.\ {\cite[p.\ 76, Cor.\ 10.3]{karlinStuddenTSystemsBook}}]\index{Karlin!Nichtnegativstellensatz!on $[a,b]$}\index{Theorem!Karlin!Nichtnegativstellensatz on $[a,b]$}\index{Nichtnegativstellensatz!Karlin!on $[a,b]$}\label{thm:karlinNonNegab}
Let $n\in\nset_0$, $\cF = \{f_i\}_{i=0}^n$ be an ET-system of order $n$ on $[a,b]$ with $a<b$, and let $f\in\lin\cF$ be such that $f\geq 0$ on $[a,b]$ and $f$ has $r < n$ zeros $a\leq z_1 \leq z_2 \leq \dots \leq z_r\leq b$ (counting multiplicities).
Then there exists a unique representation
\[f = f_* + f^*\]
with $f_*,f^*\in\lin\cF$ such that
\begin{enumerate}[(i)]
\item $f_*, f^*\geq 0$ on $[a,b]$,

\item for $f_*$ and $f^*$ the sets of zeros counting algebraic multiplicities is after removing the zeros of $f$ with algebraic multiplicity a set of index $n-r$ which strictly interlace, and

\item the set of zeros of $f^*$ contains after removing the zeros of $f$ with algebraic multiplicities the point $b$.
\end{enumerate}
\end{karnegthm}
\begin{proof}
Let $f_*$ be the polynomial from \Cref{thm:karlinNonNeg} and set $g := f - f_*$.
Then $g$ fulfills the conditions of $f^*$ in \Cref{thm:karlinNonNeg} and by its uniqueness we have $g = f^*$ which proves the statement.
\end{proof}

\begin{rem}
Since \Cref{thm:karlinNonNegab} (ii) might be a little bit confusing we explain it more detailed.

Let $\cF$ be an ET-system of order $n\in\nset_0$ on $[a,b]$ with $a<b$ and let $f\in\lin\cF$ be such that $f\geq 0$ on $[a,b]$ and $f$ has the zeros $z_1,\dots,z_l$ with algebraic multiplicities $m_1,\dots,m_l$, $m_1 + \dots + m_l =: r < n$.
\begin{enumerate}[\it(i)]
\item If $n-r = 2m$ is \emph{even} then the zeros of $f_*$ from \Cref{thm:karlinNonNegab} are $x_1,\dots,x_m$ all with algebraic multiplicity $2$ and the zeros of $f^*$ are $y_0,y_1,\dots,y_m$ where $y_0$ and $y_m$ have algebraic multiplicity $1$ and otherwise the $y_i$ have algebraic multiplicity $2$.
They interlace, i.e., we have
\[a= y_0 < x_1 < y_1 < \dots < x_m < y_m = b.\]
The $f_*$ and $f^*$ are then given by
\[f_*(x) = c_*\cdot\det \left(\begin{array}{c|cccccccc}
f_0 & \, f_1 & f_2 & \dots & f_{2m-1} & f_{2m} & f_{2m+1} & \dots & f_n\\
x & \, x_1 & x_1 & \dots & x_m & x_m & z_1 & \dots & z_l
\end{array}\right)\]
and
\[f^*(x) = c^*\cdot\det \left(\begin{array}{c|ccccccccc}
f_0 & \, f_1 & f_2 \; f_3 \; \dots & f_{2m-2} & f_{2m-1} & f_{2m} & f_{2m+1} & \dots & f_n\\
x & \, a & y_1 \; y_1 \; \dots & y_{m-1} & y_{m-1} & b & z_1 & \dots & z_l
\end{array}\right)\]
where $c_*,c^*\in\rset\setminus\{0\}$ and the signs are such that $f_*,f^*\geq 0$ on $[a,b]$.
The zeros $z_1,\dots,z_l$ are included with their corresponding algebraic multiplicities $m_1,\dots,m_l$, i.e., $z_1$ is included $m_1$-times, \dots, $z_l$ is included $m_l$-times.

\item If $n-r = 2m+1$ is \emph{odd} then the zeros of $f_*$ from \Cref{thm:karlinNonNegab} are $x_0,\dots,x_m$ where $x_0$ has algebraic multiplicity $1$ and the other algebraic multiplicity $2$. For $f^*$ we have the zeros $y_0,\dots,y_m$ where $y_0,\dots,y_{m-1}$ have algebraic multiplicity $2$ and $y_m$ has algebraic multiplicity $1$.
They interlace, i.e., we have
\[x_0 = a < y_0 < \dots < x_m < y_m = b.\]
The $f_*$ and $f^*$ are then given by
\[f_*(x) = c_*\cdot\det \left(\begin{array}{c|ccccccccc}
f_0 & \, f_1 & f_2 \; f_3 \; \dots & f_{2m} & f_{2m+1} & f_{2m+2} & \dots & f_n\\
x & \, a & x_1 \; x_1 \; \dots & x_m & x_m  & z_1 & \dots & z_l
\end{array}\right)\]
and
\[f^*(x) = c^*\cdot\det \left(\begin{array}{c|cccccccc}
f_0 & \, f_1\; f_2 & \dots & f_{2m-1} & f_{2m} & f_{2m+1} & f_{2m+2} & \dots & f_n\\
x & \, y_0\; y_0 & \dots & y_{m-1} & y_{m-1} & b & z_1 & \dots & z_l
\end{array}\right)\]
where $c_*,c^*\in\rset\setminus\{0\}$ and the signs are such that $f_*,f^*\geq 0$ on $[a,b]$.
The zeros $z_1,\dots,z_l$ are included with their corresponding algebraic multiplicities $m_1,\dots,m_l$, i.e., $z_1$ is included $m_1$-times, \dots, $z_l$ is included $m_l$-times.
\exmsymbol
\end{enumerate}
\end{rem}

With the proof of \Cref{thm:karlinNonNeg} one can prove a similar interlacing theorem as the \Cref{thm:g1g2g} when $g_2-g_1$ has a certain number of zeros, see \cite[p.\ 76, Rem.\ 10.1]{karlinStuddenTSystemsBook}.

We stated here \Cref{thm:karlinPosab} and \Cref{thm:karlinNonNegab} for functions on $[a,b]$.
There are also similar statements for periodic functions, see \cite[Thm.\ 6 and 7]{karlin63}.
The cases on $[0,\infty)$ and $\rset$ are given in the next chapter.

\section*{Problems}%%%%%%%%%%%%%%%%%%%%%
\addcontentsline{toc}{section}{Problems}

\begin{prob}\label{prob:etNeighborhood}
Examine the proof of \Cref{thm:karlinNonNeg} more closely.
In the statement of the theorem it is required that $\cF$ is an ET-system on $[a,b]$.
But for a given $f\geq 0$ where does the family $\cF$ actually only needs to be an ET-system?
\end{prob}

%\motto{When you have satisfied yourself that the theorem is true,\\
%you start proving it.\\ \medskip
\motto{Look at the conclusion! And try to think of a familiar\\ theorem having the same or a similar conclusion.\\ \medskip
\ \hspace{1cm} \normalfont{George P\'olya {\cite[p.\ 25]{polyaHowToSolveIt}}}\index{P\'olya, G.}}

\chapter{Karlin's Positivstellensätze and Nichtnegativstellensätze on $[0,\infty)$ and $\rset$}%%%
%%%%%%%%%%%%%%%%%%%%%%%%%%%%%%%%%%%%%%%%%%%%%%%%%%%%%%%%%%%%%%%%%%%%%%%%%%%%%%%%%%%%
\label{ch:karlinPos0inftyR}

In this chapter we extend the results of the previous chapter, i.e., we extend \Cref{thm:karlinPosab} on $[a,b]$ to $[0,\infty)$ in \Cref{thm:karlinPos0infty} and to $\rset$ in \Cref{thm:karlinPosR} as well as we extend \Cref{thm:karlinNonNegab} on $[a,b]$ to $[0,\infty)$ in \Cref{thm:karlinNonNeg0infty} and to $\rset$ in \Cref{thm:karlinNonNegR}.

\section{Karlin's Positivstellensatz for T-Systems on $[0,\infty)$}
%%%%%%%%%%%%%%%%%%%%%%%%%%%%%%%%%%%%%%%%%%%%%%%%%%%%%%%%%%%%%%%%%%%

By a transformation of $[a,b]$ to $[0,\infty]$ and then restriction to $[0,\infty)$ we get from \Cref{thm:karlinPosab} the following.

\begin{karposthm}[for T-Systems on {$[0,\infty)$}; see {\cite[Thm.\ 9]{karlin63}} or e.g.\ {\cite[p.\ 169, Thm.\ 8.1]{karlinStuddenTSystemsBook}}]\index{Karlin!Positivstellensatz!on $[0,\infty)$}\index{Positivstellensatz!Karlin!on $[0,\infty)$}\index{Theorem!Karlin!Positivstellensatz on $[0,\infty)$}\label{thm:karlinPos0infty}
Let $n\in\nset_0$ and $\cF = \{f_i\}_{i=0}^n$ be a continuous T-system of order $n$ on $[0,\infty)$ such that
\begin{enumerate}[\quad (a)]
\item there exists a $C>0$ such that $f_n(x)>0$ for all $x\geq C$,

\item $\displaystyle\lim_{x\to\infty} \frac{f_i(x)}{f_n(x)} = 0$ for all $i=0,\dots, n-1$, and

\item $\{f_i\}_{i=0}^{n-1}$ is a continuous T-system on $[0,\infty)$.
\end{enumerate}
Then for any $f = \sum_{i=0}^n a_i f_i\in\lin\cF$ with $f>0$ on $[0,\infty)$ and $a_n>0$ there exists a unique representation
\[f = f_* + f^*\]
with $f_*,f^*\in\lin\cF$ and $f_*,f^*\geq 0$ on $[0,\infty)$ such that the following hold:
\begin{enumerate}[(i)]
\item If $n = 2m$ the polynomials $f_*$ and $f^*$ each possess $m$ distinct zeros $\{x_i\}_{i=1}^m$ and $\{y_i\}_{i=0}^{m-1}$ satisfying
\[0 = y_0 < x_1 < y_1 < \dots < y_{m-1} < x_m <\infty.\]
All zeros except $y_0$ are double zeros.

\item If $n = 2m+1$ the polynomials $f_*$ and $f^*$ each possess the zeros $\{x_i\}_{i=1}^{m+1}$ and $\{y_i\}_{i=1}^m$ satisfying
\[0 = x_1 < y_1 < x_2 < \dots < y_m < x_{m+1} < \infty.\]
All zeros except $x_1$ are double zeros.

\item The coefficient of $f_n$ in $f_*$ is equal to $a_n$.
\end{enumerate}
\end{karposthm}

The proof is adapted from \cite[pp.\ 168]{karlinStuddenTSystemsBook}.

\begin{proof}
By (a) there exists a function $w\in\cat([0,\infty),\rset)$ such that $w>0$ on $[0,\infty)$ and $\lim_{x\to\infty} \frac{f_n(x)}{w(x)} = 1$.
By (b) we define
\[v_i(x) := \begin{cases}
\frac{f_i(x)}{w(x)} & \text{if}\ x\in [0,\infty),\\
\delta_{i,n} & \text{if}\ x=\infty
\end{cases}\]
for all $i=0,1,\dots,n$.
Then by (c) and \Cref{cor:scaling} we have that $\{v_i\}_{i=0}^n$ is a T-system on $[0,\infty]$.
With $t(x) := \tan (\pi x/2)$ we define
$g_i(x) := v_i\circ t$
for all $i=0,1,\dots,n$.
Hence, $\cG = \{g_i\}_{i=0}^n$ is a T-system on $[0,1]$ by \Cref{cor:transf}.
We now apply \Cref{thm:karlinPosab} to $\cG$. Set $g := \big(\frac{f}{w}\big)\circ t$.

(i): Let $n = 2m$. Then by \Cref{thm:karlinPosab} there exist points
\[0=y_0 < x_1 < y_1 < \dots < x_m < y_m = 1\]
and unique functions $g_*$ and $g^*$ such that $g = g_* + g^*$, $g_*,g^*\geq 0$ on $[0,1]$, $x_1,\dots,x_m$ are the zeros of $g_*$, and $y_0,\dots,y_m$ are the zeros of $g^*$.
Then $f_* := (g_*\circ t^{-1})\cdot w$ and $f^* := (g^*\circ t^{-1})\cdot w$ are the unique components in the decomposition $f = f_* + f^*$.

(ii): Similar to (i).

(iii): From (i) (and (ii) in a similar way) we have $g_i(1) = 0$ for $i=0,\dots,n-1$ and $g_n(1) = 1$.
Hence, we get with $g^*(y_m=1) = 0$ that $g_n$ is not contained in $g^*$, i.e., $g_*$ has the only $g_n$ contribution because $\cG$ is linearly independent.
This is inherited by $f_*$ and $f^*$ which proves (iii).
\end{proof}

The transformation $g_i = v_i\circ t$ with $t$ the $\tan$-function is due to Krein \cite{krein51}.

If $\cF$ in \Cref{thm:karlinPos0infty} is an ET-system then the $f_*$ and $f^*$ can be written down explicitly.
For that we only need $\cF$ to be an ET-system on $(0,\infty)$ not on all $[0,\infty)$ since at $x=0$ a possible zero in $f_*$ or $f^*$ only has multiplicity one.

\begin{cor}\label{cor:posrepr0infty}
If in \Cref{thm:karlinPos0infty} we have additionally that $\cF$ is an ET-system on $(0,\infty)$ then the unique $f_*$ and $f^*$ are given
\begin{enumerate}[(i)]
\item for $n= 2m$ by
\[ f_*(x) = c^*\cdot \det \begin{pmatrix}
f_0 & f_1 & f_2 & \dots & f_{2m-1} & f_{2m}\\
x & (x_1 & x_1) & \dots & (x_m & x_m)
\end{pmatrix}\]
and
\[f^*(x) = -c_*\cdot \det \begin{pmatrix}
f_0 & f_1 & f_2 & f_3 & \dots & f_{2m-2} & f_{2m-1}\\
x & y_0 & (y_1 & y_1) & \dots & (y_{m-1} & y_{m-1})
\end{pmatrix},\]

\item and for $n= 2m+1$ by
\[f_*(x) = -c_*\cdot \det \begin{pmatrix}
f_0 & f_1 & f_2 & f_3 & \dots & f_{2m} & f_{2m+1}\\
x & x_1 & (x_2 & x_2) & \dots & (x_{m+1} & x_{m+1})
\end{pmatrix}\]
and
\[f^*(x) = c^*\cdot \det \begin{pmatrix}
f_0 & f_1 & f_2 & \dots & f_{2m-1} & f_{2m}\\
x & (y_1 & y_1) & \dots & (y_m & y_m)
\end{pmatrix}\]
\end{enumerate}
for some $c_*,c^* > 0$.
\end{cor}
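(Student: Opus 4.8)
The plan is to read off the exact zero data of $f_*$ and $f^*$ from \Cref{thm:karlinPos0infty} and then to invoke the determinantal representation of an ET-polynomial in terms of its prescribed zeros (the ET-system analogue of \Cref{thm:detRepr} proved in \Cref{ch:etsystems}). The decomposition $f = f_* + f^*$ and the location and algebraic multiplicities of all zeros are already handed to us by the theorem; crucially, property (iii) says that the coefficient of $f_n$ in $f_*$ equals $a_n$, so $f_n$ does not occur in $f^*$. Hence $f_*\in\lin\cF$, whereas $f^*\in\lin\{f_0,\dots,f_{n-1}\}$, which we may regard as an ET-system of order $n-1$ on $(0,\infty)$ (this is automatic for the ECT-systems, e.g.\ the sparse monomials, that supply the main examples, and uses hypothesis (c) of \Cref{thm:karlinPos0infty} together with the ET-property of $\cF$).

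First I would tally, in each parity, the zeros counted with algebraic multiplicity. For $n=2m$ the polynomial $f_*$ carries the $m$ interior double zeros $x_1,\dots,x_m$, i.e.\ exactly $n$ zeros, while $f^*$ carries the simple zero $y_0=0$ together with the $m-1$ interior double zeros $y_1,\dots,y_{m-1}$, i.e.\ exactly $n-1$ zeros. For $n=2m+1$ the roles are reversed: $f_*$ carries the simple zero $x_1=0$ and the $m$ interior double zeros $x_2,\dots,x_{m+1}$, totalling $n$ zeros, and $f^*$ carries the $m$ interior double zeros $y_1,\dots,y_m$, totalling $n-1$ zeros. Thus $f_*$ always has precisely $n$ zeros inside the $(n+1)$-dimensional space $\lin\cF$, and $f^*$ always has precisely $n-1$ zeros inside the $n$-dimensional space $\lin\{f_0,\dots,f_{n-1}\}$.

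Next I would apply the determinantal representation theorem: in an ET-system of order $k$ a nonzero polynomial is determined up to a scalar by $k$ zeros (with multiplicities), and that scalar multiple is exactly the determinant obtained by inserting the zeros into the starred matrix of (\ref{eq:matrixStar}). Applying this with $k=n$ to $f_*$ and with $k=n-1$ to $f^*$ yields precisely the determinants displayed in (i) and (ii): inserting a double zero $x_i$ as $(x_i\ x_i)$ forces a double zero of the determinant, inserting $0$ as a single entry forces a simple zero, and the determinants are not identically zero by \Cref{thm:etDet}. Since $f_*$ (resp.\ $f^*$) and the corresponding determinant share all inserted zeros with the correct multiplicities inside a space of the right dimension, they must be proportional. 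Finally the sign of each constant is forced: by \Cref{rem:signDetET} the starred determinant is strictly positive for strictly ordered arguments, so on each interval between consecutive zeros it has a definite alternating sign, and one selects $c_*,c^*>0$ together with the displayed signs $\pm$ so as to reproduce the non-negativity $f_*,f^*\geq 0$ supplied by \Cref{thm:karlinPos0infty}.

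The main obstacle I anticipate is the single zero sitting at the boundary point $x=0$, present in $f^*$ when $n=2m$ and in $f_*$ when $n=2m+1$, since $\cF$ is assumed to be an ET-system only on the open half-line $(0,\infty)$. The resolution, flagged in the remark preceding the corollary, is that this boundary zero is always simple, so that inserting it into the determinant requires only the values $f_i(0)$ and no derivatives, and hence no ET-condition at $0$ is used. To make the proportionality rigorous I would restrict to a compact subinterval $[\delta,N]\subset(0,\infty)$ containing all the interior double zeros, where $\cF$ (and its subsystem) remains an ET-system by the restriction property (Problem \ref{prob:ecRestriction}); the interior zeros already account for the full $n$ or $n-1$ algebraic multiplicities there, while the extra simple zero at $0$, controlled through the T-system index bound of \Cref{thm:zeros1} on $[0,N]$, prevents the difference $f_*-c\,D_*$ (resp.\ $f^*-c\,D^*$) from acquiring more zeros than an order-$n$ (resp.\ order-$(n-1)$) system permits, forcing it to vanish identically.
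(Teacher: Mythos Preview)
Your proposal is correct and follows the same approach as the paper's proof, which simply says ``Combine \Cref{thm:karlinPos0infty} with \Cref{rem:doublezeros} and note that since $0$ is never a multiple zero we only need $\cF$ to be an ET-system on $(0,\infty)$.'' You have unpacked precisely this: read off the zero data and parity from \Cref{thm:karlinPos0infty}, use the ET-determinantal representation (Theorem in \Cref{ch:etsystems}), and observe that the only boundary zero at $0$ is simple so the ET-hypothesis on $(0,\infty)$ suffices. Your additional observations---that property (iii) forces $f^*\in\lin\{f_0,\dots,f_{n-1}\}$, and that one implicitly needs this subsystem to be ET on $(0,\infty)$ (automatic for the ECT main examples)---are sound refinements that the paper's one-line proof leaves implicit.
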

\begin{proof}
Combine \Cref{thm:karlinPos0infty} with \Cref{rem:doublezeros} and note that since $0$ is never a multiple zero we only need $\cF$ to be an ET-system on $(0,\infty)$.
\end{proof}

\section{Karlin's Nichtnegativstellensatz for ET-Systems on $[0,\infty)$}
%%%%%%%%%%%%%%%%%%%%%%%%%%%%%%%%%%%%%%%%%%%%%%%%%%%%%%%%%%%%%%%%%%%%%%%%%

In \Cref{thm:karlinPos0infty} we needed to transform the domain $[a,b]$ into $[0,\infty]$ of a T-system.
For \Cref{thm:karlinNonNeg0infty} we needed an ET-system because of the additional zeros from $f\geq 0$.

With the same technique as in the proof of \Cref{thm:karlinPos0infty} and \Cref{lem:ettrans} we get from \Cref{thm:karlinNonNegab} the following.

\begin{karnegthm}[for ET-Systems on {$[0,\infty)$}]%; {\cite[p.\ \suchen, Cor.\ \suchen]{karlinStuddenTSystemsBook}}]
\index{Karlin!Nichtnegativstellensatz!on $[0,\infty)$}\index{Theorem!Karlin!Nichtnegativstellensatz on $[0,\infty)$}\index{Nichtnegativstellensatz!Karlin!on $[0,\infty)$}
\label{thm:karlinNonNeg0infty}
Let $n\in\nset_0$ and $\cF = \{f_i\}_{i=0}^n$ be an ET-system of order $n$ on $[0,\infty)$ such that
\begin{enumerate}[(a)]
\item there exists a $C>0$ such that $f_n(x)>0$ for all $x\geq 0$,

\item $\displaystyle\lim_{x\to\infty} \frac{f_i(x)}{f_n(x)} = 0$ for all $i=0,\dots,n-1$, and

\item $\{f_i\}_{i=0}^{n-1}$ is an ET-system.
\end{enumerate}
Then for any $f = \sum_{i=0}^n a_i f_i \in\lin\cF$ such that $f\geq 0$ on $[0,\infty)$, $a_n>0$, and $f$ has $r<n$ zeros counting multiplicity there exists a unique representation
\[f = f_* + f^*\]
with $f_*,f^*\in\lin\cF$ such that the following hold:
\begin{enumerate}[(i)]
\item $f_*,\ f^*\geq 0$ on $[0,\infty)$,

\item $f_*$ has $n$ zeros (counting multiplicities),

\item $f^*$ has $n-1$ zeros (counting multiplicities),

\item the zeros of $f_*$ and $f^*$ strictly interlace if the zeros of $f$ are removed, and

\item the coefficient of $f_n$ in $f_*$ is equal to $a_n$.
\end{enumerate}
\end{karnegthm}
\begin{proof}
The conditions (a) -- (c) are such that $\cF$ on $[0,\infty]$, i.e., including $\infty$, is an ET-system.

With the same argument as in the proof of \Cref{thm:karlinPos0infty} we transform $\cF$ on $[0,\infty]$ into $\cG$ on $[0,1]$ with the $\tan$-function.
Here \Cref{lem:ettrans} ensures that also $\cG$ is an ET-system.

Application of \Cref{thm:karlinNonNegab} on $[0,1]$ gives the desired decomposition $g = g_* + g^*$ with the observation that $x=1$ is a zero of at most multiplicity one by (a) and (b).
Backwards transformation into $\cF$ on $[0,\infty]$ resp.\ $[0,\infty)$ then gives the assertion.
\end{proof}

%\chapter{Karlin's Positivstellensatz and Nichtnegativstellensatz on $\rset$}%%%
%%%%%%%%%%%%%%%%%%%%%%%%%%%%%%%%%%%%%%%%%%%%%%%%%%%%%%%%%%%%%%%%%%%%%%%%%%%%%%%%%

\section{Karlin's Positivstellensatz for T-Systems on $\rset$}
%%%%%%%%%%%%%%%%%%%%%%%%%%%%%%%%%%%%%%%%%%%%%%%%%%%%%%%%%%%%%%

We have seen that from \Cref{thm:karlinPosab} on $[a,b]$ we get \Cref{thm:karlinPos0infty} on $[0,\infty)$ with the transformation $t(x) = \tan(\pi x/2)$ from $[0,1]$ to $[0,\infty]$ and only need to pay attention to the end point $x=1$ resp.\ $x=\infty$.
The same transformation however also applies going from $[-1,1]$ to $[-\infty,\infty]$ now paying attention to both end points.

\begin{karposthm}[for T-Systems on {$\rset$}; see {\cite[Thm.\ 10]{karlin63}} or e.g.\ {\cite[p.\ 198, Thm.\ 8.1]{karlinStuddenTSystemsBook}}]\index{Karlin!Positivstellensatz!on $\rset$}\index{Positivstellensatz!Karlin!on $\rset$}\index{Theorem!Karlin!Positivstellensatz on $\rset$}\label{thm:karlinPosR}
Let $m\in\nset_0$ and $\cF = \{f_i\}_{i=0}^{2m}$ be a continuous T-system of order $2m$ on $\rset$ such that
\begin{enumerate}[\quad (a)]
\item there exists a $C>0$ such that $f_{2m}(x)>0$ for all $x\in (-\infty,-C]\cup [C,\infty)$,

\item $\displaystyle\lim_{|x|\to\infty} \frac{f_i(x)}{f_{2m}(x)} = 0$ for all $i=0,\dots,2m-1$, and

\item $\{f_i\}_{i=0}^{2m-1}$ is a continuous T-system of order $2m-1$ on $\rset$.
\end{enumerate}
Let $f = \sum_{i=0}^{2m} a_i f_i$ be such that $f>0$ on $\rset$ and $a_{2m}>0$.
Then there exists a unique representation
\[f = f_* + f^*\]
with $f_*,f^*\in\lin\cF$ and $f_*,f^*\geq 0$ on $\rset$ such that
\begin{enumerate}[(i)]
\item the coefficient of $f_{2m}$ in $f_*$ is $a_{2m}$, and

\item $f_*$ and $f^*$ are non-negative polynomials having zeros $\{x_i\}_{i=1}^m$ and $\{y_i\}_{i=1}^{m-1}$ with
\[-\infty < x_1 < y_1 < x_2 < \dots < y_{m-1} < x_m < \infty.\]
\end{enumerate}
\end{karposthm}
\begin{proof}
See Problem \ref{prob:posR}.
\end{proof}

\section{Karlin's Nichtnegativstellensatz for ET-Systems on $\rset$}
%%%%%%%%%%%%%%%%%%%%%%%%%%%%%%%%%%%%%%%%%%%%%%%%%%%%%%%%%%%%%%%%%%%%

On $\rset$ we have the following Nichtnegativstellensatz for ET-systems.

\begin{karnegthm}[for ET-Systems on {$\rset$}]%; {\cite[p.\ \suchen, Cor.\ \suchen]{karlinStuddenTSystemsBook}}]
\index{Karlin!Nichtnegativstellensatz!on $\rset$}\index{Theorem!Karlin!Nichtnegativstellensatz on $\rset$}\index{Nichtnegativstellensatz!Karlin!on $\rset$}
\label{thm:karlinNonNegR}
Let $m\in\nset_0$ and $\cF = \{f_i\}_{i=0}^{2m}$ be an ET-system of order $2m$ on $\rset$ such that
\begin{enumerate}[\quad (a)]
\item there exists a $C>0$ such that $f_{2m}>0$  for all $x\in (-\infty,-C]\cup[C,\infty)$,

\item $\displaystyle\lim_{|x|\to\infty} \frac{f_i(x)}{f_{2m}(x)} = 0$ for all $i=0,\dots,2m-1$,

\item $\{f_i\}_{i=0}^{n-1}$ is an ET-system of order $n-1$ on $\rset$.
\end{enumerate}
Let $f = \sum_{i=0}^{2m} a_i f_i\in\lin\cF$ be such that $f\geq 0$, $a_{2m}>0$, and $f$ has $r<n$ zeros counting multiplicities.
Then there exists a unique representation
\[f = f_* + f^*\]
with $f_*, f^*\in\lin\cF$ such that the following hold:
\begin{enumerate}[(i)]
\item $f_*,f^*\geq 0$ on $\rset$,
\item $f_*$ has $2m$ zeros counting multiplicity,
\item $f^*$ has $2m-2$ zeros counting multiplicity,
\item the zeros of $f_*$ and $f^*$ strictly interlace if the zeros of $f$ are removed, and
\item the coefficient of $f_n$ in $f_*$ is equal to $a_n$.
\end{enumerate}
\end{karnegthm}
\begin{proof}
See Problem \ref{prob:nonnegR}.
\end{proof}

\section*{Problems}%%%%%%%%%%%%%%%%%%%%%
\addcontentsline{toc}{section}{Problems}

\begin{prob}\label{prob:posR}
Prove \Cref{thm:karlinPosR}, i.e., adapt the proof of \Cref{thm:karlinPos0infty} such that both interval ends $a$ and $b$ of $[a,b]$ are mapped to $-\infty$ and $+\infty$, respectively.
\end{prob}

\begin{prob}\label{prob:nonnegR}
Prove \Cref{thm:karlinNonNegR}, i.e., adapt the proof of \Cref{thm:karlinNonNeg0infty} such that both interval ends $a$ and $b$ of $[a,b]$ are mapped to $-\infty$ and $+\infty$, respectively.
\end{prob}

%%%%%%%%%%%%%%%%%%%%%%%%%%%%%%%%%%%%%%%%%%%%%%%%%%%%%%%%%%%%%%%%%%%%%%%%%%
%%%%%%%%%%%%%%%%%%%%%%%%%%%%%%%%%%%%%%%%%%%%%%%%%%%%%%%%%%%%%%%%%%%%%%%%%%
%%%%%%%%%%%%%%%%%%%%%%%%%%%%%%%%%%%%%%%%%%%%%%%%%%%%%%%%%%%%%%%%%%%%%%%%%%
\part{Non-Negative Algebraic Polynomials on $[a,b]$, $[0,\infty)$, and $\rset$}%%%%%
%%%%%%%%%%%%%%%%%%%%%%%%%%%%%%%%%%%%%%%%%%%%%%%%%%%%%%%%%%%%%%%%%%%%%%%%%%
%%%%%%%%%%%%%%%%%%%%%%%%%%%%%%%%%%%%%%%%%%%%%%%%%%%%%%%%%%%%%%%%%%%%%%%%%%
%%%%%%%%%%%%%%%%%%%%%%%%%%%%%%%%%%%%%%%%%%%%%%%%%%%%%%%%%%%%%%%%%%%%%%%%%%
\label{part:algPos}

\motto{I hold that it is only when we can prove everything we assert\\ that we understand perfectly the thing under consideration.\\ \medskip
\ \hspace{1cm} \normalfont{Gottfried Wilhelm Leibniz {\cite{leibniz89FoucherLetter}}}\index{Leibniz, G.\ W.}}

\chapter{Non-Negative Algebraic Polynomials on $[a,b]$}%%%
%%%%%%%%%%%%%%%%%%%%%%%%%%%%%%%%%%%%%%%%%%%%%%%%%%%%%%%%%%
\label{ch:nonNegAlgPolab}

We developed in the previous chapters the Positiv- and Nichtnegativestellensätze for general T- and ET-systems due to Karlin.
We will now apply these to the algebraic polynomials, i.e., we will plug in \Cref{exm:algECTsystem} and \Cref{exm:algETsystem}.

\section{Sparse Algebraic Positivstellensatz on $[a,b]$}
%%%%%%%%%%%%%%%%%%%%%%%%%%%%%%%%%%%%%%%%%%%%%%%%%%%%%%%%

At first let us have a look how all sparse strictly positive polynomials on some interval $[a,b]\subseteq (0,\infty)$ look like.

\begin{thm}[Sparse Algebraic Positivstellensatz on {$[a,b]$ with $0<a<b$}]\label{thm:algPosSatzab}\index{sparse!algebraic Positivstellensatz!on $[a,b]$}
Let $n\in\nset_0$, $\alpha_0,\dots,\alpha_n\in\rset$ be real numbers with $\alpha_0 < \alpha_1 < \dots < \alpha_n$, and let $\cF=\{x^{\alpha_i}\}_{i=0}^n$.
Then for any $f=\sum_{i=0}^n a_i x^{\alpha_i}\in\lin\cF$ with $f>0$ on $[a,b]$ and $a_n>0$ there exists a unique decomposition
\[f = f_* + f^*\]
with $f_*,f^*\in\lin\cF$ such that
\begin{enumerate}[(i)]
\item for $n = 2m$ there exist points $x_1,\dots,x_m,y_1,\dots,y_{m-1}\in [a,b]$ with
\[a < x_1 < y_1 < \dots < x_m < b\]
and constants $c_*,c^*>0$ with
\begin{equation}\label{eq:fStarStart}
f_*(x) = c_*\cdot\det \begin{pmatrix}
x^{\alpha_0} & x^{\alpha_1} & x^{\alpha_2} & \dots & x^{\alpha_{2m-1}} & x^{\alpha_{2m}}\\
x & (x_1 & x_1) & \dots & (x_m & x_m)
\end{pmatrix}\geq 0
\end{equation}
and
\begin{equation}
f^*(x) = -c^*\cdot\det \begin{pmatrix}
x^{\alpha_0} & x^{\alpha_1} & x^{\alpha_2} & x^{\alpha_3} & \dots & x^{\alpha_{2m-2}} & x^{\alpha_{2m-1}} & x^{\alpha_{2m}}\\
x & a & (y_1 & y_1) & \dots & (y_{m-1} & y_{m-1}) & b
\end{pmatrix}\geq 0
\end{equation}
for all $x\in [a,b]$, or

\item for $n = 2m+1$ there exist points $x_1,\dots,x_m,y_1,\dots,y_m\in [a,b]$ with
\[a < y_1 < x_1 < \dots < y_m < x_m < b\]
and $c_*,c^*>0$ with
\begin{equation}
f_*(x) = -c_*\cdot\det\begin{pmatrix}
x^{\alpha_0} & x^{\alpha_1} & x^{\alpha_2} & x^{\alpha_3} & \dots & x^{\alpha_{2m}} & x^{\alpha_{2m+1}}\\
x & a & (x_1 & x_1) & \dots & (x_m & x_m)
\end{pmatrix}\geq 0
\end{equation}
and
\begin{equation}\label{eq:fStarEnd}
f^*(x) = c^*\cdot\det\begin{pmatrix}
x^{\alpha_0} & x^{\alpha_1} & x^{\alpha_2} & \dots & x^{\alpha_{2m-1}} & x^{\alpha_{2m}} & x^{\alpha_{2m+1}}\\
x & (y_1 & y_1) & \dots & (y_m & y_m) & b
\end{pmatrix}\geq 0
\end{equation}
for all $x\in [a,b]$.
\end{enumerate}
\end{thm}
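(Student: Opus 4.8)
The statement is exactly the specialization of Karlin's Positivstellensatz (\Cref{thm:karlinPosab}) to the concrete system $\cF=\{x^{\alpha_i}\}_{i=0}^n$, so the plan is to verify the hypotheses of that theorem, extract the abstract decomposition, and then translate the abstract zero data into the explicit determinants. First I would note that, since $0<a<b$, we have $[a,b]\subseteq(0,\infty)$. By \Cref{exm:etSystems} the family $\cF=\{x^{\alpha_i}\}_{i=0}^n$ is an ECT-system on $(0,\infty)$, and by the restriction property for ECT-systems (Problem \ref{prob:ecRestriction}) it is an ECT-system on $[a,b]$; in particular it is a continuous ET-system and a continuous T-system there. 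Hence \Cref{thm:karlinPosab} applies to $f$ with $f>0$ on $[a,b]$ and yields a \emph{unique} decomposition $f=f_*+f^*$ with $f_*,f^*\in\lin\cF$, both non-negative on $[a,b]$, whose zero sets each have index $n$, strictly interlace, and satisfy $f_*(b)=f(b)>0$ and $f^*(b)=0$. This already gives existence and uniqueness of the decomposition; what remains is to identify the zeros with their multiplicities and to write down the determinants.

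Next I would pin down the multiplicities. Because $\cF$ is an ET-system and $f_*,f^*\in\cat^n([a,b],\rset)$ are non-negative, a zero at which the function does not change sign (an interior, non-nodal zero) has even algebraic multiplicity, while a boundary zero at $a$ or $b$ contributes index one; moreover a non-trivial polynomial in $\lin\cF$ has at most $n$ zeros counting algebraic multiplicity (\Cref{dfn:etSystem}). Combining the even-interior-multiplicity constraint with the index-$n$ count, the strict interlacing, and the endpoint values $f_*(b)>0$, $f^*(b)=0$ then forces, for $n=2m$, that $f_*$ has exactly the $m$ interior double zeros $x_1<\dots<x_m$ and that $f^*$ has simple zeros at $a$ and $b$ together with the $m-1$ interior double zeros $y_1<\dots<y_{m-1}$, arranged as $a<x_1<y_1<\dots<x_m<b$; symmetrically, for $n=2m+1$ one reads off the configuration of (ii). In each case exactly $n$ zeros counting multiplicity are used.

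Knowing the zeros and their multiplicities, I would invoke the determinantal representation for ET-systems (cf.\ \Cref{thm:detRepr} and the double-zero notation of \Cref{rem:doublezeros}, eq.\ (\ref{eq:doublezeroDfn})): an ET-polynomial with $n$ prescribed zeros, counted with multiplicity, is determined up to a scalar, and that scalar multiple is precisely the associated determinant. Applied to the configurations of the previous paragraph, this shows that each of $f_*$ and $f^*$ equals a scalar times the corresponding determinant in (\ref{eq:fStarStart})--(\ref{eq:fStarEnd}).

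The main obstacle is the determination of the \emph{signs}, i.e.\ checking that the constants can be taken strictly positive, $c_*,c^*>0$, with the alternating leading signs as displayed. For this I would use that the generalized Vandermonde determinant of $\cF$ is positive, since it factors through a positive Schur polynomial (\Cref{exm:algETsystem}); consequently each displayed determinant, viewed as a function of $x$, has a definite sign beyond its largest zero and a positive coefficient on the top power $x^{\alpha_n}$. Comparing this orientation with the hypothesis $a_n>0$ (which fixes the sign of the leading term of $f=f_*+f^*$) and with the non-negativity $f_*,f^*\geq 0$ then forces $c_*,c^*>0$ and accounts for the $\pm$ placement, exactly as in the analogous sign bookkeeping of \Cref{cor:posrepr0infty}.
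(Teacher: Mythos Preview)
Your proposal is correct and follows essentially the same route as the paper: verify that $\cF$ is an ET-system on $[a,b]\subset(0,\infty)$, apply \Cref{thm:karlinPosab}, read off the zero configurations from the index-$n$ condition together with $f^*(b)=0$ and strict interlacing, and then convert to determinantal form via \Cref{rem:doublezeros}. Your argument is in fact slightly more careful than the paper's in two places: you cite \Cref{exm:etSystems} (which covers arbitrary real exponents $\alpha_i$, as the theorem requires) rather than \Cref{exm:algETsystem} (which is stated only for $\alpha_i\in\nset_0$), and you explicitly address the signs $c_*,c^*>0$, which the paper leaves implicit.
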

\begin{proof}
By \Cref{exm:algETsystem} we have that $\cF$ on $[a,b]$ is an ET-system. Hence, \Cref{thm:karlinPosab} applies. We check both cases $n = 2m$ and $n=2m+1$ separately.

$n=2m$:
By \Cref{thm:karlinPosab} we have that the zero set $\cZ(f^*)$ of $f^*$ has index $2m$ and contains $b$ with index $1$, i.e., $a\in\cZ(f^*)$ and all other zeros have index $2$.
Hence, $\cZ(f^*) = \{a=y_0 < y_1 < \dots < y_{m-1} < y_m = b\}$.
By \Cref{thm:karlinPosab} we have that $\cZ(f_*)$ also has index $2m$ and the zeros of $f_*$ and $f^*$ interlace.
Then the determinantal representations of $f_*$ and $f^*$ follow from \Cref{rem:doublezeros}.

$n=2m+1$:
By \Cref{thm:karlinPosab} we have that $b\in\cZ(f^*)$ and since the index of $\cZ(f^*)$ is $2m+1$ we have that there are only double zeros $y_1,\dots,y_m\in (a,b)$ in $\cZ(f^*)$.
Similar we find that $a\in\cZ(f_*)$ since its index is odd and only double zeros $x_1,\dots,x_m\in (a,b)$ in $\cZ(f_*)$ remain.
By \Cref{thm:karlinPosab} (iii) the zeros $x_i$ and $y_i$ strictly interlace and the determinantal representation of $f_*$ and $f^*$ follow again from \Cref{rem:doublezeros}.
\end{proof}

Note, if $\alpha_0,\dots,\alpha_n\in\nset_0$ then by \Cref{exm:algETsystem} equation (\ref{eq:schurPolyRepr}) the algebraic polynomials $f_*$ and $f^*$ in (\ref{eq:fStarStart}) -- (\ref{eq:fStarEnd}) can be written down with Schur polynomials.

\begin{rem}
The condition $a_n>0$ in \Cref{thm:algPosSatzab} is no restriction.
The result also holds for $a_n<0$ as long as $f>0$ on $[a,b]$.
Since $[a,b]$ is compact the polynomials $x^{\alpha_i}$ are bounded.
In the definition of a T-system the order of the functions $f_i$ can be altered since only any linear combination has to have at most $n$ zeros.
Hence, in a $f>0$ at least one coefficient $a_i$ is larger then zero and we interchange $f_i$ with $f_n$.
A possible sign change in the $f_*$ and $f^*$ in (\ref{eq:fStarStart}) -- (\ref{eq:fStarEnd}) might appear.
\exmsymbol
\end{rem}

\Cref{thm:algPosSatzab} does not hold for $a=0$ and $\alpha_0>0$ or $\alpha_0,\dots,\alpha_k<0$.
In case $\alpha_0>0$ the determinantal representations of $f^*$ for $n=2m$ and $f_*$ for $n=2m+1$ are the zero polynomials.
In fact, in this case $\cF$ is not even a T-system since in \Cref{lem:determinant} the determinant contains a zero column if $x_0 = 0$.
We need to have $\alpha_0=0$ ($x^{\alpha_0} = 1$) to let $a=0$.
For $\alpha_0,\dots,\alpha_k<0$ we have singularities at $x=0$ and hence no T-system.

\begin{cor}\label{cor:azero}
If $\alpha_0=0$ in \Cref{thm:algPosSatzab} then \Cref{thm:algPosSatzab} also holds with $a=0$.
\end{cor}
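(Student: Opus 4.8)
The plan is to rerun the proof of \Cref{thm:algPosSatzab} essentially verbatim, checking that every step survives the specialization $a=0$ once $\alpha_0=0$. The only place where $0<a$ was used is in invoking \Cref{exm:algETsystem} to guarantee that $\cF=\{x^{\alpha_i}\}_{i=0}^n$ is an ET-system on the \emph{closed} interval, and (via the preceding remark) that $\cF$ is a T-system at all. With $\alpha_0=0$ the objection disappears at the level of the T-system: by \Cref{exm:tsysAlpha} the family $\{x^{\alpha_0},\dots,x^{\alpha_n}\}$ with $0=\alpha_0<\alpha_1<\dots<\alpha_n$ is a continuous T-system on every $\cX\subseteq[0,\infty)$ with $|\cX|\geq n+1$, in particular on $[0,b]$. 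Hence \Cref{thm:karlinPosab}, which requires only that $\cF$ be a \emph{continuous T-system} on the interval, applies directly on $[0,b]$ and produces the unique decomposition $f=f_*+f^*$ together with the index, interlacing, and endpoint properties of that theorem.

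It remains to upgrade these abstract zero data to the explicit determinantal formulas (\ref{eq:fStarStart})--(\ref{eq:fStarEnd}) with $a$ replaced by $0$. Here one exploits the asymmetry between interior and boundary zeros. By \Cref{thm:karlinPosab} every interior zero has index $2$ (a double zero) and lies in the open interval $(0,b)$, while the endpoints $0$ and $b$, when they occur as zeros, have index $1$, i.e.\ are simple. Thus all double zeros are bounded away from $0$, so that on any $[\varepsilon,b]$ with $\varepsilon>0$ small enough to contain them, $\cF$ is an ET-system by \Cref{exm:etSystems} together with the restriction property of \Cref{prob:ecRestriction}; consequently \Cref{rem:doublezeros} represents each double zero by the corresponding derivative row exactly as in the proof of \Cref{thm:algPosSatzab}. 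The simple zero at $x=0$ contributes only the value row $(x^{\alpha_0}(0),\dots,x^{\alpha_n}(0))=(1,0,\dots,0)$, which is perfectly well defined; no derivative of $x^{\alpha_i}$ at $0$ is ever needed, so the ET-defect of $\cF$ at the origin (seen e.g.\ in \Cref{exm:nonETalg}) is irrelevant. This is exactly the mechanism already used for $[0,\infty)$ in \Cref{cor:posrepr0infty}.

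The delicate point---and the reason $\alpha_0=0$ cannot be dropped---is to verify that the resulting determinants are not identically zero, i.e.\ that the boundary row at $0$ keeps the representation nondegenerate. Since $\alpha_0=0$ gives $x^{\alpha_0}\equiv 1$, the row at $x=0$ equals $(1,0,\dots,0)\neq 0$ and the determinant is a genuine nonzero element of $\lin\cF$; this is precisely what fails when $\alpha_0>0$, where that row becomes $(0,\dots,0)$ and the corresponding $f^*$ (for $n=2m$) or $f_*$ (for $n=2m+1$) collapses to the zero polynomial, as noted before the corollary. With nondegeneracy in hand, the uniqueness of $f_*$ and $f^*$ is inherited verbatim from the uniqueness assertion in \Cref{thm:karlinPosab} (ultimately from the zero count in \Cref{thm:zeros1}), completing the argument. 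An equally valid route, should one prefer not to reopen \Cref{thm:karlinPosab}, is to apply \Cref{thm:algPosSatzab} on $[a,b]$ for a sequence $a\searrow 0$, normalize the coefficients so that the decompositions lie in the compact set $(\lin\cF)^e$, and pass to a convergent subsequence; the limiting zeros then furnish the claimed representation on $[0,b]$, the only thing to check being that the smallest zero converges to a simple zero at $0$.
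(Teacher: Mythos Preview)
Your proposal is correct. The paper, however, chooses the route you list only as an alternative: it argues that the determinantal expressions in \Cref{thm:algPosSatzab} depend continuously on $a$ and then checks, by an explicit Laplace expansion along the row $(1,0,\dots,0)$ at $a=0$, that the resulting determinant is a nontrivial element of $\lin\{x^{\alpha_1},\dots,x^{\alpha_n}\}$ (using that this family is an ET-system on any $[a',b]$ with $a'>0$). Your primary route is more direct: you apply \Cref{thm:karlinPosab} straight away on $[0,b]$ (which is legitimate since only the continuous T-system property is needed and that holds by \Cref{exm:tsysAlpha} when $\alpha_0=0$), and only afterwards identify $f_*$ and $f^*$ with the determinants, exploiting that every double zero lies in $(0,b)$ where $\cF$ is an ET-system and that the lone boundary zero at $0$ contributes the harmless value row $(1,0,\dots,0)$. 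Both arguments pivot on exactly the same algebraic fact---expansion along $(1,0,\dots,0)$ reduces the problem to the ET-system $\{x^{\alpha_1},\dots,x^{\alpha_n}\}$ of order $n-1$ on $(0,\infty)$---so the core content is identical; your framing avoids the limiting process and mirrors the mechanism already isolated in \Cref{cor:posrepr0infty}, while the paper's limiting argument makes the continuity in $a$ explicit. One small point worth spelling out in your write-up: to conclude that $f^*$ (in the case $n=2m$) actually \emph{equals} a constant times the determinant, you use that $f^*(0)=0$ forces $f^*\in\lin\{x^{\alpha_1},\dots,x^{\alpha_n}\}$, where the remaining $n-1$ zeros (counted with multiplicity) in $(0,b]$ pin it down uniquely by the ET property; this is implicit in your reference to \Cref{cor:posrepr0infty} but deserves a sentence.
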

\begin{proof}
The determinantal representations of $f_*$ for $n=2m+1$ and $f^*$ for $n=2m$ in \Cref{thm:algPosSatzab} continuously depend on $a$.
It is sufficient to show that these representations are non-trivial (not the zero polynomial) for $a=0$.
We show this for $f_*$ in case (ii) $n=2m+1$.
The other cases are equivalent.

We have that $\cF$ is a T-system on $[0,b]$ with $b>0$.
For $\varepsilon>0$ small enough we set
\begin{align*}
g_\varepsilon(x) &= -\varepsilon^{-m}\cdot\det\begin{pmatrix}
1 & x^{\alpha_1} & x^{\alpha_2} & x^{\alpha_3} & \dots & x^{\alpha_{2m}} & x^{\alpha_{2m+1}}\\
x & 0 & x_1 & x_1+\varepsilon & \dots & x_m & x_m+\varepsilon
\end{pmatrix}\\
&= -\varepsilon^{-m}\cdot \det\begin{pmatrix}
1 & x^{\alpha_1} & x^{\alpha_2} & \dots & x^{\alpha_{2m+1}}\\
1 & 0 & 0 & \dots & 0\\
1 & x_1^{\alpha_1} & x_1^{\alpha_2} & \dots & x_1^{\alpha_{2m+1}}\\
\vdots & \vdots & \vdots & & \vdots\\
1 & (x_{m}+\varepsilon)^{\alpha_1} & (x_{m}+\varepsilon)^{\alpha_2} & \dots & (x_{m}+\varepsilon)^{\alpha_{2m+1}}
\end{pmatrix}
\intertext{develop with respect to the second row}
&= \varepsilon^{-m}\cdot\det\begin{pmatrix}
x^{\alpha_1} & x^{\alpha_2} & \dots & x^{\alpha_{2m-1}}\\
x_1^{\alpha_1} & x_1^{\alpha_2} & \dots & x_1^{\alpha_{2m-1}}\\
\vdots & \vdots & & \vdots\\
(x_{m}+\varepsilon)^{\alpha_1} & (x_{m}+\varepsilon)^{\alpha_2} & \dots & (x_{m}+\varepsilon)^{\alpha_{2m+1}}
\end{pmatrix}\\
&= \varepsilon^{-m}\cdot\det\begin{pmatrix}
x^{\alpha_1} & x^{\alpha_2} & x^{\alpha_3} & \dots & x^{\alpha_{2m}} & x^{\alpha_{2m+1}}\\
x & x_1 & x_1+\varepsilon & \dots & x_m & x_m+\varepsilon
\end{pmatrix}.
\end{align*}
Then $x_1,x_1+\varepsilon,\dots,x_m,x_m+\varepsilon\in (0,b]$, i.e., $\{x^{\alpha_i}\}_{i=1}^n$ is an ET-system on $[a',b]$ with $0=a < a' < x_1$, see \Cref{exm:algETsystem}.
By \Cref{rem:doublezeros} the limit $\varepsilon\searrow 0$ is not the zero polynomial which ends the proof.
\end{proof}

\begin{rem}
It is clear that if $\alpha_0 > 0$ then we can just factor out $x^{\alpha_0}$
\[f(x) = a_0 x^{\alpha_0} + a_1 x^{\alpha_1} + \dots + a_n x^{\alpha_n} = x^{\alpha_0}\cdot (\underbrace{a_0 + a_1 x^{\alpha_1-\alpha_0} + \dots + a_n x^{\alpha_n - \alpha_0}}_{=:\tilde{f}(x)})\]
and apply \Cref{thm:algPosSatzab} or \Cref{cor:azero} to $\tilde{f}$.
\exmsymbol
\end{rem}

We now prove a stronger version of (\ref{eq:pos-11}).
We only need the sparse algebraic Positivstellensatz on $[a,b]$ (\Cref{thm:algPosSatzab}) but not the sparse algebraic Nichtnegativestellensatz (\Cref{thm:algNichtNegab}) even for $p\geq 0$ on $[a,b]$.
This result was already proved in \cite{karlin53}. Later the T-system approach was developed in \cite{karlin63} and summarized and expanded in \cite{karlinStuddenTSystemsBook}.

We now get the strengthened version of the \Cref{thm:luma}. Earlier versions are due to Markov \cite{markov06} and Luk\'acs \cite{lukacs18}, see the \Cref{thm:luma} in \Cref{sec:classicalResults} and the discussion around it.

\begin{lumathm}[see {\cite[Thm.\ 10.3]{karlin53}} or {\cite[p.\ 373, Thm.\ 6.4]{kreinMarkovMomentProblem}}]\label{cor:nonnegabRx}\label{thm:luma2}
\index{Luk\'acs--Markov Theorem}\index{Theorem!Luk\'acs--Markov}
Let $p\in\rset[x]$ with $p\geq 0$ on $[a,b]$ with $-\infty<a<b<\infty$ and let $z_1,\dots,z_r\in [a,b]$ be the zeros of $p$ in $[a,b]$ with algebraic multiplicities $m_1,\dots,m_r\in\nset$.
\begin{enumerate}[(i)]
\item If $\deg p - m_1 - \dots - m_r = 2m$, $m\in\nset_0$, is even then there exist points $x_1,\dots,x_m$ and $y_1,\dots,y_{m-1}$ with
\[a < x_1 < y_1 < \dots < y_{m-1} < x_m < b\]
and constants $\alpha,\beta>0$ such that
\begin{multline*}
p(x) = (x-z_1)^{m_1}\cdots (x-z_r)^{m_r}\cdot\left(\alpha\cdot\prod_{i=1}^m (x-x_i)^2\right.\\
\left.+ \beta\cdot(x-a)\cdot(b-x)\cdot\prod_{i=1}^{m-1} (x-y_i)^2 \right).
\end{multline*}

\item If $\deg p - m_1 - \dots - m_r = 2m+1$, $m\in\nset_0$, is odd then there exist points $x_1,\dots,x_m$ and $y_0,\dots,y_{m-1}$ with
\[a < y_0 < x_1 < y_1 < \dots < y_{m-1} < x_m < b\]
and constants $\alpha,\beta>0$ such that
\begin{multline*}
p(x) = (x-z_1)^{m_1}\cdots (x-z_r)^{m_r}\cdot\left(\alpha\cdot(x-a)\cdot\prod_{i=1}^m (x-x_i)^2\right.\\
\left.+ \beta\cdot(b-x)\cdot\prod_{i=0}^{m-1} (x-y_i)^2 \right).
\end{multline*}
\end{enumerate}
\end{lumathm}
\begin{proof}
We have $p(x) = (x-z_1)^{m_1}\cdots (x-z_r)^{m_r}\cdot\tilde{p}(x)$ with $\tilde{p}\in\rset[x]$ and $\tilde{p}>0$ on $[a,b]$.
By a translation $p(\,\cdot\,+a)$ we can assume $a=0$ and the assertion follows from \Cref{cor:azero}.
\end{proof}

Note, in \Cref{thm:algPosSatzab} (and \Cref{thm:algNichtNegab}) we need $a\geq 0$.
But in the \Cref{cor:nonnegabRx} we can allow for arbitrary $a\in\rset$ since by $p\in\rset[x]_{\leq \deg p}$ the translation $p(\,\cdot\,+a)$ remains in $\rset[x]_{\leq\deg p}$.
We see here also why in \Cref{thm:algPosSatzab} and \Cref{cor:azero} we have the restriction $a\geq 0$ since a translation can produce monomials which are not in the family $\{x^{\alpha_i}\}_{i=0}^n$.

Additionally, note that in \Cref{cor:nonnegabRx} we can have $z_i = a$ or $b$ for some $i$.

\section{Sparse Hausdorff Moment Problem}
%%%%%%%%%%%%%%%%%%%%%%%%%%%%%%%%%%%%%%%%%
\index{sparse!Hausdorff moment problem}

\Cref{thm:algPosSatzab} is a complete description of $\inter(\lin\cF)_+$.
Since $\cF$ is continuous on the compact interval $[a,b]$ and $x^{\alpha_0}>0$ on $[a,b]$, we have that the truncated moment cone is closed.
Hence, $(\lin\cF)_+$ and the moment cone are dual to each other.
With \Cref{thm:algPosSatzab} we can now write down the conditions for the sparse truncated Hausdorff moment problem on $[a,b]$ with $a>0$.
A first but insufficient attempt was done in \cite{hausdo21a} since Hausdorff\index{Hausdorff, F.} did not have access to the sparse Positivstellensatz by Karlin and therefore \Cref{thm:algPosSatzab}.

\begin{thm}[Sparse Truncated Hausdorff Moment Problem on {$[a,b]$ with $a>0$}]\index{Theorem!Hausdorff!sparse}\index{Hausdorff!truncated moment problem!sparse}\label{thm:sparseTruncHausd}\index{sparse!Hausdorff moment problem!truncated}
Let $n\in\nset_0$, $\alpha_0,\dots,\alpha_n\in[0,\infty)$ with $\alpha_0 < \dots < \alpha_n$, and $a,b$ with $0 < a < b$. Set $\cF = \{x^{\alpha_i}\}_{i=0}^n$. Then the following are equivalent:
\begin{enumerate}[(i)]
\item $L:\lin\cF\to\rset$ is a truncated $[a,b]$-moment functional.

\item $L(p)\geq 0$ holds for all
\begin{align*}
p(x) &:= \begin{cases}
\det \begin{pmatrix}
x^{\alpha_0} & x^{\alpha_1} & x^{\alpha_2} & \dots & x^{\alpha_{2m-1}} & x^{\alpha_{2m}}\\
x & (x_1 & x_1) & \dots & (x_m & x_m)
\end{pmatrix}\\
-\det\begin{pmatrix}
x^{\alpha_0} & x^{\alpha_1} & x^{\alpha_2} & x^{\alpha_3} & \dots & x^{\alpha_{2m-2}} & x^{\alpha_{2m-1}} & x^{\alpha_{2m}}\\
x & a & (x_1 & x_1) & \dots & (x_{m-1} & x_{m-1}) & b
\end{pmatrix}
\end{cases} \tag*{\textit{if} $n = 2m$}
\intertext{and}
p(x) &:=\begin{cases}
-\det\begin{pmatrix}
 x^{\alpha_0} & x^{\alpha_1} & x^{\alpha_2} & x^{\alpha_3} & \dots & x^{\alpha_{2m}} & x^{\alpha_{2m+1}}\\
x & a & (x_1 & x_1) & \dots & (x_m & x_m)
\end{pmatrix}\\
\det\begin{pmatrix}
x^{\alpha_0} & x^{\alpha_1} & x^{\alpha_2} & \dots & x^{\alpha_{2m-1}} & x^{\alpha_{2m}} & x^{\alpha_{2m+1}}\\
x & (x_1 & x_1) & \dots & (x_m & x_m) & b
\end{pmatrix}
\end{cases} \tag*{\textit{if} $n=2m+1$}
\end{align*}
and all $x_1,\dots,x_m$ with $a < x_1 < \dots < x_m < b$.
\end{enumerate}
\end{thm}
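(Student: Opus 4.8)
The plan is to derive both implications from the Basic Representation Theorem (\Cref{thm:basicrepresentation}) together with the sparse Positivstellensatz (\Cref{thm:algPosSatzab}). First I would record the structural facts. By \Cref{exm:algETsystem} the family $\cF=\{x^{\alpha_i}\}_{i=0}^n$ is an ET-system, in particular a continuous T-system, on the compact interval $[a,b]$; and since $a>0$ and $\alpha_0\geq 0$ the element $x^{\alpha_0}\in\lin\cF$ is strictly positive on $[a,b]$. Hence, by Problem \ref{prob:compactAdapted}, $\lin\cF$ is an adapted space on $[a,b]$, and \Cref{thm:basicrepresentation} applies: $L$ is a truncated $[a,b]$-moment functional if and only if $L$ is $(\lin\cF)_+$-positive, i.e.\ $L(g)\geq 0$ for every $g\in\lin\cF$ with $g\geq 0$ on $[a,b]$. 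This reduces the theorem to proving that condition (ii) is equivalent to $(\lin\cF)_+$-positivity of $L$.

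The implication (i) $\Rightarrow$ (ii) is then immediate: each admissible $p$ is, up to the positive constant $c_*$ resp.\ $c^*$, exactly the polynomial $f_*$ resp.\ $f^*$ produced by \Cref{thm:algPosSatzab}, so $p\geq 0$ on $[a,b]$, that is $p\in(\lin\cF)_+$, and therefore $L(p)\geq 0$ by the $(\lin\cF)_+$-positivity established in the reduction.

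For (ii) $\Rightarrow$ (i) the core claim is that the admissible polynomials $p$ positively generate the whole cone $(\lin\cF)_+$. If $f>0$ on $[a,b]$ has leading coefficient $a_n>0$, then \Cref{thm:algPosSatzab} gives $f=f_*+f^*=c_*p_*+c^*p^*$ with $c_*,c^*>0$ and $p_*,p^*$ of the form in (ii), whence $L(f)=c_*L(p_*)+c^*L(p^*)\geq 0$. The sign restriction $a_n>0$ is removed as in the Remark following \Cref{thm:algPosSatzab}: after interchanging a function occurring with positive coefficient with $f_n$ one again obtains $f=f_*+f^*$ with $f_*,f^*\geq 0$, each vanishing on a set of index $n$. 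Since a non-negative element of $\lin\cF$ with an index-$n$ zero set is, up to a positive scalar, determined by its zeros (the determinantal representation for ET-systems together with \Cref{thm:zeros4}), each piece is a positive multiple of one of the admissible $p$'s, so $L(f)\geq 0$ persists. Finally, for arbitrary $g\in(\lin\cF)_+$ the perturbation $g+\varepsilon x^{\alpha_0}$ is strictly positive on $[a,b]$ for every $\varepsilon>0$, so $L(g+\varepsilon x^{\alpha_0})=L(g)+\varepsilon L(x^{\alpha_0})\geq 0$; letting $\varepsilon\searrow 0$ gives $L(g)\geq 0$. Thus $L$ is $(\lin\cF)_+$-positive and (i) follows from \Cref{thm:basicrepresentation}.

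The step I expect to be the main obstacle is the bookkeeping behind removing the hypothesis $a_n>0$, namely the claim that the standard-order generators $p$ in (ii) already exhaust, up to positive scalars, all non-negative elements of $\lin\cF$ with an index-$n$ zero set. One must check that the reordering in the Remark only alters the normalizing constants $c_*,c^*$ (keeping them positive, so that $f_*,f^*\geq 0$) and does not leave $\cone\{p\}$; equivalently, that the two configuration types appearing in (ii) --- $m$ interior double zeros, resp.\ double zeros together with both end points $a$ and $b$ --- are the only index-$n$ zero configurations a non-negative polynomial in $\lin\cF$ can have. Once this is settled, the passage from strictly positive to merely non-negative functions and the final appeal to the Basic Representation Theorem are routine.
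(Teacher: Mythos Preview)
Your proof is correct and follows essentially the same route as the paper's. The only cosmetic differences are: (a) the paper does not invoke the \Cref{thm:basicrepresentation} but argues directly that the moment cone $((\lin\cF)_+)^*$ is closed (compact interval, continuous functions, a strictly positive element), so that $L$ is a moment functional iff $L\geq 0$ on $(\lin\cF)_+$; (b) the paper perturbs by $\varepsilon\cdot x^{\alpha_n}$ rather than $\varepsilon\cdot x^{\alpha_0}$ --- both are strictly positive on $[a,b]$, so either works. Your explicit bookkeeping around the hypothesis $a_n>0$ is more careful than the paper's, which simply absorbs it into the Remark after \Cref{thm:algPosSatzab}; the parity check you sketch (an index-$n$ zero set of a non-negative element must be either $m$ interior double zeros or both endpoints plus $m-1$ interior double zeros when $n=2m$, and analogously for $n=2m+1$) indeed confirms that the two determinantal families in (ii) exhaust all such configurations, so this is not a genuine obstacle.
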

\begin{proof}
The implication (i) $\Rightarrow$ (ii) is clear since all given polynomials $p$ are non-negative on $[a,b]$.
It is therefore sufficient to prove (ii) $\Rightarrow$ (i).

Since $a>0$ we have that $x^{\alpha_0} > 0$ on $[a,b]$ and since $[a,b]$ is compact we have that the moment cone $((\lin\cF)_+)^*$ as the dual of the cone of non-negative (sparse) polynomials $(\lin\cF)_+$ is a closed pointed cone.

To establish $L\in ((\lin\cF)_+)^*$ it is sufficient to have $L(f)\geq 0$ for all $f\in(\lin\cF)_+$.
Let $f\in(\lin\cF)_+$.
Then for all $\varepsilon>0$ we have $f_\varepsilon := f+\varepsilon\cdot x^{\alpha_n} > 0$ on $[a,b]$, i.e., by \Cref{thm:algPosSatzab} $f_\varepsilon$ is a conic combination of the polynomials $p$ in (ii) and hence $L(f) + \varepsilon\cdot L(x^{\alpha_n}) = L(f_\varepsilon) \geq 0$ for all $\varepsilon>0$.
Since $x^{\alpha_n}>0$ on $[a,b]$ we also have that $x^{\alpha_n}$ is a conic combination of the polynomials $p$ in (ii) and therefore $L(x^{\alpha_n}) \geq 0$.
Then $L(f)\geq 0$ follows from $\varepsilon\to 0$ which proves (i).
\end{proof}

\begin{cor}
If $\alpha_0 = 0$ in \Cref{thm:sparseTruncHausd} then \Cref{thm:sparseTruncHausd} also holds with $a=0$, i.e., the following are equivalent:
\begin{enumerate}[(i)]
\item $L:\lin\cF\to\rset$ is a truncated $[0,b]$-moment functional.

\item $L(p)\geq 0$ holds for all
\begin{align*}
p(x) &:= \begin{cases}
\det \begin{pmatrix}
1 & x^{\alpha_1} & x^{\alpha_2} & \dots & x^{\alpha_{2m-1}} & x^{\alpha_{2m}}\\
x & (x_1 & x_1) & \dots & (x_m & x_m)
\end{pmatrix}\\
\det\begin{pmatrix}
x^{\alpha_1} & x^{\alpha_2} & x^{\alpha_3} & \dots & x^{\alpha_{2m-2}} & x^{\alpha_{2m-1}} & x^{\alpha_{2m}}\\
x & (x_1 & x_1) & \dots & (x_{m-1} & x_{m-1}) & b
\end{pmatrix}
\end{cases} \tag*{if $n = 2m$}
\intertext{and}
p(x) &:=\begin{cases}
\det\begin{pmatrix}
x^{\alpha_1} & x^{\alpha_2} & x^{\alpha_3} & \dots & x^{\alpha_{2m}} & x^{\alpha_{2m+1}}\\
x & (x_1 & x_1) & \dots & (x_m & x_m)
\end{pmatrix}\\
\det\begin{pmatrix}
1 & x^{\alpha_1} & x^{\alpha_2} & \dots & x^{\alpha_{2m-1}} & x^{\alpha_{2m}} & x^{\alpha_{2m+1}}\\
x & (x_1 & x_1) & \dots & (x_m & x_m) & b
\end{pmatrix}
\end{cases} \tag*{if $n=2m+1$}
\end{align*}
and all $x_1,\dots,x_m$ with $a < x_1 < \dots < x_m < b$.
\end{enumerate}
\end{cor}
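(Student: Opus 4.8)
The plan is to run the proof of \Cref{thm:sparseTruncHausd} verbatim, substituting \Cref{cor:azero} for \Cref{thm:algPosSatzab} at the one place where positivity of the lowest monomial at the left endpoint is used. The reason $a=0$ becomes admissible precisely when $\alpha_0=0$ is that the argument in \Cref{thm:sparseTruncHausd} only needs two facts about the endpoint: first, that $x^{\alpha_0}>0$ on the whole interval, so that the truncated moment cone $((\lin\cF)_+)^*$ is a closed pointed cone dual to $(\lin\cF)_+$; and second, that every strictly positive $f\in\lin\cF$ is a conic combination of the listed generators. With $\alpha_0=0$ we have $x^{\alpha_0}=1>0$ on all of $[0,b]$ (including $x=0$), so the first fact survives, and \Cref{cor:azero} supplies the second fact at $a=0$.

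For the direction (i) $\Rightarrow$ (ii) there is nothing new: each displayed polynomial $p$ is, by \Cref{cor:azero} together with \Cref{rem:doublezeros}, one of the non-negative components $f_*,f^*$ of the decomposition on $[0,b]$, hence $p\geq 0$ on $[0,b]$ and $L(p)\geq 0$ for any $[0,b]$-moment functional. For (ii) $\Rightarrow$ (i) I would reproduce the duality step: assume $L(p)\geq 0$ for all generators $p$ and let $f\in(\lin\cF)_+$ be arbitrary. Then $f_\varepsilon:=f+\varepsilon\cdot x^{\alpha_n}>0$ on $[0,b]$ for every $\varepsilon>0$, so by \Cref{cor:azero} the polynomial $f_\varepsilon$ is a conic combination of the generators in (ii), whence $L(f)+\varepsilon\cdot L(x^{\alpha_n})=L(f_\varepsilon)\geq 0$. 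Since $x^{\alpha_n}>0$ on $[0,b]$, it too is a conic combination of the generators, giving $L(x^{\alpha_n})\geq 0$, and letting $\varepsilon\searrow 0$ yields $L(f)\geq 0$. Thus $L\in((\lin\cF)_+)^*$, and because this cone is closed and equals the truncated moment cone, $L$ is a $[0,b]$-moment functional.

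The one genuine computation is identifying the generating polynomials in the $a=0$ statement with the $a\searrow 0$ limits of those in \Cref{thm:sparseTruncHausd}, and this is exactly the content of the proof of \Cref{cor:azero}. Concretely, in the $f^*$-generator for $n=2m$ one sets the node equal to $a=0$; since $\alpha_0=0<\alpha_1<\dots$, the row of $K\!\begin{pmatrix}\cdot\\0\end{pmatrix}$ reads $(1,0,\dots,0)$, and expanding the determinant along this row deletes the $x^{\alpha_0}=1$ column and collapses the matrix to the stated $2m\times 2m$ determinant in $x^{\alpha_1},\dots,x^{\alpha_{2m}}$; the analogous expansion handles $f_*$ in the odd case $n=2m+1$. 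The only thing one must check here is that these limits are \emph{not} the zero polynomial, so that the cone of generators is still large enough for the duality argument; \Cref{cor:azero} establishes this by exhibiting the limit as a non-trivial ET-system determinant on an interval $[a',b]$ with $0<a'<x_1$. I expect this non-triviality/identification to be the only delicate point, and it is already dispatched by \Cref{cor:azero}, so the corollary follows with no essentially new work beyond transcribing the determinantal forms.
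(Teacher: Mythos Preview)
Your approach is exactly the paper's: the corollary is recorded there with the one-line proof ``Follows immediately from \Cref{cor:azero},'' and your unpacking of the duality argument and the determinant collapse at $a=0$ is correct and helpful.

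There is, however, one small slip in the perturbation step. You write $f_\varepsilon:=f+\varepsilon\cdot x^{\alpha_n}>0$ on $[0,b]$, but at $x=0$ we have $x^{\alpha_n}=0$ (since $\alpha_n>0$), so if $f(0)=0$ then $f_\varepsilon(0)=0$ and \Cref{cor:azero} does not apply. The fix is immediate precisely because $\alpha_0=0$: perturb instead by $\varepsilon\cdot(1+x^{\alpha_n})$, or simply by $\varepsilon\cdot 1$, since $1=x^{\alpha_0}\in\lin\cF$ and $1>0$ on all of $[0,b]$. Then $f_\varepsilon>0$ on $[0,b]$ with positive leading coefficient, \Cref{cor:azero} gives the conic decomposition, and the limiting argument goes through as you wrote. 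Likewise, to conclude $L$ is non-negative on the perturbing element you should invoke $1>0$ (hence $L(1)\ge 0$) rather than $x^{\alpha_n}>0$, which fails at the origin.
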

\begin{proof}
Follows immediately from \Cref{cor:azero}.
\end{proof}

For the following we want to remind the reader of the \emph{M\"untz--Sz\'asz Theorem} \cite{muntz14,szasz16}.\index{Theorem!Muntz--Szasz@M\"untz--Sz\'asz}
It states that for real exponents $\alpha_0=0 < \alpha_1 < \alpha_2 < \dots$ the vector space $\lin\{x^{\alpha_i}\}_{i\in\nset_0}$ of finite linear combinations is dense in $\cat([0,1],\rset)$ with respect to the $\sup$-norm if and only if $\sum_{i\in\nset} \frac{1}{\alpha_i} = \infty$.

We state the following only for the classical case of the interval $[0,1]$.
Other cases $[a,b]\subseteq [0,\infty)$ are equivalent.
Hausdorff required $\alpha_i\to\infty$.
The M\"untz--Sz\'asz Theorem does not require $\alpha_i\to\infty$.
The conditions $\alpha_0=0$ and $\sum_{i\in\nset} \frac{1}{\alpha_i} = \infty$ already appear in \cite[eq.\ (17)]{hausdo21a}.
We can remove here the use of the M\"untz--Sz\'asz Theorem and therefore the condition $\sum_{i\in\nset} \frac{1}{\alpha_i} = \infty$ for the existence of a representing measure.
We need it only for uniqueness.
Additionally, we allow negative exponents.
The following is an improvement of \cite{hausdo21a} and we are not aware of a reference for this result.

\begin{thm}[General Sparse Hausdorff Moment Problem on {$[a,b]$ with $0\leq a < b$}]
\index{Hausdorff!moment problem!sparse}\label{thm:generalSparseHausd}\index{sparse!Hausdorff moment problem}
Let $I\subseteq\nset_0$ be an index set (finite or infinite), let $\{\alpha_i\}_{i\in I}$ be such that $\alpha_i\neq \alpha_j$ for all $i\neq j$ and
\begin{enumerate}[\quad (a)]
\item if $a=0$ then $\{\alpha_i\}_{i\in I}\subset [0,\infty)$ with $\alpha_i=0$ for an $i\in I$, or

\item if $a>0$ then $\{\alpha_i\}_{i\in I}\subset\rset$.
\end{enumerate}
Let $\cF = \{x^{\alpha_i}\}_{i\in I}$.
Then the following are equivalent:
\begin{enumerate}[(i)]
\item $L:\lin\cF\to\rset$ is a Hausdorff moment functional.

\item $L(p)\geq 0$ holds for all $p\in (\lin\cF)_+$.

\item $L(p)\geq 0$ holds for all $p\in\lin\cF$ with $p>0$.

\item $L(p)\geq 0$ holds for all
\[p(x) = \begin{cases}
\det \begin{pmatrix}
x^{\alpha_{i_0}} & x^{\alpha_{i_1}} & x^{\alpha_{i_2}} & \dots & x^{\alpha_{i_{2m-1}}} & x^{\alpha_{2m}}\\
x & (x_1 & x_1) & \dots & (x_m & x_m)
\end{pmatrix}, & \text{if $|I| = 2m$ or $\infty$,}\\
\det\begin{pmatrix}
x^{\alpha_{i_1}} & x^{\alpha_{i_2}} & x^{\alpha_{i_3}} & \dots & x^{\alpha_{i_{2m-2}}} & x^{\alpha_{i_{2m-1}}} & x^{\alpha_{i_{2m}}}\\
x & (x_1 & x_1) & \dots & (x_{m-1} & x_{m-1}) & b
\end{pmatrix}, & \text{if $|I| = 2m$ or $\infty$,}\\
\det\begin{pmatrix}
x^{\alpha_{i_1}} & x^{\alpha_{i_2}} & x^{\alpha_{i_3}} & \dots & x^{\alpha_{i_{2m}}} & x^{\alpha_{i_{2m+1}}}\\
x & (x_1 & x_1) & \dots & (x_m & x_m)
\end{pmatrix},\  & \text{if $|I| = 2m+1$ or $\infty$,}\\
\det\begin{pmatrix}
x^{\alpha_{i_0}} & x^{\alpha_{i_1}} & x^{\alpha_{i_2}} & \dots & x^{\alpha_{i_{2m-1}}} & x^{\alpha_{i_{2m}}} & x^{\alpha_{i_{2m+1}}}\\
x & (x_1 & x_1) & \dots & (x_m & x_m) & b
\end{pmatrix},  & \text{if $|I| = 2m+1$ or $\infty$,}
\end{cases}\]
for all $m\in\nset$ if $|I|=\infty$, all $0 < x_1 < x_2 < \dots < x_m < b$, and all $\alpha_{i_0} < \alpha_{i_1} < \dots < \alpha_{i_m}$ with $\alpha_{i_0}=0$ if $a=0$.
\end{enumerate}
If additionally $\sum_{i:\alpha_i\neq 0} \frac{1}{|\alpha_i|} = \infty$ then $L$ is determinate.
\end{thm}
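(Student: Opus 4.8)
The plan is to anchor the four-fold equivalence at the Basic Representation Theorem and to treat determinacy as a separate density argument. First I would prove $(i)\Leftrightarrow(ii)$. Since $0\le a<b<\infty$, the interval $[a,b]$ is compact Hausdorff, and under hypothesis (a) (so $\alpha_i\ge 0$ with some $\alpha_i=0$) or (b) ($a>0$, arbitrary real exponents) each $x^{\alpha_i}$ is continuous on $[a,b]$ with $x^{\alpha_0}>0$ there ($x^{\alpha_0}\equiv 1$ when $a=0$). Hence $\lin\cF\subseteq\cat([a,b],\rset)$ contains a strictly positive function, so by Problem~\ref{prob:compactAdapted} it is an adapted space on the compact set $[a,b]$. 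The Basic Representation Theorem (\Cref{thm:basicrepresentation}) then gives directly that $L$ is $(\lin\cF)_+$-positive, i.e.\ condition $(ii)$, if and only if there is a Radon measure $\mu$ on $[a,b]$ with $L(f)=\int_{[a,b]} f\,\diff\mu$ for all $f\in\lin\cF$, i.e.\ $L$ is a Hausdorff moment functional, condition $(i)$. Crucially this is insensitive to whether $I$ is finite or infinite, so no passage through truncations (\Cref{thm:stochel}) is required.

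Next I would close the cycle among $(ii)$, $(iii)$, $(iv)$. The implication $(ii)\Rightarrow(iii)$ is immediate since $p>0$ implies $p\ge 0$. For $(iii)\Rightarrow(ii)$ I use the perturbation $f_\varepsilon:=f+\varepsilon\,x^{\alpha_0}$: for $f\ge 0$ and $\varepsilon>0$ one has $f_\varepsilon>0$, whence $L(f)+\varepsilon L(x^{\alpha_0})=L(f_\varepsilon)\ge 0$, and letting $\varepsilon\searrow 0$ yields $L(f)\ge 0$. The step $(ii)\Rightarrow(iv)$ is trivial because every determinantal polynomial listed in $(iv)$ is non-negative on $[a,b]$ (they are exactly the $f_*,f^*$ produced by the sparse Positivstellensatz). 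The substantive step is $(iv)\Rightarrow(iii)$: a given $f\in\lin\cF$ with $f>0$ on $[a,b]$ involves only finitely many monomials, so it lies in $\lin\{x^{\alpha_i}\}_{i\in J}$ for a finite $J$; that finite family is an ET-system on $[a,b]$ by \Cref{exm:algETsystem} (with \Cref{cor:azero} when $a=0$), and \Cref{thm:algPosSatzab}/\Cref{cor:azero} decompose $f=f_*+f^*$ as a conic combination $c_*p_*+c^*p^*$ with $c_*,c^*>0$ of the non-negative generators appearing in $(iv)$. Since $f>0$ forces at least one coefficient to be positive, the leading-coefficient hypothesis $a_n>0$ is arranged by the reordering described in the remark following \Cref{thm:algPosSatzab}; exactly as in the proof of \Cref{thm:sparseTruncHausd}, one then concludes $L(f)=c_*L(p_*)+c^*L(p^*)\ge 0$ from $(iv)$.

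For determinacy I would show that $\sum_{i:\alpha_i\neq 0}|\alpha_i|^{-1}=\infty$ forces $\lin\cF$ to be dense in $\cat([a,b],\rset)$ in the sup-norm, and that a representing measure is then determined by its values on this dense subspace. If $\mu,\nu$ both represent $L$, then $\int g\,\diff\mu=\int g\,\diff\nu$ for all $g\in\lin\cF$; both measures are finite, since $x^{\alpha_0}\ge\min_{[a,b]}x^{\alpha_0}>0$ gives $\mu([a,b])\le L(x^{\alpha_0})/\min_{[a,b]}x^{\alpha_0}<\infty$ and likewise for $\nu$, so density upgrades the equality to $\int h\,\diff\mu=\int h\,\diff\nu$ for every $h\in\cat([a,b],\rset)$, and uniqueness in the Riesz representation (\Cref{thm:rieszSigned}) yields $\mu=\nu$. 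When $a=0$ the required density is the classical M\"untz--Sz\'asz Theorem on $[0,1]$ after rescaling $[0,b]\to[0,1]$; when $a>0$ I would invoke the full M\"untz theorem on an interval bounded away from the origin, typically via the substitution $x=e^t$ reducing to exponential sums on $[\log a,\log b]$, where density holds precisely under the divergence condition $\sum_{\alpha_i\neq 0}|\alpha_i|^{-1}=\infty$.

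The existence equivalences amount mostly to assembling already-proved tools, so the main obstacle is the determinacy half. The text records M\"untz--Sz\'asz only for $[0,1]$ with $\alpha_0=0$, whereas here the interval may be bounded away from $0$ and the exponents may be negative; the delicate point is to verify that the stated divergence condition is \emph{exactly} the density criterion in that setting and to handle the negative exponents cleanly. I would emphasize that only density (not the finer description of the M\"untz closure) is needed for the uniqueness argument, which should keep this step self-contained once the appropriate interval version of M\"untz--Sz\'asz is cited.
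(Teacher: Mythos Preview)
Your equivalence argument $(i)\Leftrightarrow(ii)\Leftrightarrow(iii)\Leftrightarrow(iv)$ is essentially the paper's proof, with the same ingredients (adapted space on the compact interval, \Cref{thm:basicrepresentation}, and the Positivstellensatz \Cref{thm:algPosSatzab}/\Cref{cor:azero}); you merely spell out $(iii)\Rightarrow(ii)$ by perturbation where the paper says ``clear''.

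The determinacy argument differs. The paper does not invoke a ``full M\"untz theorem'' on an interval bounded away from zero, nor the exponential substitution $x=e^t$. Instead it splits $\{\alpha_i\}$ into positive and negative parts: if $\sum_{\alpha_i>0}\alpha_i^{-1}=\infty$, apply the classical M\"untz--Sz\'asz on $[0,1]$ (after rescaling $[0,b]$) and restrict to $[a,b]$; if $\sum_{\alpha_i<0}|\alpha_i|^{-1}=\infty$, then necessarily $a>0$, and the substitution $y=x^{-1}$ sends $[a,b]$ to $[1/b,1/a]$ and $x^{\alpha_i}$ to $y^{-\alpha_i}$ with $-\alpha_i>0$, so classical M\"untz--Sz\'asz applies again. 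This trick keeps everything inside the $[0,1]$ version of M\"untz--Sz\'asz already quoted in the text, whereas your route via $e^{\alpha_i t}$ on $[\log a,\log b]$ requires a density theorem for exponential sums (equivalently, the Borwein--Erd\'elyi full M\"untz theorem on $[a,b]$ with $a>0$), which is true but is exactly the external input you flagged as delicate. The paper's splitting is the cleaner choice here.
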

\begin{proof}
The case $|I|<\infty$ is \Cref{thm:sparseTruncHausd}.
We therefore prove the case $|I|=\infty$.
The choice $\alpha_{i_0} < \alpha_{i_1} < \dots < \alpha_{i_m}$ with $\alpha_{i_0}=0$ if $a=0$ makes $\{x^{\alpha_{i_j}}\}_{j=0}^m$ a T-system.
The implications ``(i) $\Rightarrow$ (ii) $\Leftrightarrow$ (iii)'' are clear and ``(iii) $\Leftrightarrow$ (iv)'' is \Cref{thm:algPosSatzab}.
It is therefore sufficient to show ``(ii) $\Rightarrow$ (i)''.
But the space $\lin\cF$ is an adapted space and the assertion follows therefore from the \Cref{thm:basicrepresentation}.

For the determinacy of $L$ split $\{\alpha_i\}_{i\in I}$ into positive and negative exponents. If $\sum_{i:\alpha_i\neq 0} \frac{1}{|\alpha_i|} = \infty$ then the corresponding sum over at least one group is infinite. If the sum over the positive exponents is infinite apply the M\"untz--Sz\'asz Theorem. If the sum over the negative exponents is infinite apply the M\"untz--Sz\'asz Theorem to $\{(x^{-1})^{-\alpha_i}\}_{i\in I: \alpha_i < 0}$ since $a>0$.
\end{proof}

Note, since $[a,b]$ is compact the fact that $\{x^{\alpha_i}\}_{i\in I}$ is an adapted space is trivial.

\begin{rem}
If in \Cref{thm:generalSparseHausd} we have $a=0$ and $\alpha_0>0$ then we can of course factor out $x^{\alpha_0}$ and instead of determining $\diff\mu(x)$ of the linear functional $L$ we determine $\diff\tilde{\mu}(x) = x^{\alpha_0}~\diff\mu(x)$.
\exmsymbol
\end{rem}

\section{Sparse Algebraic Nichtnegativstellensatz on $[a,b]$}
%%%%%%%%%%%%%%%%%%%%%%%%%%%%%%%%%%%%%%%%%%%%%%%%%%%%%%%%%%%%%

The non-negative polynomials are described in the following result.

\begin{thm}[Sparse Algebraic Nichtnegativstellensatz on {$[a,b]$ with $0<a<b$}]\label{thm:algNichtNegab}\index{sparse!algebraic Nichtnegativstellensatz!on $[a,b]$}
Let $n\in\nset_0$, $\alpha_0,\dots,\alpha_n\in\rset$ be real numbers with $\alpha_0 < \alpha_1 < \dots < \alpha_n$, and let $\cF = \{x^{\alpha_i}\}_{i=0}^n$. Let $f\in\lin\cF$ with $f\geq 0$ on $[a,b]$.
Then there exist points $x_1,\dots,x_n,y_1,\dots,y_n\in [a,b]$ (not necessarily distinct) with $y_n=b$ which include the zeros of $f$ with multiplicities such that
\[f = f_* + f^*\]
with $f_*,f^*\in\lin\cF$, $f_*,f^*\geq 0$ on $[a,b]$.
The polynomials $f_*$ and $f^*$ are given by
\[f_*(x) = c_*\cdot\det\left(\begin{array}{c|ccc}
f_0 &\ f_1 & \dots & f_n\\
x &\ x_1 & \dots & x_n
\end{array}\right)
\qquad\text{and}\qquad
f^*(x) = c_*\cdot\det\left(\begin{array}{c|ccc}
f_0 &\ f_1 & \dots & f_n\\
x &\ y_1 & \dots & y_n
\end{array}\right)\]
for all $x\in [a,b]$ and some constants $c_*,c^*\in\rset$

Removing the zeros of $f$ from $x_1,\dots,x_n,y_1,\dots,y_n$ we can assume that the remaining $x_i$ and $y_i$ are disjoint and when grouped by size the groups strictly interlace:
\[a \leq x_{i_1} = \dots = x_{i_k} < y_{j_1} = \dots = y_{j_l} < \dots < x_{i_p} = \dots = x_{i_q} < y_{j_r} = \dots = y_{j_s}=b.\]
Each such group in $(a,b)$ has an even number of members.
\end{thm}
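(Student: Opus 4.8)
The plan is to read this statement off from Karlin's Nichtnegativstellensatz for ET-systems on $[a,b]$ (\Cref{thm:karlinNonNegab}): once the underlying system is identified as a concrete sparse monomial ET-system, the abstract ``set of index $n-r$'' and interlacing conditions translate verbatim into the concrete multiplicity and determinantal assertions claimed here. Thus essentially all analytic content is already contained in \Cref{thm:karlinNonNegab}, and what remains is verification and translation.

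First I would check that $\cF = \{x^{\alpha_i}\}_{i=0}^n$ is an ET-system of order $n$ on $[a,b]$. Because $0 < a < b$ we have $[a,b] \subset (0,\infty)$, and by \Cref{exm:etSystems}(a) the family $\{x^{\alpha_i}\}_{i=0}^n$ is an ECT-system, hence in particular an ET-system, on all of $(0,\infty)$ for arbitrary reals $\alpha_0 < \dots < \alpha_n$; restricting to the closed subinterval $[a,b]$ retains the ET-property by Problem \ref{prob:ecRestriction}. In particular $\cF \subseteq \cat^n([a,b],\rset)$, so every $*$-determinant of \Cref{eq:matrixStar} and \Cref{eq:doublezeroDfn2} appearing in the formulas for $f_*$ and $f^*$ is well defined.

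Next I would apply \Cref{thm:karlinNonNegab} directly. Assuming $f \neq 0$, it has $r$ zeros in $[a,b]$ counting algebraic multiplicities with $r \leq n$, since $\cF$ is an ET-system of order $n$. In the principal case $r < n$, \Cref{thm:karlinNonNegab} yields $f = f_* + f^*$ with $f_*, f^* \geq 0$ on $[a,b]$; after deleting the common zeros $z_1,\dots,z_r$ of $f$, the remaining zero sets of $f_*$ and $f^*$ each have index $n-r$, strictly interlace, and $b$ lies in the zero set of $f^*$. Now I would invoke the index dictionary of \Cref{dfn:index}: an interior zero carries index $2$ and must therefore be a double zero, while an endpoint carries index $1$ and is simple. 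Consequently each interior coincidence group has an even number of members, which is exactly the final clause; ordering the full zero list of $f^*$ (including the $z_j$, counted with multiplicity) as $y_1 \leq \dots \leq y_n = b$ and that of $f_*$ as $x_1 \leq \dots \leq x_n$ then records the $n$ zeros of each summand. Since $f_*$ and $f^*$ each have exactly $n$ zeros counting multiplicity and $\cF$ is an ET-system, each is determined up to a scalar by its zeros via the ET-analogue of \Cref{thm:detRepr}, and \Cref{rem:doublezeros} encodes the multiple zeros as successive derivative rows; this produces precisely the determinantal formulas for $f_*$ and $f^*$, with constants $c_*, c^* \in \rset$ whose signs are fixed by the requirement $f_*, f^* \geq 0$.

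The one point not covered by \Cref{thm:karlinNonNegab}, and hence the main obstacle, is the boundary case $r = n$: here $f$ already exhausts the maximal number of zeros, so $f$ is itself (up to a positive scalar) the $*$-determinant built from its own $n$ zeros. I would handle this separately by taking $f_* = f$ and $f^* = 0$, with the interlacing and even-multiplicity clauses holding vacuously; one should note that in this degenerate situation the normalization $y_n = b$ and the uniqueness are no longer forced, so the substantive content is really the case $r < n$. Apart from isolating this boundary case, the proof is pure bookkeeping, since the genuine analytic work — the Brouwer fixed-point construction and the zero-counting uniqueness argument — is already internal to \Cref{thm:karlinNonNegab}.
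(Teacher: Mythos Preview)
Your proposal is correct and follows essentially the same route as the paper: verify that $\cF$ is an ET-system on $[a,b]\subset(0,\infty)$ and then invoke \Cref{thm:karlinNonNegab}, reading off the determinantal and interlacing data exactly as in the proof of \Cref{thm:algPosSatzab}. Your write-up is in fact more careful than the paper's one-line proof: you cite \Cref{exm:etSystems}(a) (which covers arbitrary real exponents, matching the hypothesis) rather than \Cref{exm:algETsystem} (integer exponents), and you explicitly isolate the degenerate case $r=n$, which the paper's proof does not address.
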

\begin{proof}
By \Cref{exm:algETsystem} we have that $\cF$ on $[a,b]$ is an ET-system. We then apply \Cref{thm:karlinNonNegab} similar to the proof of \Cref{thm:algPosSatzab}.
\end{proof}

\begin{rem}
The signs of $c_*$ and $c^*$ are determined by $x_1$ and $y_1$ and their multiplicity.
If $x_1 = \dots = x_k < x_{k+1} \leq \dots \leq x_n$ then $\sign\, c_* = (-1)^k$.
The same holds for $c^*$ from $y_1$.
\exmsymbol
\end{rem}

\begin{exm}
Let $\alpha\in (0,\infty)$ and let $\cF = \{1,x^{\alpha}\}$ on $[0,1]$. Then we have $1 = 1_* + 1^*$ with $1_* = x^\alpha$ and $1^* = 1 - x^\alpha$.\exmsymbol
\end{exm}

In \Cref{thm:algNichtNegab} we can let $a=0$ if $\alpha_0 = 0$ and $f(0)>0$.

\begin{thm}[Sparse Algebraic Nichtnegativstellensatz on {$[0,b]$ with $0<b$}]\label{thm:algNichtNeg0b}\index{sparse!algebraic Nichtnegativstellensatz!on $[0,b]$}
Let $n\in\nset_0$, $\alpha_0,\dots,\alpha_n\in\rset$ be real numbers with $0=\alpha_0 < \alpha_1 < \dots < \alpha_n$, and let $\cF = \{x^{\alpha_i}\}_{i=0}^n$ on $[0,b]$ with $b>0$.
Let $f\in\lin\cF$ with $f\geq 0$ on $[0,b]$ and $f(0)>0$.
Then there exist points $x_1,\dots,x_n,y_1,\dots,y_n\in [0,b]$ (not necessarily distinct) with $y_n=b$ which include the zeros of $f$ with multiplicities such that
\[f = f_* + f^*\]
with $f_*,f^*\in\lin\cF$, $f_*,f^*\geq 0$ on $[0,b]$ and the points $x_1,\dots,x_n$ are the zeros of $f_*$ and $y_1,\dots,y_n$ are the zeros of $f^*$.
Removing the zeros of $f$ from $x_1,\dots,x_n,y_1,\dots,y_n$ we can assume that the remaining $x_i$ and $y_i$ are disjoint and when grouped by size the groups strictly interlace:
\[0 \leq x_{i_1} = \dots = x_{i_k} < y_{j_1} = \dots = y_{j_l} < \dots < x_{i_p} = \dots = x_{i_q} < y_{j_r} = \dots = y_{j_s}=b.\]
Each such group in $(a,b)$ has an even number of members.
\end{thm}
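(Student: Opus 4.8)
The plan is to deduce this from the case $0<a<b$ already settled in \Cref{thm:algNichtNegab}, using the hypothesis $f(0)>0$ to keep the left endpoint away from the zeros of $f$. By \Cref{exm:algETsystem} the family $\cF=\{x^{\alpha_i}\}_{i=0}^n$ is an ECT-system, hence an ET-system, on every $[a,b]$ with $0<a<b$, while by \Cref{exm:algECTsystem} together with $\alpha_0=0$ it is a T-system on all of $[0,b]$; the sole obstruction to the ET-property sits at $x=0$, exactly as in \Cref{exm:nonETalg}. Since $f$ is continuous with $f(0)>0$, there is an $a'\in(0,b)$ with $f>0$ on $[0,a']$, so the $r<n$ zeros of $f$ all lie in $(a',b]\subseteq(0,b]$, where $\cF$ is genuinely an ET-system.

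First I would, for each $a\in(0,a')$, apply \Cref{thm:algNichtNegab} on $[a,b]$ to get $f=f_{*,a}+f^{*}_{a}$ with $f_{*,a},f^{*}_{a}\in\lin\cF$ non-negative on $[a,b]$, carrying the (fixed) zeros of $f$ in $(a',b]$ and with $b$ among the zeros of $f^{*}_{a}$. Because $0\le f_{*,a}\le f$ and $0\le f^{*}_{a}\le f$ on $[a,b]$ and $f$ is bounded on the compact $[0,b]$, the coefficient vectors of $f_{*,a}$ and $f^{*}_{a}$ stay uniformly bounded (solve the interpolation system at the interlacing nodes as in Case~3 of the proof of \Cref{thm:karlin}). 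Letting $a\searrow0$ and passing to a convergent subsequence, using compactness of $(\lin\cF)^e$, I would obtain limits $f_*,f^{*}\in\lin\cF$ with $f=f_*+f^{*}$, both non-negative now on all of $[0,b]$, and $y_n=b$.

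The one new feature is that in the limit one of $f_*,f^{*}$ may acquire a zero at the left endpoint $x=0$. This zero is necessarily simple: by \Cref{dfn:index} an endpoint has index $1$, so it enters the determinant only through the plain point-evaluation row of \Cref{dfn:kreinMatrix} (no derivative rows), for which the T-system property of $\cF$ on $[0,b]$ is already sufficient. This is exactly the phenomenon exploited in \Cref{cor:posrepr0infty} and \Cref{cor:azero}, where it is observed that $\cF$ need only be an ET-system on $(0,\infty)$. Conceptually, the same conclusion follows at once from the sharpened form of \Cref{thm:karlinNonNegab} addressed in Problem~\ref{prob:etNeighborhood}, namely that $\cF$ must be an ET-system only in a neighborhood of the zeros of $f$.

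The main obstacle is not the convergence but the \emph{non-degeneracy} of the limit: I must rule out that the determinantal representations of $f_*$ and $f^{*}$ collapse to the zero polynomial as $a\searrow0$. This is handled by the explicit vanishing-free computation of \Cref{cor:azero}, where the offending column at $x=0$ is expanded so that the determinant is shown to converge to a non-trivial polynomial (and, via \Cref{rem:doublezeros}, to the claimed determinantal form). Granting this, the interlacing and the even-group structure in $(0,b)$ are inherited directly from \Cref{thm:algNichtNegab} together with \Cref{thm:zeros4}, with the simple endpoint zero at $0$ slotted in at the appropriate end of the chain, completing the proof.
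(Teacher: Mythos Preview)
Your proposal is correct, and the key idea is the same as the paper's: the only obstruction to applying \Cref{thm:algNichtNegab} at $a=0$ is the failure of the ET-property at $x=0$, and the hypothesis $f(0)>0$ pushes all zeros of $f$ into $(0,b]$, where $\cF$ \emph{is} an ET-system, so the Karlin machinery goes through.

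Where you differ is in packaging. The paper's solution invokes this observation directly --- essentially just the content of Problem~\ref{prob:etNeighborhood} (the proof of \Cref{thm:karlinNonNeg} needs the ET-property only near the zeros of $f$, elsewhere the T-system property suffices) --- and points to the parallel argument in \Cref{thm:sparseNonneg0infty} and \Cref{rem:a0infty}. You instead run an explicit $a\searrow 0$ limiting argument on \Cref{thm:algNichtNegab} with a compactness/non-degeneracy step borrowed from Case~3 of \Cref{thm:karlin} and \Cref{cor:azero}, and only afterwards note that Problem~\ref{prob:etNeighborhood} gives the same conclusion in one stroke. Your route is more self-contained and makes the endpoint behavior at $x=0$ explicit; the paper's is shorter but relies on the reader having internalized Problem~\ref{prob:etNeighborhood}. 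Since you already cite that shortcut, your write-up could be trimmed by leading with it and keeping the limiting argument as a remark.
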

\begin{proof}
See Problem \ref{prob:a0}.
\end{proof}

\section*{Problems}%%%%%%%%%%%%%%%%%%%%%
\addcontentsline{toc}{section}{Problems}

\begin{prob}\label{prob:a0}
Prove \Cref{thm:algNichtNeg0b}, i.e., show that \Cref{thm:algNichtNegab} can be extended to the case $a=0$, i.e., on $[0,b]$ with $b>0$.
\end{prob}

\motto{Mathematics is the tool specially suited for dealing with\\
abstract concepts of any kind and there is no limit to its\\
power in this field.\\ \medskip
\ \hspace{1cm} \normalfont{Paul Adrien Maurice Dirac {\cite[p.\ viii]{dirac58}}}\index{Dirac, P.\ A.\ M.}}

\chapter{Non-Negative Algebraic Polynomials on $[0,\infty)$ and on $\rset$}%%%
%%%%%%%%%%%%%%%%%%%%%%%%%%%%%%%%%%%%%%%%%%%%%%%%%%%%%%%%%%%%%%%%%%%%%%%%%%%
\label{ch:nonNegAlgPol0infty}

We went a long way to arrive here.
But by using \Cref{thm:karlinPos0infty} and \Cref{thm:karlinNonNeg0infty} on the interval $[0,\infty)$ we can now describe all sparse algebraic strictly positive and non-negative polynomials on $[0,\infty)$ and on $\rset$.

\section{Sparse Algebraic Positivstellensatz on $[0,\infty)$}
%%%%%%%%%%%%%%%%%%%%%%%%%%%%%%%%%%%%%%%%%%%%%%%%%%%%%%%%%%%%%

For the sparse algebraic Positivstellensatz on $[a,b]$ (\Cref{thm:algPosSatzab}) we had a lot of freedom in the exponents $\alpha_i$ for $a>0$.
We no longer have such a large range of freedom on $[0,\infty)$.
If we now plug \Cref{exm:tsysAlphaReal} into \Cref{thm:karlinPos0infty} we get the following.

\begin{thm}[Sparse Algebraic Positivstellensatz on {$[0,\infty)$}]\label{thm:algPosSatz0infty}
\index{sparse!algebraic Positivstellensatz!on $[0,\infty)$}
Let $n\in\nset_0$, $\alpha_0,\dots,\alpha_n\in [0,\infty)$ be real numbers with $\alpha_0 = 0 < \alpha_1 < \dots < \alpha_n$, and let $\cF = \{x^{\alpha_i}\}_{i=0}^n$ on $[0,\infty)$.
Then for any $f = \sum_{i=0}^n a_i f_i\in\lin\cF$ with $f>0$ on $[0,\infty)$ and $a_n>0$ there exists a unique decomposition
\[f = f_* + f^*\]
with $f_*,f^*\in\lin\cF$ and $f_*,f^*\geq 0$ on $[0,\infty)$ such that the following hold:
\begin{enumerate}[(i)]
\item If $n = 2m$ then the polynomials $f_*$ and $f^*$ each possess $m$ distinct zeros $\{x_i\}_{i=1}^m$ and $\{y_i\}_{i=0}^{m-1}$ satisfying
\[0 = y_0 < x_1 < y_1 < \dots < y_{m-1} < x_m < \infty.\]
The polynomials $f_*$ and $f^*$ are given by
\[f_*(x) = c_*\cdot\det\begin{pmatrix}
1 & x^{\alpha_1} & x^{\alpha_2} & \dots & x^{\alpha_{2m-1}} & x^{\alpha_{2m}}\\
x & (x_1 & x_1) & \dots & (x_m & x_m)
\end{pmatrix}\]
and
\begin{align*}
f^*(x) %&= -c^*\cdot\det\begin{pmatrix}
%1 & x^{\alpha_1} & x^{\alpha_2} & x^{\alpha_3} &\dots& x^{\alpha_{2m-2}} & x^{\alpha_{2m-1}}\\
%x & 0 & (y_1 & y_1) & \dots & (y_{m-1} & y_{m-1})
%\end{pmatrix}\\
%
&= c^*\cdot \det\begin{pmatrix}
x^{\alpha_1} & x^{\alpha_2} & x^{\alpha_3} &\dots& x^{\alpha_{2m-2}} & x^{\alpha_{2m-1}}\\
x & (y_1 & y_1) & \dots & (y_{m-1} & y_{m-1})
\end{pmatrix}
\end{align*}
for some $c_*,c^*>0$.

\item If $n=2m+1$ then $f_*$ and $f^*$ have zeros $\{x_i\}_{i=1}^{m+1}$ and $\{y_i\}_{i=1}^m$ respectively which satisfy
\[0 = x_1 < y_1 < x_2 < \dots < y_m < x_{m+1}<\infty.\]
The polynomials $f_*$ and $f^*$ are given by
\begin{align*}
f_*(x) %&= -a_{2m+1}\cdot\det\begin{pmatrix}
%1 & x^{\alpha_1} & x^{\alpha_2} & x^{\alpha_3} & \dots & x^{\alpha_{2m}} & x^{\alpha_{2m+1}}\\
%x & 0 & (x_2 & x_2) & \dots & (x_{m+1} & x_{m+1})
%\end{pmatrix}\\
&= c_*\cdot \det\begin{pmatrix}
x^{\alpha_1} & x^{\alpha_2} & x^{\alpha_3} & \dots & x^{\alpha_{2m}} & x^{\alpha_{2m+1}}\\
x & (x_2 & x_2) & \dots & (x_{m+1} & x_{m+1})
\end{pmatrix}
\end{align*}
and
\[f^*(x) = c^*\cdot\det\begin{pmatrix}
1 & x^{\alpha_1} & x^{\alpha_2} & \dots & x^{\alpha_{2m-1}} & x^{\alpha_{2m}}\\
x & (y_1 & y_1) & \dots & (y_m & y_m)
\end{pmatrix}\]
for some $c_*,c^*>0$.
\end{enumerate}
\end{thm}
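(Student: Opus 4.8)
The plan is to obtain the result as a direct specialization of Karlin's Positivstellensatz on $[0,\infty)$ (\Cref{thm:karlinPos0infty}) to the algebraic system $\cF = \{x^{\alpha_i}\}_{i=0}^n$, and then to read off the explicit determinantal formulas from \Cref{cor:posrepr0infty}. So the whole proof is a matter of checking hypotheses and performing one determinant simplification.

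First I would verify the three hypotheses (a)--(c) of \Cref{thm:karlinPos0infty}. Hypothesis (a) holds because $f_n(x) = x^{\alpha_n} > 0$ for all $x > 0$ (recall $\alpha_n > \alpha_0 = 0$), so any $C > 0$ works. Hypothesis (b) holds since $f_i(x)/f_n(x) = x^{\alpha_i - \alpha_n} \to 0$ as $x \to \infty$ for every $i < n$, because $\alpha_i - \alpha_n < 0$. Hypothesis (c) holds because $\{x^{\alpha_i}\}_{i=0}^{n-1}$ is again a continuous T-system of order $n-1$ on $[0,\infty)$ by \Cref{exm:tsysAlpha}, as $0 = \alpha_0 < \dots < \alpha_{n-1}$. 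Since $\cF$ itself is also a continuous T-system on $[0,\infty)$ by \Cref{exm:tsysAlpha} and $a_n > 0$ by assumption, \Cref{thm:karlinPos0infty} applies and yields the unique decomposition $f = f_* + f^*$, the interlacing zero patterns stated in (i) and (ii), and the fact that the coefficient of $f_n$ in $f_*$ equals $a_n$.

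Next I would upgrade from bare existence to the explicit determinantal formulas. By \Cref{exm:etSystems} (or \Cref{exm:algETsystem} in the integer-exponent case) the family $\{x^{\alpha_i}\}_{i=0}^n$ is an ECT-system, hence an ET-system, on $(0,\infty)$. It is essential here that one needs the ET-property only on the open half-line $(0,\infty)$: by the zero structure from \Cref{thm:karlinPos0infty}, the zero of $f_*$ or $f^*$ located at $x=0$ always has multiplicity one, so no derivative information at $0$ is required (recall \Cref{exm:nonETalg}, where exactly the point $x=0$ obstructs the ET-property). Thus \Cref{cor:posrepr0infty} applies and produces determinantal representations of $f_*$ and $f^*$ in which the interior zeros appear as double zeros in the sense of \Cref{rem:doublezeros}.

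The only remaining point, and the step requiring a little care, is to reconcile the determinants of \Cref{cor:posrepr0infty} with the shorter determinants displayed in the theorem in the two cases where $x=0$ is one of the zeros, namely $y_0 = 0$ in the $f^*$ of the even case $n = 2m$, and $x_1 = 0$ in the $f_*$ of the odd case $n = 2m+1$. Here I would expand the relevant determinant along the row coming from the point $0$: since $\alpha_0 = 0$ we have $x^{\alpha_0}|_{x=0} = 1$ while $x^{\alpha_j}|_{x=0} = 0$ for every $j \geq 1$, so that row equals $(1,0,\dots,0)$. Cofactor expansion along it deletes the column of $f_0 = x^{\alpha_0}$ and the row of the point $0$, collapsing the $2m \times 2m$ (resp.\ $(2m{+}2)\times(2m{+}2)$) determinant of \Cref{cor:posrepr0infty} into exactly the $(2m{-}1)$-function (resp.\ $(2m{+}1)$-function) determinant written in the statement; the sign generated by the expansion is absorbed into the positive constant $c_*$ or $c^*$. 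The even-case $f_*$ and the odd-case $f^*$, where $0$ is not a zero, already coincide with the corresponding formulas in \Cref{cor:posrepr0infty} and need no reduction, which finishes the identification. I expect this cofactor bookkeeping (dimensions and signs) to be the only genuinely fiddly part of the argument.
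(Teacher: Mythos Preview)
Your proposal is correct and follows essentially the same route as the paper: verify hypotheses (a)--(c) of \Cref{thm:karlinPos0infty} for the system $\{x^{\alpha_i}\}$, use the ET-property on $(0,\infty)$ to obtain determinantal representations, and then simplify the determinant containing the simple zero at $x=0$ via cofactor expansion along the row $(1,0,\dots,0)$. The only cosmetic difference is that the paper does not invoke \Cref{cor:posrepr0infty} directly for the component vanishing at $0$; instead it rewrites that determinant with $\varepsilon$-perturbed double zeros, performs the same cofactor expansion along the $(1,0,\dots,0)$ row first, and only then passes to the limit $\varepsilon\searrow 0$ inside the reduced system $\{x^{\alpha_i}\}_{i\geq 1}$, where the ET-property on $[y_1/2,\infty)$ is available---but this is the same computation in a different order.
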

\begin{proof}
We have that $\cF$ fulfills conditions (a) and (b) of \Cref{thm:karlinPos0infty} and by \Cref{exm:tsysAlpha} we known that $\cF$ on $[0,\infty)$ is also a T-system, i.e., (c) in \Cref{thm:karlinPos0infty} is fulfilled.
We can therefore apply \Cref{thm:karlinPos0infty}.

(i) $n=2m$: By \Cref{thm:karlinPos0infty} (i) the unique $f_*$ and $f^*$ each possess $m$ distinct zeros $\{x_i\}_{i=1}^m$ and $\{y_i\}_{i=0}^{m-1}$ with $0\leq y_0 < x_1 < \dots < y_{m-1} < x_m < \infty$.
Since $x_1,\dots,x_m\in (0,\infty)$ and $\cF$ on $[x_1/2,\infty)$ is an ET-system we immediately get the determinantal representation of $f_*$ by \Cref{cor:posrepr0infty} (combine \Cref{thm:karlinPos0infty} with \Cref{rem:doublezeros}).
For $f^*$ we have $y_0=0$ and by \Cref{exm:nonETalg} this is no ET-system.
Hence, we prove the representation of $f^*$ by hand, similar as in the proof of \Cref{cor:azero}.

Let $\varepsilon>0$ be such that $0=y_0 < y_1 < y_1+\varepsilon < \dots < y_{m-1} < y_{m-1}+\varepsilon$ holds. Then
\begin{align*}
g_\varepsilon(x) &= -\varepsilon^{-m+1}\cdot \det \begin{pmatrix}
1 & x^{\alpha_1} & x^{\alpha_2} & x^{\alpha_3} & \dots & x^{\alpha_{2m-2}} & x^{\alpha_{2m-1}}\\
x & 0 & y_1 & y_1+\varepsilon & \dots & y_{m-1} & y_{m-1}+\varepsilon
\end{pmatrix}\\
&= -\varepsilon^{-m+1}\cdot \det\begin{pmatrix}
1 & x^{\alpha_1} & x^{\alpha_2} & \dots & x^{\alpha_{2m-1}}\\
1 & 0 & 0 & \dots & 0\\
1 & y_1^{\alpha_1} & y_1^{\alpha_2} & \dots & y_1^{\alpha_{2m-1}}\\
\vdots & \vdots & \vdots & & \vdots\\
1 & (y_{m-1}+\varepsilon)^{\alpha_1} & (y_{m-1}+\varepsilon)^{\alpha_2} & \dots & (y_{m-1}+\varepsilon)^{\alpha_{2m-1}}
\end{pmatrix}
\intertext{expand by the second row}
&= \varepsilon^{-m+1}\cdot\det\begin{pmatrix}
x^{\alpha_1} & x^{\alpha_2} & \dots & x^{\alpha_{2m-1}}\\
y_1^{\alpha_1} & y_1^{\alpha_2} & \dots & y_1^{\alpha_{2m-1}}\\
\vdots & \vdots & & \vdots\\
(y_{m-1}+\varepsilon)^{\alpha_1} & (y_{m-1}+\varepsilon)^{\alpha_2} & \dots & (y_{m-1}+\varepsilon)^{\alpha_{2m-1}}
\end{pmatrix}\\
&= \varepsilon^{-m+1}\cdot\det\begin{pmatrix}
x^{\alpha_1} & x^{\alpha_2} & \dots & x^{\alpha_{2m-2}} & x^{\alpha_{2m-1}}\\
x & y_1 & y_1+\varepsilon & \dots & y_{m-1} & y_{m-1}+\varepsilon
\end{pmatrix}
\end{align*}
is non-negative on $[0,y_1]$ and every $[y_i+\varepsilon,y_{i+1}]$.
Now $y_0=0$ is removed and all $y_i,y_i+\varepsilon>0$.
Hence, we can work on $[y_1/2,\infty)$ where $\{x^{\alpha_i}\}_{i=1}^{2m}$ is an ET-system and we can go to the limit $\varepsilon\searrow 0$ as in \Cref{rem:doublezeros}.
Then \Cref{cor:posrepr0infty} proves the representation of $f^*$.

(ii) $n=2m+1$: Similar to the case (i) with $n=2m$.
\end{proof}

If all $\alpha_i\in\nset_0$ then we can express the $f_*$ and $f^*$ in \Cref{thm:algPosSatz0infty} also with Schur polynomials, see (\ref{eq:schurPolyRepr}) in \Cref{exm:algETsystem}.

We now prove a stronger version of (\ref{eq:pos0infty2}), i.e., $p = f^2 + x\cdot g^2$ for any $p\geq 0$ on $[0,\infty)$.
It is sufficient to have only the sparse algebraic Positivstellensatz (\Cref{thm:algPosSatz0infty}).
A previous version already appeared in \cite{karlin53}.

\begin{cor}[see {\cite[p.\ 169, Cor.\ 8.1]{karlinStuddenTSystemsBook}}]\label{cor:nonneg0inftyRx}
Let $p\in\rset[x]$ with $p\geq 0$ on $[0,\infty)$.
Let $z_1,\dots,z_r\in [0,\infty)$ be the zeros of $p$ in $[0,\infty)$ and let $m_1,\dots,m_r\in\nset$ be the corresponding algebraic multiplicities.
\begin{enumerate}[(i)]
\item If $\deg p - m_1 - \dots - m_r = 2m$, $m\in\nset_0$, is even then there exist points $\{x_i\}_{i=1}^m$ and $\{y_i\}_{i=1}^{m-1}\subseteq (0,\infty)$ with
\[0 < x_1 < y_1 < \dots < y_{m-1} < x_m < \infty\]
and constants $a,b>0$ such that
\[p(x) = \prod_{i=1}^r (x-z_i)^{m_i}\cdot \left( a\cdot\prod_{i=1}^m (x-x_i)^2 + b\cdot x\cdot\prod_{i=1}^{m-1} (x-y_i)^2 \right).\]
The constant $a$ is the leading coefficient of $p$.

\item If $\deg p - m_1 - \dots - m_r = 2m+1$, $m\in\nset_0$, is odd then there exist points $\{x_i\}_{i=1}^m$ and $\{y_i\}_{i=1}^m\subset (0,\infty)$ with
\[0 < x_1 < y_1 < \dots < x_m < y_m < \infty\]
and constants $a,b>0$ such that
\[p(x) = \prod_{i=1}^r (x-z_i)^{m_i}\cdot \left( a\cdot\prod_{i=1}^m (x-x_i)^2 + b\cdot x\cdot\prod_{i=1}^m (x-y_i)^2 \right).\]
The constant $b$ is the leading coefficient of $p$.
\end{enumerate}
\end{cor}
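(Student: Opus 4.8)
The plan is to reduce the claim to the strictly positive situation already settled by \Cref{thm:algPosSatz0infty} and then to translate the determinantal decomposition obtained there into the explicit product form. First I would isolate the $[0,\infty)$-zeros of $p$: setting $q(x):=\prod_{i=1}^r (x-z_i)^{m_i}$, the quotient $\tilde p := p/q$ is a polynomial (apply the factorization \ref{eq:zeroFactor} at each $z_i$) of degree $\deg p - m_1 - \dots - m_r$ which has \emph{no} zeros in $[0,\infty)$. Since $q$ is monic and $p\geq 0$ on $[0,\infty)$, the polynomial $\tilde p$ has constant sign on $[0,\infty)$ and is positive for large $x$; hence $\tilde p>0$ on $[0,\infty)$ and its leading coefficient is positive. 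This places $\tilde p$ exactly in the hypotheses of \Cref{thm:algPosSatz0infty} with the algebraic ECT-system $\cF=\{1,x,\dots,x^n\}$, $n:=\deg\tilde p$ (an ECT-system by \Cref{exm:algECTsystem}), and with leading coefficient $a_n>0$.

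Next I would apply \Cref{thm:algPosSatz0infty} to $\tilde p$, obtaining the unique decomposition $\tilde p = f_* + f^*$ with $f_*,f^*\geq 0$ and the stated interlacing zero patterns, treating $n=2m$ (part (i)) and $n=2m+1$ (part (ii)) separately. The remaining work is purely to rewrite the two determinants. For $\alpha_i=i$ the determinant
\[
\det\begin{pmatrix}
1 & x & x^2 & \dots & x^{2k-1} & x^{2k}\\
x & (x_1 & x_1) & \dots & (x_k & x_k)
\end{pmatrix}
\]
is, as a function of $x$, a polynomial in $\lin\cF$ of degree $2k$ vanishing to order two at each $x_i$ (two coincident rows once the free row equals $x_i$), so it equals a nonzero constant times $\prod_{i=1}^k (x-x_i)^2$; this is exactly \Cref{rem:doublezeros} combined with the Schur-polynomial form \ref{eq:schurPolyRepr} of \Cref{exm:algETsystem}. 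Whenever the lowest entry is $x^{\alpha_1}=x$ instead of $1$, one factors a single $x$ out of every column, producing the factor $x$ and a double-zero product of degree one lower. Because $f_*$ and $f^*$ are already nonnegative, the constants multiplying $\prod(x-x_i)^2$ and $x\prod(x-y_i)^2$ are automatically positive, yielding the constants $a,b>0$.

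Matching degrees finishes the bookkeeping: in part (i) the two summands have degrees $2m$ and $2m-1$, so the leading coefficient $a$ of $\tilde p$ — and hence of $p$, as $q$ is monic — is the coefficient of $\prod(x-x_i)^2$; in part (ii) the degrees are $2m$ and $2m+1$, so the leading coefficient is $b$. Multiplying $\tilde p = a\prod(x-x_i)^2 + bx\prod(x-y_i)^2$ back by $q$ produces the asserted formulas, with the interlacing $0<x_1<y_1<\dots$ inherited (after an index shift) from the strict interlacing of the zeros of $f_*$ and $f^*$ in \Cref{thm:algPosSatz0infty}.

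The main obstacle I anticipate is not a single hard estimate but the careful treatment of the endpoint $x=0$: in the odd case one component has a \emph{simple} zero at $0$ (the node $x_1=0$ resp.\ $y_0=0$), which is precisely why $\cF$ is only an ET-system on $(0,\infty)$ rather than on $[0,\infty)$ (cf.\ \Cref{exm:nonETalg}), so the determinant carrying that zero must be read off via the explicit $\varepsilon\searrow 0$ limit used in the proof of \Cref{thm:algPosSatz0infty} instead of by a naive confluent-Vandermonde argument. Keeping track of which summand carries the lone factor of $x$, and therefore whether $a$ or $b$ is the leading coefficient, is the one point where the two parity cases genuinely differ and must be verified individually.
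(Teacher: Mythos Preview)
Your proposal is correct and follows exactly the paper's approach: factor out the zeros in $[0,\infty)$ to obtain $\tilde p>0$ there, then apply \Cref{thm:algPosSatz0infty} to $\tilde p$. You supply more detail than the paper (which dispatches the whole proof in one sentence) in converting the determinantal $f_*,f^*$ into the explicit products and in identifying which of $a,b$ is the leading coefficient, but the strategy and the key input are identical.
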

\begin{proof}
Since $z_1,\dots,z_r$ are the zeros of $p$ in $[0,\infty)$ with multiplicities $m_1,\dots,m_r$ we have that $p(x) = (x-z_1)^{m_1}\cdots (x-z_r)^{m_r}\cdot \tilde{p}(x)$ with $\tilde{p}\in\rset[x]$ and $\tilde{p}>0$ on $[0,\infty)$. Applying \Cref{thm:algPosSatz0infty} to $\tilde{p}$ gives the assertion.
\end{proof}

Note, in the previous result we were able to factor out the zeros of $p$ and were only left with $\tilde{p}>0$ on $[0,\infty)$ since we are working in $\rset[x]_{\leq\deg p}$ where all monomials $1,x,\dots,x^{\deg p}$ are present.
In sparse systems we are not able to factor out the zeros since we no longer know which monomials in $\tilde{p}$ will appear.

\begin{rem}\label{rem:factorizationfg0infty}
Working in the sparse setting, i.e., in T-systems, gives us an additional information.
In (\ref{eq:pos0infty2}) we only have $p(x) = x\cdot f^2 + g^2$.
But this also includes that $f$ and $g$ might contain factors $((x-y_i)^2 + \delta_i)$ with $\delta_i >0$, i.e., a pair of complex conjugated zeros can be present.
In \Cref{cor:nonneg0inftyRx} we see that this is not necessary.
The polynomials $f$ and $g$ can always be chosen such that they decompose into linear factors having only real zeros.
A similar results holds on $\rset$, see \Cref{cor:nonnegRRx}.\exmsymbol
\end{rem}

\section{Sparse Stieltjes Moment Problem}
%%%%%%%%%%%%%%%%%%%%%%%%%%%%%%%%%%%%%%%%%

In \Cref{sec:earlyGaps} we have seen that Boas\index{Boas, R.\ P.} already investigated the sparse Stieltjes moment problem \cite{boas39}.
However, the description was complicated and is even incomplete since Boas did not had access to \Cref{thm:karlinPos0infty} and therefore \Cref{thm:algPosSatz0infty}.
We get the following complete and simple description.
It fully solves \cite{boas39}.
We are not aware of a reference for the following result.

\begin{thm}[Sparse Stieltjes Moment Problem]\label{thm:sparseStieltjesMP}\index{sparse!Stieltjes moment problem}
Let $\{\alpha_i\}_{i\in\nset_0}\subseteq [0,\infty)$ be such that $\alpha_0 = 0 < \alpha_1 < \alpha_2 < \dots$ and let $\cF = \{x^{\alpha_i}\}_{i\in\nset_0}$.
Then the following are equivalent:
\begin{enumerate}[(i)]
\item $L:\lin\cF\to\rset$ is a $[0,\infty)$-moment functional.

\item $L(p)\geq 0$ for all $p\in\lin\cF$ with $p\geq 0$.

\item $L(p)\geq 0$ for all $p\in\lin\cF$ with $p>0$.

\item $L(p)\geq 0$ for all
\[p(x) = \begin{cases}
\det\begin{pmatrix}
1 & x^{\alpha_1} & x^{\alpha_2} & \dots & x^{\alpha_{2m-1}} & x^{\alpha_{2m}}\\
x & (x_1 & x_1) & \dots & (x_m & x_m)
\end{pmatrix},\\
\det\begin{pmatrix}
x^{\alpha_1} & x^{\alpha_2} & x^{\alpha_3} &\dots& x^{\alpha_{2m-2}} & x^{\alpha_{2m-1}}\\
x & (x_1 & x_1) & \dots & (x_{m-1} & x_{m-1})
\end{pmatrix},\\
\det\begin{pmatrix}
x^{\alpha_1} & x^{\alpha_2} & x^{\alpha_3} & \dots & x^{\alpha_{2m}} & x^{\alpha_{2m+1}}\\
x & (x_2 & x_2) & \dots & (x_{m+1} & x_{m+1})
\end{pmatrix},\ \text{and}\\
\det\begin{pmatrix}
1 & x^{\alpha_1} & x^{\alpha_2} & \dots & x^{\alpha_{2m-1}} & x^{\alpha_{2m}}\\
x & (x_1 & x_1) & \dots & (x_m & x_m)
\end{pmatrix}
\end{cases}\]
for all $m\in\nset_0$ and $0<x_1<\dots<x_m$.
\end{enumerate}
\end{thm}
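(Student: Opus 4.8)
The plan is to follow closely the template of the proof of \Cref{thm:generalSparseHausd}, disposing of the cheap implications first and reducing everything to one substantial step. The implication (i) $\Rightarrow$ (ii) is immediate: if $L(f)=\int_0^\infty f\,\diff\mu$ with $\supp\mu\subseteq[0,\infty)$, then every $p\in\lin\cF$ with $p\geq 0$ on $[0,\infty)$ satisfies $L(p)=\int_0^\infty p\,\diff\mu\geq 0$. Likewise (ii) $\Rightarrow$ (iii) is trivial, since $p>0$ forces $p\geq 0$. This leaves the cyclic equivalence of (ii), (iii), (iv) and the closure (ii) $\Rightarrow$ (i). Throughout I would exploit that $x^{\alpha_0}=1>0$ on $[0,\infty)$ and that $\cF$ is the full \emph{infinite} family, so arbitrarily large exponents are always available.

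For the cycle among (ii), (iii), (iv) the engine is \Cref{thm:algPosSatz0infty}. To prove (iv) $\Rightarrow$ (ii), take $p\in\lin\cF$ with $p\geq 0$; it is a finite combination $p=\sum_{i=0}^N a_i x^{\alpha_i}$, and for $\varepsilon>0$ the function $p+\varepsilon\cdot 1$ is strictly positive, so its leading coefficient is positive. Applying \Cref{thm:algPosSatz0infty} to the finite family $\{x^{\alpha_i}\}_{i=0}^N$ writes $p+\varepsilon\cdot 1 = f_*+f^*$, where $f_*$ and $f^*$ are precisely the determinantal generators listed in (iv) multiplied by positive constants; hence $L(p+\varepsilon\cdot 1)=L(f_*)+L(f^*)\geq 0$ by (iv). Since $1$ is itself strictly positive, the same decomposition gives $L(1)\geq 0$, and letting $\varepsilon\searrow 0$ in $L(p)\geq -\varepsilon L(1)$ yields $L(p)\geq 0$. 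The implication (iii) $\Rightarrow$ (iv) uses the identical $\varepsilon$-perturbation in reverse: each determinantal polynomial $p$ in (iv) is non-negative but not strictly positive, so $p+\varepsilon\cdot 1>0$ gives $L(p+\varepsilon\cdot 1)\geq 0$ by (iii), and $L(1)\geq 0$ together with $\varepsilon\searrow 0$ gives $L(p)\geq 0$. Combined with the trivial (ii) $\Rightarrow$ (iii), this closes the cycle.

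The heart of the argument, and the main obstacle, is the closure (ii) $\Rightarrow$ (i). In the Hausdorff setting of \Cref{thm:generalSparseHausd} compactness of $[a,b]$ made adaptedness automatic, but here the domain $[0,\infty)$ is non-compact and the domination condition of \Cref{dfn:adaptedSpace}(iii) must be verified by hand. I would check that $\lin\cF$ is an adapted subspace of $\cat([0,\infty),\rset)$ directly. Condition (ii) of \Cref{dfn:adaptedSpace} holds because $1=x^{\alpha_0}\in(\lin\cF)_+$ is strictly positive everywhere. For condition (i), given $f=\sum_{i=0}^N a_i x^{\alpha_i}$ set $g:=1+x^{\alpha_N}\in(\lin\cF)_+$; then $g\geq 1>0$ on $[0,\infty)$ and $f/g$ is continuous on $[0,\infty)$ with finite limit as $x\to\infty$, hence bounded, so $f+cg>0$ for $c$ large and $f=(f+cg)-cg\in E_+-E_+$. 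For the domination condition, given $g\in E_+$ with largest exponent $\alpha_N$, choose $f:=1+x^{\alpha_{N+1}}\in E_+$ with a strictly larger exponent from the infinite family; since $\alpha_N<\alpha_{N+1}$ we have $g(x)/f(x)\to 0$ as $x\to\infty$, which is exactly the characterization of domination in \Cref{lem:adapted}(ii).

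With $\lin\cF$ adapted and $[0,\infty)$ locally compact Hausdorff with countable base, the $E_+$-positivity asserted in (ii) produces via the \Cref{thm:basicrepresentation} a Radon representing measure $\mu$ on $[0,\infty)$, which is statement (i). The delicate point to watch is precisely the domination step: it is the presence of arbitrarily large exponents $\alpha_{N+1}$ that rescues adaptedness on the unbounded interval, and this is exactly where the hypothesis that $\cF$ is the full infinite family $\{x^{\alpha_i}\}_{i\in\nset_0}$ is essential. Finally, the determinacy tail (if one wishes to include it, as in \Cref{thm:generalSparseHausd}) would again follow from the M\"untz--Sz\'asz Theorem applied to the positive exponents.
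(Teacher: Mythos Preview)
Your proof is correct and follows essentially the same route as the paper: the easy implications are immediate, the equivalence with the determinantal generators in (iv) is \Cref{thm:algPosSatz0infty} together with the standard $\varepsilon$-perturbation, and the substantive step (ii) $\Rightarrow$ (i) is the verification that $\lin\cF$ is adapted on $[0,\infty)$ followed by the \Cref{thm:basicrepresentation}. The paper carries out the adaptedness check more tersely (using $x^{\alpha_{m+1}}$ rather than $1+x^{\alpha_{N+1}}$ as the dominating function), but the content is identical.
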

\begin{proof}
The implications ``(i) $\Rightarrow$ (ii) $\Leftrightarrow$ (iii)'' are clear and ``(iii) $\Leftrightarrow$ (iv)'' is \Cref{thm:algPosSatz0infty}.
It is therefore sufficient to prove ``(ii) $\Rightarrow$ (i)''.

We have $\lin\cF = (\lin\cF)_+ - (\lin\cF)_+$, we have $1 = x^{\alpha_0}\in\lin\cF$, and for any $g = \sum_{i=0}^m a_i\cdot x^{\alpha_i} \in (\lin\cF)_+$ we have $\lim_{x\to\infty} \frac{g(x)}{x^{\alpha_{m+1}}} = 0$, i.e., there exists a $f\in (\lin\cF)_+$ which dominates $g$.
Hence, $\lin\cF$ is an adapted space on $[0,\infty)$ and the assertion follows from the \Cref{thm:basicrepresentation}.
\end{proof}

In the previous result we did needed $0=\alpha_0 < \alpha_1 < \alpha_2 < \dots$. We did not needed $\alpha_i\to\infty$.
Hence, \Cref{thm:sparseStieltjesMP} also includes the case $\sup_{i\in\nset_0}\alpha_i < \infty$.

\Cref{thm:sparseStieltjesMP} also holds with $\alpha_0 > 0$ since we can factor out $x^{\alpha_0}$ and therefore determine $x^{\alpha_0}~\diff\mu(x)$ instead of $\diff\mu(x)$.

\section{Sparse Algebraic Nichtnegativstellensatz on $[0,\infty)$}
%%%%%%%%%%%%%%%%%%%%%%%%%%%%%%%%%%%%%%%%%%%%%%%%%%%%%%%%%%%%%%%%%%

For $\{1,x,x^3\}$ we have seen in \Cref{exm:nonETalg} that this is not an ET-systen on $[0,\infty)$, or on any other $[0,b]$.
If we remove the point $x=0$ and work on $(0,\infty)$ then it is an ET-system and even an ECT-system (\Cref{exm:etSystems}).
For a Nichtnegativstellensatz we therefore have to exclude zeros at $x=0$ in a sparse polynomial $p\geq 0$.

\begin{thm}[Sparse Algebraic Nichtnegativstellensatz on $[0,\infty)$]\label{thm:sparseNonneg0infty}\index{sparse!algebraic Nichtnegativstellensatz!on $[0,\infty)$}
Let $n\in\nset_0$, $\alpha_0,\dots,\alpha_n\in [0,\infty)$ be real numbers with $\alpha_0 = 0 < \alpha_1 < \dots < \alpha_n$, and let $\cF = \{x^{\alpha_i}\}_{i=0}^n$.
Let $f = \sum_{i=0}^n a_i x^{\alpha_i} \geq 0$ on $[0,\infty)$ with $a_n>0$ and $f(0) = a_0 > 0$.
Then there exist points $x_1,\dots,x_n,y_1,\dots,y_{n-1}\in [0,\infty)$ (not necessarily distinct) which include the zeros of $f$ with multiplicities and there exist constants $c_*,c^*\in\rset$ such that
\[f = f_* + f^*\]
with $f_*,f^*\in\lin\cF$, $f_*,f^*\geq 0$ on $[0,\infty)$, and the polynomials $f_*$ and $f^*$ are given by
\[f_*(x) = c_*\cdot\det\left(\begin{array}{c|ccc}
1 &\ x^{\alpha_1} & \dots & x^{\alpha_n}\\
x &\ x_1 & \dots & x_n
\end{array}\right)
\qquad\text{and}\qquad
f^*(x) = c_*\cdot\det\left(\begin{array}{c|ccc}
1 &\ x^{\alpha_1} & \dots & x^{\alpha_{n-1}}\\
x &\ y_1 & \dots & y_{n-1}
\end{array}\right)\]
for all $x\in [0,\infty)$.
\end{thm}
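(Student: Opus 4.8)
The plan is to reduce this result to the already-established Nichtnegativstellensatz machinery on $[0,\infty)$, namely \Cref{thm:karlinNonNeg0infty}, exactly as \Cref{thm:algPosSatz0infty} was derived from \Cref{thm:karlinPos0infty}. First I would verify that $\cF = \{x^{\alpha_i}\}_{i=0}^n$ satisfies the hypotheses (a)--(c) of \Cref{thm:karlinNonNeg0infty}: condition (a) holds since $x^{\alpha_n}>0$ for all $x>0$; condition (b) holds because $\lim_{x\to\infty} x^{\alpha_i}/x^{\alpha_n} = \lim_{x\to\infty} x^{\alpha_i - \alpha_n} = 0$ for all $i<n$; and condition (c) holds because $\{x^{\alpha_i}\}_{i=0}^{n-1}$ is again a T-system by \Cref{exm:tsysAlpha}. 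I should note, however, that \Cref{thm:karlinNonNeg0infty} is stated for ET-systems, and by \Cref{exm:nonETalg} the family $\cF$ fails to be an ET-system on $[0,\infty)$ precisely at the point $x=0$. This is exactly why the hypothesis $f(0) = a_0 > 0$ is imposed: it forces $x=0$ to be a \emph{non}-zero of $f$, so that no zero of $f_*$ or $f^*$ at the origin with multiplicity exceeding one can occur, and the ET-property is only needed on $(0,\infty)$ where it does hold.

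Next, with the hypotheses of \Cref{thm:karlinNonNeg0infty} in place, I would apply it directly to $f = \sum_{i=0}^n a_i x^{\alpha_i}$ with $a_n>0$ and $r<n$ zeros counting multiplicities (where $r$ is the total multiplicity of the zeros of $f$ in $[0,\infty)$). This yields the unique decomposition $f = f_* + f^*$ with $f_*,f^*\geq 0$ on $[0,\infty)$, where $f_*$ has $n$ zeros and $f^*$ has $n-1$ zeros (counting multiplicities), and these zeros strictly interlace after the zeros of $f$ are removed. The count $n$ versus $n-1$ is what produces the $x_1,\dots,x_n$ for $f_*$ and $y_1,\dots,y_{n-1}$ for $f^*$ in the statement.

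To obtain the explicit determinantal formulas I would invoke \Cref{rem:doublezeros}, which shows that a zero of algebraic multiplicity recorded in a determinant $\left(\begin{smallmatrix} f_0 & f_1 & \cdots \\ x & x_1 & \cdots\end{smallmatrix}\right)$ is encoded by including both the function values and their derivatives. Since $f_*$ and $f^*$ are each (up to a scalar) the unique polynomial in $\lin\cF$ vanishing at its prescribed zeros with prescribed multiplicities, the representation theorem for ET-systems gives $f_*(x) = c_*\cdot\det\left(\begin{smallmatrix} 1 & x^{\alpha_1} & \cdots & x^{\alpha_n} \\ x & x_1 & \cdots & x_n\end{smallmatrix}\right)$ and analogously for $f^*$ on the reduced index set. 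Here I must take care that the zeros $x_1,\dots,x_n$ of $f_*$ all lie in $(0,\infty)$ whenever they carry multiplicity two, so that the ET-property on $(0,\infty)$ (via \Cref{exm:algETsystem}/\Cref{exm:etSystems}) legitimately applies; the hypothesis $f(0)>0$ again guarantees no obstruction at the origin.

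The main obstacle will be the careful bookkeeping at the endpoint $x=0$: I expect that establishing that the determinantal representation of $f^*$ (which ends in the origin-side structure) is \emph{not} identically the zero polynomial requires a limiting argument of the kind carried out in \Cref{cor:azero} and in the proof of \Cref{thm:algPosSatz0infty}, where one writes a perturbed determinant $g_\varepsilon$ with $x_i, x_i+\varepsilon$ and expands along the row corresponding to $x=0$ before letting $\varepsilon\searrow 0$. This confirms non-triviality precisely because at $x=0$ only a simple zero (multiplicity one) can appear, which is compatible with $\cF$ being an ET-system merely on $(0,\infty)$ rather than all of $[0,\infty)$. Apart from this endpoint subtlety, the proof is a direct specialization of \Cref{thm:karlinNonNeg0infty} combined with \Cref{rem:doublezeros}, so I would keep the argument brief and refer back to the proof of \Cref{thm:algPosSatzab} for the analogous structural details.
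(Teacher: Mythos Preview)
Your proposal is correct and matches the paper's approach: both recognize that $\cF$ is an ET-system only on $(0,\infty)$, invoke the observation (Problem~\ref{prob:etNeighborhood}) that the ET-property in \Cref{thm:karlinNonNeg} is needed only near the zeros of $f$, use $f(0)>0$ to rule out any zero at the origin, and then apply \Cref{thm:karlinNonNeg0infty}. Your additional discussion of the determinantal representations and the $\varepsilon$-limiting argument for non-triviality at $x=0$ goes a bit beyond the paper's terse solution but is consistent with the methods used in \Cref{cor:azero} and the proof of \Cref{thm:algPosSatz0infty}.
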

\begin{proof}
See Problem \ref{prob:nonneg0infty}.
\end{proof}

\begin{rem}\label{rem:a0infty}
Note, if $f(0) = a_0 = 0$ in \Cref{thm:sparseNonneg0infty} then
\[f(x) = a_ix^{\alpha_i} + a_{i+1}x^{\alpha_{i+1}} + \dots + a_n x^{\alpha_n} = x^{\alpha_i}\cdot (\underbrace{a_i + a_{i+1}x^{\alpha_{i+1}-\alpha_i} + \dots + a_n x^{\alpha_n-\alpha_i}}_{=:\tilde{f}(x)})\]
where $a_i$ is the first non-zero coefficient and it fulfills $a_i>0$ since $f\geq 0$.
Then apply \Cref{thm:sparseNonneg0infty} to $\tilde{f}$ to get $\tilde{f} = \tilde{f}_* + \tilde{f}^*$ and hence $f = x^{\alpha_i}\cdot (\tilde{f}_* + \tilde{f}^*)$.
\exmsymbol
\end{rem}

\section{Algebraic Positiv- and Nichtnegativstellensatz on $\rset$}
%%%%%%%%%%%%%%%%%%%%%%%%%%%%%%%%%%%%%%%%%%%%%%%%%%%%%%%%%%%%%%%%%%%

Since we treat $\cF = \{x^{i}\}_{i=0}^n$ we need only \Cref{thm:karlinPosR} on $\rset$ but not \Cref{thm:karlinNonNegR} on $\rset$ as we will see in the next result.

\begin{thm}[Algebraic Positiv- and Nichtnegativstellensatz on $\rset$, see {\cite[]{karlin53}} or e.g.\ {\cite[p.\ 198, Cor.\ 8.1]{karlinStuddenTSystemsBook}}]\label{cor:nonnegRRx}\index{sparse!algebraic Positivstellensatz!on $\rset$}\index{sparse!algebraic Nichtnegativstellensatz!on $\rset$}
Let $p\in\rset[x]$ with $p\geq 0$ on $\rset$ and let $z_1,\dots,z_r\in\rset$ be the zeros of $p$ with algebraic multiplicities $m_1,\dots,m_r\in 2\nset$.
Then there exist pairwise distinct points $\{x_i\}_{i=1}^m,\{y_i\}_{i=1}^{m-1}\subseteq\rset$ with $2m =\deg p - m_1 - \dots - m_r$ and
\[ -\infty < x_1 < y_1 < \dots < y_{m-1} < x_m < \infty\]
as well as constants $a,b>0$ such that
\begin{equation}\label{eq:p=f2g2}
p(x) = \prod_{i=1}^r (x-z_i)^{m_i}\cdot \left(a\cdot\prod_{i=1}^m (x-x_i)^2 + b\cdot\prod_{i=1}^{m-1} (x-y_i)^2 \right).
\end{equation}
The constant $a$ is the leading coefficient of $p$.
\end{thm}
\begin{proof}
We have $p(x) = (x-z_1)^{m_1}\cdots (x-z_r)^{m_r}\cdot \tilde{p}(x)$ for some $\tilde{p}\in\rset[x]$ with $\tilde{p}>0$ on $\rset$. Applying \Cref{thm:karlinPosR} to $\tilde{p}$ gives the assertion.
\end{proof}

Like in the case on $[0,\infty)$ in \Cref{cor:nonneg0inftyRx} a factorization
\[p(x) = (x-z_1)^{m_1}\cdots (x-z_r)^{m_r}\cdot \tilde{p}(x)\]
is not possible in T-systems or sparse algebraic systems on $\rset$.
But since we are working in $\rset[x]_{\leq\deg p}$ all monomials $1,x,\dots,x^{\deg p}$ are present.

\begin{rem}
Similar to \Cref{rem:factorizationfg0infty} we see that \Cref{cor:nonnegRRx} gives a stronger version of (\ref{eq:posR}), i.e., $p = f^2 + g^2$.
By applying only the Fundamental Theorem of Algebra\index{Theorem!Algebra!Fundamental}\index{Fundamental Theorem of Algebra} $f$ and $g$ might contain pairs of complex conjugated zeros, see e.g.\ \cite[Prop.\ 1.2.1]{marshallPosPoly}.
But by working in the T-system framework of \Cref{thm:karlinPosR} on $\rset$ we see that $f$ and $g$ can be chosen to have only real zeros.\exmsymbol
\end{rem}

\section*{Problems}%%%%%%%%%%%%%%%%%%%%%
\addcontentsline{toc}{section}{Problems}

\begin{prob}\label{prob:nonneg0infty}
Use \Cref{thm:karlinNonNeg0infty} to prove \Cref{thm:sparseNonneg0infty}.
\end{prob}

\begin{prob}\label{prob:leadingcoeffR}
Show that $a$ in (\ref{eq:p=f2g2}) in \Cref{cor:nonnegRRx} is the leading coefficient of $p$.
\end{prob}

%%%%%%%%%%%%%%%%%%%%%%%%%%%%%%%%%%%%%
%%%%%%%%%%%%%%%%%%%%%%%%%%%%%%%%%%%%%
\part{Applications of T-Systems}%%%%%
%%%%%%%%%%%%%%%%%%%%%%%%%%%%%%%%%%%%%
%%%%%%%%%%%%%%%%%%%%%%%%%%%%%%%%%%%%%

\motto{Long is the way and hard, that out of Hell leads up to light.\\ \medskip
\ \hspace{1cm} \normalfont{John Milton: Paradise Lost}\index{Milton, J.}}

\chapter{Moment Problems for continuous T-Systems on $[a,b]$}%%%
%%%%%%%%%%%%%%%%%%%%%%%%%%%%%%%%%%%%%%%%%%%%%%%%%%%%%%%%%%%%%%%%

In this chapter we demonstrate how e.g.\ \Cref{thm:karlinPosab} for general T-systems on $[a,b]$ can be used to prove moment problems which do not live on the algebraic polynomials $\rset[x]$.

\section{General Moment Problems for continuous T-Systems on $[a,b]$}%%%
%%%%%%%%%%%%%%%%%%%%%%%%%%%%%%%%%%%%%%%%%%%%%%%%%%%%%%%%%%%%%%%%%%%%%%%%

For T-system $\cF$ on $[a,b]$ \Cref{thm:karlinPosab} describes all polynomials $f\in\lin\cF$ with $f>0$.

\begin{thm}\label{thm:genMPtsystemAB}
Let $n\in\nset$, let $\cF = \{f_i\}_{i=0}^n$ be a continuous T-system on $[a,b]$ with $a<b$.
The following are equivalent:
\begin{enumerate}[(i)]
\item $L:\lin\cF\to\rset$ is an $[a,b]$-moment functional.

\item $L(f)\geq 0$ for all $f\in\lin\cF$ such that
\begin{enumerate}[(a)]
\item $f\geq 0$ on $[a,b]$ and

\item the zero set of $f$ has index $n$.
\end{enumerate}
\end{enumerate}
\end{thm}
\begin{proof}
The implication (i) $\Rightarrow$ (ii) is clear since $f\geq 0$.
It is therefore sufficient to prove (ii) $\Rightarrow$ (i).

Since $\cF$ is a continuous T-system there exists a polynomial $e\in\lin\cF$ with $e>0$ on $[a,b]$.
Since $[a,b]$ is compact, $\cF$ is continuous and finite dimensional, and there exists a $e>0$ we have that the moment cone $((\lin\cF)_+)^*$ is closed.
Therefore, to show that $L$ is a moment functional it is sufficient to show that $L(f)\geq 0$ for all $f\in(\lin\cF)_+$.

By \Cref{thm:karlinPosab} there are $e_*,e^*\in\lin\cF$ with $e_*,e^*\geq 0$ and the zero sets of $e_*$ and of $e^*$ have index $n$.
Hence, $L(e) = L(e_*) + L(e^*) \geq 0$.

Let $f\in(\lin\cF)_+$ and $\varepsilon>0$.
Then $f_\varepsilon = f + \varepsilon\cdot e >0$ on $[a,b]$, i.e., by \Cref{thm:karlinPosab} there exist $(f_\varepsilon)_*, (f_\varepsilon)^*\in (\lin\cF)_+$ each with zero sets of index $n$.
Assumption (ii) then implies $L(f + \varepsilon\cdot e) = L((f_\varepsilon)_*) + L((f_\varepsilon)^*)\geq 0$ for all $\varepsilon>0$, i.e., $L(f)\geq 0$.
That proves the assertion.
\end{proof}

Note, that a continuous T-system on $[a,b]$ is always an adapted space.
Additionally, the use of \Cref{thm:basicrepresentation} is not necessary since we only need to check in this case $L\in ((\lin\cF)_+)^*$ since the moment cone is $((\lin\cF)_+)^*$ and hence it is closed.

If in the previous theorem we additionally have that $\cF$ is an ET-system then we can write down $f_*$ and $f^*$ explicitly in the similar way as in \Cref{thm:sparseTruncHausd}.

\begin{thm}\label{thm:genMPetSystem}
Let $n\in\nset$, let $\cF = \{f_i\}_{i=0}^n$ be an ET-system on $[a,b]$ with $a<b$.
The following are equivalent:
\begin{enumerate}[(i)]
\item $L:\lin\cF\to\rset$ is a moment functional.

\item $L(f)\geq 0$ holds for all
\begin{align*}
f(x) &:= \begin{cases}
\det \begin{pmatrix}
f_0 & f_1 & f_2 & \dots & f_{2m-1} & f_{2m}\\
x & (x_1 & x_1) & \dots & (x_m & x_m)
\end{pmatrix}\\
-\det\begin{pmatrix}
f_0 & f_1 & f_2 & f_3 & \dots & f_{2m-2} & f_{2m-1} & f_{2m}\\
x & a & (x_1 & x_1) & \dots & (x_{m-1} & x_{m-1}) & b
\end{pmatrix}
\end{cases} \tag*{\textit{if} $n = 2m$}
\intertext{and}
f(x) &:=\begin{cases}
-\det\begin{pmatrix}
f_0 & f_1 & f_2 & f_3 & \dots & f_{2m} & f_{2m+1}\\
x & a & (x_1 & x_1) & \dots & (x_m & x_m)
\end{pmatrix}\\
\det\begin{pmatrix}
f_0 & f_1 & f_2 & \dots & f_{2m-1} & f_{2m} & f_{2m+1}\\
x & (x_1 & x_1) & \dots & (x_m & x_m) & b
\end{pmatrix}
\end{cases} \tag*{\textit{if} $n=2m+1$}
\end{align*}
and all $x_1,\dots,x_m$ with $a < x_1 < \dots < x_m < b$.
\end{enumerate}
\end{thm}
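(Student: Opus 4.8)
The plan is to recognize that \Cref{thm:genMPetSystem} is the ET-system specialization of \Cref{thm:genMPtsystem}, exactly mirroring how \Cref{thm:sparseTruncHausd} specializes the general moment description to explicit determinantal generators. First I would note that the implication (i) $\Rightarrow$ (ii) is immediate: each polynomial $f$ displayed in (ii) is non-negative on $[a,b]$ by construction (these are precisely the $f_*$ and $f^*$ of \Cref{thm:algPosSatzab}, built from \Cref{rem:doublezeros}, with zero sets of index $n$), so any $[a,b]$-moment functional $L$ satisfies $L(f) = \int_a^b f(x)~\diff\mu(x) \geq 0$. The substance is therefore in (ii) $\Rightarrow$ (i).

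For the reverse direction I would invoke \Cref{thm:genMPtsystem}: it suffices to show $L(g) \geq 0$ for every $g \in \lin\cF$ with $g \geq 0$ on $[a,b]$ whose zero set has index $n$. The key observation is that every such $g$ is (up to a positive scalar) exactly one of the displayed determinantal polynomials. Indeed, since $\cF$ is an ET-system on $[a,b]$ and $g \geq 0$ vanishes on a set of index $n$, the zeros of $g$ are determined by their positions and multiplicities; by \Cref{rem:doublezeros} the function $g$ is, up to a constant, the determinant obtained by inserting those zeros (each interior double zero as a coincident pair $(x_i\ x_i)$, each endpoint as a single entry). Splitting into the parity cases $n = 2m$ and $n = 2m+1$, and into whether the zero set contains $a$, $b$, both, or neither, the index-$n$ constraint forces the configurations listed in (ii): either $m$ interior double zeros (the $f_*$-type determinant) or one or both endpoints together with interior double zeros (the $f^*$-type determinant). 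Thus the generators in (ii) span all of $(\lin\cF)_+$ with index-$n$ zero sets, and the hypothesis gives $L(g)\geq 0$ on all of them.

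Having secured non-negativity on this generating family, I would then run the closure-and-approximation argument verbatim from the proof of \Cref{thm:genMPtsystem}: since $[a,b]$ is compact and $\cF$ is a continuous finite-dimensional ET-system containing a strictly positive polynomial $e$ (e.g.\ $f_0$ after a sign adjustment), the moment cone $((\lin\cF)_+)^*$ is closed, so it is enough to check $L(f) \geq 0$ for all $f \in (\lin\cF)_+$. For arbitrary $f \in (\lin\cF)_+$ and $\varepsilon > 0$, the polynomial $f_\varepsilon = f + \varepsilon e$ is strictly positive, so \Cref{thm:algPosSatzab} decomposes $f_\varepsilon = (f_\varepsilon)_* + (f_\varepsilon)^*$ into two generators of the type appearing in (ii); the hypothesis then yields $L(f_\varepsilon) \geq 0$, and letting $\varepsilon \searrow 0$ gives $L(f) \geq 0$.

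The main obstacle I anticipate is bookkeeping rather than conceptual: one must verify carefully that the four displayed determinantal forms (two per parity) genuinely exhaust all index-$n$ non-negative configurations, matching the endpoint-inclusion cases of \Cref{thm:algPosSatzab} to the $f_*$/$f^*$ dichotomy and checking that the sign prefactors ($\pm$) make each determinant non-negative on $[a,b]$. In particular I would want to confirm that when the zero set touches an endpoint, the index accounting (endpoints count once, interior points twice) lines up with the multiplicity-two insertions in \Cref{rem:doublezeros}, so that no index-$n$ configuration is omitted from the list in (ii). Once that correspondence is pinned down, the proof reduces to citing \Cref{thm:genMPtsystem} together with the explicit forms, exactly as \Cref{thm:sparseTruncHausd} was deduced.
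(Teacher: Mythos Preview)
Your approach is the paper's: reduce via \Cref{thm:genMPtsystemAB} to non-negative polynomials with index-$n$ zero sets, then identify those (up to positive scalar) with the displayed determinants using the ET-system structure. The paper compresses this to one line, citing \Cref{thm:genMPtsystemAB} together with \Cref{thm:etDet}.

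Two corrections. First, you repeatedly invoke \Cref{thm:algPosSatzab}, but that result is the algebraic specialization for $\cF=\{x^{\alpha_i}\}$ on $[a,b]\subset(0,\infty)$ and does not apply to a general ET-system; the correct reference for the decomposition $f_\varepsilon=(f_\varepsilon)_*+(f_\varepsilon)^*$ is \Cref{thm:karlinPosab}, and for the determinantal representation of an index-$n$ non-negative polynomial you want \Cref{thm:etDet} (or the representation theorem for ET-systems) together with \Cref{rem:doublezeros}. Second, your final paragraph re-running the closure-and-approximation argument is redundant: that argument is already the content of \Cref{thm:genMPtsystemAB}, which you have just invoked. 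Once you have shown that the displayed determinants exhaust (up to positive scalars) the non-negative polynomials with zero set of index $n$, \Cref{thm:genMPtsystemAB} finishes the proof immediately.
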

\begin{proof}
Follows from \Cref{thm:genMPtsystemAB} with \Cref{thm:etDet}.
\end{proof}

\section{A Non-Polynomial Example}%%%
%%%%%%%%%%%%%%%%%%%%%%%%%%%%%%%%%%%%%

In \Cref{exm:fraction} we have seen that
\[\cF=\left\{\frac{1}{x+\alpha_0},\frac{1}{x+\alpha_1},\dots,\frac{1}{x+\alpha_n}\right\}\]
with $n\in\nset$ and $\alpha_0 < \alpha_1 < \dots < \alpha_n$ reals is a continuous T-system on any $[a,b]$ with $-\alpha_0 < a < b$, see Problem \ref{prob:fraction} for the proof.
But in the proof of \Cref{exm:fraction} we actually showed that this $\cF$ is an ET-system since we multiplied with $(x+\alpha_0)\cdots (x+\alpha_n)$ which has no zeros on $[a,b]$ and hence the multiplicities of the zeros do not change.
Multiplicity restriction from the fundamental theorem of algebra then shows that $\cF$ is an ET-system.

\begin{cor}\label{cor:fractionET}
Let $n\in\nset$ and $\alpha_0 < \alpha_1 < \dots < \alpha_n$ be reals. Then
\[\cF=\left\{\frac{1}{x+\alpha_0},\frac{1}{x+\alpha_1},\dots,\frac{1}{x+\alpha_n}\right\}\]
is an ET-system on any $[a,b]$ with $-\alpha_0 < a < b$.
\end{cor}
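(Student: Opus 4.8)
The claim is that the family $\cF = \{(x+\alpha_i)^{-1}\}_{i=0}^n$ with $\alpha_0 < \dots < \alpha_n$ is an ET-system on any $[a,b]$ with $-\alpha_0 < a < b$. The condition $-\alpha_0 < a$ guarantees $x + \alpha_i > 0$ for all $x\in[a,b]$ and all $i$, so each $f_i$ is smooth (indeed $\cat^\infty$) on $[a,b]$, and in particular $\cF\subseteq\cat^n([a,b],\rset)$ as required by \Cref{dfn:etSystem}. The author already announces the strategy in the remark preceding the corollary: the proof of \Cref{exm:fraction} secretly establishes the \emph{ET} property, not merely the T-system property, because it proceeds by clearing denominators.

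**The plan.** I would argue via \Cref{thm:etDet}: it suffices to show that for every choice $a\le x_0\le x_1\le\dots\le x_n\le b$ the starred determinant $\det\!\begin{pmatrix} f_0 & \dots & f_n\\ x_0 & \dots & x_n\end{pmatrix}^*$ is nonzero. Equivalently, I show that any nontrivial $f=\sum_{i=0}^n a_i (x+\alpha_i)^{-1}\in\lin\cF\setminus\{0\}$ has at most $n$ zeros on $[a,b]$ counting algebraic multiplicities. The key manipulation is to multiply by the common denominator $q(x) := \prod_{i=0}^n (x+\alpha_i)$, which is strictly positive on $[a,b]$ and $\cat^\infty$. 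Define $g := q\cdot f$. Then
\[
g(x) = \sum_{i=0}^n a_i \prod_{j\neq i} (x+\alpha_j),
\]
so $g\in\rset[x]$ with $\deg g \le n$, and $g$ is not the zero polynomial whenever $(a_0,\dots,a_n)\neq 0$ (each partial-fraction numerator is determined uniquely, so the $a_i$ are recoverable from $g$). Since $q>0$ on $[a,b]$, multiplication by $q$ preserves zeros together with their algebraic multiplicities: $f^{(k)}(x_0)=0$ for $k=0,\dots,m-1$ if and only if $g^{(k)}(x_0)=0$ for $k=0,\dots,m-1$, because $g = q f$ with $q$ nonvanishing and smooth (apply the Leibniz rule inductively, exactly as in \Cref{exm:algETsystem}). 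Hence $f$ and $g$ have the same zeros with the same multiplicities on $[a,b]$.

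**Conclusion and the main obstacle.** By the fundamental theorem of algebra, the nonzero polynomial $g$ of degree $\le n$ has at most $n$ zeros in $\rset$ counting multiplicities, hence at most $n$ in $[a,b]$. Therefore $f$ has at most $n$ zeros on $[a,b]$ counting algebraic multiplicities, which is precisely the defining condition of an ET-system in \Cref{dfn:etSystem}. The only genuine point requiring care — and the step I would single out as the main obstacle — is the equivalence of multiplicities under multiplication by the smooth nonvanishing factor $q$: one must verify that $g=qf$ and $q\ne 0$ on $[a,b]$ force $f$ and $g$ to vanish to exactly the same order at each point. This follows from the Leibniz formula $g^{(k)} = \sum_{j=0}^k \binom{k}{j} q^{(j)} f^{(k-j)}$ by a straightforward induction: if $f,\dots,f^{(m-1)}$ vanish at $x_0$ then so do $g,\dots,g^{(m-1)}$, and conversely since $q(x_0)\neq 0$ allows one to solve back for $f^{(k)}(x_0)$ in terms of lower-order data. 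Everything else is routine, and since the same clearing-denominators argument already appears in the cited proof of \Cref{exm:fraction}, the corollary is essentially an observation that that argument respects multiplicities.
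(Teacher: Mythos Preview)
Your argument is correct and is precisely the approach the paper takes: clear denominators by multiplying with $q(x)=\prod_{i=0}^n(x+\alpha_i)$, observe that this nonvanishing smooth factor preserves zero multiplicities on $[a,b]$, and invoke the fundamental theorem of algebra on the resulting polynomial of degree at most $n$. You have in fact supplied more detail than the paper does (the Leibniz-rule justification for preservation of multiplicities), but the strategy is identical.
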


From \Cref{thm:genMPetSystem} and \Cref{cor:fractionET} we therefore get the following.

\begin{cor}
Let $n\in\nset$, let $\alpha_0 < \alpha_1 < \dots < \alpha_n$ be reals, and let
\[\cF=\left\{f_0(x) = \frac{1}{x+\alpha_0}, f_1(x) = \frac{1}{x+\alpha_1},\dots, f_n(x) = \frac{1}{x+\alpha_n}\right\}\]
on $[a,b]$ with $-\alpha_0 < a < b$.
Then the following are equivalent:
\begin{enumerate}[(i)]
\item $L:\lin\cF\to\rset$ is a $[a,b]$-moment functional.

\item $L(f)\geq 0$ holds for all
\begin{align*}
f(x) &:= \begin{cases}
\det \begin{pmatrix}
f_0 & f_1 & f_2 & \dots & f_{2m-1} & f_{2m}\\
x & (x_1 & x_1) & \dots & (x_m & x_m)
\end{pmatrix}\\
-\det\begin{pmatrix}
f_0 & f_1 & f_2 & f_3 & \dots & f_{2m-2} & f_{2m-1} & f_{2m}\\
x & a & (x_1 & x_1) & \dots & (x_{m-1} & x_{m-1}) & b
\end{pmatrix}
\end{cases} \tag*{\textit{if} $n = 2m$}
\intertext{and}
f(x) &:=\begin{cases}
-\det\begin{pmatrix}
f_0 & f_1 & f_2 & f_3 & \dots & f_{2m} & f_{2m+1}\\
x & a & (x_1 & x_1) & \dots & (x_m & x_m)
\end{pmatrix}\\
\det\begin{pmatrix}
f_0 & f_1 & f_2 & \dots & f_{2m-1} & f_{2m} & f_{2m+1}\\
x & (x_1 & x_1) & \dots & (x_m & x_m) & b
\end{pmatrix}
\end{cases} \tag*{\textit{if} $n=2m+1$}
\end{align*}
and all $x_1,\dots,x_m$ with $a < x_1 < \dots < x_m < b$.
\end{enumerate}
\end{cor}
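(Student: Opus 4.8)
The plan is to recognize this final corollary as an immediate specialization of \Cref{thm:genMPetSystem} to the concrete ET-system of partial fractions, so the proof will be a single sentence invoking two earlier results. The essential observation is that the statement combines (a) the abstract moment characterization for ET-systems on $[a,b]$ from \Cref{thm:genMPetSystem} and (b) the fact, recorded in \Cref{cor:fractionET}, that $\cF = \{1/(x+\alpha_i)\}_{i=0}^n$ is genuinely an ET-system on $[a,b]$ whenever $-\alpha_0 < a < b$. Once both of these are in hand, the displayed families of test polynomials $f$ in condition (ii) are exactly the determinantal generators produced by \Cref{thm:genMPetSystem} with $f_i(x) = 1/(x+\alpha_i)$ substituted in, so there is nothing further to compute.

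Concretely, I would first invoke \Cref{cor:fractionET} to certify that $\cF$ is an ET-system on $[a,b]$, which is the only hypothesis that \Cref{thm:genMPetSystem} requires of the system. I would then apply \Cref{thm:genMPetSystem} verbatim: its equivalence of (i) ``$L$ is a moment functional'' and (ii) ``$L(f)\geq 0$ on the listed determinantal generators'' transfers directly, since the generators in the statement of \Cref{thm:genMPetSystem} are written abstractly in terms of $f_0,\dots,f_n$ and the present corollary simply displays them after the substitution $f_i = 1/(x+\alpha_i)$. The splitting into the even case $n=2m$ and the odd case $n=2m+1$, together with the bracket notation $(x_i\ x_i)$ for the double-zero insertions, is inherited unchanged from \Cref{thm:genMPetSystem}, so the matching is purely notational.

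There is essentially no obstacle here beyond bookkeeping: the genuine mathematical content was already established upstream (the Brouwer fixed-point construction inside \Cref{thm:karlin}, the closedness/adaptedness argument in \Cref{thm:genMPtsystemAB}, and the Wronskian/multiplicity analysis behind ET-systems). The only point that deserves a word of care is that \Cref{thm:genMPetSystem} is stated for an ET-system of \emph{continuous} functions on the \emph{compact} interval $[a,b]$; since $-\alpha_0 < a$ guarantees that each $x+\alpha_i > 0$ on $[a,b]$, every $f_i = 1/(x+\alpha_i)$ is continuous (indeed $\cat^\infty$) on $[a,b]$, so the hypotheses are met and the compactness argument ensuring the moment cone is closed goes through. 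Thus the proof reduces to the single line:

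\begin{proof}
By \Cref{cor:fractionET} the family $\cF$ is an ET-system on $[a,b]$, and since $-\alpha_0 < a$ each $f_i(x) = 1/(x+\alpha_i)$ is continuous on the compact interval $[a,b]$. The equivalence of (i) and (ii) is therefore \Cref{thm:genMPetSystem} applied to $\cF$, with the displayed test functions being the determinantal generators of \Cref{thm:genMPetSystem} after substituting $f_i(x) = 1/(x+\alpha_i)$.
\end{proof}
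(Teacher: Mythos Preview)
Your proposal is correct and matches the paper's approach exactly: the paper derives this corollary in a single line from \Cref{thm:genMPetSystem} together with \Cref{cor:fractionET}, precisely as you do. Your added remark about continuity on $[a,b]$ when $-\alpha_0 < a$ is harmless extra detail that the paper leaves implicit.
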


In a similar way many other T-system moment problems can be proven from \Cref{thm:genMPtsystemAB}.

\motto{The rest is silence.\\ \medskip
\ \hspace{1cm} \normalfont{William Shakespeare: Hamlet (Act 5, Scene 2)}\index{Shakespeare, W.}}

\chapter{Polynomials of Best Approximation and Optimization over Linear Functionals}
%%%%%%%%%%%%%%%%%%%%%%%%%%%%%%%%%%%%%%%%%%%%%%%%%%%%%%%%%%%%%%%%%%%%%%%%%%%%%%%%%
\label{ch:best}

This last chapter is devoted to best approximation polynomials and optimization over linear functionals.

We started in \Cref{ch:measures} with moments and moment functionals, went to the theory of T-systems in \Cref{part:tSystems}, proved Karlin's Theorems in \Cref{part:karlinPosNonNeg}, and applied them to algebraic polynomials in \Cref{part:algPos}.
Now we finish our lecture by closing the circle.
We apply the previous results to best approximation in \Cref{sec:bestApprox} and to optimization over linear (moment) functionals in \Cref{sec:momOpt}.

\section{Polynomials of Best Approximation}
\label{sec:bestApprox}

A classical question is:
\begin{quote}
How to approximate a given function $f\in\cat([a,b],\rset)$ in the $\sup$-norm by a finite linear combination $\sum_{i=0}^n a_i f_i$ of some given $f_0,\dots,f_n\in\cat([a,b],\rset)$?
\end{quote}

\begin{dfn}
Let $n\in\nset_0$, let $f,f_0,\dots,f_n\in\cat([a,b],\rset)$, and let $\cF:=\{f_i\}_{i=0}^n$.
The polynomial $\underline{f}\in\lin\cF$ which solves
\begin{equation}\label{eq:tApprox}
\min_{a_0,\dots,a_n} \left\|\; f - \sum_{i=0}^n a_i f_i \;\right\|_\infty
\end{equation}
is called the \emph{polynomial of best approximation}.\index{polynomial!best approximation}\index{best approximation!polynomial}\index{approximation!best!polynomial}
\end{dfn}

Approximations (\ref{eq:tApprox}) with the $\sup$-norm are called \emph{Tchebycheff approximations}.\index{Tchebycheff approximation}\index{approximation!Tchebycheff}

The connection between polynomials of best approximation and T-systems is revealed in the following result.

\begin{thm}[see \cite{haar18}, \cite{bernstein26}; or e.g.\ {\cite[p.\ 74, §48]{achieser56}}, {\cite[p.\ 280, Thm.\ 1.1]{karlinStuddenTSystemsBook}}]\label{thm:bestApprox1}
Let $n\in\nset_0$, let $a,b\in\rset$ with $a<b$, and let $\cF:=\{f_i\}_{i=0}^n\subseteq\cat([a,b],\rset)$ be a family of continuous functions.
The following hold:
\begin{enumerate}[(i)]
\item The following are equivalent:
\begin{enumerate}[(a)]
\item The polynomial minimizing
\begin{equation}\label{eq:minInfimum}
\min_{a_0,\dots,a_n} \left\|\; f - \sum_{i=0}^n a_i f_i \;\right\|_\infty
\end{equation}
is uniquely determined for every $f\in\cat([a,b],\rset)$.

\item The family $\cF$ is a continuous T-system on $[a,b]$.
\end{enumerate}

\item If $\cF$ is a T-system then for each $f\in\cat([a,b],\rset)$ the unique polynomial
\[\underline{f} = \sum_{i=0}^n \underline{a}_i f_i\]
minimizing (\ref{eq:minInfimum}) is characterized by the property that there exist $n+2$ points% $\{x_i\}_{i=1}^{n+2}$ with
\[a\leq\quad x_1 < x_2 < \dots < x_{n+2} \quad\leq b\]
such that
\begin{equation*}
(-1)^i\cdot\delta\cdot (f(x_i) - \underline{f}(x_i)) = \max_{a\leq x\leq b} \left|\, f(x) - \underline{f}(x)\,\right|
\end{equation*}
holds for all $i=1,2,\dots,n+2$ with $\delta = +1$ or $-1$.
\end{enumerate}
\end{thm}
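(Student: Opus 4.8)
The plan is to treat existence, the equioscillation characterization, and uniqueness in turn, and then deduce part (i).

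First I would dispatch existence of a minimizer in (ii). The map $(a_0,\dots,a_n)\mapsto \|f-\sum a_i f_i\|_\infty$ is continuous and convex on $\rset^{n+1}$, and since $\cF$ is a T-system the functions $f_0,\dots,f_n$ are linearly independent by \Cref{cor:uniqueDeter}, so $\|\sum a_i f_i\|_\infty$ is a genuine norm on the coefficient space. Consequently the sublevel sets of (\ref{eq:minInfimum}) are bounded and closed, hence compact, and a minimizer $\underline f=\sum \underline a_i f_i$ exists. Write $E:=\|f-\underline f\|_\infty$.

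The heart of the argument is the equioscillation characterization, which I would prove in two directions. For \emph{sufficiency} (equioscillation at $n+2$ points forces optimality and uniqueness simultaneously): suppose $g\in\lin\cF$ satisfies $\|f-g\|_\infty\le E$ and let $x_1<\dots<x_{n+2}$ be the alternation points, where $(f-\underline f)(x_i)=(-1)^i\delta E$. Then
\[
(\underline f-g)(x_i)=(f-g)(x_i)-(f-\underline f)(x_i),
\]
and since $|(f-g)(x_i)|\le E$ while $(f-\underline f)(x_i)=\pm E$ with strictly alternating sign, the quantity $(\underline f-g)(x_i)$ has the sign of $-(f-\underline f)(x_i)$, so it alternates sign across the $n+2$ points. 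By the intermediate value theorem $\underline f-g\in\lin\cF$ then has at least $n+1$ zeros, forcing $\underline f-g=0$ by \Cref{lem:determinant} and \Cref{dfn:tSystem}. This shows at once that $\underline f$ is a best approximation and that any equally good competitor coincides with it, giving uniqueness. For \emph{necessity} I would argue by contraposition: if $\underline f$ is optimal but the extremal set $\{x:|f(x)-\underline f(x)|=E\}$ admits fewer than $n+1$ sign alternations, partition $[a,b]$ by the alternation pattern into at most $n+1$ blocks and choose separating points; then invoke the interpolation property of T-systems (the unnamed interpolation theorem after \Cref{cor:uniqueDeter}) to produce a correction $p\in\lin\cF$ whose sign on each block matches $f-\underline f$ there and which stays small. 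A standard continuity/compactness estimate shows $\underline f+\varepsilon p$ strictly lowers the error for small $\varepsilon>0$, contradicting optimality.

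The main obstacle is precisely this necessity direction: building the correction polynomial $p$ with prescribed signs on the blocks and controlling it uniformly so that $\|f-\underline f-\varepsilon p\|_\infty<E$. The delicate point is that the extremal set need not be finite, so one must first extract a finite set of extremal points realizing the maximal alternation count and then cover $[a,b]$ by finitely many compact pieces on which $|f-\underline f|$ is bounded away from $E$ except near those points. Finally, for part (i): the direction (b)$\Rightarrow$(a) is immediate from the uniqueness just established. For (a)$\Rightarrow$(b) I would argue contrapositively — if $\cF$ is not a T-system, some nontrivial $p\in\lin\cF$ has $n+1$ zeros $z_0<\dots<z_n$ by \Cref{lem:determinant}, and one can construct a continuous $f$ whose best approximation is non-unique (e.g.\ engineering $f-\underline f$ to be supported off the zeros of $p$ so that both $\underline f$ and $\underline f+\varepsilon p$ attain the minimal deviation), contradicting (a).
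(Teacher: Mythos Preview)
Your overall architecture matches the paper's: prove the alternation characterization (ii) by a correction-polynomial argument, derive (b)$\Rightarrow$(a) from it, and handle (a)$\Rightarrow$(b) by contraposition. The necessity step is essentially the paper's argument; note only that the paper builds the correction via \Cref{thm:zeros3} (a polynomial with \emph{prescribed nodal zeros} and no others on $(a,b)$), which directly forces the sign on each block, whereas pure interpolation at one point per block does not by itself prevent extra sign changes inside a block when $k-1<n$. This is exactly the ``main obstacle'' you flag, and the fix is to use \Cref{thm:zeros3}/\Cref{rem:kreinError} rather than the interpolation theorem.

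The genuine gap is in your $\neg$(b)$\Rightarrow\neg$(a) sketch. Saying you will ``engineer $f-\underline f$ to be supported off the zeros of $p$'' so that $\underline f$ and $\underline f+\varepsilon p$ are both optimal does not work as stated: you still have to show that \emph{no} $q\in\lin\cF$ beats $\underline f$, and support considerations alone do not give that. The paper's construction uses a second, independent consequence of $\det(f_i(x_j))=0$: there are scalars $c_0,\dots,c_n$ (not all zero) with $\sum_i c_i q(x_i)=0$ for \emph{every} $q\in\lin\cF$. Choosing $f$ so that $f(x_i)$ has the sign of $c_i$ (with $|f|\le 1$) turns this linear relation into an obstruction: if $\|f-q\|_\infty<1$, then each $q(x_i)$ would have the sign of $c_i$, forcing $\sum c_i q(x_i)\neq 0$, a contradiction. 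Only after this lower bound is in place does the nontrivial $\tilde p$ with $\tilde p(x_i)=0$ furnish a one-parameter family of minimizers (the paper takes $f=g(1-|\lambda\tilde p|)$ and shows $\delta\lambda\tilde p$ all achieve deviation $1$). Your sketch is missing this dual linear relation, which is the key idea in this direction.
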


Statement (i) of the previous theorem is essentially due to A.\ Haar\index{Haar, A.} \cite{haar18}.
The following proof significantly differs from Haar's proof and it is more general.
It is taken from \cite[pp.\ 284--286]{karlinStuddenTSystemsBook}, see also \cite[pp.\ 75--76]{achieser56}.

\begin{proof}
(a) $\Rightarrow$ (b): We prove $\neg$(b) $\Rightarrow \neg$(a).

Assume $\cF$ is not a T-system. There exist $n+1$ distinct points $a\leq x_0 < x_1 < \dots < x_n\leq b$ such that
\begin{equation}\label{eq:zeroDetLast}
\det \left(f_i(x_j)\right)_{i,j=0}^{n} = 0.
\end{equation}
Hence, there exist real coefficients $c_0,c_1,\dots,c_n$  with $\sum_{i=0}^n c_i^2 > 0$ with $\sum_{i=0}^n c_i f_j(x_i) = 0$ for all $j=0,\dots,n$.
That implies
\begin{equation}\label{eq:polyZeroLast}
\sum_{i=0}^n c_i p(x_i) = 0
\end{equation}
for all $p\in\lin\cF$.

The relation (\ref{eq:zeroDetLast}) also implies the existence of a non-trivial polynomial $\tilde{p} = \sum_{i=0}^n b_i f_i\in\lin\cF$ which vanishes at the points $x_0,x_1,\dots,x_n$.

Let $g\in\cat([a,b],\rset)$ be such that $\|g\|_\infty\leq 1$ and
\[g(x_i) = \frac{c_i}{|c_i|}\]
for all $i=0,1,\dots,n$ with $c_i\neq 0$.

Let $\lambda>0$ be such that $\|\lambda \tilde{p}\|_\infty<1$ then $f := g\cdot (1 - |\lambda \tilde{p}|)$ has the same signs at the points $x_i$ with $c_i\neq 0$ as $g$.

We will now construct an infinite number of polynomials of the same minimum deviation from $f$.

If
\[\left\|\;f  - \sum_{i=0}^n a_i f_i \;\right\|_\infty < 1\]
for some $a_0,a_1,\dots,a_n$ then
\[-1 < g(x_j)\cdot (1- |\lambda \tilde{p}(x_j)|) - \sum_{i=0}^n a_i f_i(x_j) < 1\]
for all $j=0,1,\dots,n$ which reduces to
\[-1 < g(x_j) - \sum_{i=0}^n a_i f_i(x_j) < 1\]
for all $j=0,1,\dots,n$.
Hence, if $c_j\neq 0$ the value of $\sum_{i=0}^n a_i f_i(x_j)$ has the sign of the $c_j$ so that $\sum_{j=0}^n c_j \sum_{i=0}^n a_i f_i(x_j) \neq 0$ which contradicts (\ref{eq:polyZeroLast}).
Therefore,
\[\left\|\; f - \sum_{i=0}^n a_i f_i \;\right\|_\infty \geq 1.\]
If now $|\delta|<1$ then
\begin{align*}
|f(x) - \delta\lambda\tilde{p}(x)|
&\leq |f(x)| + |\delta\lambda\tilde{p}(x)|\\
&\leq |g(x)|\cdot (1- |\lambda\tilde{p}(x)) + |\delta\lambda\tilde{p}(x)|\\
&\leq 1 - (1-|\delta|)\cdot |\lambda\tilde{p}(x)|\\
&\leq 1
\end{align*}
so that $\delta\lambda\tilde{p}$ minimizes the distance to $f$ independent of $\delta\in (-1,1)$.
Hence, we proved $\neg$(a).

We now prove (ii) which will also establish (b) \folgt\ (a).
Let $\cF$ be a T-system.
At least one minimal polynomial exists since $\lin\cF$ is finite dimensional.
Assume $g = \sum_{i=0}^n b_i f_i$ fulfills
\[\|f - g \|_\infty = m = \min_{a_0,\dots,a_n} \left\|\; f - \sum_{i=0}^n a_i f_i \;\right\|_\infty\]
and $f-g$ takes on the values $\pm m$ alternatively at only $k\leq n+1$ points.
We suppose for definiteness that $f-g$ assumes the values $+m$ before it takes the value $-m$.
In this case there exist $k-1$ points
\[a\leq\quad y_1<\dots<y_{k-1} \quad\leq b\]
such that
\[f(y_i) - g(y_i) = 0\]
for all $i=1,2,\dots,k-1$ and for some $d>0$ we have
\begin{align*}
m &\;\geq\; f-g \;\geq\; -m+d &&\text{on}\ [a,y_1]\cup [y_2,y_3]\cup\dots\\
m-d &\;\geq\; f-g \;\geq\; -m &&\text{on}\ [y_1,y_2]\cup [y_3,y_4]\cup.
\end{align*}
By \Cref{thm:zeros3} and \Cref{rem:kreinError} there exists a polynomial $h$ whose \emph{only} zeros on the \emph{open} interval $(a,b)$ are the nodal zeros $y_1,\dots,y_{k-1}$ and additionally $h\leq 0$ on $[a,y_1]$.
Let $\delta>0$ be such that $|\delta h|\leq d/2$ then
\begin{equation}\label{eq:boundabs}
| f-g + \delta h| < m
\end{equation}
on $(a,b)$.

Equality in (\ref{eq:boundabs}) is possible at the end point $a$ only if $f(a)-g(a) = m$ and $h(a)=0$ and at $b$ only if $|f(a) - g(b)| = m$ and $h(b) = 0$.
To repair the situation at the points $a$ and $b$ let $\tilde{h}$ be such that $\tilde{h}\cdot (f-g)>0$ at $a$ and $b$.
Then for sufficient small $\eta$ we have
\[|f-g+\delta h - \eta\tilde{h}|<m\]
on $[a,b]$.
Hence, by continuity on the compact interval $[a,b]$ we have
\[ \min_{a_0,\dots,a_n} \left\|\; f - \sum_{i=0}^n a_i f_i \;\right\|_\infty < m\]
contradicting the fact that $m$ is the minimum deviation.
That proves (ii) including uniqueness in (i).
\end{proof}

In the previous theorem we have seen the close connection between the best approximation polynomials from the minimum problem (\ref{eq:minInfimum}) and T-systems.
The next result shows that the connection is even closer, i.e., the solution of (\ref{eq:minInfimum}) is connected to the \Cref{thm:g1g2g}.

\begin{thm}[see e.g.\ {\cite[p.\ 283, Thm.\ 2.1]{karlinStuddenTSystemsBook}}]
Let $n\in\nset_0$ and let $f_0,\dots,f_n,f\in\cat([a,b],\rset)$ be such that $\{f_0,\dots,f_n\}$ and $\{f_0,\dots,f_n,f\}$ are continuous T-systems on $[a,b]$ with $a<b$.
Let
\[f^* = c\cdot f + \sum_{i=0}^n c_i\cdot f_i\]
be the $f^*$ from the \Cref{thm:g1g2g} with $g_1 = -1$ and $g_2 = 1$, i.e., $f^*$ is uniquely characterized by the following conditions:
\begin{enumerate}[(a)]
\item $-1 \leq f^* \leq 1$ on $[a,b]$, and

\item there exist $n+2$ points $x_1 < x_2 < \dots < x_{n+2}$ in $[a,b]$ such that
\[f^*(x_i) = (-1)^{n+1-i}\]
for all $i=1,\dots,n+2$.
\end{enumerate}
Then $c\neq 0$ and the polynomial
\[\underline{f} := -\frac{1}{c}\cdot\sum_{i=0}^n c_i f_i\]
is the unique minimizer of
\[d = \min_{a_0,\dots,a_n} \left\|\; f - \sum_{i=0}^n a_i f_i \;\right\|_\infty\]
and the minimum deviation is $d = |c|^{-1}$.
\end{thm}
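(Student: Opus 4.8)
The plan is to exploit an algebraic identity relating $f^*$ to the approximation error $f-\underline f$, and then to recognize the alternation property furnished by the Snake Theorem as exactly the Chebyshev equioscillation criterion of \Cref{thm:bestApprox1}(ii). First I would record that $\{f_0,\dots,f_n,f\}$ is a continuous T-system of order $n+1$ on $[a,b]$, and that the zero polynomial $0\in\lin\{f_0,\dots,f_n,f\}$ satisfies $-1<0<1$, so the hypothesis of \Cref{thm:g1g2g} (with $g_1=-1$, $g_2=1$, $g=0$) is met. The Snake Theorem then produces $f^*$ with $-1\le f^*\le 1$ and $n+2$ alternation points $x_1<\dots<x_{n+2}$ at which $f^*(x_i)=(-1)^{n+1-i}$. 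Writing $f^*=c\,f+\sum_{i=0}^n c_i f_i$ and dividing by $c$ (legitimate once $c\neq 0$ is secured) yields the key identity $f-\underline f=\tfrac{1}{c}f^*$.

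The first substantive step is to prove $c\neq 0$. Suppose $c=0$; then $f^*=\sum_{i=0}^n c_i f_i$ lies in $\lin\{f_0,\dots,f_n\}$ and is nonzero, since it attains the values $\pm 1$. Because $f^*(x_i)$ and $f^*(x_{i+1})$ have opposite signs for each $i=1,\dots,n+1$, the intermediate value theorem yields a zero of $f^*$ in each of the $n+1$ disjoint open intervals $(x_i,x_{i+1})$, so $f^*$ has at least $n+1$ zeros. This contradicts the fact that $\{f_0,\dots,f_n\}$ is a T-system of order $n$, in which a nonzero polynomial has at most $n$ zeros (\Cref{dfn:tSystem}). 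Hence $c\neq 0$, and the identity $f-\underline f=\tfrac{1}{c}f^*$ holds genuinely.

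Finally I would read off all three conclusions from this identity. Since $|f^*|\le 1$ on $[a,b]$ with equality at each $x_i$, one gets $|f-\underline f|=|c|^{-1}|f^*|\le |c|^{-1}$ everywhere and $|f(x_i)-\underline f(x_i)|=|c|^{-1}$, so $\|f-\underline f\|_\infty=|c|^{-1}$. Moreover $f(x_i)-\underline f(x_i)=c^{-1}(-1)^{n+1-i}$, whence $(-1)^i\bigl(f(x_i)-\underline f(x_i)\bigr)=c^{-1}(-1)^{n+1}$ is constant in $i$; taking the fixed sign $\delta=\sign\!\bigl(c^{-1}(-1)^{n+1}\bigr)$ gives $(-1)^i\,\delta\,\bigl(f(x_i)-\underline f(x_i)\bigr)=|c|^{-1}=\max_{[a,b]}|f-\underline f|$ for all $i=1,\dots,n+2$. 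This is precisely the equioscillation characterization in \Cref{thm:bestApprox1}(ii) applied to the order-$n$ T-system $\{f_0,\dots,f_n\}$, which therefore identifies $\underline f=-\tfrac{1}{c}\sum_{i=0}^n c_i f_i$ as the unique minimizer of $\min_{a_0,\dots,a_n}\|f-\sum_{i=0}^n a_i f_i\|_\infty$, with minimum deviation $d=|c|^{-1}$.

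The only genuinely delicate point is the argument that $c\neq 0$, which is where the interplay between the two nested T-systems is used; the remaining computations are routine bookkeeping. The conceptual heart of the statement is the observation that the $(n+2)$-fold alternation guaranteed by the Snake Theorem for the \emph{enlarged} system $\{f_0,\dots,f_n,f\}$ of order $n+1$ is exactly the $(n+2)$-point equioscillation certifying best approximation within the \emph{original} system $\{f_0,\dots,f_n\}$ of order $n$, the coefficient $c$ of $f$ recording the reciprocal of the optimal deviation.
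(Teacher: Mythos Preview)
Your proof is correct and follows essentially the same approach as the paper: both show $c\neq 0$ by observing that otherwise $\sum_i c_i f_i$ would be a nonzero element of the order-$n$ T-system with $n+1$ sign changes (hence $n+1$ zeros), and then both invoke the equioscillation characterization of \Cref{thm:bestApprox1}(ii) to identify $\underline f$ as the unique minimizer with deviation $|c|^{-1}$. Your write-up is more explicit in spelling out the identity $f-\underline f=\tfrac{1}{c}f^*$ and in verifying the hypotheses of the Snake Theorem, but the underlying argument is the same.
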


The proof is taken from \cite[pp.\ 283--284]{karlinStuddenTSystemsBook}.

\begin{proof}
The coefficient $c$ can not be zero.
Otherwise the polynomial $\sum_{i=0}^n c_i f_i$ vanishes at $n+1$ points in the T-system $\{f_0,\dots,f_n\}$ by (b) and would therefore be equal to zero by \Cref{lem:determinant}.

From (a) we get
\[ \left\|\; f - \left( -\frac{1}{d}\sum_{i=0}^n c_i f_i\right) \;\right\|_\infty\leq \frac{1}{|d|}.\]
Since $\underline{f}$ fulfills (b) we get from \Cref{thm:bestApprox1} (ii) uniqueness of $\underline{f}$ and $d = |c|^{-1}$.
\end{proof}

Finding approximations is also done with respect to the $\cL^p$-norms
\begin{equation}\label{eq:bestLp}
\min_{a_0,\dots,a_n}\int \left|\; f(x) - \sum_{i=0}^n a_i f_i(x)  \;\right|^p~\diff\mu(x)
\end{equation}
with a fixed measure $\mu$ and $p\geq 1$.
For $p=2$ this leads to the well-studied \emph{orthogonal polynomials},\index{polynomial!orthogonal}\index{orthogonal!polynomial} a special branch of the theory of moments.

For $p=1$ in (\ref{eq:bestLp}) this also is connected to T-systems.
D.\ Jackson\index{Jackson, D.} \cite{jackson24} showed that if $\cF = \{f_0,\dots,f_n\}$ is a T-system then the best approximation of (\ref{eq:bestLp}) is unique, see also \cite[p.\ 77]{achieser56}.

\section{Optimization over Linear Functionals}
\label{sec:momOpt}

In optimization\index{optimization!over linear functionals} one often encounters the problem of having only a linear functional $L:\cV\to\rset$, e.g.\ a moment functional, and one wants to minimize $L(f)$ over $\cV_+$.
By removing the dependency on the scaling of $f$ we get the following result.

\begin{thm}[see e.g.\ {\cite[p.\ 312, Thm.\ 9.1]{karlinStuddenTSystemsBook}}]
Let $n\in\nset_0$, let $\cF = \{f_i\}_{i=0}^n$ be an ET-system on $[a,b]$ with $a<b$, and let $L,S:\lin\cF\to\rset$ be two linear functionals such that $S$ is strictly positive on $(\lin\cF)_+$, i.e., $S(f)>0$ for all $f\in\lin\cF\setminus\{0\}$ with $f\geq 0$.
Then
\begin{equation}\label{eq:minmax}
\min_{f\in(\lin\cF)_+\setminus\{0\}} \frac{L(f)}{S(f)} \qquad\text{and}\qquad \max_{f\in(\lin\cF)_+\setminus\{0\}} \frac{L(f)}{S(f)}
\end{equation}
are attained at non-negative polynomials possessing $n$ zeros counting multiplicities.
\end{thm}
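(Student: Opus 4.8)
The plan is to exploit the degree-zero homogeneity of the ratio $L(f)/S(f)$ to convert both extremal problems into the optimization of a single \emph{linear} functional over a compact convex base of the cone $(\lin\cF)_+$, and then to identify the extremizers as generators of extreme rays of that cone, which I will show must carry exactly $n$ zeros.

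First I would normalize by $S(f)=1$. Since $S$ is strictly positive on $(\lin\cF)_+$, it is strictly positive on the compact set $(\lin\cF)^e_+$ (see \Cref{dfn:linFsubsets}), hence bounded below there by some $m>0$. Consequently the base
\[
B := \{ f\in (\lin\cF)_+ \,|\, S(f) = 1\}
\]
is closed, bounded (so compact), and nonempty, the last point using the existence of a strictly positive $e\in\lin\cF$ exactly as invoked in the proof of \Cref{thm:genMPtsystemAB}. Because $L(f)/S(f)$ is invariant under positive scaling, the two problems in (\ref{eq:minmax}) are equivalent to minimizing, respectively maximizing, the continuous linear functional $L$ over $B$. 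Both extrema are attained by compactness, and a linear functional attains its minimum and its maximum over a compact convex set at extreme points (Bauer's maximum principle, applied to $L$ and to $-L$). Thus it suffices to prove that every extreme point of $B$ is a non-negative polynomial with exactly $n$ zeros counting multiplicities.

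Next I would use the standard fact that the extreme points of the base $B$ are precisely the generators of the extreme rays of the cone $(\lin\cF)_+$. Let $f_0\in B$ generate an extreme ray, and suppose toward a contradiction that $f_0$ has $r<n$ zeros counting algebraic multiplicities. Then \Cref{thm:karlinNonNegab} applies and produces a decomposition $f_0 = f_* + f^*$ with $f_*,f^*\in(\lin\cF)_+$ whose zero sets strictly interlace after the zeros of $f_0$ are removed. In particular $f_*$ possesses $n$ zeros while $f_0$ has only $r<n$, so $f_*$ is not a scalar multiple of $f_0$; hence $f_0$ is a sum of two non-negative polynomials not both proportional to it, contradicting that $\{\rho f_0 \,|\, \rho\geq 0\}$ is an extreme ray. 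Therefore every extreme generator carries at least $n$ zeros, and since any nonzero polynomial of an ET-system has at most $n$ zeros counting multiplicities (\Cref{dfn:etSystem}), it has exactly $n$.

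The main obstacle is precisely this passage from ``extremizer'' to ``extreme-ray generator with the maximal zero count'': the decisive mechanism is that Karlin's Nichtnegativstellensatz splits every under-determined (i.e.\ $r<n$) non-negative polynomial into two genuinely distinct non-negative pieces, which is exactly a witness to the failure of extremality. The routine points to verify with care are that $f_*$ (and $f^*$) are truly non-proportional to $f_0$, which follows from the strict interlacing together with the zero-count mismatch $n\neq r$, and that $B$ is compact and nonempty, both of which rest on the strict positivity of $S$ and on the existence of a strictly positive polynomial in $\lin\cF$.
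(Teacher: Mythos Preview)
Your proof is correct and uses the same core mechanism as the paper: normalize to the base $\{S(f)=1\}$ and invoke Karlin's Nichtnegativstellensatz (\Cref{thm:karlinNonNegab}) to split any $f$ with fewer than $n$ zeros into two non-proportional non-negative pieces. The only difference is cosmetic: where you pass through Bauer's maximum principle and the identification of extreme points with extreme-ray generators, the paper argues directly that $f=\alpha\cdot\frac{f_*}{\alpha}+\beta\cdot\frac{f^*}{\beta}$ with $\alpha=S(f_*)$, $\beta=S(f^*)>0$ gives $L(f)\leq\max\bigl(L(f_*)/\alpha,\,L(f^*)/\beta\bigr)$, so the extremum is also attained at one of the two pieces, each carrying $n$ zeros.
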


The proof is taken from \cite[p.\ 312]{karlinStuddenTSystemsBook}.

\begin{proof}
Since $\lin\cF$ is finite dimensional the values in (\ref{eq:minmax}) are attained.

It is sufficient to prove the statement for the maximum.
But maximizing $\frac{L(f)}{S(f)}$ over $(\lin\cF)_+\setminus\{0\}$ is equivalent to maximize $L(f)$ over $f\in (\lin\cF)_+\setminus\{0\}$ with $S(f)=1$.

Let $f\geq 0$ be such that $S(f) = 1$ and suppose $f$ has at most $n-1$ zeros counting multiplicities.
Then by \Cref{thm:karlinNonNegab} there is a unique decomposition $f = f_* + f^*$ where $f_*$ and $f^*$ differ, are non-negative, and both have $n$ zeros counting multiplicities.
Set $\alpha := S(f_*)$ and $\beta := S(f^*)$.
Then $\alpha,\beta>0$ since $S$ is strictly positive and $\alpha + \beta = S(f_*) + S(f^*) = S(f) = 1$.
Then
\[f = \alpha\cdot \frac{f_*}{\alpha} + \beta\cdot \frac{f^*}{\beta}\]
and by linearity
\[L(f) \leq \max\left( \frac{L(f_*)}{\alpha}, \frac{L(f^*)}{\beta}\right)\]
which proves the statement.
\end{proof}

More results on best approximation and optimization over linear functionals can already be found in \cite{bernstein26}, \cite{achieser56}, and \cite{karlinStuddenTSystemsBook}.
Let alone the enormous literature after that.

%\section*{Problems}%%%%%%%%%%%%%%%%%%%%%
%\addcontentsline{toc}{section}{Problems}

%\begin{prob}\label{prob:}

%\end{prob}

%\include{part}
%\include{ch-lin-fun-repr-meas}
%\include{chapter-adapted-spaces}
%\include{appendix}
\appendix
\part*{Appendices}
\addcontentsline{toc}{part}{Appendices}
%\motto{All's well that ends well}

%\chapter{}

\backmatter%%%%%%%%%%%%%%%%%%%%%%%%%%%%%%%%%%%%%%%%%%%%%%%%%%%%%%%

\Extrachap{Solutions}
%\chapter*{Solutions}
%\addcontentsline{toc}{chapter}{Solutions}
\setcounter{chapter}{19}
\renewcommand{\thechapter}{\Alph{chapter}}

%\section*{Problems of \Cref{ch:prel}}
%%%%%%%%%%%%%%%%%%%%%%%%%%%%%%%%%%%%%

%\begin{sol}{prob:rimakaRepr}

%\end{sol}

\section*{Problems of \Cref{ch:measures}}
%%%%%%%%%%%%%%%%%%%%%%%%%%%%%%%%%%%%%%%%%

\begin{sol}{prob:determinacy}
The \Cref{thm:stoneWeierstrass} states that for a compact set $K\subset\rset^n$ the polynomials $\rset[x_1,\dots,x_n]$ are dense in $\cat(K,\rset)$ with respect to the $\sup$-norm.
Let $A\in\fB(K)$ be a Borel measurable set, let $\varepsilon>0$, and let $\mu_1$ and $\mu_2$ be two representing measures of $L$.
Set $A_\delta := (A + B_\delta(0))\cap K$ for all $\delta>0$.
Then for any $\varepsilon>0$ there exists a $\delta=\delta(\varepsilon) >0$ such that $\mu_1(A_\delta\setminus A),\mu_2(A_\delta\setminus A)<\varepsilon$.

By \Cref{lem:ury} there exists a $\varphi_\varepsilon\in\cat(K,[0,1])$ such that
\[\varphi_\varepsilon(x) = \begin{cases} 1 & \text{for}\ x\in A\\
0 & \text{for}\ x\in K\setminus A_\varepsilon
\end{cases}\]
and since $\rset[x_1,\dots,x_n]$ is dense in $\cat(K,\rset)$ there exists a family of polynomials $(p_i^\varepsilon)_{i\in\nset}\subseteq\rset[x_1,\dots,x_n]$ such that
\[\|p_i^\varepsilon - \varphi_\varepsilon\|_\infty \xrightarrow{i\to\infty} 0
\quad\text{and hence}\quad
\int_K p_i^\varepsilon(x)~\diff\mu_j(x)\xrightarrow{i\to\infty} \int_K \varphi_\varepsilon(x)~\diff\mu_j(x)\]
for $j=1,2$.
Then we have
\begin{align*}
\mu_1(A) &= \lim_{\varepsilon\searrow 0}\int_K \varphi_\varepsilon(x)~\diff\mu_1(x)\\
&= \lim_{\varepsilon\searrow 0} \lim_{i\to\infty} \int_K p_i^{\varepsilon}(x)~\diff\mu_1(x)\\
&= \lim_{\varepsilon\searrow 0} \lim_{i\to\infty} L(p_i^\varepsilon)\\
&= \lim_{\varepsilon\searrow 0} \lim_{i\to\infty} \int_K p_i^{\varepsilon}(x)~\diff\mu_2(x)\\
&= \lim_{\varepsilon\searrow 0}\int_K \varphi_\varepsilon(x)~\diff\mu_2(x)
\qquad = \mu_2(A).
\end{align*}
Since $A\in\fB(K)$ was arbitrary we have $\mu_1=\mu_2$, i.e., $L$ has a unique representing measure and is therefore determinate.
\end{sol}

\begin{sol}{prob:convexML}
\textbf{Proof of \Cref{cor:convexML}}\\
Let $\mu_1,\mu_2\in\cM(L)$ and $\lambda\in [0,1]$. Then
\begin{align*}
\int p(x)~\diff[\lambda\mu_1 + (1-\lambda)\mu_2](x) &= \lambda \int p(x)~\diff\mu_1(x) + (1-\lambda) \int p(x)~\diff\mu_2(x)\\
&= \lambda L(p) + (1-\lambda) L(p)\\
&= L(p)
\end{align*}
and hence $\lambda\mu_1 + (1-\lambda)\mu_2\in\cM(L)$ which proves convexity.
\end{sol}

\begin{sol}{prob:indeter}
\textbf{Proof of \Cref{cor:indeter}}\\
Let $\mu_0,\mu_1\in\cM(L)$ with $\mu_0\neq \mu_1$, i.e., there exists a $A\in\fA$ such that $\mu_0(A)\neq \mu_1(A)$ and without loss of generality we have $\mu_0(A) < \mu_1(A)$.
Hence, for all $\lambda\in [0,1]$ we set $\mu_\lambda := \lambda\mu_1 + (1-\lambda)\mu_0$ and we have
\[\mu_{\lambda_0}(A) < \mu_{\lambda_1}(A)\]
for all $0\leq \lambda_0 < \lambda_1 \leq 1$ which proves that $\mu_{\lambda_0}\neq \mu_{\lambda_1}$ for all $\lambda_0\neq \lambda_1$.
Hence, we have at least $|[0,1]| = |\rset|$ many representing measures for $L$.
\end{sol}

\section*{Problems of \Cref{ch:choquet}}
%%%%%%%%%%%%%%%%%%%%%%%%%%%%%%%%%%%%%%%%

\begin{sol}{prob:linearCone}
\textbf{Proof of \Cref{lem:linearCone}}\\
The proof is taken from \cite[Vol.\ 2, p.\ 268]{choquet69}.

(i) \folgt\ (ii): If $F+C$ is a vector space then $-(F+C) = (F+C)$ and $-(F+C) = F - C$ since $-F = F$.

(ii) \folgt\ (iii): If $x\in F+C$, i.e.,  $x = y' + z$ for some $y'\in F$ and $z\in C$, then $x\geq y'$.
Similarly, if $x = y-w$ then $y\geq x$.

(iii) \folgt\ (i): First note that $F+C$ is a convex cone. So if suffices to show that $F + C = -(F+C)$, i.e., $F+C = F-C$.
But if $x\in F+ C$ and $x\leq y$ then $x = y - z$ for some $z\in C$, or $x\in F-C$.
Similarly, if $x\in F-C$ and $x = y' + w$ for some $w\in C$ then $x\in F + C$.
\end{sol}

\begin{sol}{prob:adaptedLem}
\textbf{Proof of \Cref{lem:adapted}}

(i) \folgt\ (ii): Set $K_\varepsilon = \supp h_\varepsilon$.

(ii) \folgt\ (iii): Chose by \Cref{lem:ury} a $\eta_\varepsilon\in\cat_c(\cX,\rset)$ with $\eta_\varepsilon|_{K_\varepsilon} = 1$.

(iii) \folgt\ (i): Take $h_\varepsilon = \eta_\varepsilon\cdot g\in\cat_c(\cX,\rset)$.
\end{sol}

\begin{sol}{prob:compactAdapted}
Since $\cX$ is compact for every $f\in E$ we have $m_f:= \min_{x\in\cX} f(x) > -\infty$ and $M_f := \max_{x\in\cX} f(x) < \infty$, especially for $f= e>0$ we have $m_e>0$. Then for every $f$ there exists a $d_f>0$ such that $f = (f+d_f e) - d_f e$ such that $f+d_f e, d_f e\in E_+$ and hence $E = E_+ - E_+$ proving (i) in \Cref{dfn:adaptedSpace}.

Since $e>0$ we also have (ii) in \Cref{dfn:adaptedSpace}.

For (iii) in \Cref{dfn:adaptedSpace} it is sufficient to note that $\cX$ is compact, i.e., for every $g$ there is a $c_g>0$ such that $g\leq c_g e$.
\end{sol}

\begin{sol}{prob:adaptedPolynomials}
Let $E = \rset[x_1,\dots,x_n]$ on $\cX$.
Then (i) $E = E_+ - E_+$ follows immediately from the fact that for every $f\in E$ there is a $g\in E_+$ such that $f = f+g - g$ with $f+g\in E_+$.

For (ii) we take $f = 1 >0$ on $\cX$.

For (iii) take the $g$ from (i).
\end{sol}

\begin{sol}{prob:adaptedPolynomials2}
Since $E$ is finite dimensional we can equip it with a norm, e.g.\ the $l^2$-norm in the coefficients of $f$.
Assume $\cX$ is not compact then there exists an unbounded sequence $(x_i)_{i\in\nset_0}$ and a $f\in E$ with $\|f\|\leq 1$ such that $(f(x_i))_{i\in\nset_0}$ grows faster than any other $(g(x_i))_{i\in\nset_0}$.
Hence, $f$ can not be dominated by any $g$.
\end{sol}

\begin{sol}{prob:adaptedCompact}
\textbf{Proof of \Cref{lem:adaptedCompact}}\\
Since $K=\supp g$ is compact and $E$ is an adapted space, i.e., there exists a $f\in E_+$ with $f>0$ we have that $\min_{x\in K} f(x) >0$ and hence there exists a $c>0$ such that $cf > g$ on $K$ and hence on all $\cX$.
\end{sol}

\section*{Problems of \Cref{ch:classical}}
%%%%%%%%%%%%%%%%%%%%%%%%%%%%%%%%%%%%%%%%%%

\begin{sol}{prob:stieltjes}
\textbf{Proof of \Cref{thm:stieltjesMP}}\\
We have (iii) \gdw\ (iv) \gdw\ (v) by the definition of the Hankel matrix and also (i) \folgt\ (ii) \folgt\ (iii).
Additionally, we have (iii) \folgt\ (ii) by (\ref{eq:pos0infty2}) since $L(p) = L(f^2) + L(xg^2) \geq 0$.
At last (ii) \folgt\ (i) holds by the \Cref{thm:basicrepresentation} since $\rset[x]$ on $[0,\infty)$ is an adapted space.
\end{sol}

\begin{sol}{prob:hamburger}
\textbf{Proof of \Cref{thm:hamburgerMP}}\\
We have (i) \folgt\ (ii) \folgt\ (iii) and additionally (iii) \gdw\ (iv) \gdw\ (v) by the definition of the Hankel matrix.
The implication (iii) \folgt\ (ii) follows from \Cref{eq:posR} by $L(p) = L(f^2 + g^2) \geq 0$.
At last (ii) \folgt\ (i) holds by the \Cref{thm:basicrepresentation} since $\rset[x]$ on $\rset$ is an adapted space.
\end{sol}

\begin{sol}{prob:hausdorff}
\textbf{Proof of \Cref{thm:hausdorffMP}}\\
We have (i) \folgt\ (ii) \folgt\ (iii) and additionally (iii) \gdw\ (iv) \gdw\ (v) by the definition of the Hankel matrix.
The implication (iii) \folgt\ (ii) follows from (\ref{eq:posab}) since it is sufficient to look only at $f(x)^2 + xg(x)^2 + (1-x) h(x)^2$.
At last (ii) \folgt\ (i) holds by the \Cref{thm:basicrepresentation} since $\rset[x]$ on $[0,1]$ is an adapted space.
\end{sol}

\begin{sol}{prob:haviland}
\textbf{Proof of \Cref{thm:haviland}}\\
Since (i) \folgt\ (ii) is clear it is sufficient to show (ii) \folgt\ (i).
But since $E=\rset[x_1,\dots,x_n]$ on $K$, is an adapted space (see Problem \ref{prob:adaptedPolynomials}) and since $E_+ = \pos(K)$ by definition the \Cref{thm:basicrepresentation} applies and gives the assertion.
\end{sol}

\begin{sol}{prob:bernstein}
\textbf{Proof of \Cref{cor:bernstein}}\\
We have that (ii) $\Rightarrow$ (i) is clear since $x^k\cdot (1-x)^l >0$ on $(0,1)$ and at least one $c_{k',l'}>0$.
It remains to prove (i) $\Rightarrow$ (ii).

Let $f\in\rset[x]\setminus\{0\}$ with $f>0$ on $(0,1)$ then we can write $f$ as
\[f(x) = x^p\cdot (1-x)^q\cdot \tilde{f}(x)\]
with $\tilde{f}\in\rset[x]$, $\tilde{f}>0$ on $[0,1]$, and $p,q\in\nset_0$, i.e., by the fundamental theorem of algebra we can factor out the zeros at $x=0$ and at $x=1$.
Applying \Cref{thm:bernstein} (ii) to $\tilde{f}$ then gives the assertion.
\end{sol}

\begin{sol}{prob:boundaryCone}
\textbf{Proof of \Cref{lem:boundaryCone}}\\
Since the moment cone $\cS_\cF$ and the hyperplane $H$ are convex we have that $\cS_\cF\cap H$ is a convex cone, i.e., it is a moment cone and there exists a family $\cG\subsetneq\lin\cF$ of $m<n$ elements which spans $\cS_\cF\cap H$.
It is sufficient to show that $\cG$ lives on $(\cY,\fA|_\cY)$ for some $\cY\subseteq\cX$.

For the hyperplane $H$ there exists a function $h\in\lin\cF$ such that $L_s(h)\geq 0$ for all $s\in\cS_\cF$.
Note, that $\cN=\cap_{k\in\nset} \{x\in\cX \,|\, f_1(x)^2 + \dots + f_n(x)^2 \geq k\}$ has measure zero for any representing measure $\mu_s$ on $\cX$ of a moment sequence $s\in\cS_\cF$ since the moments are finite, i.e., the $f_i$ are $\mu_s$-integrable.
Without loss of generality we can therefore work on $\cX\setminus\cN$.
Hence, all $\delta_x$ with $x\in\cX\setminus\cN$ are moment measures and $L_s(h)\geq 0$ implies $h\geq 0$ on $\cX\setminus\cN$.

Then $s\in\cS_\cF\cap H \gdw L_s(h) = 0$ implies that all representing measures $\mu$ of all $s\in\cS_\cF\cap H$ have the support in $\cY := \{x\in\cX\setminus\cN \,|\, h(x) = 0\}$.
\end{sol}

\begin{sol}{prob:richterFromRosen}
Let $\cF = \{f_1,\dots,f_n\}$ be measurable functions on $(\cX,\fA)$ which are not necessarily bounded.
Set
\[I := \bigcap_{k\in\nset} \{x\in\cX \,|\, |f_i(x)|>k\ \text{for all}\ i=1,\dots,n\}.\]
Then $I$ is measurable.
Let $s$ be a moment sequence with representing measure $\mu$.
Since all $f_i$ are $\mu$-measurable we have $\mu(I) = 0$.
Therefore, by working on $\cX\setminus I$ we can assume without loss of generality that $|f_i(x)|<\infty$ for all $x\in\cX$.

Define $\cG = \{g_1,\dots,g_n\}$ with $g_i := \frac{g_i}{f}$ and $f := 1 + \sum_{i=1}^n f_i^2$.

At first we note that from
\begin{equation}\label{eq:richterRosenbloom}
\int_\cX f_i(x)~\diff\mu(x) = \int_\cX g_i(x)\cdot f(x)~\diff\mu = \int_\cX g_i(x)~\diff\nu(x),
\end{equation}
we have that every sequence $s=(s_1,\dots,s_n)$ is a moment sequence with respect to $\cG$ if and only if it is moment sequence with respect to $\cF$.

Since all $g_i$ are bounded we have by Rosenbloom's Theorem that there is a $k$-atomic representing measure $\nu = \sum_{i=1}^k c_i\cdot\delta_{x_i}$ which represents the moment sequence $s$.
Then by (\ref{eq:richterRosenbloom}) we find that $\mu = \sum_{i=1}^k c_i\cdot f(x_i)^{-1}\cdot\delta_{x_i}$ is a representing measure of $s$ with respect to $\cF$ which proves the statement.
\end{sol}

\section*{Problems of \Cref{ch:tsystems}}
%%%%%%%%%%%%%%%%%%%%%%%%%%%%%%%%%%%%%%%%%

\begin{sol}{prob:restriction}
\textbf{Proof of \Cref{cor:restriction}}\\
Let $f\in\lin\cF$. Then $f$ has at most $n$ zeros in $\cX$ and hence $f|_\cY$ has at most $n$ zeros in $\cY\subset\cX$.
Since for any $g\in\lin\cG$ there is a $f\in\lin\cF$ such that $g = f|_\cY$ we have the assertion.
\end{sol}

\begin{sol}{prob:transf}
\textbf{Proof of \Cref{cor:transf}}\\
Let $w_0,\dots,w_n\in\cW$ be pairwise distinct.
Since $g$ is injective we have that also $g(w_0),\dots,g(w_n)\in\cX$ are pairwise distinct.
Hence,
\[\det\begin{pmatrix}
g_0 & g_1 & \dots & g_n\\ w_0 & w_1 & \dots & w_n
\end{pmatrix} =
\det\begin{pmatrix}
f_0 & f_1 & \dots & f_n\\ g(w_0) & g(w_1) & \dots & g(w_n)
\end{pmatrix} \neq 0\]
and the statement follows from \Cref{lem:determinant}.
\end{sol}

\begin{sol}{prob:scaling}
\textbf{Proof of \Cref{cor:scaling}}\\
Let $x_0,\dots,x_n\in\cX$ be pairwise distinct.
Then
\[\det\begin{pmatrix}
g_0 & g_1 & \dots & g_n\\ x_0 & x_1 & \dots & x_n
\end{pmatrix} =
\det\begin{pmatrix}
f_0 & f_1 & \dots & f_n\\ x_0 & x_1 & \dots & x_n
\end{pmatrix}\cdot g(x_1)\cdots g(x_n) \neq 0\]
and the statement follows from \Cref{lem:determinant}.
\end{sol}

\begin{sol}{prob:4.1}
\textbf{Proof of \Cref{cor:uniqueDeter}}
\begin{enumerate}[(i)]
\item Assume $f_0,\dots,f_n$ are linearly dependent, i.e., there are $a_0,\dots,a_n\in\rset$ not all zero such that $a_0 f_0 + \dots + a_n f_n$ is the zero polynomial. Hence, $f$ has at least $n+1$ zeros. But since $\cF$ is a T-system this is a contradiction.

\item Let $x_0,\dots,x_n\in\cX$ be $n+1$ pairwise distinct points. Then by \Cref{dfn:kreinMatrix} we have
\[\begin{pmatrix} f(x_0)\\ \vdots\\ f(x_n)\end{pmatrix} =
\underbrace{\begin{pmatrix} f_0 & \dots & f_n\\ x_0 & \dots & x_n\end{pmatrix}}_{=:M}\cdot
\begin{pmatrix} a_0\\ \vdots\\ a_n\end{pmatrix}\]
and since $\cF$ is a T-system we have that $M$ has full rank by \Cref{lem:determinant}. Hence, the coefficients $a_0,\dots,a_n$ are unique.
\end{enumerate}
\end{sol}

\begin{sol}{prob:fraction}
\textbf{Proof of \Cref{exm:fraction}}\\
Set $f_i(x) := (x+\alpha_i)^{-1}$ and $g(x) = (x+\alpha_0)\cdots (x+\alpha_n)$.
Then $g>0$ on $[a,b]$ since $-\alpha_0 < a < b$.
Hence, $\cF$ is a T-system on $[a,b]$ if and only if $\cG = \{g_i:= g\cdot f_i\}_{i=0}^n$ is a T-system on $[a,b]$ by \Cref{cor:scaling}.

We have $g_i(x) = (x+\alpha_0)\cdots (x+\alpha_{i-1})\cdot(x+\alpha_{i+1})\cdots (x+\alpha_n)$ and $\deg g_i = n$.
It is now sufficient to show that $\cG$ is a T-system on $\rset$ by \Cref{cor:restriction} since then it will also be a T-system on $[a,b]$.

Since $g_i(\alpha_j) = 0$ for all $i\neq j$ we have that the $g_i$ are linearly independent.
Hence, $\lin\cG = \rset[x]_{\leq n}$.
But since $\{x^i\}_{i=0}^n$ is a T-system so is $\cG$ since every non-trivial $f\in\lin\cG = \rset[x]_{\leq n}$ has at most $n$ zeros.

In summary, we have that $\{x^i\}_{i=0}^n$ is a T-system on $\rset$ \folgt\ $\cG$ on $\rset$ is a T-system \folgt\ $\cG$ on $[a,b]$ is a T-system \folgt\ $\cF$ on $[a,b]$ is a T-system.
\end{sol}

\begin{sol}{prob:4.2}
To the points $x_0,\dots,x_{k+l}\in [a,b]$ add pairwise distinct points $x_{k+l+1},\dots,x_n\in [a,b]\setminus\{x_0,\dots,x_{k+l}$. Then the matrix
\begin{equation}\label{eq:solMatrix}
\begin{pmatrix}
f_0(x_0) & \dots & f_n(x_0)\\
\vdots & & \vdots\\
f_0(x_n) & \dots & f_n(x_n)
\end{pmatrix}
\end{equation}
has full rank since $\cF$ is a T-system, i.e., every vector, especially
\[(m,\dots,m,-m,\dots,-m,0,\dots,0, *,\dots,*)^T\in\rset^{n+1}\]
is in its image.
But the matrix
\[\begin{pmatrix}
f_0(x_1) & \dots & f_n(x_1)\\
\vdots & & \vdots\\
f_0(x_{k+l}) & \dots & f_n(x_{k+l})
\end{pmatrix}\]
in (\ref{eq:bigGleichungssystem}) only contains the first $k+l$ rows of (\ref{eq:solMatrix}), i.e., (\ref{eq:bigGleichungssystem}) has at least one solution.
\end{sol}

\begin{sol}{prob:kreinError}
By \Cref{rem:kreinError} only the case $n = 2m + 2p$ and one end point is contained.
But then we can apply \Cref{thm:zeros2} to $\tilde{\cF} = \{f_i\}_{i=0}^{n-1}$ which ensures by the same arguments in \Cref{rem:kreinError} that $x_1,\dots,x_p$ are the only zeros of some $f\geq 0$.
\end{sol}

\section*{Problems of \Cref{ch:etsystems}}%%%
%%%%%%%%%%%%%%%%%%%%%%%%%%%%%%%%%%%%%%%%%%%%%

\begin{sol}{prob:etMultiplication}
\textbf{Proof of \Cref{lem:etMultiplication}}\\
Set $g_i := g\cdot f_i$.
Then we have to check that
\[\cW(g_0,\dots,g_k)(x) = \det\begin{pmatrix}
g_0(x) & g_1(x) & \dots & g_n(x)\\
g_0'(x) & g_1'(x) & \dots & g_n'(x)\\
\vdots & \vdots & & \vdots\\
g_0^{(n)}(x) & g_1^{(n)}(x) & \dots & g_n^{(n)}(x)
\end{pmatrix}  \neq 0\]
holds for all $x\in [a,b]$.
Since $g_i = g\cdot f_i$ we apply the product rule and get
\begin{align*}
\cW(g_0,\dots,g_k)(x)
&= g^2\cdot \det\begin{pmatrix}
f_0(x) & f_1(x) & \dots & f_n(x)\\
f_0'(x) & f_1'(x) & \dots & f_n'(x)\\
g_0''(x) & g_1''(x) & \dots & g_n''(x)\\
\vdots & \vdots & & \vdots\\
g_0^{(n)}(x) & g_1^{(n)}(x) & \dots & g_n^{(n)}(x)
\end{pmatrix}
\end{align*}
since in the first line we factored out $g$ and then subtracted $g'$-times the first line from the second, and factored out $g$ from the remaining second line.
For the second derivatives in the third line we have
\[(g\cdot f_i)'' = g''\cdot f_i + 2g'\cdot f_i' + g\cdot f_i''\]
and hence subtracting $g''$-times the first row, $2g'$-times the second row, and finally factoring out $g$ from the remaining third row we get
\begin{align*}
\cW(g_0,\dots,g_k)(x)
&= g^3\cdot \det\begin{pmatrix}
f_0(x) & f_1(x) & \dots & f_n(x)\\
f_0'(x) & f_1'(x) & \dots & f_n'(x)\\
f_0''(x) & f_1''(x) & \dots & f_n''(x)\\
g_0'''(x) & g_1'''(x) & \dots & g_n'''(x)\\
\vdots & \vdots & & \vdots\\
g_0^{(n)}(x) & g_1^{(n)}(x) & \dots & g_n^{(n)}(x)
\end{pmatrix}.
\end{align*}
Proceeding in this manner we arrive at
\begin{align*}
\cW(g_0,\dots,g_k)(x)
&= g^{n+1}\cdot \cW(f_0,\dots,f_n)(x) \neq 0
\end{align*}
for all $x\in [a,b]$ which proves the statement.
\end{sol}

\begin{sol}{prob:wronsTrans}
\textbf{Proof of \Cref{lem:ettrans}}\\
We proceed similar to Problem/Solution \ref{prob:etMultiplication} but now with the rule of differentiation for $f_i\circ g$.
We have
\[(f_i\circ g)' = g'\cdot (f_i'\circ g)\]
and hence
\[\cW(g_0,\dots,g_n) = g'\cdot \det\begin{pmatrix}
f_0\circ g & \dots & f_n\circ g\\
f_0'\circ g & \dots & f_n'\circ g\\
(f_0\circ g)'' & \dots & (f_n\circ g)''\\
\vdots & & \vdots\\
(f_0\cdot g)^{(n)} & \dots & (f_n\circ g)^{(n)}
\end{pmatrix}\]
by factoring out $g'$ from the second row.
Then we have
\begin{align*}
(f_i\circ g)'' &= (g'\cdot (f_i'\circ g))'
= g''\cdot (f_i'\circ g) + (g')^2\cdot (f_i''\circ g),
\end{align*}
i.e., we subtract $g''$-times the second row and factor out $(g')^2$ to get
\[\cW(g_0,\dots,g_n) = (g')^3\cdot \det\begin{pmatrix}
f_0\circ g & \dots & f_n\circ g\\
f_0'\circ g & \dots & f_n'\circ g\\
f_0''\circ g & \dots & f_n''\circ g\\
(f_0\circ g)''' & \dots & (f_n\circ g)'''\\
\vdots & & \vdots\\
(f_0\cdot g)^{(n)} & \dots & (f_n\circ g)^{(n)}
\end{pmatrix}.\]
Proceeding in this manner with
\[(f_i\circ g)^{(k)}=(g')^{(k)}\cdot (f_i^{(k)}\circ g)+{\dots}+ g^{(k)}\cdot (f_i'\circ g)\]
we get
\[\cW(g_0,\dots,g_n) = (g')^{\frac{n(n+1)}{2}}\cdot \cW(f_0,\dots,f_n)\circ g\]
with proves the assertion.
\end{sol}

\begin{sol}{prob:wronskiReduction}
\textbf{Proof of \Cref{lem:wronskiReduction}}\\
Set $\cH = \{h_i\}_{i=0}^n$ with $h_i := \frac{f_i}{f_0}$.
Then by \Cref{lem:etMultiplication} we have
\begin{align*}
\cW(f_0,\dots,f_n) &= f_0^{n+1}\cdot\cW(h_0,\dots,h_n)
\intertext{and since $h_0=1$ we have $h_0' = h_0'' = \dots = 0$ and}
&=f_0^{n+1}\cdot \det \begin{pmatrix}
1 & h_1 & \dots & h_n\\
0 & h_1' & \dots & h_n'\\
\vdots & \vdots & & \vdots\\
0 & h_1^{(n)} & \dots & h_n^{(n)}
\end{pmatrix}
\intertext{which gives by expanding along the first column}
&= f_0^{n+1}\cdot \det \begin{pmatrix}
h_1' & \dots & h_n'\\
\vdots & & \vdots\\
h_1^{(n)} & \dots & h_n^{(n)}
\end{pmatrix}\\
&= f_0^{n+1}\cdot \cW(h_1',\dots,h_n')
\intertext{and with $g_i = h_{i+1}'$ for $i=0,\dots,n-1$ we get}
&= f_0^{n+1}\cdot \cW(g_0,\dots,g_{n-1})
\end{align*}
which proves the statement.
\end{sol}

\begin{sol}{prob:ecRestriction}
%\begin{enumerate}[\bfseries (a)]
\textbf{(a)} Since $\cF$ is an ET-system on $[a,b]$ we have
\[\cW(f_0,\dots,f_n)(x) \neq 0\]
for all $x\in [a,b]$, i.e., also for all $x\in [a',b']\subseteq [a,b]$ and hence it is an ET-system on $[a',b']$.

\noindent
\textbf{(b)} Since $\cF$ is an ECT-system on $[a,b]$ we have
\[\cW(f_0,\dots,f_k)(x) \neq 0\]
for all $x\in [a,b]$ and $k=0,\dots,n$, i.e., also for all $x\in [a',b']\subseteq [a,b]$ and $k=0,\dots,n$ and hence it is an ECT-system on $[a',b']$.
%\end{enumerate}
\end{sol}

\begin{sol}{prob:xf}
\textbf{Proof of \Cref{exm:xf}}\\
We already know that $\{1,x,x^2,\dots,x^k\}$ is an ET-system for any $k=0,1,\dots,n$ since
\[cW(1,x,\dots,x^k)(x) = 1\cdot 1!\cdot{\cdots}\cdot k! > 0.\]
From the Wronskian determinant
\[\cW(1,x,\dots,x^n,f)(x) = 1\cdot 1!\cdot 2!\cdot{\dots}\cdot n!\cdot f^{(n)}(x) >0\]
we then get that $\cF$ is an ECT-system on $[a,b]$ by \Cref{thm:ectWronski}.
\end{sol}

\begin{sol}{prob:etSystExm}
\textbf{Proof of \Cref{exm:etSystems}}\\
By \Cref{lem:ettrans} we only need to prove the statement for one case, say case (b) $\cG = \{e^{\alpha_i x}\}_{i=0}^n$.
Let $k\in \{0,1,\dots,n\}$.
Then
\begin{align*}
\cW(g_0,\dots,g_k) &= \det\begin{pmatrix}
g_0 & g_1 & \dots & g_k\\
g_0' & g_1' & \dots & g_k'\\
\vdots & \vdots & & \vdots\\
g_0^{(k)} & g_1^{(k)} & \dots & g_k^{(k)}
\end{pmatrix}
\intertext{and with $g_i^{(j)} = \alpha_i^j\cdot g_i$ we get}
&= \deg\begin{pmatrix}
g_0 & g_1 & \dots & g_k\\
\alpha_0 g_0 & \alpha_1 g_1 & \dots & \alpha_k g_k\\
\vdots & \vdots & & \vdots\\
\alpha_0^k g_0 & \alpha_1^k g_1 & \dots & \alpha_k^k g_k
\end{pmatrix}
= g_0\cdot g_1 \cdots g_n\cdot\det\begin{pmatrix}
1 & 1 & \dots & 1\\
\alpha_0 & \alpha_1 & \dots & \alpha_k\\
\vdots & \vdots & & \vdots\\
\alpha_0^k & \alpha_1^k & \dots & \alpha_k^k
\end{pmatrix}\\
&= g_0\cdot g_1 \cdots g_k\cdot\prod_{0\leq i<j\leq k} (\alpha_j-\alpha_i) \neq 0
\end{align*}
which proves the statement.
\end{sol}

\begin{sol}{prob:exampleECTpolynomial}
To construct the non-negative polynomial on $[0,\infty)$ with the double zero $x_1 = 1$ and the zero $x_2=2$ with algebraic multiplicity $m_2=4$ we need $7$ monomials.
We chose $f_0(x)=1,f_1(x)=x^2, f_2(x)=x^3,f_3(x)=x^5,f_4(x)=x^8,f_5(x)=x^{11},f_6(x)=x^{13}$ and leave out $x^{42}$.
With (\ref{eq:doublezeroDfn2}) we get
\begin{align*}
f(x) &= \det\left(\begin{array}{c|cccccc}
f_0 &\, f_1 & f_2 & f_3 & f_4 & f_5 & f_6\\
x &\, 1 & 1 & 2 & 2 & 2 & 2
\end{array}\right)\\
&= \det \begin{pmatrix}
f_0(x) & f_1(x) & f_2(x) & f_3(x) & f_4(x) & f_5(x) & f_6(x)\\
f_0(x_1) & f_1(x_1) & f_2(x_1) & f_3(x_1) & f_4(x_1) & f_5(x_1) & f_6(x_1)\\
f_0'(x_1) & f_1'(x_1) & f_2'(x_1) & f_3'(x_1) & f_4'(x_1) & f_5'(x_1) & f_6'(x_1)\\
f_0(x_2) & f_1(x_2) & f_2(x_2) & f_3(x_2) & f_4(x_2) & f_5(x_2) & f_6(x_2)\\
f_0'(x_2) & f_1'(x_2) & f_2'(x_2) & f_3'(x_2) & f_4'(x_2) & f_5'(x_2) & f_6'(x_2)\\
f_0''(x_2) & f_1''(x_2) & f_2''(x_2) & f_3''(x_2) & f_4''(x_2) & f_5''(x_2) & f_6''(x_2)\\
f_0'''(x_2) & f_1'''(x_2) & f_2'''(x_2) & f_3'''(x_2) & f_4'''(x_2) & f_5'''(x_2) & f_6'''(x_2)\\
f_0^{(4)}(x_2) & f_1^{(4)}(x_2) & f_2^{(4)}(x_2) & f_3^{(4)}(x_2) & f_4^{(4)}(x_2) & f_5^{(4)}(x_2) & f_6^{(4)}(x_2)
\end{pmatrix}\\
&= \det \begin{pmatrix}
 1 & x^2 & x^3 & x^5 & x^8 & x^{11} & x^{13} \\
 1 & 1 & 1 & 1 & 1 & 1 & 1 \\
 0 & 2 & 3 & 5 & 8 & 11 & 13 \\
 1 & 4 & 8 & 32 & 256 & 2\,048 & 8\,192 \\
 0 & 4 & 12 & 80 & 1\,024 & 11\,264 & 53\,248 \\
 0 & 2 & 12 & 160 & 3\,584 & 56\,320 & 319\,488 \\
 0 & 0 & 6 & 240 & 10\,752 & 253\,440 & 1\,757\,184
\end{pmatrix}\\
f(x) &= 48\cdot \left(14\,980\,788 x^{13} - 184\,325\,420 x^{11} + 2\,421\,354\,616 x^8 - 26\,336\,028\,160 x^5\right.\\
&\qquad\quad\, \left. + 112\,945\,898\,496 x^3 - 112\,347\,781\,120 x^2 + 23\,485\,900\,800\right).
\end{align*}
The function $f$ is shown in \Cref{fig:S54}.

\begin{figure}[htb]\centering
\includegraphics[width=0.9\textwidth]{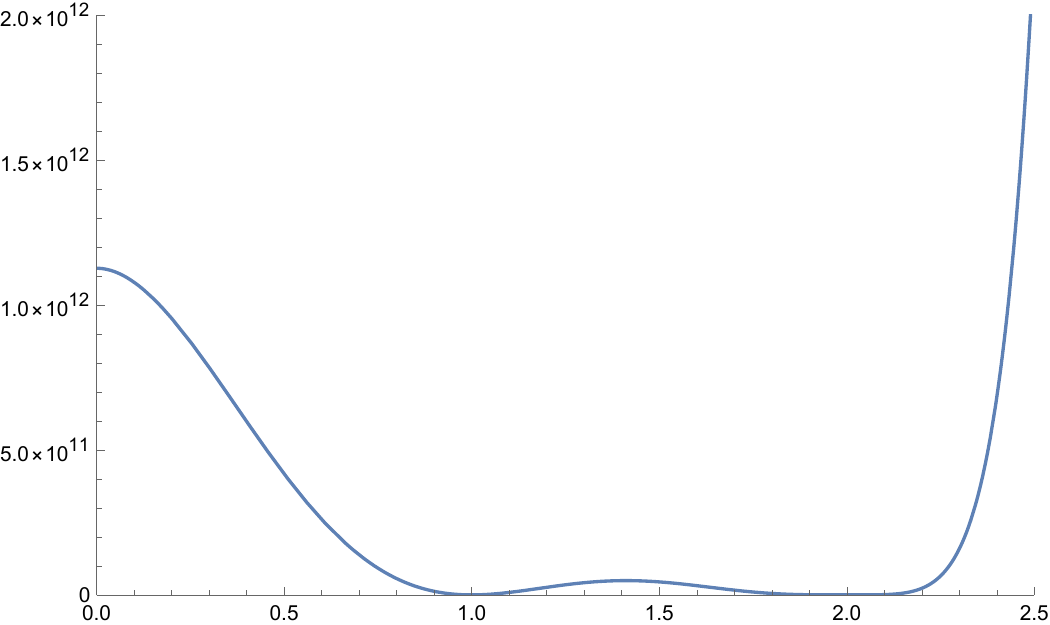}
\caption{The function $f$ from the solution of Problem \ref{prob:exampleECTpolynomial}.\label{fig:S54}}
\end{figure}
This function $f$ we gave here is not unique.
Of course every multiple of $f$ also fulfills the requirements but we also made the restrictions to use all monomials except $x^{42}$.
We get another polynomial when we e.g.\ leave out $x^{13}$ (or any other monomial except $1$) instead of $x^{42}$.
Then any conic linear combination of these functions also fulfills the requirements.

We can not leave out $1$ since any linear combination has the additional zero $x=0$.
\end{sol}

\section*{Problems of \Cref{ch:ETfromT}}%%%
%%%%%%%%%%%%%%%%%%%%%%%%%%%%%%%%%%%%%%%%%%%

\begin{sol}{prob:smoothing}
\textbf{Proof of \Cref{thm:ETfromT}}\\
Since $\cF$ is a continuous T-system we can assume that
\[\det\begin{pmatrix}
f_0 & f_1 & \dots & f_n\\
x_0 & x_1 & \dots & x_n
\end{pmatrix}>0\]
for all $a\leq x_0 < x_1 < \dots < x_n \leq b$.
Since the Gaussian kernel is ETP$_k$ for every $k\in\nset_0$, see \Cref{exm:gaussiankernel},
we have
\[K_\sigma^*\!\begin{pmatrix} x_1 & x_2 & \dots & x_n\\ y_1 & y_2 & \dots & y_n\end{pmatrix}>0\]
for all $x_1 < x_2 < \dots < x_n$ and $y_1 \leq y_2 \leq \dots \leq y_n$ in $\rset$ as well as $\sigma>0$.
Hence, in $\cW(f_{\sigma,0},f_{\sigma,1},\dots,f_{\sigma,n})(x)=$ (\ref{eq:basic2}) in \Cref{lem:basiccomposition} we are integrating over a non-negative functions with respect to the Lebesgue measure $\mu=\lambda$, i.e., $\cW(f_{\sigma,0},f_{\sigma,1},\dots,f_{\sigma,n})(x)>0$ for all $x\in[a,b]$ which proves the statement.
\end{sol}

\section*{Problems of \Cref{ch:karlinPosab}}%%%
%%%%%%%%%%%%%%%%%%%%%%%%%%%%%%%%%%%%%%%%%%%%%%%

\begin{sol}{prob:etNeighborhood}
The family $\cF$ on $[a,b]$ needs for a fixed $f\geq 0$ only be an ET-system around the zeros of $f$ but otherwise the proof of \Cref{thm:karlin} is employed, i.e., there we only need $\cF$ to be a T-system.
\end{sol}

\section*{Problems of \Cref{ch:karlinPos0inftyR}}%%%
%%%%%%%%%%%%%%%%%%%%%%%%%%%%%%%%%%%%%%%%%%%%%%%%%%%%

\begin{sol}{prob:posR}
\textbf{Proof of \Cref{thm:karlinPosR} on $\rset$}\\
By (a) there exists a function $w\in\cat(\rset,\rset)$ such that $w>0$ on $\rset$ and
\[\lim_{x\to\infty} \frac{f_n(x)}{w(x)} = 1.\]
By (b) we define
\[v_i(x) := \begin{cases}
\frac{f_i(x)}{w(x)} & \text{if}\ x\in \rset,\\
\delta_{i,n} & \text{if}\ x=\pm\infty
\end{cases}\]
for all $i=0,1,\dots,n$.
Then by (c) and \Cref{cor:scaling} we have that $\{v_i\}_{i=0}^n$ is a T-system on $[0,\infty]$.
With $t(x) := \tan (\pi x/2)$ we define
\[g_i(x) := v_i\circ t\]
for all $i=0,1,\dots,n$.
Hence, $\cG = \{g_i\}_{i=0}^n$ is a T-system on $[-1,1]$ by \Cref{cor:transf}.
We now apply \Cref{thm:karlinPosab} to $\cG$. Set $g := (\frac{f}{w})\circ t$.

(ii): By \Cref{thm:karlinPosab} on $[a,b]$ there exist points
\[-1=y_0 < x_1 < y_1 < \dots < x_m < y_m = 1\]
and unique functions $g_*$ and $g^*$ such that $g = g_* + g^*$, $g_*,g^*\geq 0$ on $[-1,1]$, $x_1,\dots,x_m$ are the zeros of $g_*$, and $y_0,\dots,y_m$ are the zeros of $g^*$.
Then $f_* := (g_*\circ t^{-1})\cdot w$ and $f^* := (g^*\circ t^{-1})\cdot w$ are the unique components in the decomposition $f = f_* + f^*$.

(i): Since $g^*(y_0) = g^*(y_m) = 0$ we have that $g^*$ contains no $g_{2m}$ and hence the coefficient of $g_{2m}$ in $g_*$ is $a_{2m}$.
\end{sol}

\begin{sol}{prob:nonnegR}
\textbf{Proof of \Cref{thm:karlinNonNegR} on $\rset$}\\
Similar to the proof of \Cref{thm:karlinNonNeg0infty} on $[0,\infty)$ and hence Problem/Solution \ref{prob:posR}.

The conditions (a) -- (c) are such that $\cF$ on $[-\infty,\infty]$, i.e., including $\pm\infty$, is an ET-system.

With the same argument as in the proof of \Cref{thm:karlinPos0infty} we transform $\cF$ on $[-\infty,\infty]$ into $\cG$ on $[-1,1]$ with the $\tan$-function.
Here \Cref{lem:ettrans} ensures that also $\cG$ is an ET-system.

Application of \Cref{thm:karlinNonNegab} on $[-1,1]$ gives the desired decomposition $g = g_* + g^*$ with the observation that $x=\pm 1$ is a zero of at most multiplicity one by (a) and (b).
Backwards transformation into $\cF$ on $[-\infty,\infty]$ resp.\ $[-\infty,\infty)$ then gives the assertion.
\end{sol}

\section*{Problems of \Cref{ch:nonNegAlgPolab}}%%%
%%%%%%%%%%%%%%%%%%%%%%%%%%%%%%%%%%%%%%%%%%%%%%%%%%

\begin{sol}{prob:a0}
\textbf{Proof of \Cref{thm:algNichtNeg0b}}\\
\Cref{thm:algNichtNegab} can in general not be extended to $[0,b]$ since $\{x^{\alpha_0},\dots,x^{\alpha_n}\}$ is not an ET-system.
This fails at $x=0$.
But on $(0,b]$ it is an ET-system.
We can therefore factor out the zeros of $f\geq 0$ at $x=0$
\[f(x) = a_i x^{\alpha_i} + a_{i+1} x^{\alpha_{i+1}} + \dots + a_n x^{\alpha_n}
= x^{\alpha_i}\cdot (\underbrace{a_i + a_{i+1} x^{\alpha_{i+1}-\alpha_i} + \dots + a_n x^{\alpha_n-\alpha_i}}_{=:\tilde{f}(x)})\]
to get some $\tilde{f}$ with $\tilde{f}\geq 0$ on $[0,b]$ and $\tilde{f}(0)>0$.
To $\tilde{f}$ we can then apply \Cref{thm:algNichtNegab} with $a=0$.

In summary, \Cref{thm:algNichtNegab} on $[0,b]$ holds if $f(0)>0$, see also \Cref{thm:sparseNonneg0infty} and \Cref{rem:a0infty} for the corresponding version on $[0,\infty)$.
\end{sol}

\section*{Problems of \Cref{ch:nonNegAlgPol0infty}}%%%
%%%%%%%%%%%%%%%%%%%%%%%%%%%%%%%%%%%%%%%%%%%%%%%%%%%%%%

\begin{sol}{prob:nonneg0infty}
\textbf{Proof of \Cref{thm:sparseNonneg0infty}}\\
To prove \Cref{thm:sparseNonneg0infty} we have to note that $\cF = \{x^{\alpha_i}\}_{i=0}^n$ with $\alpha_0 = 1$ is an ET-system on $(0,\infty)$.
The only difficulty is $x=0$ where $\cF$ fails to be a ET-system.

But looking closely at the proof of \Cref{thm:karlinNonNeg} (see Problem/Solution \ref{prob:etNeighborhood}) the ET-system property is only required in a neighborhood of the zeros of $f$ and otherwise it is the proof of \Cref{thm:karlin} for T-systems.
Since $f(0)>0$ we have no zero at $x=0$ where $\cF$ fails to be a T-system.
In fact, we have $f(x) > 0$ for all $x\in [0,\varepsilon)$ for some $\varepsilon>0$.
Hence, we can apply \Cref{thm:karlinNonNeg0infty} since its proof requires for our $f$ with $f(0)>0$ only that $\cF$ to be an ET-system on $(0,\infty)$ which is fulfilled.
\end{sol}

\begin{sol}{prob:leadingcoeffR}
By expanding
\[\prod_{i=1}^r (x-z_i)^{m_i}\cdot \left(a\cdot\prod_{i=1}^m (x-x_i)^2 + b\cdot\prod_{i=1}^{m-1} (x-y_i)^2 \right)\]
we see that $a\cdot x^{m_1 + \dots + m_r + 2m}$ is the monomial with the highest degree $m_1 + \dots + m_r + 2m = \deg p$ and the coefficient is $a$.
\end{sol}

%\section*{Problems of \Cref{ch:best}}%%%
%%%%%%%%%%%%%%%%%%%%%%%%%%%%%%%%%%%%%%%%

%\begin{sol}{}

%\end{sol}

\newpage
%\bibliographystyle{amsalpha}
%\bibliography{../../bibdata}

\begin{thebibliography}{C{ur}59}

\bibitem[AAM96]{akritas96}
A.~G. Akritas, E.~K. Akritas, and G.~I. Malaschonok, \emph{Various proofs of
  {S}ylvester's (determinant) identity}, Math.\ Comput.\ Simulat. \textbf{42}
  (1996), 585--593.

\bibitem[Ach56]{achieser56}
N.~I. Achieser, \emph{Theory of {A}pproximation}, Frederick Ungar, New York,
  1956.

\bibitem[AK62]{ahiezer62}
N.~I. Ahiezer and M.~Kre\u{\i}n, \emph{Some {Q}uestions in the {T}heory of
  {M}oments}, American Mathematical Society, Providence, Rhode Island, 1962.

\bibitem[Ban29a]{banach29a}
S.~Banach, \emph{Sur les fonctionnelles lin\'eaires {I}}, Stud.\ Math.
  \textbf{1} (1929), 211--216.

\bibitem[Ban29b]{banach29b}
\bysame, \emph{Sur les fonctionnelles lin\'eaires {II}}, Stud.\ Math.
  \textbf{1} (1929), 223--239.

\bibitem[Ber12]{bernstein12}
S.~N. Bernstein, \emph{D\'emonstration du th\'{e}or\`{e}me de {W}eierstrass
  fond\'{e}e sur le calcul des probabilit\'{e}s}, Comm.\ Kharkiv Math.\ Soc.
  \textbf{13} (1912), 1--2.

\bibitem[Ber15]{bernstein15}
\bysame, \emph{Sur la repr\'esentation des polyn\^{o}mes positifs}, Comm.\
  Kharkiv Math.\ Soc., 2nd series \textbf{14} (1915), 227--228.

\bibitem[Ber26]{bernstein26}
\bysame, \emph{Le\c{c}ons sur les {P}ropri\'et\'es extr\'emales et la
  {M}eilleure {A}pproximation des {F}onctions {A}nalytique d'une {V}ariable
  r\`eelle}, Gauthier--Villars, Paris, 1926.

\bibitem[Ber37]{bernstein37}
\bysame, \emph{Extremal properties of polynomials and the best approximation of
  continuous functions of one real variable}, vol.~1, ONTI, Leningrad-Moscow,
  1937.

\bibitem[Boa39a]{boas39}
R.~P. Boas, \emph{On a generalization of the {S}tieltjes moment problem},
  Trans.\ Amer.\ Math.\ Soc. \textbf{46} (1939), 142--150.

\bibitem[Boa39b]{boas39a}
\bysame, \emph{The {S}tieltjes moment problem for functions of bounded
  variation}, Bull.\ Amer.\ Math.\ Soc. \textbf{45} (1939), 399--404.

\bibitem[Bog07]{bogachevMeasureTheory}
V.~I. Bogachev, \emph{Measure {T}heory}, Springer-Verlag, Berlin, 2007.

\bibitem[Bri59]{brickman59}
L.~Brickman, \emph{A new generalization of a problem of {F}.\ {L}uk\'acs},
  Compos.\ Math. \textbf{14} (1959), 195--227.

\bibitem[Bro11]{brouwer11}
L.~E.~J. Brouwer, \emph{\"{U}ber {A}bbildungen von {M}annigfaltigkeiten},
  Math.\ Ann. \textbf{71} (1911), 97--115.

\bibitem[Car11]{carath11}
C.~Carath\'{e}odory, \emph{{\"{U}}ber den {V}ariabilit\"{a}tsbereich der
  {F}ourier'schen {K}onstanten von positiven harmonischen {F}unktionen}, Rend.\
  Circ.\ Mat.\ Palermo \textbf{32} (1911), 193--217.

\bibitem[CdD22]{curtoHeat22}
R.~Curto and P.~J. di~Dio, \emph{Time-dependent moments from the heat equation
  and a transport equation}, Int.\ Math.\ Res.\ Notices (2022),
  {https://doi.org/10.1093/imrn/rnac244}.

\bibitem[Cho69]{choquet69}
G.~Choquet, \emph{Lectures on {A}nalysis}, W. A. Benjamin, Inc., New York,
  Amsterdam, 1969, 3 volumes.

\bibitem[C{ur}59]{curtis59}
P.~C. C{urtis, Jr.}, \emph{$n$-parameter families and best approximation},
  Pacific J.\ Math. \textbf{9} (1959), 1013--1027.

\bibitem[Dan18]{daniell18}
P.~J. Daniell, \emph{A {G}eneral {F}orm of {I}ntegral}, Ann.\ Math. \textbf{19}
  (1918), 279--294.

\bibitem[Dan20]{daniell20}
\bysame, \emph{Further {P}roperties of the {G}eneral {I}ntegral}, Ann.\ Math.
  \textbf{21} (1920), 203--220.

\bibitem[dD23]{didio23sparse}
P.~J. di~Dio, \emph{The {E}arly {H}istory of {M}oment {P}roblems and
  {N}on-{N}egative {P}olynomials with {G}aps: {S}parse {M}oment {P}roblems,
  {S}parse {P}ositivstellens\"atze, and {S}parse {N}ichtnegativstellens\"atze
  from a {T}-{S}ystem {P}oint of {V}iew}, unpublished manuscript,
  arXiv:2309.03864.

\bibitem[dDS22]{didioCone22}
P.~J. di~Dio and K.~Schm\"{u}dgen, \emph{The multidimensional truncated moment
  problem: The moment cone}, J.~Math.\ Anal.\ Appl. \textbf{511} (2022),
  126066, 38 pages.

\bibitem[Dir58]{dirac58}
P.~A.~M. Dirac, \emph{The {P}rinciples of {Q}uantum {M}echanics}, 4th ed.,
  Oxford University Press, Oxford, 1958, with revisions in 1967.

\bibitem[Edw78]{edwards78}
D.~A. Edwards, \emph{On the existence of probablity measures with given
  marginales}, Ann.\ Inst.\ Fourier, Grenoble \textbf{28} (1978), no.~4,
  53--78.

\bibitem[Ein35]{einstein35noether}
A.~Einstein, \emph{The {L}ate {E}mmy {N}oether. {P}rofessor {E}instein {W}rites
  in {A}ppreciation of a {F}ellow-{M}athematician.}, The New York Times (May 4,
  1935), 12.

\bibitem[Eur13]{euripi13}
Euripides, \emph{Euripides {II}: {A}ndromache, {H}ecuba, the {S}uppliant
  {W}omen, {E}lectra}, 3rd ed., The University of Chicago Press, Chicago,
  London, 2013.

\bibitem[Fed69]{federerGeomMeasTheo}
H.~Federer, \emph{Geometric {M}easure {T}heory}, Springer-Verlag, Berlin, 1969.

\bibitem[Fil64]{filsti64}
V.~A. Fil'\v{s}tinki\v{\i}, \emph{The power moment problem on the entire axis
  with a finite number of empty intervals in the spectrum}, Zap.\ Meh.-Mat.\
  Fak.\ i Harkow.\ Mat.\ Ob\v{s}\v{c} \textbf{40} (1964), 186--200, in Russian.

\bibitem[FP01]{floudas01}
C.~F. Floudas and P.~M. Pardalos (eds.), \emph{Encyclopedia of {O}ptimization},
  vol.~1, Kluwer Academic Publishers, Dordrecht, 2001.

\bibitem[Gau15]{gauss15}
C.~F. Gau{\ss}, \emph{Methodus nova integralium valores per approximationem
  inveniendi}, Comm.\ Soc.\ Sci.\ G\"ottingen Math. \textbf{3} (1815), 29--76.

\bibitem[GK02]{gantma50}
V.~Gantmacher and M.~Krein, \emph{Oscillation matrices and kernels and small
  vibrations of mechanical systems}, American Mathematical Society, Providence,
  RI, 2002, originally published in Russian, Moskow, 1950.

\bibitem[GM96]{gasca96totalPositivity}
M.~Gasca and C.~A. Micchelli (eds.), \emph{Total {P}ositivity and {I}ts
  {A}pplications}, Mathematics and {I}ts {A}pplications, no. 359, Springer
  Science+Business Media, Dordrecht, 1996.

\bibitem[Gon00]{goncha00}
V.~L. Goncharov, \emph{The {T}heory of {B}est {A}pproximation of {F}unctions},
  J.~Approx.\ Theo. \textbf{106} (2000), 2--57.

\bibitem[Haa18]{haar18}
A.~Haar, \emph{Die {M}inkowskische {G}eometrie und die {A}nn\"{a}herung an
  stetige {F}unktionen}, Math.\ Ann. \textbf{78} (1918), 294--311.

\bibitem[Hah27]{hahn27}
H.~Hahn, \emph{\"{U}ber lineare {G}leichungssysteme in linearen {R}\"aumen},
  J.~reine angew.\ Math. \textbf{157} (1927), 214--229.

\bibitem[Ham20]{hamburger20}
H.~L. Hamburger, \emph{{\"U}ber eine {E}rweiterung des {S}tieltjesschen
  {M}omentenproblems}, Math.\ Ann. \textbf{81} (1920), 235--319.

\bibitem[Har69]{hardy67}
G.~H. Hardy, \emph{A {M}athematician's {A}pology}, Cambridge University Press,
  London, 1969, reprint of the first edition from 1940, with a forword of C.\
  P.\ Snow.

\bibitem[Hau21a]{hausdo21}
F.~Hausdorff, \emph{Summationsmethoden und {M}omentenfolgen {I}}, Math.~Z.
  \textbf{9} (1921), 74--109.

\bibitem[Hau21b]{hausdo21a}
\bysame, \emph{Summationsmethoden und {M}omentenfolgen {II}}, Math.~Z.
  \textbf{9} (1921), 280--299.

\bibitem[Hav35]{havila35}
E.~K. Haviland, \emph{On the momentum problem for distribution functions in
  more than one dimension}, Amer.\ J.\ Math. \textbf{57} (1935), 562--572.

\bibitem[Hav36]{havila36}
\bysame, \emph{On the momentum problem for distribution functions in more than
  one dimension {II}}, Amer.\ J.\ Math. \textbf{58} (1936), 164--168.

\bibitem[Hel12]{helly12}
E.~Helly, \emph{\"{U}ber lineare {F}unktionaloperatoren}, Sitzungsber.\ Akad.\
  Wiss.\ Wien \textbf{121} (1912), 265--297.

\bibitem[Hil88]{hilbert88}
D.~Hilbert, \emph{\"{U}ber die {D}arstellung definiter {F}ormen als {S}umme von
  {F}ormenquadraten}, Math.\ Ann. \textbf{32} (1888), 342--350.

\bibitem[HS33]{hildeb33}
T.~H. Hildebrandt and I.~J. Schoenberg, \emph{On {L}inear {F}unctional
  {O}perations and the {M}oment {P}roblem for a {F}inite {I}nterval in {O}ne or
  {S}everal {D}imensions}, Ann.\ Math. \textbf{34} (1933), 317--328.

\bibitem[Jac24]{jackson24}
D.~Jackson, \emph{A general class of problems in approximation}, Am.\ J.\ Math.
  \textbf{XLVI} (1924), 215--234.

\bibitem[Kak41]{kakutani41}
S.~Kakutani, \emph{Concrete representation of abstract ({M})-spaces ({A}
  characterization of the space of continuous functions)}, Ann.\ Math. Series 2
  \textbf{42} (1941), 994--1024.

\bibitem[Kar63]{karlin63}
S.~Karlin, \emph{Representation {T}heorems for {P}ositive {F}unctions},
  J.~Math.\ Mech. \textbf{12} (1963), 599--618.

\bibitem[Kar68]{karlin68totalPositivity}
\bysame, \emph{Total {P}ositivity}, vol.~1, Stanford University Press,
  Stanford, California, 1968.

\bibitem[Kem68]{kemper68}
J.~H.~B. Kemperman, \emph{The {G}eneral {M}oment {P}roblem, a {G}eometric
  {A}pproach}, Ann.\ Math.\ Stat. \textbf{39} (1968), 93--122.

\bibitem[KN77]{kreinMarkovMomentProblem}
M.~G. Kre\u{\i}n and A.~A. Nudel'man, \emph{The {M}arkow {M}oment {P}roblem and
  {E}xtremal {P}roblems}, American Mathematical Society, Providence, Rhode
  Island, 1977, translation of the Russian original from 1973.

\bibitem[Kre51]{krein51}
M.~G. Kre\u{\i}n, \emph{The ideas of {P}.\ {L}.\ \v{C}eby\v{s}ev and {A}.\
  {A}.\ {M}arkov in the theory of limiting values of integrals and their
  further development}, Uspehi Mat.\ Nauk \textbf{6} (1951), no.~4 (44),
  3--120, {E}nglish transl.: Amer.\ Math.\ Soc.\ Transl. (2) \textbf{12}
  (1959), 1--121.

\bibitem[KS53]{karlin53}
S.~Karlin and L.~S. Shapley, \emph{Geometry of moment spaces}, Mem.\ Amer.\
  Math.\ Soc. \textbf{12} (1953).

\bibitem[KS66]{karlinStuddenTSystemsBook}
S.~Karlin and W.~J. Studden, \emph{Tchebycheff {S}ystems: {W}ith {A}pplications
  in {A}nalysis and {S}tatistics}, John Wiley \& Sons, Interscience Publishers,
  New York, NY, 1966.

\bibitem[Lau09]{lauren09}
M.~Laurent, \emph{Sums of squares, moment matrices and optimization over
  polynomials}, Emerging application of algebraic geometry, IMA Vol. Math.
  Appl., vol. 149, Springer, New York, 2009, pp.~157--270.

\bibitem[Lei89]{leibniz89FoucherLetter}
G.~W. Leibniz, \emph{Philosophical {E}ssays}, Hackett Publishing Company, Inc.,
  Indianapolis, Indiana, 1989, p.\ 1, letter to Foucher in 1675, in French.

\bibitem[Lew43]{lewin43}
K.~Lewin, \emph{Psychology and the {P}rocess of {G}roup {L}iving}, J.~Soc.\
  Psych. S.P.S.S.I. Bull. \textbf{17} (1943), 113--131.

\bibitem[Lin99]{lindvall99}
T.~Lindvall, \emph{On {S}trassen's {T}heorem on stochastic {D}omination},
  Elect.\ Comm.\ in Probab. \textbf{4} (1999), 51--59.

\bibitem[Lor86]{lorentz86}
G.~G. Lorentz, \emph{Bernstein {P}olynomials}, AMS Chelsea Publishing,
  Providence, Rhode Island, 1986.

\bibitem[Luk18]{lukacs18}
F.~Luk\'acs, \emph{Versch\"arfung des ersten {M}ittelwertsatzes der
  {I}ntegralrechnung f\"ur rationale {P}olynome}, Math.~Z. \textbf{2} (1918),
  295--305.

\bibitem[Mac95]{macdonSymFuncHallPoly}
I.~G. Macdonald, \emph{Symmetric {F}unctions and {H}all {P}olynomials}, 2nd
  ed., Oxford Science Publications, Oxford, 1995.

\bibitem[Mai56]{mairhu56}
J.~C. Mairhuber, \emph{On {H}aar's theorem concering {C}hebyshev approximation
  problems having unique solutions}, Proc.\ Amer.\ Math.\ Soc. \textbf{7}
  (1956), 609--615.

\bibitem[Mar84]{markov84}
A.~A. Markov, \emph{On certain applications of algebraic continued fractions},
  Thesis (in {R}ussian), St.\ Petersburg, 1884, 131~pp.

\bibitem[Mar95]{markov95}
\bysame, \emph{Deux d\'emonstrations de la convergence de certaines fractions
  continues}, Acta Math. \textbf{19} (1895), 93--104.

\bibitem[Mar06]{markov06}
\bysame, \emph{Lectures on functions deviating least from zero}, Mineographed
  Notes, St.\ Petersburg, 1906, in Russian; reprinted in A. A. Markov,
  \textit{Selected papers on continued fractions and the theory of functions
  deviating least from zero}, OGIZ, Moscow, 1948, p.\ 244--291.

\bibitem[Mar38]{markov38}
A.~Markov, \emph{On mean values and exterior densities}, Rec.\ Math.\ Moscou.\
  N.S. \textbf{4} (1938), 165--190.

\bibitem[Mar08]{marshallPosPoly}
M.~Marshall, \emph{Positive {P}olynomials and {S}ums of {S}quares},
  Mathematical Surveys and Monographs, no. 146, American Mathematical Society,
  Rhode Island, 2008.

\bibitem[M{\"u}n14]{muntz14}
C.~H. M{\"u}ntz, \emph{Mathematische {A}bhandlungen {H}ermann {A}mandus
  {S}chwarz zu seinem f\"{u}nfzigj\"ahrigen {D}oktorjubil\"{a}um am 6.\
  {A}ugust 1914 gewidmet von {F}reunden und {S}ch\"{u}lern.}, ch.~\"Uber den
  {A}pproximationssatz von {W}eierstrass, pp.~303--312, Springer, Berlin, 1914.

\bibitem[NW72]{ng72}
S.-B. Ng and S.~Warner, \emph{Continuity of positive and multiplicative
  functionals}, Duke Math. J. \textbf{39} (1972), 281--284.

\bibitem[Phe01]{phelpsLectChoquetTheorem}
R.~R. Phelps, \emph{Lectures on {C}hoquet's {T}heorem}, Springer-Verlag,
  Berlin, Heidelberg, 2001.

\bibitem[Pie07]{pietsch07}
A.~Pietsch, \emph{History of {B}anach {S}paces and {L}inear {O}perators},
  Birkh\"{a}user, Boston, 2007.

\bibitem[P{\'{o}}l38]{polya38}
G.~P{\'{o}}lya, \emph{Sur l'ind\'etermination d'un th\'eoreme voisin du
  probl\'eme des moments}, C.~R.\ Acad.\ Sci.\ Paris \textbf{207} (1938),
  708--711.

\bibitem[P{\'{o}}l45]{polyaHowToSolveIt}
\bysame, \emph{How {T}o {S}olve {I}t. {A} {N}ew {A}spect of {M}athematical
  {M}ethod}, Princeton University Press, Princeton, NJ, 1945, reprinted 2022 in
  {P}inguin {B}ooks.

\bibitem[PS64]{polya64}
G.~P\'olya and G.~Szeg{\"o}, \emph{Aufgaben und {L}ehrs{\"a}tze aus der
  {A}nalysis}, 3rd ed., vol.~2, Springer-Verlag, Berlin, 1964.

\bibitem[PS70]{polya70}
\bysame, \emph{{A}ufgaben und {L}ehrs{\"a}tze aus der {A}nalysis}, 4th ed.,
  vol.~1, Springer-Verlag, Berlin, 1970.

\bibitem[Rad80]{radau80}
R.~Radau, \emph{\'{E}tude sur les formules d'approximation qui servant \`a
  calculer la valeur num\'erique d'une int\'egrale d\'efinie}, J.~Math.\ Pures
  Appl., 3$^\text{me}$ s\'erie \textbf{6} (1880), 283--336.

\bibitem[Ric57]{richte57}
H.~Richter, \emph{Parameterfreie {A}bsch\"atzung und {R}ealisierung von
  {E}rwartungswerten}, Bl.\ Deutsch.\ Ges.\ Versicherungsmath. \textbf{3}
  (1957), 147--161.

\bibitem[Rie09]{riesz09}
F.~Riesz, \emph{Sur les op\'erations fonctionnelles lin\'eaires}, C.~R.\ Acad.\
  Sci.\ Paris \textbf{144} (1909), 1409--1411.

\bibitem[Roc72]{rock72}
R.~T. Rockafellar, \emph{Convex {A}nalysis}, Princeton University Press,
  Princeton, New Jersey, 1972.

\bibitem[Rog58]{rogosi58}
W.~W. Rogosinski, \emph{Moments of non-negative mass}, Proc.\ R.\ Soc.\ Lond.\
  A \textbf{245} (1958), 1--27.

\bibitem[Ros52]{rosenb52}
P.~C. Rosenbloom, \emph{Quelques classes de probl\`{e}me extr\'{e}maux. {II}},
  Bull.\ Soc.\ Math.\ France \textbf{80} (1952), 183--215.

\bibitem[San05]{santa05}
G.~Santayana, \emph{Reason in {C}ommon {S}ense}, The Life of Reason, vol.~1,
  London Constable, London, 1905.

\bibitem[Sch53]{schoen53}
I.~J. Schoenberg, \emph{On smoothing operations and their generating
  functions}, Bull.\ Amer.\ Math.\ Soc. \textbf{59} (1953), 199--230.

\bibitem[Sch78]{schmud78}
K.~Schm\"{u}dgen, \emph{Uniform {T}opologies on {E}nveloping {A}lgebras},
  J.~Funct.\ Anal. \textbf{39} (1978), 57--66.

\bibitem[Sch90]{schmud90UnboundedOperatorAlgebras}
\bysame, \emph{Unbounded {O}perator {A}lgebras and {R}epresentation {T}heory},
  Birkh\"{a}user, Basel, Boston, Berlin, 1990.

\bibitem[Sch91]{schmud91}
\bysame, \emph{The {$K$}-moment problem for closed semi-algebraic sets}, Math.\
  Ann. \textbf{289} (1991), 203--206.

\bibitem[Sch14]{schne14}
R.~Schneider, \emph{{C}onvex {B}odies: {T}he {B}runn--{M}inkowski {T}heory},
  Cambridge University Press, Cambridge, 2014.

\bibitem[Sch17]{schmudMomentBook}
K.~Schm\"{u}dgen, \emph{The {M}oment {P}roblem}, Springer, New York, 2017.

\bibitem[She64]{sherman64}
T.~Sherman, \emph{A {M}oment {P}roblem on $\rset^n$}, Rend.\ Circ.\ Mat.\
  Palermo \textbf{13} (1964), 273--278.

\bibitem[Sie58]{sieklu58}
K.~Sieklucki, \emph{Topological properties of sets admitting the {T}schebycheff
  systems}, Bull.\ Acad.\ Polon.\ Sci.\ S\'er.\ Sci.\ Math.\ Astr.\ Phys.
  \textbf{6} (1958), 603--606.

\bibitem[Ska93]{skala93}
J.~Skala, \emph{The {E}xistence of probability {M}easures with given
  {M}arginals}, Ann.\ Prob. \textbf{21} (1993), 136--142.

\bibitem[ST43]{shohat43}
J.~A. Shohat and J.~D. Tamarkin, \emph{The {P}roblem of {M}oments}, Amer.\
  Math.\ Soc., Providence, R.I., 1943.

\bibitem[Sti94]{stielt94}
T.~J. Stieltjes, \emph{Recherches sur les fractions continues}, Ann.~Fac.\ Sci.
  Toulouse \textbf{8} (1894), no.~4, J1--J122.

\bibitem[Sto37]{stone37}
M.~H. Stone, \emph{Applications of the theory of {B}oolean rings to general
  topology}, Trans.\ Amer.\ Math.\ Soc. \textbf{41} (1937), 375--481.

\bibitem[Sto01]{stochel01}
J.~Stochel, \emph{Solving the truncated moment problem solves the moment
  problem}, Glasgow J.\ Math. \textbf{43} (2001), 335--341.

\bibitem[Str65]{strassen65}
V.~Strassen, \emph{The existence of probability measures with given
  marginales}, Ann.\ Math.\ Stat. \textbf{36} (1965), 423--439.

\bibitem[{\v{S}}ve39]{svenco39}
K.~I. {\v{S}}venco, \emph{On {H}amburger's moment problem with supplementary
  requirement that masses are absent on a given interval}, Commun.\ Soc.\
  Math.\ Kharkov \textbf{16} (1939), 121--128, in Russian.

\bibitem[SW99]{schaefer99}
H.~H. Schaefer and M.~P. Wolff, \emph{Topological {V}ector {S}paces}, Graduate
  Texts in Mathematics, no.~3, Springer Science+Business Media, New York, 1999.

\bibitem[Syl51]{sylves51}
J.~J. Sylvester, \emph{On the relation between the minor determinants of
  linearly equivalent quadratic functions}, Phil.\ Mag. \textbf{1} (1851),
  295--305.

\bibitem[Sz{\'a}16]{szasz16}
O.~Sz{\'a}sz, \emph{\"{U}ber die {A}pproximation stetiger {F}unktionen durch
  lineare {A}ggregate von {P}otenzen}, Math.\ Ann. \textbf{77} (1916),
  482--496.

\bibitem[Tch74]{tcheby74}
P.~L. Tchebycheff, \emph{Sur les values limites des int\'egrales}, J.~Math.\
  Pures Appl. (2) \textbf{19} (1874), 157--160.

\bibitem[Tch57]{tchaka57}
M.~V. Tchakaloff, \emph{Formules de cubatures m\'echaniques a coefficients non
  n\'egatifs}, Bull.\ Sci.\ Math. \textbf{81} (1957), 123--134.

\bibitem[Toe11]{toeplitz11}
O.~Toeplitz, \emph{\"{U}ber allgemeine lineare {M}ittelbildung}, Prace
  Mat.-Fiz. \textbf{22} (1911), 113--119.

\bibitem[Tr{\`e}67]{treves67}
F.~Tr{\`e}ves, \emph{Topological {V}ector {S}paces, {D}istributions and
  {K}ernels}, Academic Press, New York, 1967.

\bibitem[Ury25]{urysohn25}
P.~Urysohn, \emph{\"{U}ber die {M}\"achtigkeit der zusammenh\"angenden
  {M}engen}, Math.\ Ann. \textbf{94} (1925), 262--295.

\bibitem[Wal39]{wald39}
A.~Wald, \emph{Limits of distribution function determined by absolute moment
  and inequalities satisfied by absolute moments}, Trans.\ Amer.\ Math.\ Soc.
  \textbf{46(2)} (1939), 280--306.

\bibitem[Wei85]{weierstrass85}
K.~Weierstrass, \emph{\"{U}ber die analytische {D}arstellbarkeit sogenannter
  willk\"{u}rlicher {F}unctionen reeller {A}rgumente}, Sitzungsber.\
  K\"{o}nigl.\ Preuss.\ Akad.\ Wiss. (9. Juli 1885), 633--639.

\bibitem[Wer07]{wernerFA}
D.~Werner, \emph{Funktionalanalysis}, Springer-Verlag, Berlin, 2007.

\bibitem[Wie88]{wiener88}
N.~Wiener, \emph{The {H}uman {U}se of {H}uman {B}eings: {C}ybernetics and
  {S}ociety}, Da Capo Press, Cambridge, Massachusetts, 1988.

\bibitem[Xia59]{xia59}
D.~Xia, \emph{Positive functional on algebras}, Dokl.\ Akad.\ Nauk SSSR
  \textbf{124} (1959), 1223--1225, Izv.\ \textit{Akad.\ Nauk, SSS4, Ser.\ Mat.}
  \textbf{23} (1959), 509--528; in {R}ussian.

\bibitem[Yos68]{yosida68}
K.~Yosida, \emph{Functional {A}nalysis}, 2nd ed., Springer-Verlag, Berlin,
  1968.

\bibitem[Zei86]{zeidlerNonlinFuncAna1}
E.~Zeidler, \emph{Nonlinear {F}unctional {A}nalysis and its {A}pplications {I}:
  {F}ixed-{P}oint {T}heorems}, Springer, New York, 1986.

\end{thebibliography}

\providecommand{\bysame}{\leavevmode\hbox to3em{\hrulefill}\thinspace}
\providecommand{\MR}{\relax\ifhmode\unskip\space\fi MR }
% \MRhref is called by the amsart/book/proc definition of \MR.
\providecommand{\MRhref}[2]{%
  \href{http://www.ams.org/mathscinet-getitem?mr=#1}{#2}
}
\providecommand{\href}[2]{#2}

\Extrachap{List of Symbols}
%\chapter*{List of Symbols}
%\addcontentsline{toc}{chapter}{List of Symbols}

\section*{Matrices}
%%%%%%%%%%%%%%%%%%%

$\begin{pmatrix}
f_0 & f_1 & \dots & f_n\\
x_1 & x_2 & \dots & x_n
\end{pmatrix}$: \Cref{dfn:kreinMatrix}, eq.\ (\ref{eq:kreinMatrix})\dotfill \pageref{dfn:kreinMatrix}\medskip

\noindent
$\begin{pmatrix}
f_0 & \dots & f_{i-1} & f_i & \dots & f_{i+p} & f_{i+p+1} & \dots & f_n\\
x_0 & \dots & x_{i-1} & (x_i & \dots & x_i) & x_{i+p+1} & \dots & x_n
\end{pmatrix}$: Eq.\ (\ref{eq:doublezeroDfn}) \dotfill \pageref{eq:doublezeroDfn}\medskip

\noindent
$\begin{pmatrix}
f_0 & f_1 & \dots & f_n\\
x_1 & x_2 & \dots & x_n
\end{pmatrix}^*$: Eq.\ (\ref{eq:matrixStar}) \dotfill \pageref{eq:matrixStar}\medskip

\noindent
$\left(\begin{array}{c|cccc}
f_0 & \, f_1 & f_2 & \dots & f_n\\
x & \, x_1 & x_2 & \dots & x_n
\end{array}\right)$: Eq.\ (\ref{eq:doublezeroDfn2}) \dotfill \pageref{eq:doublezeroDfn2}

\section*{Determinants}
%%%%%%%%%%%%%%%%%%%%%%%

\noindent
$K\!\begin{pmatrix} x_0 & x_1 & \dots & x_n\\ y_0 & y_1 & \dots & y_n\end{pmatrix}$: Eq.\ (\ref{eq:kernelDetDef}) \dotfill \pageref{eq:kernelDetDef}\medskip

\noindent
$K^*\!\begin{pmatrix} x_1 & x_2 & \dots & x_i\\ y_1 & y_2 & \dots & y_i\end{pmatrix}$: \Cref{dfn:etpk}, eq.\ (\ref{eq:kstarDet}) \dotfill \pageref{eq:kstarDet}\medskip

\noindent
$\cW(f_0,\dots,f_k)$: \Cref{dfn:wronski}, eq.\ (\ref{eq:wronski}) \dotfill \pageref{eq:wronski}

\section*{Further Mathematical Symbols}
%%%%%%%%%%%%%%%%%%%%%%%%%%%%%%%%%%%%%%%

\noindent
$\leq$ \dotfill \pageref{eq:leq}

\noindent
$B_{f,d}$: Eq.\ (\ref{eq:bernPoly}) \dotfill \pageref{eq:bernPoly}

\noindent
$h_n\nearrow g$ \dotfill \pageref{nearrow}

\noindent
$\fB(\rset^n)$ \dotfill \pageref{eq:borelSigmaAlg}

\noindent
$\cat(\cX,\cY)$ \dotfill \pageref{eq:catXY}

$\cat_c(\cX,\rset)$ \hfill \pageref{eq:catcXR}

\noindent
$\conv A$ \dotfill \pageref{eq:convA}

\noindent
$E_+$ \dotfill \pageref{eq:E+}

\textit{see also} $(\lin\cF)_+$ \hfill \pageref{dfn:linFsubsets}

\noindent
$E^*$ \dotfill \pageref{eq:Estar}

\noindent
$f_+$, $f_-$, $|f|$ \dotfill \pageref{eq:f+-||}

\noindent
$\cH(s)$: Eq.\ (\ref{eq:hankel}) \dotfill \pageref{eq:hankel}

\noindent
$\inter A$ \dotfill \pageref{eq:intA}

\noindent
$L_\mu$ \dotfill \pageref{eq:Lmu}

\noindent
$\cL^p(\cX,\mu)$ \dotfill \pageref{eq:Lp}

\noindent
$\varepsilon(x)$: \Cref{dfn:index}, eq.\ (\ref{eq:index}) \dotfill \pageref{dfn:index}

$\varepsilon(\cX)$: \Cref{dfn:index}, eq.\ (\ref{eq:indexSet}) \hfill \pageref{eq:indexSet}

\noindent
$K_\sigma$: Eq.\ (\ref{eq:gaussiankernel}) \dotfill \pageref{eq:gaussiankernel}

\noindent
$\lin\cF$: Eq.\ (\ref{eq:linF}) \dotfill \pageref{eq:linF}

$(\lin\cF)_+$: \Cref{dfn:linFsubsets} \hfill \pageref{dfn:linFsubsets}

$(\lin\cF)^e$: \Cref{dfn:linFsubsets} \hfill \pageref{dfn:linFsubsets}

$(\lin\cF)_+^e$: \Cref{dfn:linFsubsets} \hfill \pageref{dfn:linFsubsets}

\noindent
$L_\mu$: \Cref{dfn:Lmu} \dotfill \pageref{dfn:Lmu}

\noindent
$L_s$: \Cref{dfn:rieszFunctional} \dotfill \pageref{dfn:rieszFunctional}

\noindent
$\cM(L)$: \Cref{dfn:momentFunctional}\dotfill \pageref{dfn:momentFunctional}

\noindent
$\cM(\cX)_+$ \dotfill \pageref{eq:MX+}

\noindent
$\nset$ \dotfill \pageref{numbers}

$\nset_0$ \hfill \pageref{numbers}

\noindent
$\pos(K)$: Eq.\ (\ref{eq:posKdfn}) \dotfill \pageref{eq:posKdfn}

$\pos(\rset)$: Eq.\ (\ref{eq:posR}) \hfill \pageref{eq:posR}

$\pos([0,\infty))$: Eqs.\ (\ref{eq:pos0infty1}) and (\ref{eq:pos0infty2})  \hfill\pageref{eq:pos0infty1}

$\pos([-1,1])$: Eq.\ (\ref{eq:pos-11}) \hfill \pageref{eq:pos-11}

$\pos([a,b])$: Eq.\ (\ref{eq:posab}) \hfill \pageref{eq:posab}

%$\pos([0,1]^n)$: Eq.\ (\ref{eq:posCube}) \hfill \pageref{eq:posCube}

\noindent
$\cP(\cX)$ \dotfill \pageref{eq:PX}

\noindent
$\qset$ \dotfill \pageref{numbers}

\noindent
$\rset$ \dotfill \pageref{numbers}

\noindent
$\cS_\cF$ \dotfill \pageref{dfn:SF}

\noindent
$\sigma(A)$ \dotfill \pageref{sigmaA}

\noindent
$\tset$ \dotfill \pageref{numbers}

\noindent
$\Xi^m$: Eq.\ (\ref{eq:simplex}) \dotfill \pageref{eq:simplex}

\noindent
$\zset$ \dotfill \pageref{numbers}

%\printindex

\begin{theindex}
{\bf A}\nopagebreak%
 \indexspace\nopagebreak%
  \item adapted
    \subitem cones\idxquad \hyperpage{25}
    \subitem space\idxquad \hyperpage{23}
  \item algebra\idxquad \hyperpage{4},\,\hyperpage{27}
    \subitem $*$-\idxquad \hyperpage{27}
      \subsubitem Fr\'echet topological\idxquad \hyperpage{27}
  \item approximation
    \subitem best
      \subsubitem polynomial\idxquad \hyperpage{117}
    \subitem Tchebycheff\idxquad \hyperpage{117}

  \indexspace
{\bf B}\nopagebreak%
 \indexspace\nopagebreak%
  \item Banach, S.\idxquad \hyperpage{2}
  \item basic composition formulas\idxquad \hyperpage{73}
  \item Basic Representation Theorem\idxquad \hyperpage{24}
  \item Bernstein
    \subitem polynomial\idxquad \hyperpage{32}
    \subitem Theorem\idxquad \hyperpage{32}
  \item Bernstein, S.\ N.\idxquad \hyperpage{32},\,\hyperpage{44}
  \item best approximation
    \subitem polynomial\idxquad \hyperpage{117}
  \item Boas' Theorem\idxquad \hyperpage{38}
  \item Boas, R.\ P.\idxquad \hyperpage{35},\,\hyperpage{38},\,
		\hyperpage{108}
  \item Borel
    \subitem measure\idxquad \hyperpage{4}
    \subitem $\sigma$-algebra\idxquad \hyperpage{4}
  \item Brickman, L.\idxquad \hyperpage{33}

  \indexspace
{\bf C}\nopagebreak%
 \indexspace\nopagebreak%
  \item Carath\'eodory
    \subitem Theorem\idxquad \hyperpage{3}
  \item Carroll, L.\idxquad \hyperpage{57}
  \item compact
    \subitem locally
      \subsubitem Hausdorff space\idxquad \hyperpage{2}
    \subitem set\idxquad \hyperpage{2}
  \item cone\idxquad \hyperpage{3}
    \subitem adapted\idxquad \hyperpage{25}
    \subitem negative\idxquad \hyperpage{1}
    \subitem positive\idxquad \hyperpage{1}
      \subsubitem linear functional\idxquad \hyperpage{21}
  \item Conic Extension Theorem\idxquad \hyperpage{26}
  \item continuous\idxquad \hyperpage{2}
  \item convex\idxquad \hyperpage{3}
  \item cubature formula
    \subitem Gaussian\idxquad \hyperpage{37}
  \item Curtis--Mairhuber--Sieklucki Theorem\idxquad \hyperpage{47}

  \indexspace
{\bf D}\nopagebreak%
 \indexspace\nopagebreak%
  \item Daniell's Representation Theorem\idxquad \hyperpage{6}
  \item Daniell's Signed Representation Theorem\idxquad \hyperpage{9}
  \item Daniell, P.\ J.\idxquad \hyperpage{6}
  \item decomposition
    \subitem Riesz property\idxquad \hyperpage{12}
  \item determinant
    \subitem representation as a\idxquad \hyperpage{49},\,
		\hyperpage{68}
    \subitem Vandermonde\idxquad \hyperpage{44},\,\hyperpage{48}
    \subitem Wronskian\idxquad \hyperpage{60}
  \item determinate
    \subitem moment functional\idxquad \hyperpage{19}
  \item Dirac, P.\ A.\ M.\idxquad \hyperpage{105}
  \item dominate\idxquad \hyperpage{23}
    \subitem cone\idxquad \hyperpage{25}
  \item dual\idxquad \hyperpage{2}

  \indexspace
{\bf E}\nopagebreak%
 \indexspace\nopagebreak%
  \item ECT-system\idxquad \hyperpage{60}
  \item Einstein, A.\idxquad \hyperpage{1}
  \item ET-system\idxquad \hyperpage{57}
  \item ETP$_k$\idxquad \hyperpage{72}
  \item Euripides\idxquad \hyperpage{71}
  \item extended
    \subitem totally
      \subsubitem positive\idxquad \hyperpage{72}

  \indexspace
{\bf F}\nopagebreak%
 \indexspace\nopagebreak%
  \item Fil'\v{s}tinski\v{\i}, V.\ A.\idxquad \hyperpage{34}
  \item Fixed Point Theorem of Brouwer\idxquad \hyperpage{79}
  \item Fr\'echet space\idxquad \hyperpage{27}
  \item function
    \subitem measurable\idxquad \hyperpage{4}
    \subitem $\mu$-integrable\idxquad \hyperpage{4}
  \item functional
    \subitem $K$-moment\idxquad \hyperpage{18}
    \subitem linear\idxquad \hyperpage{27}
      \subsubitem cone positive\idxquad \hyperpage{21}
      \subsubitem non-negative\idxquad \hyperpage{27}
    \subitem moment\idxquad \hyperpage{17}
    \subitem Riesz\idxquad \hyperpage{18}
    \subitem sublinear\idxquad \hyperpage{2}
    \subitem superlinear\idxquad \hyperpage{2}
  \item Fundamental Theorem of Algebra\idxquad \hyperpage{110}

  \indexspace
{\bf G}\nopagebreak%
 \indexspace\nopagebreak%
  \item Gau\ss, C.\ F.\idxquad \hyperpage{37}
  \item Gaussian kernel\idxquad \hyperpage{72}

  \indexspace
{\bf H}\nopagebreak%
 \indexspace\nopagebreak%
  \item Haar, A.\idxquad \hyperpage{118}
  \item Hahn, H.\idxquad \hyperpage{2}
  \item Hahn--Banach Theorem\idxquad \hyperpage{2}
  \item Hamburger moment problem\idxquad 
		\hyperindexformat{\see{Theorem, Hamburger}}{31}
  \item Hamburger's Theorem\idxquad \hyperpage{30}
  \item Hamburger, H.\ L.\idxquad \hyperpage{30},\,\hyperpage{39}
  \item Hankel matrix\idxquad \hyperpage{3}
    \subitem of a sequence\idxquad \hyperpage{3},\,\hyperpage{18}
  \item Hardy, G.\ H.\idxquad \hyperpage{77}
  \item Hausdorff
    \subitem moment problem
      \subsubitem sparse\idxquad \hyperpage{102}
    \subitem space\idxquad \hyperpage{2}
    \subitem truncated moment problem
      \subsubitem sparse\idxquad \hyperpage{101}
  \item Hausdorff moment problem\idxquad 
		\hyperindexformat{\see{Theorem, Hausdorff}}{32}
  \item Hausdorff's Theorem\idxquad \hyperpage{30}
  \item Hausdorff, F.\idxquad \hyperpage{30},\,\hyperpage{35},\,
		\hyperpage{101}
  \item Haviland's Theorem\idxquad \hyperpage{31}
  \item Haviland, E.\ K.\idxquad \hyperpage{31}
  \item Haviland--Hildebrandt--Schoenberg--Wintner Theorem\idxquad 
		\hyperpage{34}
  \item Haviland--Wintner Theorem\idxquad \hyperpage{34}
  \item Helly, E.\idxquad \hyperpage{2}
  \item Herglotz
    \subitem moment problem\idxquad \hyperpage{31}
  \item Hilbert, D.\idxquad \hyperpage{32}
  \item Hildebrandt, T.\ H.\idxquad \hyperpage{34}
  \item hull
    \subitem convex\idxquad \hyperpage{3}

  \indexspace
{\bf I}\nopagebreak%
 \indexspace\nopagebreak%
  \item identity
    \subitem Sylvester\idxquad \hyperpage{62}
  \item indeterminacy
    \subitem Stieltjes example\idxquad \hyperpage{19}
  \item indeterminate
    \subitem moment
      \subsubitem functional\idxquad \hyperpage{19}
  \item index\idxquad \hyperpage{52}
    \subitem of a set\idxquad \hyperpage{52}
  \item interior
    \subitem of a set\idxquad \hyperpage{2}
  \item Interlacing Theorem\idxquad 
		\hyperindexformat{\see{Snake Theorem}}{83}

  \indexspace
{\bf J}\nopagebreak%
 \indexspace\nopagebreak%
  \item Jackson, D.\idxquad \hyperpage{121}

  \indexspace
{\bf K}\nopagebreak%
 \indexspace\nopagebreak%
  \item $K$-moment functional\idxquad \hyperpage{18}
  \item Kakutani, S.\idxquad \hyperpage{11}
  \item Karlin
    \subitem Nichtnegativstellensatz
      \subsubitem on $[0,\infty)$\idxquad \hyperpage{91}
      \subsubitem on $[a,b]$\idxquad \hyperpage{86}
      \subsubitem on $\rset$\idxquad \hyperpage{92}
    \subitem Positivstellensatz
      \subsubitem on $[0,\infty)$\idxquad \hyperpage{89}
      \subsubitem on $[a,b]$\idxquad \hyperpage{82}
      \subsubitem on $\rset$\idxquad \hyperpage{92}
    \subitem Theorem
      \subsubitem for $f>0$ on $[a,b]$\idxquad \hyperpage{77}
      \subsubitem for $f\geq 0$ on $[a,b]$\idxquad \hyperpage{84}
  \item Karlin, S.\idxquad \hyperpage{vii}
  \item kernel\idxquad \hyperpage{71}
    \subitem Gaussian\idxquad \hyperpage{72}
  \item Krein, M.\ G.\idxquad \hyperpage{33},\,\hyperpage{83}

  \indexspace
{\bf L}\nopagebreak%
 \indexspace\nopagebreak%
  \item lattice
    \subitem of functions\idxquad \hyperpage{5}
    \subitem space\idxquad \hyperpage{5}
  \item Leibniz, G.\ W.\idxquad \hyperpage{97}
  \item Lemma
    \subitem Markov\idxquad \hyperpage{82}
  \item Lewin, K.\idxquad \hyperpage{43}
  \item locally compact\idxquad \hyperpage{2}
  \item Luk\'acs Theorem\idxquad 
		\hyperindexformat{\see{Luk\'acs--Markov Theorem}}{33}
  \item Luk\'acs, F.\idxquad \hyperpage{33}
  \item Luk\'acs--Markov Theorem\idxquad \hyperpage{33},\,
		\hyperpage{99}

  \indexspace
{\bf M}\nopagebreak%
 \indexspace\nopagebreak%
  \item majorized\idxquad \hyperpage{21}
  \item Markov Lemma\idxquad \hyperpage{82}
  \item Markov's Theorem\idxquad 
		\hyperindexformat{\see{Luk\'acs--Markov Theorem}}{33}
  \item Markov, A.\ A.\idxquad \hyperpage{11},\,\hyperpage{33}
  \item Markov--Luk\'acs Theorem\idxquad 
		\hyperindexformat{\see{Luk\'acs--Markov Theorem}}{34}
  \item measurable
    \subitem function\idxquad \hyperpage{4}
    \subitem space\idxquad \hyperpage{4}
  \item measure\idxquad \hyperpage{4}
    \subitem Borel\idxquad \hyperpage{4}
    \subitem Carath\'eodory outer\idxquad \hyperpage{4}
    \subitem outer\idxquad \hyperpage{4}
    \subitem Radon\idxquad \hyperpage{4}
    \subitem regular\idxquad \hyperpage{5}
    \subitem representing\idxquad \hyperpage{17}
    \subitem space\idxquad \hyperpage{4}
  \item Milton, J.\idxquad \hyperpage{113}
  \item minorized\idxquad \hyperpage{21}
  \item moment
    \subitem $f$-moment of $\mu$\idxquad \hyperpage{17}
    \subitem functional\idxquad \hyperpage{17}
      \subsubitem determinate\idxquad \hyperpage{19}
      \subsubitem generated by $\mu$\idxquad \hyperpage{18}
      \subsubitem indeterminate\idxquad \hyperpage{19}
      \subsubitem truncated\idxquad \hyperpage{19}
    \subitem problem
      \subsubitem Hamburger\idxquad 
		\hyperindexformat{\see{Theorem, Hamburger}}{30}
      \subsubitem Hausdorff\idxquad 
		\hyperindexformat{\see{Theorem, Hausdorff}}{30}
      \subsubitem Herglotz\idxquad \hyperpage{31}
      \subsubitem Stieltjes\idxquad 
		\hyperindexformat{\see{Theorem, Stieltjes}}{29}
      \subsubitem {\v{S}}venco\idxquad 
		\hyperindexformat{\see{Theorem, {\v{S}}venco}}{34}
      \subsubitem trigonometric\idxquad \hyperpage{31}
    \subitem sequence\idxquad \hyperpage{18}
  \item $\mu$-integrable\idxquad \hyperpage{4}
  \item $\mu$-measurable\idxquad \hyperpage{4}

  \indexspace
{\bf N}\nopagebreak%
 \indexspace\nopagebreak%
  \item negative cone\idxquad \hyperpage{1}
  \item neighborhood\idxquad \hyperpage{2}
  \item Nichtnegativstellensatz
    \subitem Karlin
      \subsubitem on $[0,\infty)$\idxquad \hyperpage{91}
      \subsubitem on $[a,b]$\idxquad \hyperpage{86}
      \subsubitem on $\rset$\idxquad \hyperpage{92}
  \item Nudel'man, A.\ A.\idxquad \hyperpage{33},\,\hyperpage{83}

  \indexspace
{\bf O}\nopagebreak%
 \indexspace\nopagebreak%
  \item optimization
    \subitem over linear functionals\idxquad \hyperpage{121}
  \item order
    \subitem partial\idxquad \hyperpage{1}
    \subitem total\idxquad \hyperpage{1}
    \subitem vector space\idxquad \hyperpage{1}
  \item orthogonal
    \subitem polynomial\idxquad \hyperpage{121}

  \indexspace
{\bf P}\nopagebreak%
 \indexspace\nopagebreak%
  \item P\'olya, G.\idxquad \hyperpage{32},\,\hyperpage{39},\,
		\hyperpage{89}
  \item partial order\idxquad \hyperpage{1}
  \item polynomial\idxquad \hyperpage{44}
    \subitem Bernstein\idxquad \hyperpage{32}
    \subitem best approximation\idxquad \hyperpage{117}
    \subitem orthogonal\idxquad \hyperpage{121}
    \subitem Schur\idxquad \hyperpage{68}
  \item Pope, A.\idxquad \hyperpage{17}
  \item positive
    \subitem totally\idxquad \hyperpage{71}
      \subsubitem extended\idxquad \hyperpage{72}
      \subsubitem strictly\idxquad \hyperpage{71}
  \item positive cone\idxquad \hyperpage{1}
  \item Positivstellensatz
    \subitem Karlin
      \subsubitem on $[0,\infty)$\idxquad \hyperpage{89}
      \subsubitem on $[a,b]$\idxquad \hyperpage{82}
      \subsubitem on $\rset$\idxquad \hyperpage{92}

  \indexspace
{\bf R}\nopagebreak%
 \indexspace\nopagebreak%
  \item Radau, M.\ R.\idxquad \hyperpage{33}
  \item Radon
    \subitem measure\idxquad \hyperpage{4}
  \item reduced
    \subitem system\idxquad \hyperpage{61},\,\hyperpage{63}
  \item regular
    \subitem measure\idxquad \hyperpage{5}
  \item representation
    \subitem as a determinant\idxquad \hyperpage{49},\,\hyperpage{68}
    \subitem Theorem
      \subsubitem Basic\idxquad \hyperpage{24}
      \subsubitem Daniell\idxquad \hyperpage{6}
      \subsubitem Daniell, signed\idxquad \hyperpage{9}
      \subsubitem Riesz\idxquad \hyperpage{12}
      \subsubitem Riesz, signed\idxquad \hyperpage{11}
      \subsubitem Riesz--Markov--Kakutani\idxquad \hyperpage{11}
  \item representing
    \subitem measure\idxquad \hyperpage{17}
  \item Richter's Theorem\idxquad \hyperpage{36}
  \item Richter, H.\idxquad \hyperpage{35}
  \item Richter--Rogosinski--Rosenbloom Theorem\idxquad \hyperpage{38}
  \item Riesz
    \subitem decomposition property\idxquad \hyperpage{12}
  \item Riesz functional\idxquad \hyperpage{18}
  \item Riesz' Representation Theorem\idxquad \hyperpage{12}
    \subitem Signed\idxquad \hyperpage{11}
  \item Riesz, F.\idxquad \hyperpage{11}
  \item Riesz, M.\idxquad \hyperpage{31}
  \item Riesz--Markov--Kakutani Representation Theorem\idxquad 
		\hyperpage{11}
  \item Rogosinski, W.\ W.\idxquad \hyperpage{38}
  \item Rosenbloom, P.\ C.\idxquad \hyperpage{37}

  \indexspace
{\bf S}\nopagebreak%
 \indexspace\nopagebreak%
  \item Santayana, G.\idxquad \hyperpage{29}
  \item Schmüdgen, K.\idxquad \hyperpage{39}
  \item Schoenberg, I.\ J.\idxquad \hyperpage{34}
  \item Schur polynomial\idxquad \hyperpage{68}
  \item sequence
    \subitem moment\idxquad \hyperpage{18}
  \item Shakespeare, W.\idxquad \hyperpage{117}
  \item Sherman, T.\idxquad \hyperpage{39}
  \item $\sigma$-algebra\idxquad \hyperpage{4}
    \subitem Borel\idxquad \hyperpage{4}
  \item Snake Theorem\idxquad \hyperpage{83}
  \item space
    \subitem adapted\idxquad \hyperpage{23}
    \subitem Hausdorff\idxquad \hyperpage{2}
      \subsubitem locally compact\idxquad \hyperpage{2}
    \subitem lattice\idxquad \hyperpage{5}
    \subitem measurable\idxquad \hyperpage{4}
    \subitem measure\idxquad \hyperpage{4}
    \subitem topological\idxquad \hyperpage{2}
  \item sparse
    \subitem algebraic Nichtnegativstellensatz
      \subsubitem on $[0,\infty)$\idxquad \hyperpage{109}
      \subsubitem on $[0,b]$\idxquad \hyperpage{104}
      \subsubitem on $[a,b]$\idxquad \hyperpage{103}
      \subsubitem on $\rset$\idxquad \hyperpage{110}
    \subitem algebraic Positivstellensatz
      \subsubitem on $[0,\infty)$\idxquad \hyperpage{105}
      \subsubitem on $[a,b]$\idxquad \hyperpage{97}
      \subsubitem on $\rset$\idxquad \hyperpage{110}
    \subitem Hausdorff moment problem\idxquad \hyperpage{101,\,102}
      \subsubitem truncated\idxquad \hyperpage{101}
    \subitem Stieltjes moment problem\idxquad \hyperpage{108}
  \item Stieltjes
    \subitem example
      \subsubitem indeterminacy\idxquad \hyperpage{19}
  \item Stieltjes moment problem\idxquad 
		\hyperindexformat{\see{Theorem, Stieltjes}}{32}
  \item Stieltjes' Theorem\idxquad \hyperpage{29}
  \item Stieltjes, T.\ J.\idxquad \hyperpage{19},\,\hyperpage{29}
  \item Stochel, J.\idxquad \hyperpage{39}
  \item Stone--Weierstrass Theorem\idxquad \hyperpage{3}
  \item STP$_k$\idxquad \hyperpage{71}
  \item Strassen's Theorem\idxquad \hyperpage{22}
  \item strictly
    \subitem totally
      \subsubitem positive\idxquad \hyperpage{71}
  \item sublinear
    \subitem functional\idxquad \hyperpage{2}
  \item suits\idxquad \hyperpage{6}
  \item superlinear
    \subitem functional\idxquad \hyperpage{2}
  \item {\v{S}}venco's Theorem\idxquad \hyperpage{34}
  \item {\v{S}}venco, K.\ I.\idxquad \hyperpage{34}
  \item Sylvester's identity\idxquad \hyperpage{62}
  \item system
    \subitem ECT-\idxquad \hyperpage{60}
    \subitem ET-\idxquad \hyperpage{57}
    \subitem reduced\idxquad \hyperpage{61},\,\hyperpage{63}
    \subitem T-\idxquad \hyperpage{44}
    \subitem Tchebycheff\idxquad \hyperindexformat{\see{T-}}{44}
  \item Szeg\"o, G.\idxquad \hyperpage{32}

  \indexspace
{\bf T}\nopagebreak%
 \indexspace\nopagebreak%
  \item T-system\idxquad \hyperpage{44}
    \subitem continuous\idxquad \hyperpage{45}
    \subitem extended\idxquad \hyperpage{57}
      \subsubitem complete\idxquad \hyperpage{60}
    \subitem periodic\idxquad \hyperpage{45}
  \item Tchakaloff, M.\ V.\idxquad \hyperpage{37}
  \item Tchebycheff approximation\idxquad \hyperpage{117}
  \item Tchebycheff system\idxquad 
		\hyperindexformat{\see{T-system}}{44}
  \item Tchebycheff, P.\ L.\idxquad \hyperpage{43}
  \item Theorem
    \subitem Algebra
      \subsubitem Fundamental\idxquad \hyperpage{110}
    \subitem Basic Representation\idxquad \hyperpage{24}
    \subitem Bernstein\idxquad \hyperpage{32}
    \subitem Boas\idxquad \hyperpage{38}
    \subitem Brouwer Fixed Point\idxquad \hyperpage{79}
    \subitem Carath\'eodory\idxquad \hyperpage{3}
    \subitem Conic Extension\idxquad \hyperpage{26}
    \subitem Curtis--Mairhuber--Sieklucki\idxquad \hyperpage{47}
    \subitem Daniell's Representation\idxquad \hyperpage{6}
      \subsubitem Signed\idxquad \hyperpage{9}
    \subitem Hahn--Banach\idxquad \hyperpage{2}
    \subitem Hamburger\idxquad \hyperpage{30}
    \subitem Hausdorff\idxquad \hyperpage{30}
      \subsubitem sparse\idxquad \hyperpage{101}
    \subitem Haviland\idxquad \hyperpage{31}
    \subitem Haviland--Hildebrandt--Schoenberg--Wintner\idxquad 
		\hyperpage{34}
    \subitem Haviland--Wintner\idxquad \hyperpage{34}
    \subitem Interlacing\idxquad \hyperindexformat{\see{Snake}}{83}
    \subitem Karlin
      \subsubitem for $f>0$ on $[a,b]$\idxquad \hyperpage{77}
      \subsubitem for $f\geq 0$ on $[a,b]$\idxquad \hyperpage{84}
      \subsubitem Nichtnegativstellensatz on $[0,\infty)$\idxquad 
		\hyperpage{91}
      \subsubitem Nichtnegativstellensatz on $[a,b]$\idxquad 
		\hyperpage{86}
      \subsubitem Nichtnegativstellensatz on $\rset$\idxquad 
		\hyperpage{92}
      \subsubitem Positivstellensatz on $[0,\infty)$\idxquad 
		\hyperpage{89}
      \subsubitem Positivstellensatz on $[a,b]$\idxquad \hyperpage{82}
      \subsubitem Positivstellensatz on $\rset$\idxquad \hyperpage{92}
    \subitem Luk\'acs\idxquad 
		\hyperindexformat{\see{Luk\'acs--Markov}}{33}
    \subitem Luk\'acs--Markov\idxquad \hyperpage{33},\,\hyperpage{99}
    \subitem Markov\idxquad 
		\hyperindexformat{\see{Luk\'acs--Markov}}{33}
    \subitem Markov--Luk\'acs\idxquad 
		\hyperindexformat{\see{Luk\'acs--Markov}}{34}
    \subitem M\"untz--Sz\'asz\idxquad \hyperpage{102}
    \subitem Richter\idxquad \hyperpage{36}
    \subitem Richter--Rogosinski--Rosenbloom\idxquad \hyperpage{38}
    \subitem Riesz' Representation\idxquad \hyperpage{12}
      \subsubitem Signed\idxquad \hyperpage{11}
    \subitem Riesz--Markov--Kakutani Representation\idxquad 
		\hyperpage{11}
    \subitem Rogosinski\idxquad \hyperpage{38}
    \subitem Rosenbloom\idxquad \hyperpage{37}
    \subitem Snake\idxquad \hyperpage{83}
    \subitem Stieltjes\idxquad \hyperpage{29}
    \subitem Stone--Weierstrass\idxquad \hyperpage{3}
    \subitem Strassen\idxquad \hyperpage{22}
    \subitem {\v{S}}venco\idxquad \hyperpage{34}
    \subitem Tchakaloff\idxquad \hyperpage{37}
    \subitem Wald\idxquad \hyperpage{37}
  \item topology\idxquad \hyperpage{2}
  \item total order\idxquad \hyperpage{1}
  \item totally
    \subitem positive\idxquad \hyperpage{71}
  \item TP$_k$\idxquad \hyperpage{71}
  \item trigonometric
    \subitem moment problem\idxquad \hyperpage{31}
  \item truncated
    \subitem moment functional\idxquad \hyperpage{19}

  \indexspace
{\bf U}\nopagebreak%
 \indexspace\nopagebreak%
  \item unit element\idxquad \hyperpage{27}

  \indexspace
{\bf V}\nopagebreak%
 \indexspace\nopagebreak%
  \item Vandermonde
    \subitem determinant\idxquad \hyperpage{44},\,\hyperpage{48}
  \item vector space
    \subitem ordered\idxquad \hyperpage{1}

  \indexspace
{\bf W}\nopagebreak%
 \indexspace\nopagebreak%
  \item Wald, A.\idxquad \hyperpage{37}
  \item Wiener, N.\idxquad \hyperpage{21}
  \item Wintner, A.\idxquad \hyperpage{31}
  \item Wronskian\idxquad \hyperpage{60}
  \item Wronskian determiant\idxquad 
		\hyperindexformat{\see{Wronskian}}{60}

  \indexspace
{\bf Z}\nopagebreak%
 \indexspace\nopagebreak%
  \item zero
    \subitem multiplicity\idxquad \hyperpage{57}
    \subitem nodal\idxquad \hyperpage{50}
    \subitem non-nodal\idxquad \hyperpage{50}

\end{theindex}

%%%%%%%%%%%%%%%%%%%%%%%%%%%%%%%%%%%%%%%%%%%%%%%%%%%%%%%%%%%%%%%%%%%%%%

\end{document}